\DeclareSymbolFont{rsfs}{U}{rsfs}{m}{n}
\DeclareSymbolFontAlphabet{\mathscrsfs}{rsfs}
\numberwithin{equation}{section}
\newtheoremstyle{myexample} 
    {\topsep}                    
    {\topsep}                    
    {\rm }                   
    {}                           
    {\bf }                   
    {.}                          
    {.5em}                       
    {}  
\newtheoremstyle{myremark} 
    {\topsep}                    
    {\topsep}                    
    {\rm}                        
    {}                           
    {\bf}                        
    {.}                          
    {.5em}                       
    {}  
\newtheorem{claim}{Claim}[section]
\newtheorem{lemma}[claim]{Lemma}
\newtheorem{theorem}{Theorem}
\newtheorem{proposition}[claim]{Proposition}
\newtheorem{corollary}[claim]{Corollary}
\theoremstyle{myremark}
\newtheorem{remark}{Remark}[section]
\theoremstyle{myremark}
\theoremstyle{myexample}
\def\obx{\overline{\boldsymbol x}}
\def\ox{\overline{x}}
\def\hbx{\hat{\boldsymbol x}}
\def\hlambda{\hat{\lambda}}
\def\hgamma{\hat{\gamma}}
\def\hsigma{\hat{\sigma}}
\def\hmu{\hat{\mu}}
\def\hf{{\hat{f}}}
\def\hJ{\hat{J}}
\def\bProd{{\boldsymbol \Omega}}
\def\tbProd{\tilde{\boldsymbol \Omega}}
\def\projp{{\boldsymbol P}^{\perp}}
\def\omu{\overline{\mu}}
\def\osigma{\overline{\sigma}}
\def\FDR{{\rm FDR}}
\def\hFDP{\widehat{\rm FDP}}
\def\Info{{\rm I}}
\def\npi{\mbox{\tiny\rm PI}}
\def\LAMP{\mbox{\tiny\rm L}}
\def\op{\mbox{\tiny\rm op}}
\def\sTV{\mbox{\tiny\rm TV}}
\def\sIT{\mbox{\tiny\rm IT}}
\def\sBayes{\mbox{\tiny\rm Bayes}}
\def\sALG{\mbox{\tiny\rm ALG}}
\def\dcQ{\partial{\mathcal Q}}
\def\cQ{{\mathcal Q}}
\def\cF{{\mathcal F}}
\def\cR{{\mathcal R}}
\def\cX{{\mathcal X}}
\def\cY{{\mathcal Y}}
\def\tcR{\tilde{\mathcal R}}
\def\spn{{\rm span}}
\def\sign{{\rm sign}}
\def\hB{{\hat{B}}}
\def\hC{{\hat{C}}}
\def\hpsi{{\hat{\psi}}}
\def\hxi{{\hat{\xi}}}
\def\ed{\stackrel{{\rm d}}{=}}
\def\supp{{\rm supp}}
\def\sT{{\sf T}}
\def\<{\langle}
\def\>{\rangle}
\def\P{{\mathbb P}}
\def\prob{{\mathbb P}}
\def\naturals{{\mathbb N}}
\def\E{{\mathbb E}} 
\def\Var{{\sf{Var}}}
\def\Ev{{\sf E}}
\newcommand\norm[1]{\left\lVert{#1}\right\rVert}
\def\de{{\rm d}}
\def\tlambda{\tilde{\lambda}}
\def\bsigma{{\boldsymbol \sigma}}
\def\rank{{\rm rank}}
\def\Overlap{{\rm Overlap}}
\def\cV{{\mathcal V}}
\def\hbM{\widehat{\boldsymbol M}}
\newcommand\myeqref[1]{{Eq.\,\eqref{#1}}}
\def\sTV{\mbox{\tiny\rm TV}}
\def\reals{\mathbb{R}}
\def\normal{{\sf N}}
\def\Unif{{\sf Unif}}
\def\GOE{{\sf GOE}}
\def\sB{{\sf B}}
\def\sC{{\sf C}}
\def\sb{{\sf b}}
\def\hsb{\hat{\sf b}}
\def\cE{{\mathcal{E}}}
\def\cG{{\mathcal{G}}}
\def\cS{{\mathcal{S}}}
\def\cW{{\mathcal{W}}}
\def\bg{{\boldsymbol g}}
\def\bB{{\boldsymbol B}}
\def\bM{{\boldsymbol M}}
\def\bO{{\boldsymbol O}}
\def\tbM{\widetilde{\boldsymbol M}}
\def\by{{\boldsymbol y}}
\def\bdelta{{\boldsymbol \delta}}
\def\bS{{\boldsymbol S}}
\def\bT{{\boldsymbol T}}
\def\bQ{{\boldsymbol Q}}
\def\tbQ{\tilde{\boldsymbol Q}}
\def\bG{{\boldsymbol G}}
\def\bI{{\boldsymbol I}}
\def\bPi{{\boldsymbol \Pi}}
\def\be{{\boldsymbol e}}
\def\br{{\boldsymbol r}}
\def\tbx{\tilde{\boldsymbol x}}
\def\bx{{\boldsymbol x}}
\def\bz{{\boldsymbol z}}
\def\ba{{\boldsymbol a}}
\def\bphi{{\boldsymbol \varphi}}
\def\bL{{\boldsymbol L}}
\def\bT{{\boldsymbol T}}
\def\bX{{\boldsymbol X}}
\def\bY{{\boldsymbol Y}}
\def\bZ{{\boldsymbol Z}}
\def\bP{{\boldsymbol P}}
\def\bPp{{\boldsymbol P}^{\perp}}
\def\bv{{\boldsymbol v}}
\def\obv{\tilde{\boldsymbol v}}
\def\tbv{\tilde{\boldsymbol v}}
\def\tv{\tilde{v}}
\def\tbu{\tilde{\boldsymbol u}}
\def\bu{{\boldsymbol u}}
\def\bW{{\boldsymbol W}}
\def\btau{{\boldsymbol \tau}}
\def\hbx{\hat{\boldsymbol x}}
\def\ugamma{\underline{\gamma}}
\def\ind{{\mathbb I}}
\def\bOmega{{\boldsymbol \Omega}}
\def\bSigma{{\boldsymbol \Sigma}}
\def\bU{{\boldsymbol U}}
\def\bV{{\boldsymbol V}}
\def\tbV{\tilde{\boldsymbol V}}
\def\tbX{\tilde{\boldsymbol X}}
\def\tbx{\tilde{\boldsymbol x}}
\def\tx{\tilde{x}}
\def\id{{\boldsymbol I}}
\def\bzero{{\boldsymbol 0}}
\def\bfone{{\boldsymbol 1}}
\def\towass{\stackrel{W_2}{\Rightarrow}}
\def\hbA{\widehat{\boldsymbol A}}
\def\hbW{\widehat{\boldsymbol W}}
\def\hbv{\hat{\boldsymbol v}}
\def\bR{{\boldsymbol R}}
\def\tbW{\tilde{\boldsymbol W}}
\def\tbA{\tilde{\boldsymbol A}}
\def\bA{{\boldsymbol A}}
\def\bB{{\boldsymbol B}}
\def\balpha{{\boldsymbol \alpha}}
\def\bg{{\boldsymbol g}}
\def\bbeta{{\boldsymbol \beta}}
\def\bxi{{\boldsymbol \xi}}
\def\blambda{{\boldsymbol \lambda}}
\def\SigmaAlg{{\mathfrak S}}
\def\U{{\rm U}}
\def\U1{{\rm U}(1)}
\def\Tr{{\rm Tr}}
\def\eps{{\varepsilon}}
\def\oeps{\bar{\varepsilon}}
\def\hS{\hat{S}}
\def\bPhi{{\boldsymbol \Phi}}
\def\tbphi{\tilde{\boldsymbol \varphi}}
\def\toweak{\stackrel{w}{\Rightarrow}}
\def\tow2{\stackrel{W_2}{\Rightarrow}}
\def\PL{{\rm PL}}
\def\cC{{\cal C}}
\def\mmse{{\sf mmse}}
\def\cuP{\mathscrsfs{P}}
\def\cuF{\mathscrsfs{F}}
\def\cuG{\mathscrsfs{G}}
\def\sS{{\sf S}}
\newcommand{\towa}[1]{\stackrel{W_{#1}}{\Rightarrow}}
\newcommand\abs[1]{\left\lvert{#1}\right\rvert}
\newcommand{\bfs}{{\boldsymbol s}}
\newcommand{\mbf}{\mathbf}
\newcommand{\tilg}{\tilde{g}}
\def \bDelta{{\boldsymbol \Delta}}
\def \bdel{{\boldsymbol \delta}}
\newcommand{\beq}{\begin{equation}}
\newcommand{\eeq}{\end{equation}}
\newcommand{\diag}{{\rm diag}}
\def \bLambda{{\boldsymbol \Lambda}}
\title{Estimation of Low-Rank Matrices via\\ Approximate Message Passing}
\author{Andrea  Montanari\thanks{Department of Electrical Engineering and Department
  of Statistics, Stanford University} \and
Ramji Venkataramanan\thanks{Department of Engineering, University of Cambridge. Email: \texttt{ramji.v@eng.cam.ac.uk}}}
\begin{document}

\maketitle

\begin{abstract}
Consider the problem of estimating a low-rank matrix when its entries are perturbed by Gaussian noise,
a setting that is also known as `spiked model'  or `deformed random matrix'. 
If the empirical distribution of the entries of the spikes is known, optimal estimators that exploit
this knowledge can substantially outperform simple spectral approaches. Recent work characterizes the asymptotic
accuracy of Bayes-optimal estimators in the high-dimensional limit. 
In this paper we present a practical algorithm that can achieve Bayes-optimal accuracy above the spectral threshold.
A bold conjecture from statistical physics posits that no polynomial-time algorithm achieves optimal error below
the same threshold (unless the best estimator is trivial).

Our approach uses Approximate Message Passing  (AMP) in conjunction with a spectral initialization. AMP algorithms have proved successful in a variety of statistical estimation tasks, and are amenable to exact  asymptotic analysis via state evolution. Unfortunately, state evolution is uninformative when the algorithm is initialized  near an unstable fixed point, as often happens in low-rank matrix estimation problems. We develop a new analysis of AMP that allows for spectral initializations, and builds on a 
decoupling between the outlier eigenvectors and the bulk in the spiked random matrix model. 

Our main theorem is general and applies beyond matrix estimation.
However, we use it to derive detailed predictions for the problem of estimating a rank-one matrix in noise.
Special cases of this problem are closely related---via universality arguments---to the network community  detection problem for two asymmetric
communities. For general rank-one models, we show that AMP can be used to construct confidence intervals and control false discovery rate. 

We provide illustrations of the general methodology by considering the cases of sparse low-rank matrices and of
block-constant low-rank matrices with symmetric blocks (we refer to
the latter  as to the `Gaussian Block Model').
\end{abstract}

\section{Introduction}

The `spiked model' is the simplest probabilistic model of a data matrix with a latent low-dimensional structure.
Consider, to begin with, the case of a symmetric matrix. The data are written as the sum 
of a low-rank matrix (the signal) and Gaussian component (the noise):
\begin{align}
\bA = \sum_{i=1}^k\lambda_i \bv_i\bv_i^{\sT} + \bW\, .  \label{eq:SpikedDef}
\end{align}
Here $\lambda_1\ge \lambda_2\ge \dots\ge \lambda_k$ are non-random numbers,  $\bv_i\in\reals^n$ are non-random
vectors,  and $\bW\sim\GOE(n)$ is a matrix from the Gaussian Orthogonal 
Ensemble\footnote{Recall that this means that $\bW=\bW^{\sT}$, and the entries $(W_{ij})_{ i\le j\le n}$ are independent with $(W_{ii})_{i\le n}\sim_{iid}\normal(0,2/n)$ and
$(W_{ij})_{i<j\le n}\sim_{iid}\normal(0,1/n)$.}.
The asymmetric (rectangular) version of the same model is also of interest. In this case we observe $\bA\in \reals^{n\times d}$ given by
\begin{align}
\bA = \sum_{i=1}^k\sqrt{\lambda_i} \, \bu_i\bv_i^{\sT} + \bW\, .  \label{eq:SpikedDef-Rectangular}
\end{align}
where $\bW$ is a noise matrix with entries $(W_{ij})_{i\le n,j\le d}\sim_{iid}\normal(0,1/n)$.  An important special case
assumes $\bu_i\sim\normal(0,\id_n/n)$. In this case\footnote{For the formal analysis of this model, it will be convenient to 
consider the case of deterministic vectors $\bu_i$, $\bv_i$ satisfying suitable asymptotic conditions. However, these conditions hold almost surely, e.g. 
$\bu_i\sim\normal(0,\id_n/n)$.}   the rows of $\bA$ are i.i.d. samples from a high-dimensional Gaussian $\ba_i\sim\normal(\bzero,\bSigma)$
where $\bSigma = (\sum_{i=1}^k\lambda_i\bv_i\bv_i^{\sT}+\id_d)/n$. Theoretical analysis of this spiked covariance model
has led to a number of important statistical insights \cite{JohnstoneICM,johnstone2009consistency}.

Within probability theory, the spiked model \eqref{eq:SpikedDef} is also known as `deformed GOE' or `deformed Wigner random matrix', and 
the behavior of its eigenvalues and eigenvectors has been studied in exquisite detail \cite{baik2005phase,baik2006eigenvalues,feral2007largest,capitaine2009largest,benaych2011eigenvalues,benaych2012singular,knowles2013isotropic}. The most basic phenomenon 
unveiled by this line of work is the so-called BBAP phase transition, first discovered in the physics literature
\cite{hoyle2004principal}, and named after the authors of \cite{baik2005phase}. Let
$k_*$ be the number of rank-one terms with $|\lambda_i|>1$. Then the spectrum of $\bA$ is formed by a bulk of eigenvalues in the interval
$[-2,2]$ (whose distribution follows Wigner's semicircle), plus $k_*$ outliers that are in one-to-one correspondence with the 
large rank-one terms in (\ref{eq:SpikedDef}). The eigenvectors associated to the outliers exhibit a significant correlation with 
the corresponding vectors $\bv_i$. To simplify the discussion, in the rest of this introduction we will assume that $\lambda_i\ge 0$ for all $i$.

The spiked model \eqref{eq:SpikedDef}, \eqref{eq:SpikedDef-Rectangular} and their generalizations have also been studied from a statistical perspective
\cite{johnstone2001distribution,paul2007asymptotics}. A fundamental question in this context is to estimate
the vectors $\bv_i$ from a single realization of the matrix $\bA$.  It is fair to say that this question is relatively well understood when 
the vectors $\bv_i$ are unstructured, e.g. they are a uniformly random orthonormal set (distributed according to the Haar measure).
In this case, and in the high-dimensional limit $n,d \to\infty$,  the best estimator of vector $\bv_i$ is the  $i$-th eigenvector of $\bA$.
Random matrix theory provides detailed information about its asymptotic properties.

This 
paper is concerned with the case in which the vectors $\bv_i$ are structured, e.g. they are sparse, or have bounded entries. This structure is not captured by spectral methods,
and other approaches lead to significantly better estimators. This scenario is relevant for a broad range of applications,
including sparse principal component analysis \cite{johnstone2009consistency,zou2006sparse,deshpande2014information}, non-negative principal 
component analysis \cite{lee1999learning,montanari2016non},
community detection under the stochastic block model \cite{deshpande2016asymptotic,abbe2017community,moore2017computer}, 
and so on. Understanding what are optimal ways of exploiting the
structure of signals is ---to a large extent---an open problem. 

Significant progress has been achieved recently under the assumption that the vectors $(v_{1,j},\dots, v_{k,j})$ $\in \reals^k$ (i.e., the $k$-dimensional
vectors obtained by taking the $j$-th component of the vectors $\bv_1,\dots,\bv_k$) are approximately i.i.d. (across $j \in \{1,\dots, n\}$) with some common distribution $\mu_{\bU}$ on $\reals^k$. 
This is, for instance,  the case if each $\bv_\ell$ has i.i.d. components, and distinct vectors are independent
(but mutual  independence between $\bv_1,\dots,\bv_k$ is not required).
Following heuristic derivations using statistical physics methods
(see, e.g. \cite{lesieur2017constrained}), closed form expressions
have been rigorously established for the Bayes-optimal estimation error in the limit $n\to\infty$ (with $\lambda_i$'s  fixed). We refer to 
\cite{deshpande2014information,deshpande2016asymptotic}
for special cases and to  \cite{krzakala2016mutual,barbier2016mutual,lelarge2016fundamental,miolane2017fundamental} for an increasingly general theory. 

Unfortunately, there is no general algorithm that computes the Bayes-optimal estimator and is guaranteed to run in polynomial time.
Markov Chain  Monte Carlo can have exponentially large mixing time and is difficult to analyze \cite{gamerman2006markov}. Variational methods are non-convex
and do not come with consistency guarantees \cite{blei2017variational}. Classical convex relaxations do not generally achieve the Bayes optimal error, since they incorporate limited
prior information \cite{javanmard2016phase}.

In the positive direction, approximate message passing (AMP) algorithms 
have been successfully applied to  a number of low-rank matrix estimation problems \cite{rangan2018iterative,parker2014bilinear,montanari2016non,vila2015hyperspectral,kabashima2016phase}.
In particular, AMP was proved to
achieve the Bayes optimal estimation error in  special cases of the model (\ref{eq:SpikedDef}), in the high-dimensional limit $n\to\infty$ 
\cite{deshpande2013finding,deshpande2014information}.
In fact, a bold conjecture from statistical physics suggests that the estimation error achieved by AMP is
the same that can be achieved by the optimal polynomial-time algorithm.

An important feature of AMP is that it admits an exact characterization in the limit $n\to\infty$ 
that goes under the name of \emph{state evolution} \cite{DMM09,BM-MPCS-2011,bolthausen2014iterative}. 
There is however one notable case in which the state evolution analysis of AMP falls short of its goal:
when AMP is initialized near an unstable fixed point.
This is typically the case for the problem of estimating the vectors $\bv_i$'s
in the spiked model (\ref{eq:SpikedDef}). (We refer to the next section for a discussion of this point.)

In order to overcome this problem, we propose a two-step algorithm:
\begin{enumerate}
\item We compute the principal eigenvectors $\bphi_1,\dots,\bphi_{k_*}$ of $\bA$, which correspond to
the outlier eigenvalues.
\item We run AMP with an initialization that is correlated with these eigenvectors.
\end{enumerate}
Our main result (Theorem \ref{thm:Main}) is a general asymptotically exact analysis of this type of procedure.
The analysis applies to a broad class of AMP algorithms, with initializations that are obtained by applying
separable functions to the eigenvectors $\bphi_1,\dots,\bphi_{k_*}$ (under some technical conditions).
Let us emphasize that \emph{our core technical result (state-evolution analysis) is completely general and applies beyond  low-rank matrix
estimation.}

The rest of the paper is organized as follows.
\begin{description}
\item[Section \ref{sec:Example}] applies our main results to the problem of estimating a rank-one matrix in Gaussian noise (the case
$k=1$ of the model  (\ref{eq:SpikedDef})). We compute the asymptotic empirical distribution of our estimator. In particular, this characterizes 
the asymptotics of all sufficiently regular  separable losses.

We then illustrate how this state evolution analysis can be used to design specific AMP algorithms, depending on what
prior knowledge we have about the entries of $\bv_1$. In a first case study, we only know that $\bv_1$ is sparse,
and analyze an algorithm based on iterative soft thresholding.  In the second, we assume that the empirical distribution of the entries of $\bv_1$  is known,
and develop a Bayes-AMP algorithm.
The asymptotic estimation error achieved by Bayes-AMP coincides (in certain regimes) with the Bayes-optimal error (see Corollary \ref{coro:AMP-OPT}). 
When this is not the case,  no polynomial-time  algorithm is known that outperforms our method.
\item[Section \ref{sec:Inference}] shows how AMP estimates can be used to construct confidence intervals 
and $p$-values. In particular, we prove that the resulting $p$-values are asymptotically valid on the nulls, 
which in turn can be used to establish asymptotic false discovery rate control using a Benjamini-Hochberg
procedure.
\item[Section \ref{sec:ExampleRectangular}] generalizes the  analysis of Section \ref{sec:Example} to the case of rectangular matrices. 
This allows, in particular, to derive optimal AMP algorithms for the spiked covariance model.
The theory for rectangular matrices is completely analogous to the one for symmetric ones, and indeed can be established
via a reduction to symmetric matrices.
\item[Section \ref{sec:Degenerate}] discusses a new phenomenon arising in case of degeneracies between 
the values $\lambda_1,\dots,\lambda_k$. For the sake of concreteness, we consider the case $\bA = \lambda \bA_0+\bW$,
where $\bA_0$ is a rank-$k$ matrix obtained as follows. We partition $\{1,\dots,n\}$ in $q=k+1$ groups and set 
$A_{0,ij} = k/n$ if $i,j$ belong to the same group and $A_{0,ij}=-1/n$ otherwise. 
Due to its close connections with the stochastic block model of random graphs, we refer to this as to the `Gaussian block model'.

It turns out that in such degenerate cases,  the evolution of AMP estimates does not concentrate around a deterministic 
trajectory. Nevertheless, state evolution captures the asymptotic  behavior of the algorithm in terms of a random initialization 
(whose distribution is entirely characterized) plus a deterministic evolution.
\item[Section \ref{sec:Symmetric}] presents our general result in the case of a symmetric matrix $\bA$ distributed according to 
the model  (\ref{eq:SpikedDef}).
Our theorems provide an asymptotic characterization of a general AMP algorithm 
in terms of a suitable state evolution recursion.
A completely analogous result holds for rectangular matrices. The corresponding statement is presented in the supplementary material.
\item[Section \ref{sec:ProofOutline}] provides an outline of the proofs of our main results.
Earlier state evolution results do not allow to rigorously analyze AMP unless its initialization is independent from
the data matrix $\bA$. In particular, they do not allow to analyze the spectral initialization used in our algorithm.
In order to overcome this challenge, we prove a technical lemma (Lemma \ref{lemma:Distr}) that specifies an approximate representation 
for the conditional distribution of $\bA$ given its leading outlier eigenvectors and the corresponding eigenvalues.
Namely, $\bA$ can be approximated by a sum of rank-one matrices, corresponding to the outlier eigenvectors, plus a projection of 
a new random matrix $\bA^{\mbox{\tiny\rm new}}$ independent of $\bA$. 
We leverage this explicit independence to establish state evolution for our algorithm. 
\end{description}
Complete proofs of the main results are deferred to the Appendices \ref{sec:ProofMain} and \ref{sec:ProofMainGeneral}. For the reader's convenience,  we present separate proofs for the case of rank $k=1$, and then for the general case, which is technically more involved. 
The proofs concerning the examples in Section \ref{sec:Example} and \ref{sec:ExampleRectangular} are also presented in the appendices.

As mentioned above, while several of our examples concern low-rank matrix estimation, the main 
result  in Section \ref{sec:Symmetric} is significantly more general, and is potentially relevant to
a broad range of applications in which AMP is run in conjunction with a spectral initialization.

\section{Estimation of symmetric rank-one matrices}
\label{sec:Example}

In order to illustrate our main result (to be presented in Section \ref{sec:Symmetric}), we apply it to the problem of estimating a rank-one symmetric matrix in Gaussian noise.
We will begin with a brief heuristic discussion of AMP and its application to rank-one matrix  estimation. The reader is welcome to
consult the substantial literature on AMP for further background \cite{BM-MPCS-2011,javanmard2013state,bayati2015universality,berthier2017state}. 

\subsection{Main ideas and heuristic justification}

Let $\bx_{0}=\bx_0(n)\in \reals^n$ be a sequence of signals indexed by the dimension $n$, satisfying the following conditions:
\begin{itemize}
\item[$(i)$] Their rescaled $\ell_2$-norms converge $\lim_{n\to\infty}\|\bx_0(n)\|_2/\sqrt{n}= 1$; 
\item[$(ii)$] The empirical distributions of the entries of $\bx_0(n)$ converges weakly to a probability distribution $\nu_{X_0}$  on $\reals$,
with unit second moment.
\end{itemize}

We then consider the following spiked model, for $\bW\sim \GOE(n$):
\begin{align}
\bA = \frac{\lambda}{n} \, \bx_0\bx_0^{\sT} + \bW\, .  \label{eq:SpikedDefSpecial}
\end{align}
Given one realization of the matrix $\bA$, we would like to estimate the signal $\bx_0$.
Note that this matrix is of the form (\ref{eq:SpikedDef}) with $k=1$, $\lambda_1 = \lambda\|\bx_0(n)\|_2^2/n\to \lambda$
and $\bv_1 = \bx_0(n)/\|\bx_0(n)\|_2$. 

In order to discuss informally the main ideas in AMP, assume for a moment to be given an additional noisy observation of $\bx_0$, call it $\by\in\reals^n$, which is independent of 
$\bA$ (i.e., independent of $\bW$, since $\bx_0$ is deterministic). More specifically, assume $\by \sim \normal(\mu_0\bx_0,\sigma_0^2\id_n)$.
How can we denoise this observation, and incorporate the quadratic observation $\bA$ in  \eqref{eq:SpikedDefSpecial}? 

A first idea would be to denoise $\by$, using an entry-wise scalar denoiser $f_0:\reals\to\reals$. We denote the vector obtained by applying $f_0$ component-wise
by $f_0(\by)$. Of course, the choice of $f_0$ depends on our knowledge of $\bx_0$. For instance if we know that $\bx_0$ is sparse, then we could apply
component-wise soft thresholding:
\begin{align}
f_0(y_i) = \eta\big(y_i;  \tau\big)\, ,
\end{align}
where $\eta(x;\tau) = \sign(x)(|x|-\tau)_+$, and $\tau$ is a suitable threshold level. 
Classical theory guarantees the accuracy of such a denoiser \cite{DJ94a,DJ98}.

However, $f_0(\by)$ does not exploit  the observation $\bA$ in any way. We could try to improve this estimate by multiplying $f_0(\by)$ by $\bA$:
\begin{align}
\bx^1 = \bA f_0(\by) = \frac{\lambda}{n}\<\bx_0,f_0(\by)\>\, \bx_0 + \bW f_0(\by)\, . \label{eq:FirstIteration}
\end{align}
It is not hard to see that the second term is a centered Gaussian vector whose entries have variance close to
$\|f_0(\by)\|^2/n \to \sigma_{1}^2\equiv \E \{ f_0(\mu_0 X_0 + \sigma_0 G )^2 \}$, while the first term is essentially deterministic by the law of large numbers.
We thus obtain that $\bx^1$ is approximately $\normal(\mu_1\bx_0, \sigma^2_1\id_n)$, where
\begin{align}
\mu_{1}  = \lambda \E\{  X_0 f_0(\mu_0 X_0+\sigma_0 G) \}\, , \;\;\;\;\;
 \sigma_{1}^2& = \E \{ f_0(\mu_0 X_0 + \sigma_0 G )^2 \}\, .\label{eq:First-StateEvolution}
\end{align}
Here expectation is taken with respect to $X_0\sim\nu_{X_0}$ independent of $G\sim\normal(0,1)$.
This analysis also suggests how to design the function $f_0$: ideally, it should maximize the signal-to-noise ratio (SNR) $\mu^2_1/\sigma^2_1$.
Of course, the precise choice of $f_0$ depends on our prior knowledge of $\bx_0$. For instance, if we know the law $\nu_{X_0}$, we can maximize this ratio by taking
$f_0(y) = \E\{X_0|\mu_0 X_0 + \sigma_0 G =y\}$.

At this point it would be tempting to iterate the above procedure, and consider the non-linear power iteration
\begin{align}
\bx_{\npi}^{t+1} = \bA\,  f_t(\bx^t_{\npi})\, ,\label{eq:PowerIteration}
\end{align}
for a certain sequence of functions $f_t:\reals\to\reals$. (As above, $f_t(\bx^t_{\npi})$ is the vector obtained by applying $f_t$ component-wise to $\bx_{\npi}^t$,
and we will use superscripts to indicate the iteration number.) 
While this approach has been studied in the literature \cite{journee2010generalized,yuan2013truncated,chen2018projected}, sharp results could 
only be established in a high SNR regime 
where $\lambda=\lambda(n)\to\infty$ at a sufficiently fast rate. Indeed, analyzing the recursion \eqref{eq:PowerIteration} is difficult because 
$\bA$ is correlated with $\bx_{\npi}^t$ (unlike in Eq.~\eqref{eq:FirstIteration}), and hence the simple calculation that yields Eq.~\eqref{eq:First-StateEvolution} is no longer permitted. 
This problem is compounded by the fact that we do not have an additional observation $\by$ independent of $\bA$, and instead we
plan to use a spectral initialization $\bx^0\propto \bphi_1$ that depends on the top eigenvector $\bphi_1$ of $\bA$. As a consequence, 
even the first step of the analysis (given in Eq.~\eqref{eq:First-StateEvolution}) is no longer obvious.

Let us emphasize that these difficulties are not a limitation of the proof technique. For $t>1$, the iterates \eqref{eq:PowerIteration}
are no longer Gaussian or centered around $\mu_t\bx_0$, for some scaling factor $\mu_t$. This can be easily verified by considering, for instance, 
the function $f_t(x) = x^2$ (we refer to  \cite{bayati2015universality} which carries out the calculation for such an example).

AMP solves the correlation problem in nonlinear power iteration by modifying Eq.~\eqref{eq:PowerIteration}: namely, we subtract from  
from $\bA  f_t(\bx^t_{\LAMP})$ the part that is correlated to the past iterates. Let $\SigmaAlg_t\equiv\sigma(\{\bx_{\LAMP}^0,\bx_{\LAMP}^2,\dots,\bx_{\LAMP}^t\})$ be the $\sigma$-algebra generated by iterates up to time $t$.  The correction that compensates for correlations is most conveniently explained by using the following Long AMP recursion,
introduced in \cite{berthier2017state}:
\begin{align}
\bx_{\LAMP}^{t+1} &= \bA\,  f_t(\bx^t_{\LAMP})-\E\{\bW  f_t(\bx^t_{\LAMP})|\SigmaAlg_t\} + \overline{\alpha}_{t}\bx_0+\sum_{s=0}^t\alpha_{t,s}  \bx^s_{\LAMP}\label{eq:LongAMP}\\
&= \sum_{s=0}^t\alpha_{t,s}  \bx^s_{\LAMP} +  \left( \overline{\alpha}_{t}  + \tfrac{\lambda}{n}\<\bx_0,f_t(\bx^t_{\LAMP})\> \right)\bx_0 + \bW\,  f_t(\bx^t_{\LAMP})-\E\{\bW  f_t(\bx^t_{\LAMP})|\SigmaAlg_t\}\, .
\end{align}
where $(\overline{\alpha}_{t})_{0\le t}$, $(\alpha_{t,s})_{0\le s\le t}$ are suitable sequences of deterministic numbers. 
In words, the new vector $\bx_{\LAMP}^{t+1}$ is a linear combination of iterates up to time $t$, plus 
a term $\bx_0 ( \overline{\alpha}_{t} +  \lambda\<\bx_0,f_t(\bx^t_{\LAMP})\>/n)$ that is essentially deterministic, plus a random term  $(\bW\,  f_t(\bx^t_{\LAMP})-\E\{\bW  f_t(\bx^t_{\LAMP})|\SigmaAlg_t\})$ that is uncorrelated with the past.
If the past iterates $(\bx_{\LAMP}^s)_{0\le s\le t}$ are jointly Gaussian, then the first two components (linear and deterministic)  are also jointly Gaussian with 
$(\bx_{\LAMP}^s)_{0\le s\le t}$.  Since the third (random) term is uncorrelated with the past iterates,  it can be shown by induction that the sequence $(\bx_{\LAMP}^t)_{0\le t\le T}$ is approximately Gaussian as $n\to\infty$, for any fixed $t$ (in the sense of finite dimensional marginals), and centered around $\bx_0$, see \cite{berthier2017state}.

At first sight, this might appear as a mathematical trick, with no practical implications. Indeed 
Eq.~\eqref{eq:LongAMP} does not provide an algorithm. We are  explicitly using the true signal $\bx_0$ which we are supposed to estimate,
and the expectation $\E\{\bW  f_t(\bx^t_{\LAMP})|\SigmaAlg_t\}$ is, at best, hard to compute. However it turns out that (for a certain choice of the numbers
$(\overline{\alpha}_{t})_{0\le t}$, $(\alpha_{t,s})_{0\le s\le t}$), the term subtracted from $\bA\,  f_t(\bx^t_{\LAMP})$ in Eq.~\eqref{eq:LongAMP}
can be approximated by $\sb_t f_{t-1}(\bx_{\LAMP}^{t-1})$ with a coefficient $\sb_t$ that can be computed easily. We will not try to justify this approximation here
(see, for instance, \cite{berthier2017state}). We will instead use the resulting algorithm (given below in Eq. \eqref{eq:AMPspecial0}) as the starting point of our analysis.

\subsection{General analysis}
\label{sec:GeneralRank1}

Motivated by the discussion in the previous section, we consider the following general algorithm for rank-one matrix estimation in the model \eqref{eq:SpikedDefSpecial}.
In order to estimate $\bx_0$, we compute the principal eigenvector of $\bA$, to be denoted by $\bphi_1$,
and apply the following iteration, with initialization $\bx^0= \sqrt{n}\bphi_1$:
\begin{align}
\bx^{t+1} &= \bA\, f_t(\bx^t) -\sb_t f_{t-1}(\bx^{t-1})\, ,\;\;\;\;\;\sb_t = \frac{1}{n}\sum_{i=1}^n f_{t}'(x^t_i) \, .\label{eq:AMPspecial0}
\end{align}
Here $f_t(\bx) = (f_t(x_1),\dots,f_t(x_n))^{\sT}$ is a separable function for each $t$.
As mentioned above, we can think of this iteration as an approximation of Eq.~\eqref{eq:LongAMP} where all the terms except the first one
have been estimated by $-\sb_t f_{t-1}(\bx^{t-1})$. The fact that this is an accurate estimate for large $n$ 
is far from obvious, but can be established by induction over $t$ \cite{berthier2017state}.

Note that  $\bx_0$ can be estimated from the data $\bA$  only up to an overall sign
(since $\bx_0$ and $-\bx_0$ give rise to the same matrix $\bA$ as per Eq.~(\ref{eq:SpikedDefSpecial})).
In order to resolve this ambiguity, we will assume, without loss of generality, that $\<\bx_0,\bphi_1\>\ge 0$. 
\begin{theorem}\label{thm:Rank1}
Consider the $k=1$ spiked  matrix model  of \myeqref{eq:SpikedDefSpecial},  with $\bx_0(n)\in\reals^n$ a sequence of vectors satisfying
assumptions $(i)$, $(ii)$ above, and $\lambda>1$ . 
Consider the AMP iteration in Eq.~\eqref{eq:AMPspecial0}  with initialization  $\bx^0 = \sqrt{n} \, \bphi_1$ (where, without loss of generality $\<\bx_0,\bphi_1\> \ge 0$). Assume $f_t:\reals\to\reals$
to be Lipschitz continuous for each $t\in \naturals$. 

Let  $(\mu_t, \sigma_t)_{t \geq 0}$ be defined via the recursion
\begin{align}
\mu_{t+1} & = \lambda \E[ X_0 f_t(\mu_t X_0+\sigma_t G)]\, ,  \label{eq:MutRecX0}\\
 \sigma_{t+1}^2& = \E [f_t(\mu_t X_0 + \sigma_t G )^2]\, , \label{eq:SigtRecX0}
\end{align}
where $X_0\sim \nu_{X_0}$ and $G \sim \normal (0,1)$ are independent, and 
the initial condition is $\mu_0= \sqrt{1-\lambda^{-2}}$, $\sigma_0 = 1/\lambda$. 

Then, for any function $\psi:\reals\times\reals\to\reals$  with $|\psi(\bx)-\psi(\by)|\le C(1+\|\bx\|_2+\|\by\|_2)\|\bx - \by\|_2$ for a universal constant $C>0$, the following holds almost surely for $t \geq 0$:
\begin{align}
\lim_{n \to \infty} \frac{1}{n} \sum_{i=1}^n \psi (x_{0,i},x^t_i) = \E \left\{ \psi( X_0, \mu_t X_0  +\sigma_t G) \right\} \ . \label{eq:rank1_SE}
\end{align}
\end{theorem}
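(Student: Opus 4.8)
The plan is to deduce Theorem~\ref{thm:Rank1} from the general state evolution result (Theorem~\ref{thm:Main}) by checking that the rank-one AMP iteration \eqref{eq:AMPspecial0} with spectral initialization $\bx^0=\sqrt{n}\,\bphi_1$ fits the hypotheses of that theorem, and then that the resulting abstract state evolution recursion specializes exactly to \eqref{eq:MutRecX0}--\eqref{eq:SigtRecX0} with the stated initial condition $\mu_0=\sqrt{1-\lambda^{-2}}$, $\sigma_0=1/\lambda$. The key structural input is Lemma~\ref{lemma:Distr}: conditionally on the outlier eigenvalue and eigenvector $(\,\cdot\,,\bphi_1)$, the matrix $\bA$ is (approximately) a rank-one deterministic piece $\tfrac{\lambda_1}{n}\bphi_1\bphi_1^\sT$ --- really the appropriate outlier value and eigenvector --- plus a projected fresh GOE matrix $\bPp\bA^{\mathrm{new}}\bPp$ independent of $\bphi_1$. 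This is precisely what decouples the initialization from the noise driving the iteration, so that the usual conditioning/induction argument of \cite{BM-MPCS-2011,berthier2017state} can be run.

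First I would recall the BBAP asymptotics for the model \eqref{eq:SpikedDefSpecial}: since $\lambda_1\to\lambda>1$, the top eigenvalue of $\bA$ converges a.s. to $\lambda+\lambda^{-1}$ and the overlap satisfies $\<\bv_1,\bphi_1\>^2\to 1-\lambda^{-2}$, with the sign fixed by the convention $\<\bx_0,\bphi_1\>\ge 0$. Hence the rescaled initialization $\bx^0=\sqrt{n}\,\bphi_1$ has, jointly with $\bx_0$, the limiting law: $\tfrac1n\sum_i\psi(x_{0,i},x^0_i)\to\E\{\psi(X_0,\mu_0 X_0+\sigma_0 Z)\}$ with $\mu_0=\sqrt{1-\lambda^{-2}}$ and $\sigma_0^2=1-(1-\lambda^{-2})$, because the component of $\bphi_1$ orthogonal to $\bv_1$ is asymptotically an isotropic Gaussian of the complementary norm; this gives $\sigma_0=1/\lambda$. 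Wait --- one must check $\mu_0^2+\sigma_0^2=1$, which holds, consistent with $\|\bx^0\|/\sqrt n\to 1$. This identifies the base case of the recursion and also verifies the ``correlated initialization'' hypothesis of Theorem~\ref{thm:Main}.

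Next, I would invoke Lemma~\ref{lemma:Distr} to replace $\bA$ by $\tfrac{\lambda_1}{n}\bphi_1\bphi_1^\sT+\bPp\bA^{\mathrm{new}}\bPp$ up to negligible error in operator norm (times the polynomially bounded iterates), so that the AMP recursion is driven by a GOE matrix independent of $\bx^0$. Then the standard state evolution machinery --- conditioning on $\SigmaAlg_t$, isolating the Onsager-corrected term, and an induction on $t$ showing joint Gaussianity of $(\bx^1,\dots,\bx^t)$ around multiples of $\bx_0$ --- applies, and the pseudo-Lipschitz test function class (the condition $|\psi(\bx)-\psi(\by)|\le C(1+\|\bx\|+\|\by\|)\|\bx-\by\|$) is exactly the one for which such limits are known to hold and pass through the approximation. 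Tracking the bookkeeping, the Onsager coefficient $\sb_t=\tfrac1n\sum_i f_t'(x_i^t)$ cancels the conditional-expectation term, the deterministic part contributes $\mu_{t+1}=\lambda\,\E[X_0 f_t(\mu_t X_0+\sigma_t G)]$ (the $\lambda$ coming from the rank-one coefficient and $\|\bx_0\|^2/n\to1$), and the noise part contributes $\sigma_{t+1}^2=\E[f_t(\mu_t X_0+\sigma_t G)^2]$; the projection $\bPp$ only perturbs this by $O(1/n)$ since $\bphi_1$ is a single direction. From a.s. convergence along each fixed $t$ (with the test function $\psi$ applied), \eqref{eq:rank1_SE} follows.

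The main obstacle is the first technical step: justifying that the spectral initialization can be treated as (asymptotically) independent of the noise that drives the iteration, i.e., making rigorous the conditional representation of Lemma~\ref{lemma:Distr} and controlling the error it introduces through $t$ rounds of a nonlinear recursion (the error accumulates and interacts with the Lipschitz constants of $f_0,\dots,f_{t-1}$ and with the polynomial growth allowed in $\psi$). Everything after that --- identifying $(\mu_0,\sigma_0)$ from BBAP, and collapsing the general state evolution of Theorem~\ref{thm:Main} to the two scalar recursions \eqref{eq:MutRecX0}--\eqref{eq:SigtRecX0} --- is essentially bookkeeping, since in rank one the state evolution ``matrices'' are $1\times 1$ and the only nontrivial coupling is the fixed deterministic direction $\bx_0$.
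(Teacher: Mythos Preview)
Your proposal reverses the logical direction relative to the paper. In Appendix~\ref{sec:ProofMain} the paper proves Theorem~\ref{thm:Rank1} \emph{directly}, and then uses it to obtain the rank-one case of Theorem~\ref{thm:Main} (see the Remark at the start of that appendix). So ``deduce Theorem~\ref{thm:Rank1} from Theorem~\ref{thm:Main}'' would be circular in the paper's organization. You could in principle first establish the general-$k$ Theorem~\ref{thm:Main} (Appendix~\ref{sec:ProofMainGeneral}) and then specialize, but that is the opposite of the paper's expository flow and not what your sketch actually does: most of your sketch re-derives the mechanism rather than invoking the theorem as a black box.

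On the mechanism itself, your high-level outline (conditioning lemma, then state evolution) matches the paper, but you gloss over the technical heart and make one incorrect claim. After Lemma~\ref{lemma:Distr} one works with $\tbA = z_1\bphi_1\bphi_1^{\sT} + \bPp(\lambda\bv\bv^{\sT}+\tbW)\bPp$; note that the approximation is in total variation of the conditional laws, not in operator norm, and the projected part retains the spike. The resulting iteration is \emph{not} in the standard AMP form of \cite{BM-MPCS-2011,javanmard2013state}: the term $z_1\<\bphi_1,f_t(\bx^t)\>\bphi_1$ and the projector $\bPp$ contribute $O(1)$ per coordinate, not $O(1/n)$ as you assert (since $\<\bphi_1,f_t(\bx^t)\>=O(\sqrt{n})$ and $\sqrt{n}\,\varphi_{1,i}=O(1)$). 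The paper bridges this gap by (i) introducing an auxiliary standard-form iteration $(\bfs^t)$ with nonlinearity $\tilg$ obtained from $f_t$ by subtracting a deterministic multiple of $\sqrt{n}\bphi_1$; (ii) tracking a \emph{three}-parameter state evolution $(\alpha_t,\beta_t,\tau_t)$ for the decomposition along $\bx_0$, $\sqrt{n}\bphi_1$, and the fresh Gaussian; (iii) proving by induction that $\|\bx^t-(\bfs^t+\alpha_t\bx_0+\beta_t\sqrt{n}\bphi_1)\|_2^2/n\to 0$; and (iv) collapsing $(\alpha_t,\beta_t,\tau_t)$ to $(\mu_t,\sigma_t)$ via a Stein's-lemma identity, \myeqref{eq:betat1_prop}, giving $\mu_t=\alpha_t+\beta_t\sqrt{1-\lambda^{-2}}$ and $\sigma_t^2=\beta_t^2\lambda^{-2}+\tau_t^2$. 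Steps (i)--(iii) are where the real work is; your sketch jumps from ``$\tbW$ is independent of $\bphi_1$'' to ``standard machinery applies'', which is exactly the gap the auxiliary construction is there to fill.
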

The proof of this theorem is presented in Appendix \ref{sec:ProofMain}.

One peculiarity of our approach is that we do not commit to a specific choice of the nonlinearities $f_t$, and instead
develop a sharp asymptotic characterization for any---sufficiently regular---nonlinearity. A poor choice of the functions $f_t$ might result
in large estimation error, and yet Theorem \ref{thm:Rank1} will continue to hold. 

On the other hand, the state evolution characterization can be used to design optimal nonlinearities in a principled way. 
Given Eqs.~\eqref{eq:MutRecX0} and \eqref{eq:SigtRecX0}, the general principle is quite transparent. 
The optimal nonlinearity is defined in terms of a scalar denoising problem. For $X_0\sim \nu_{X_0}$ and $G\sim\normal(0,1)$ independent, consider the problem of
estimating $X_0$ from the noisy observation $Y = \mu_t\, X_0+\sigma_tG$. 
At step $t$, $f_t$ should be
constructed as to maximize the ratio  $\E[ X_0 f_t(\mu_t X_0+\sigma_t G)]/\E [f_t(\mu_t X_0 + \sigma_t G )^2]^{1/2}$. Two specific instantiations of this principle 
are given in Sections \ref{sec:SparseSpike} and \ref{sec:BayesAMP}.

\begin{remark}
The state evolution recursion of Eqs.~\eqref{eq:MutRecX0}, \eqref{eq:SigtRecX0} in Theorem \ref{thm:Rank1} was already derived by Fletcher and Rangan 
in \cite{rangan2018iterative}. However, as explained in \cite[Section 5.3]{rangan2018iterative},  their results only apply to cases in which AMP can be initialized in a way
that: $(i)$~has positive correlation with the spike $\bx_0$ (and this correlation does not vanish as $n\to\infty$); $(ii)$~is independent of $\bA$.

 Theorem \ref{thm:Rank1} analyzes an algorithm which does not require such an initialization, and hence applies more broadly.
\end{remark}

\subsection{The case of a sparse spike}
\label{sec:SparseSpike}

In some applications we might know that the spike $\bx_0$ is sparse. We consider  a simple model in which $\bx_0$  is known to  have 
at most $n\eps$ nonzero entries for some $\eps\in (0,1)$.

Because of its importance, the use of nonlinear power iteration methods for this problem has been studied by several authors in the past
 \cite{journee2010generalized,yuan2013truncated,ma2013sparse}. However, none of these works obtains precise asymptotics in the moderate SNR regime 
(i.e., for $\lambda$, $\eps$ of order one). In contrast, sharp results can be obtained by applying Theorem \ref{thm:Rank1}.
Here we will limit ourselves to taking the first steps, deferring a more complete analysis to future work.
We focus  on the case of symmetric matrices for simplicity, cf. Eq.~\eqref{eq:SpikedDefSpecial}, but a generalization to rectangular matrices is straightforward
along the lines of Section \ref{sec:ExampleRectangular}.

The sparsity assumption implies that the random variable $X_0$ entering the state evolution recursion in \myeqref{eq:MutRecX0} should satisfy
$\nu_{X_0}(\{0\})\ge 1-\eps$.  Classical theory for the sparse sequence model \cite{DJ94a,DJ98} suggests taking $f_t$ to be the soft thresholding denoiser
$f_t(x) = \eta(x;\tau_t)$, for $(\tau_t)_{t\ge 0}$ a well-chosen sequence of thresholds. The resulting algorithm reads
\begin{align}
\bx^{t+1} &= \bA\, \hbx^t-\sb_t \hbx^{t-1}\, ,\;\;\;\; \hbx^t  = \eta(\bx^t;\tau_t)\, ,\label{eq:SparseAlg}\\
&\;\;\;\;\;\sb_t = \frac{1}{n}\|\hbx^t\|_0\, , \nonumber
\end{align}
where $\|\bv\|_0$ is the number of non-zero entries of vector $\bv$. The initialization is, as before $\bx^0 = \sqrt{n}\bphi_1$.
The algorithm alternates soft thresholding, to produce sparse estimates, and power iteration, with the crucial correction term $-\sb_t \hbx^{t-1}$.

Theorem  \ref{thm:Rank1} can be directly applied to characterize the performance of this algorithm for any fixed distribution
$\nu_{X_0}$ of the entries of $\bx_0$. For instance, we obtain the following exact prediction for the asymptotic correlation between estimates $\hbx^t$ and the
signal $\bx_0$:
\begin{align}
\lim_{n\to\infty}\frac{|\<\hbx^t(\bA),\bx_0\>|}{\|\hbx^t(\bA)\|_2\|\bx_0\|_2} = \frac{\mu_{t+1}}{\lambda\sigma_{t+1}}\, .
\end{align}
For a given distribution $\nu_{X_0}$, it is easy to compute $\mu_t,\sigma_t$ using Eq.~\eqref{eq:MutRecX0} with $f_t(\bx^t) = \eta(\bx^t;  \tau_t)$.

  We can also use Theorem \ref{thm:Rank1}  to characterize the minimax behavior over $n\eps$-sparse vectors.
We sketch the argument next: similar arguments were developed  in \cite{DMM09,donoho2013accurate} in the context of compressed sensing.
The basic idea is to lower bound the singnal-to-noise ratio (SNR) $\mu^2_{t+1}/\sigma^2_{t+1}$ iteratively as a function of the SNR at the previous iteration,
over the set of probability distributions $\cF_{\eps} = \{\nu_{X_0}: \; \nu_{X_0}(\{0\})\ge 1-\eps, \,  \int x^2\nu_{X_0}(\de x) = 1\}$. 
As shown in Appendix \ref{sec:Reduction},  it is sufficient to consider the extremal points of the set $\cF_{\eps}$,
which are given  by the three-points priors 
\beq
\pi_{p,a_1,a_2} \equiv (1-\eps)\delta_0+\eps p\delta_{a_1}+\eps(1-p)\delta_{a_2},
\quad pa_1^2+(1-p)a_2^2 =1,  \quad p\in [0,1]. 
\eeq
We then define the following SNR maps
\begin{align}
S_* (\gamma,\theta;\nu_{X_0})& \equiv \frac{[\E\{X_0\eta(\sqrt{\gamma}X_0+G;\theta)\}]^2}{\E\{\eta(\sqrt{\gamma}X_0+G;\theta)^2\}}\, , \label{eq:Sstar_def}\\
S(\gamma;\theta)&\equiv \inf\Big\{S_* (\gamma,\theta;\pi_{p,a_1,a_2}) :\;\; pa_1^2+(1-p)a_2^2 =1, \,  p\in[0,1]\Big\}\, . \label{eq:Sdef}
\end{align}
The interpretation of these quantities is as follows: $\gamma\mapsto S_* (\gamma,\theta;\nu_{X_0})$ describes the evolution of the
signal-to-noise ratio after one step of AMP, when the signal distribution is $\nu_{X_0}$; the map
$\gamma\mapsto S(\gamma;\theta)$ is  the same evolution, for the least favorable prior, which can be taken of the form $\pi_{p,a_1,a_2}$.

Notice that the function $S_* (\gamma,\theta;\pi_{p,a_1,a_2})$ can be evaluated by performing a small number (six, to be precise) of Gaussian integrals.
The function $S$ is defined by a two-dimensional optimization problem, which can be computed numerically quite efficiently.

We  define the sequences $(\ugamma_t)_{t\ge 0}$, $(\theta_t)_{t\ge 0}$ by setting $\ugamma_0 = \lambda^2-1$, and then recursively
\begin{align}
\ugamma_{t+1} = \lambda^2S(\ugamma_t;\theta_t)\, ,\;\;\; \theta_t = \arg\max_{\theta\in [0,\infty]} S(\ugamma_t;\theta)\, .
\label{eq:ugam_rec}
\end{align}

The next proposition provides the desired lower bound for the signal-to-noise ratio over the class of sparse vectors.
\begin{proposition}
Assume the setting of Theorem \ref{thm:Rank1}, and furthermore $\|\bx_0(n)\|_0\le n\eps$. 
Let $(\hbx^t = \hbx^t(\bA) )_{t\ge 0}$ be the sequence of estimates produced by the AMP
iteration \myeqref{eq:SparseAlg} with initialization $\bx^0 = \sqrt{n} \bphi_1$, and  thresholds $\tau_t=\theta_t\hat{\sigma}_t$ where
 $\hsigma_t$ is a estimator of $\sigma_t$ from data $\bx^0,\dots,\bx^t$ such that $ \hsigma_t \stackrel{\text{a.s.}}{\to} \sigma_t$. (For instance, take $\hsigma_t^2  \equiv \big\|f_{t-1}(\bx^{t-1})\big\|_2^2/n$  for $t \geq 1$. For $t=0$, take $\hsigma_0^2 \equiv 1/\hat{\lambda}$, where $\hat{\lambda}$ is given in \myeqref{eq:lambda_hat}.)

Then for any fixed $t\ge 0$ we have, almost surely,
\begin{align}
\lim_{n\to\infty} \frac{|\<\hbx^t(\bA),\bx_0\>|}{\|\hbx^t(\bA)\|_2\|\bx_0\|_2} = \frac{\mu_{t+1}}{\lambda\sigma_{t+1}}\ge \frac{\sqrt{\ugamma_{t+1}}}{\lambda}\, .
\label{eq:sparse_spike_lb}
\end{align}
Here, $(\mu_{t+1}, \sigma_{t+1})$ are recursively defined as follows, starting from $\mu_0=\sqrt{1-\lambda^{-2}}$ and $\sigma_0^2= \lambda^{-2}$:
\begin{align}
\mu_{t+1} & = \lambda \E\{ X_0 \eta(\mu_t X_0+\sigma_t G; \, \theta_t \sigma_t)\} \, ,  \qquad 
 \sigma_{t+1}^2 = \E \{ \eta(\mu_t X_0 + \sigma_t G; \,  \theta_t \sigma_t )^2\} \,. \label{eq:SE_sparse_spike}
\end{align}
\label{prop:sparse_spike}
\end{proposition}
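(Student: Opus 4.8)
The plan is to prove the two assertions of the statement separately, by a joint induction on $t$: the exact identity $\lim_{n\to\infty}|\<\hbx^t,\bx_0\>|/(\|\hbx^t\|_2\|\bx_0\|_2)=\mu_{t+1}/(\lambda\sigma_{t+1})$, which I would deduce from Theorem~\ref{thm:Rank1}, and the lower bound $\mu_{t+1}/(\lambda\sigma_{t+1})\ge\sqrt{\ugamma_{t+1}}/\lambda$, which amounts to an iterative control of the signal-to-noise ratio $\gamma_t:=\mu_t^2/\sigma_t^2$ against the least-favourable recursion \eqref{eq:ugam_rec}. Along the way I would also record that $\mu_t\ge0$ (so the positive square root and the absolute value in \eqref{eq:sparse_spike_lb} are consistent) and that $\sigma_t>0$ (so $\gamma_t$ is well defined).

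\emph{The identity, and the data-dependent threshold.} Because the threshold $\tau_t=\theta_t\hsigma_t$ in \eqref{eq:SparseAlg} depends on the data, that recursion is not literally of the form \eqref{eq:AMPspecial0} with a fixed Lipschitz nonlinearity. I would first analyse the ``oracle'' recursion obtained by replacing $\tau_t$ with the deterministic number $\theta_t\sigma_t$ (with $\sigma_t$ from \eqref{eq:SE_sparse_spike}): this \emph{is} of the form \eqref{eq:AMPspecial0} with the $1$-Lipschitz nonlinearity $f_t(x)=\eta(x;\theta_t\sigma_t)$, so Theorem~\ref{thm:Rank1} applies; evaluating \eqref{eq:rank1_SE} at the order-$2$ pseudo-Lipschitz test functions $\psi(a,b)=a\,\eta(b;\theta_t\sigma_t)$ and $\psi(a,b)=\eta(b;\theta_t\sigma_t)^2$ (whose hypotheses follow from $|\eta(x;\tau)|\le|x|$ and the $1$-Lipschitz property of $\eta$), together with assumption~$(i)$ for $\|\bx_0\|_2^2/n\to1$, yields $\tfrac1n\<\eta(\tbx^t;\theta_t\sigma_t),\bx_0\>\to\mu_{t+1}/\lambda$ and $\tfrac1n\|\eta(\tbx^t;\theta_t\sigma_t)\|_2^2\to\sigma_{t+1}^2$, hence the claimed limit, for the oracle trajectory $(\tbx^t)$. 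To transfer this to the trajectory $(\bx^t)$ actually produced by \eqref{eq:SparseAlg}, I would show $\|\bx^t-\tbx^t\|_2/\sqrt n\to0$ almost surely by induction, using that $\|\bW\|_\op\to2$ a.s.\ (hence $\sup_n\|\bA\|_\op<\infty$ a.s.), the joint bound $|\eta(x;\tau)-\eta(x';\tau')|\le|x-x'|+|\tau-\tau'|$, and the consistency $\hsigma_t\to\sigma_t$ a.s.\ --- which holds at $t=0$ because $\hsigma_0^2=1/\hlambda\to1/\lambda=\sigma_0^2$ (using $\hlambda\to\lambda$, cf.\ \myeqref{eq:lambda_hat}) and at $t\ge1$ because $\hsigma_t^2=\|\eta(\bx^{t-1};\tau_{t-1})\|_2^2/n\to\sigma_t^2$ by the induction hypothesis. (The Onsager coefficient $\sb_t=\|\hbx^t\|_0/n$ is not Lipschitz, but it still concentrates since the limiting law $\mu_t X_0+\sigma_t G$ has a density.)

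\emph{The SNR lower bound.} The homogeneity $\eta(\sigma z;\sigma\theta)=\sigma\,\eta(z;\theta)$ turns \eqref{eq:SE_sparse_spike} into $\gamma_{t+1}=\lambda^2\,S_*(\gamma_t,\theta_t;\nu_{X_0})$, with $S_*$ as in \eqref{eq:Sstar_def} (and $\mu_{t+1}\ge0$ since $x\mapsto\E[\eta(\mu_t x+\sigma_t G;\tau_t)]$ is odd and non-decreasing, so $x\,\E[\eta(\mu_t x+\sigma_t G;\tau_t)]\ge0$). The base case is $\gamma_0=(1-\lambda^{-2})/\lambda^{-2}=\lambda^2-1=\ugamma_0$. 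Assuming $\gamma_t\ge\ugamma_t$, I would chain
\[ \gamma_{t+1}=\lambda^2 S_*(\gamma_t,\theta_t;\nu_{X_0})\ \ge\ \lambda^2 S(\gamma_t;\theta_t)\ \ge\ \lambda^2 S(\ugamma_t;\theta_t)\ =\ \lambda^2\max_{\theta}S(\ugamma_t;\theta)\ =\ \ugamma_{t+1}, \]
where the first inequality uses $\nu_{X_0}\in\cF_\eps$ together with the reduction of $\inf_{\nu\in\cF_\eps}S_*(\gamma,\theta;\nu)$ to the three-point priors $\pi_{p,a_1,a_2}$ established in Appendix~\ref{sec:Reduction}, the second uses monotonicity of $\gamma\mapsto S(\gamma;\theta)$ for fixed $\theta$ (which I would derive from the minimax-denoising structure, as in \cite{DMM09,donoho2013accurate}), and the last equality is the definition of $\theta_t$ in \eqref{eq:ugam_rec}. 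Then $\mu_{t+1}/(\lambda\sigma_{t+1})=\sqrt{\gamma_{t+1}}/\lambda\ge\sqrt{\ugamma_{t+1}}/\lambda$, closing the induction and giving \eqref{eq:sparse_spike_lb}.

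\emph{Main obstacle.} Given Theorem~\ref{thm:Rank1}, neither half is deep; I expect the bulk of the effort to go into the ``transfer'' step --- rigorously licensing the data-dependent thresholds $\tau_t=\theta_t\hsigma_t$ through a continuity/perturbation argument interleaved with the state-evolution induction (including the base-case consistency $\hlambda\to\lambda$, which relies on the outlier-eigenvalue behaviour of the spiked model) --- together with the analytic verification of the monotonicity of $\gamma\mapsto S(\gamma;\theta)$; the reduction to three-point priors is quoted from Appendix~\ref{sec:Reduction}.
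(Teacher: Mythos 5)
Your proposal is correct and follows essentially the same strategy as the paper's proof in Appendix~\ref{app:sparse_spike}: both halves (the oracle AMP iteration with deterministic thresholds $\theta_t\sigma_t$ plus an $\ell^2$-transfer argument $\|\bx^t-\tbx^t\|_2/\sqrt n\to0$, and the SNR lower bound via homogeneity of soft thresholding, the three-point reduction of Appendix~\ref{sec:Reduction}, and a monotone-in-$\gamma$ argument) mirror the paper.

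One small difference worth noting: in the inductive chain for the SNR bound you write $\gamma_{t+1}=\lambda^2 S_*(\gamma_t,\theta_t;\nu_{X_0})\ge\lambda^2 S(\gamma_t;\theta_t)\ge\lambda^2 S(\ugamma_t;\theta_t)$, i.e.\ you pass to the infimum over priors first and then invoke monotonicity of the envelope $\gamma\mapsto S(\gamma;\theta)$. The paper instead chains $\gamma_{t+1}=\lambda^2 S_*(\gamma_t,\theta_t;\nu_{X_0})\ge\lambda^2 S_*(\ugamma_t,\theta_t;\nu_{X_0})\ge\lambda^2\inf_\pi S_*(\ugamma_t,\theta_t;\pi)=\ugamma_{t+1}$, using monotonicity of $\gamma\mapsto S_*(\gamma,\theta;\nu_{X_0})$ for the single prior $\nu_{X_0}$ (proved directly via a derivative computation and Cauchy--Schwarz), and only then takes the infimum. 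Your ordering is logically fine because the paper's derivative argument holds for any prior and hence passes to the infimum, but it silently requires that uniform statement rather than the single-prior one, and you only gesture at the monotonicity proof (``from the minimax-denoising structure''); the paper supplies the explicit computation. Everything else --- the base case $\gamma_0=\lambda^2-1=\ugamma_0$, the observation $\mu_{t+1}\ge0$, the treatment of the data-dependent threshold via consistency of $\hsigma_t$ and the $1$-Lipschitz joint bound on $\eta$, and the concentration of $\sb_t$ (the paper invokes \cite[Lemma~5]{BM-MPCS-2011}) --- matches the paper's argument.
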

%
%
The proof of Proposition \ref{prop:sparse_spike} is given in Appendix \ref{app:sparse_spike}. 
The proposition reduces the analysis of algorithm \eqref{eq:SparseAlg} to the study of a one-dimensional recursion $\ugamma_{t+1} = \lambda^2 S(\ugamma_t;\theta_t)$, which is much simpler. We defer this analysis to future work. We emphasize that the AMP algorithm in \myeqref{eq:SparseAlg} with thresholds $\tau_t=\theta_t\hat{\sigma}_t$ does not require knowledge of either the sparsity  level $\eps$ or the SNR parameter $\lambda$---these quantities are only required to compute the sequence of lower bounds $(\ugamma_t)_{t\ge 0}$.

\subsection{Bayes-optimal estimation}
\label{sec:BayesAMP}

As a second application of  Theorem \ref{thm:Rank1}, we consider the case in which 
the asymptotic empirical distribution $\nu_{X_0}$ of the entries of $\bx_0$ is known. 
This case is of special interest because  it provides a lower bound on the error achieved by any AMP algorithm.

To simplify some of the formulas below, we assume here a slightly different normalization for the initialization, but otherwise we use the same algorithm as in the general case,
namely
\begin{align}
\bx^0 & = \ \sqrt{n\lambda^2 (\lambda^2-1)}\, \bphi_1\, ,\label{eq:AMP_Bayes_In}\\
\bx^{t+1} &= \bA\, f_t(\bx^t) -\sb_t f_{t-1}(\bx^{t-1})\, ,\;\;\;\;\;\sb_t = \frac{1}{n}\sum_{i=1}^n f_{t}'(x^t_i) \, .\label{eq:AMPspecial000}
\end{align}

In order to define the optimal nonlinearity, consider again the scalar denoising problem of estimating $X_0$ from the noisy observation $Y = \sqrt{\gamma}\, X_0+G$
(note that $X_0, G\in\reals$ are scalar random variables).
The minimum mean square error is
\begin{align}
\mmse(\gamma) = \E\big\{\big[X_0-\E(X_0|\sqrt{\gamma}\, X_0+G)\big]^2\big\}\, .
\end{align}
With these notations, we can introduce the state evolution recursion 
\begin{align}
\gamma_0 & = \lambda^2-1\, ,\label{eq:SEspecial_1}\\
\gamma_{t+1}&= \lambda^2\big\{1-\mmse(\gamma_t)\big\}\, . \label{eq:SEspecial_2}
\end{align}
These describe the evolution of the effective signal-to-noise ratio along the algorithm execution.

The optimal non-linearity $f_t(\,\cdot\,)$ after $t$ iterations is the minimum mean square
error denoiser for signal-to-noise ratio $\gamma_t$:
\begin{align}
f_t(y) &\equiv  \lambda\, F( y;\gamma_t) \,,\label{eq:OptFspecial}\\
F(y;\gamma) & \equiv \E\{ X_0 \mid \gamma\, X_0+\sqrt{\gamma} \,G=y \} .\label{eq:Fdef}
\end{align}
After $t$ iterations, we produce an estimate of $\bx_0$ by computing $\hbx^t(\bA)\equiv f_t(\bx^t)/\lambda = F(\bx^t; \gamma_t)$. 
We will refer to this choice as to Bayes AMP. 
\begin{remark}
Implementing the Bayes-AMP algorithm requires to approximate the function $F(y;\gamma)$
of Eq.~\eqref{eq:Fdef}. This amounts to a one-dimensional integral and can be done very accurately by standard quadrature methods: a simple approach
that works well in practice is to replace the measure $\nu_{X_0}$ by a combination of finitely many point masses. Analogously, the function $\mmse(\gamma)$
(which is needed to compute the sequence $\gamma_t$), can be computed by the same method\footnote{AMP noes not  require high
accuracy in the approximations of the nonlinear functions $f_t$. As shown several times in the appendices (see,
e.g., Appendix \eqref{sec:ProofMain}) the algorithm is stable with respect to perturbations of $f_t$.}.
\end{remark}

We are now in position to state the outcome of our analysis for Bayes AMP, whose proof is deferred to Appendix \ref{app:ProofRank1}.
\begin{theorem}
\label{thm:Special}
Consider the spiked matrix model (\ref{eq:SpikedDefSpecial}), with $\bx_0(n)\in\reals^n$ a sequence of vectors satisfying
assumptions $(i)$, $(ii)$ above, and $\lambda>1$. Let $(\bx^t)_{t\ge 0}$ be the sequence of iterates generated by the Bayes AMP algorithm
defined in Eqs.~(\ref{eq:AMPspecial0}), with initialization (\ref{eq:AMP_Bayes_In}), and optimal choice of the nonlinearity defined by
Eq.~(\ref{eq:OptFspecial}). Assume $F(\,\cdot\, ;\gamma):\reals\to\reals$ to be Lipschitz continuous for any $\gamma\in (0,\lambda^2]$.
Finally, define state evolution by Eqs. (\ref{eq:SEspecial_1}), (\ref{eq:SEspecial_2}).

Then, for any function $\psi:\reals\times\reals\to\reals$  with $|\psi(\bx)-\psi(\by)|\le C(1+\|\bx\|_2+\|\by\|_2)\|\bx - \by\|_2$ for a universal constant $C>0$, the following holds almost surely for $t \geq 0$:
\begin{align}
\lim_{n\to\infty}\frac{1}{n}\sum_{i=1}^n\psi(x_{0,i},x_i^t) = \E\big\{\psi\big(X_0,\gamma_t X_0+\gamma_t^{1/2} Z\big)\big\}\, ,  
\label{eq:special_AMP_conv}
\end{align}
where expectation is taken with respect to $X_0\sim \nu_{X_0}$ and $Z\sim\normal(0,1)$ mutually independent, and we assumed without loss
of generality that $\<\bphi_1,\bx_0\>\ge 0$.

In particular, let $\gamma_{\sALG}(\lambda)$ denote the smallest strictly positive solution of the  fixed point equation
$\gamma = \lambda^2[1-\mmse(\gamma)]$. Then the AMP estimate $\hbx^t(\bA)= f_t(\bx^t)/\lambda$ achieves 
\begin{align} 
\lim_{t\to\infty}\lim_{n\to\infty}\frac{|\<\hbx^t(\bA),\bx_0\>|}{\|\hbx^t(\bA)\|_2\|\bx_0\|_2} &= \frac{\sqrt{\gamma_{\sALG}(\lambda)}}{\lambda}\, ,\\
\lim_{t\to\infty}\lim_{n\to\infty}\frac{1}{n}\min_{s\in \{+1,-1\}} \| s \hbx^t(\bA)-\bx_0\|_2^2&= 1- \frac{\gamma_{\sALG}(\lambda)}{\lambda^2}\, .
\end{align}
Finally, the algorithm has total complexity $O(n^2 \log n)$.
\end{theorem}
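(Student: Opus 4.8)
\textbf{Reduction to Theorem~\ref{thm:Rank1} and the initialization rescaling.} The plan is to derive all four assertions from Theorem~\ref{thm:Rank1} together with elementary analysis of the scalar recursion \eqref{eq:SEspecial_1}--\eqref{eq:SEspecial_2}. The only difference from the setup of Theorem~\ref{thm:Rank1} is the normalization of $\bx^0$. Write $c:=\lambda\sqrt{\lambda^2-1}$, so that the Bayes-AMP initialization \eqref{eq:AMP_Bayes_In} equals $\bx^0=c\sqrt n\,\bphi_1$. I would run the iteration of Theorem~\ref{thm:Rank1} with initialization $\sqrt n\,\bphi_1$ and nonlinearities $(g_0,f_1,f_2,\dots)$, where $g_0(y):=f_0(cy)=\lambda F(cy;\gamma_0)$ and $f_t=\lambda F(\cdot;\gamma_t)$ for $t\ge1$; each is Lipschitz since $F(\cdot;\gamma)$ is Lipschitz for $\gamma\in(0,\lambda^2]$ (and $g_0$ is a composition with a linear map). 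Because the Onsager term vanishes at $t=0$, the iterates of this modified recursion coincide with the Bayes-AMP iterates for all $t\ge1$, while the $t=0$ iterate is rescaled by $c$. With the initial condition $(\mu_0,\sigma_0)=(\sqrt{1-\lambda^{-2}},\,1/\lambda)$ of Theorem~\ref{thm:Rank1} one checks $c\mu_0=\lambda^2-1=\gamma_0$ and $c\sigma_0=\sqrt{\gamma_0}$, so the effective argument of $g_0$ has the law of the natural observation $Y_0:=\gamma_0 X_0+\sqrt{\gamma_0}\,G$ of $X_0$ at signal-to-noise ratio $\gamma_0$.

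\textbf{Collapse of state evolution to the scalar recursion.} I claim that with these nonlinearities the two-parameter recursion \eqref{eq:MutRecX0}--\eqref{eq:SigtRecX0} satisfies $\mu_t=\sigma_t^2=\gamma_t$ for all $t\ge1$. Suppose $\mu_t=\gamma_t$ and $\sigma_t^2=\gamma_t$; then $Y_t:=\mu_tX_0+\sigma_tG$ has the law of the natural observation of $X_0$ at SNR $\gamma_t$, hence $F(Y_t;\gamma_t)=\E\{X_0\mid Y_t\}$, and the tower property together with $\E\{X_0^2\}=1$ give $\E\{X_0F(Y_t;\gamma_t)\}=\E\{F(Y_t;\gamma_t)^2\}=1-\mmse(\gamma_t)$. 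Substituting $f_t=\lambda F(\cdot;\gamma_t)$ into \eqref{eq:MutRecX0}--\eqref{eq:SigtRecX0} yields $\mu_{t+1}=\sigma_{t+1}^2=\lambda^2\{1-\mmse(\gamma_t)\}=\gamma_{t+1}$, closing the induction; the base case $t=1$ uses the previous paragraph, since with $g_0$ and effective input $Y_0$ one gets $\mu_1=\lambda^2\E\{X_0F(Y_0;\gamma_0)\}=\lambda^2\{1-\mmse(\gamma_0)\}=\gamma_1$ and likewise $\sigma_1^2=\gamma_1$. Then \eqref{eq:special_AMP_conv} is exactly \eqref{eq:rank1_SE} with $(\mu_t,\sigma_t)=(\gamma_t,\sqrt{\gamma_t})$ for $t\ge1$; the $t=0$ case follows by applying the $t=0$ instance of \eqref{eq:rank1_SE} to the test function $(a,b)\mapsto\psi(a,cb)$, which is again pseudo-Lipschitz of order two.

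\textbf{Correlation, MSE, and the $t\to\infty$ limit.} Applying \eqref{eq:special_AMP_conv} to the pseudo-Lipschitz (order two) test functions $(a,b)\mapsto a\,F(b;\gamma_t)$, $(a,b)\mapsto F(b;\gamma_t)^2$, and $(a,b)\mapsto a^2$ — pseudo-Lipschitz because $F(\cdot;\gamma_t)$ is Lipschitz — together with assumption $(i)$, gives almost surely $n^{-1}\<\hbx^t,\bx_0\>\to1-\mmse(\gamma_t)=\gamma_{t+1}/\lambda^2>0$, $n^{-1}\|\hbx^t\|_2^2\to\gamma_{t+1}/\lambda^2$, and $n^{-1}\|\bx_0\|_2^2\to1$. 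Expanding $\min_{s\in\{\pm1\}}\|s\hbx^t-\bx_0\|_2^2=\|\hbx^t\|_2^2+\|\bx_0\|_2^2-2|\<\hbx^t,\bx_0\>|$ (a minimum over a two-element set, hence commuting with the limit) gives, for each fixed $t$, correlation $\to\sqrt{\gamma_{t+1}}/\lambda$ and normalized MSE $\to1-\gamma_{t+1}/\lambda^2$. It remains to show $\gamma_t\to\gamma_{\sALG}(\lambda)$. Since $\mmse$ is continuous and non-increasing, $\Phi(\gamma):=\lambda^2\{1-\mmse(\gamma)\}$ is continuous and non-decreasing, and the linear-MMSE bound $\mmse(\gamma)\le1/(1+\gamma)$ gives $\Phi(\gamma)\ge\lambda^2\gamma/(1+\gamma)>\gamma$ for $0<\gamma<\lambda^2-1=\gamma_0$; thus $\Phi$ has no strictly positive fixed point below $\gamma_0$ (so $\gamma_{\sALG}(\lambda)\ge\gamma_0$) and $\Phi(\gamma_0)\ge\gamma_0$. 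Monotonicity then makes $(\gamma_t)_{t\ge0}$ non-decreasing and bounded by $\lambda^2$, hence convergent to a fixed point $\gamma_\infty\in[\gamma_0,\lambda^2]$, while monotone comparison with the fixed point $\gamma_{\sALG}(\lambda)\ge\gamma_0$ gives $\gamma_t\le\gamma_{\sALG}(\lambda)$ for all $t$. Therefore $\gamma_\infty=\gamma_{\sALG}(\lambda)$, and continuity of $\gamma\mapsto(\sqrt\gamma/\lambda,\,1-\gamma/\lambda^2)$ yields the claimed limits.

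\textbf{Complexity, and where the work is.} By the BBAP transition the outlier eigenvalue sits at $\lambda+\lambda^{-1}>2$, a constant gap from the semicircle edge, so $\bphi_1$ can be computed to the (polynomially small) accuracy required by Theorem~\ref{thm:Rank1} with $O(\log n)$ matrix--vector multiplications (power iteration or Lanczos), i.e. $O(n^2\log n)$ operations; each AMP step \eqref{eq:AMPspecial000} costs one matrix--vector product plus $O(n)$ work for the separable denoiser and the Onsager coefficient, so a constant (or slowly growing) number of iterations contributes only $O(n^2\log n)$, giving the stated total. The substantive analytic content is entirely inside Theorem~\ref{thm:Rank1}; the only genuinely delicate points here are tracking the $c$-rescaling so that the $t=0$ marginal is identified correctly, and using the linear-MMSE comparison on $(0,\gamma_0)$ to pin the limit of the monotone scalar recursion to the \emph{smallest} strictly positive fixed point $\gamma_{\sALG}(\lambda)$ — that comparison is precisely what the hypothesis $\lambda>1$, equivalently $\gamma_0>0$, buys.
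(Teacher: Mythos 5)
Your proof is correct and follows essentially the same route as the paper's (Appendix~\ref{app:ProofRank1}): reduce to Theorem~\ref{thm:Rank1}, collapse the two-parameter recursion \eqref{eq:MutRecX0}--\eqref{eq:SigtRecX0} to the scalar recursion \eqref{eq:SEspecial_1}--\eqref{eq:SEspecial_2} via the identity $\mu_t=\sigma_t^2=\gamma_t$, and pin the $t\to\infty$ limit to $\gamma_{\sALG}$ using the linear-MMSE bound $\mmse(\gamma)\le(1+\gamma)^{-1}$ together with monotonicity of $\gamma\mapsto\lambda^2\{1-\mmse(\gamma)\}$.

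Where you deviate, you are actually tighter. The paper simply asserts that Theorem~\ref{thm:Rank1} ``applies with initial condition $\mu_0=\lambda^2-1$, $\sigma_0^2=\lambda^2-1$ because the initial condition is scaled by a factor $\lambda(\lambda^2-1)^{1/2}$,'' but Theorem~\ref{thm:Rank1} is stated only for the unscaled initialization $\sqrt n\,\bphi_1$ with the fixed pair $(\mu_0,\sigma_0)=(\sqrt{1-\lambda^{-2}},\lambda^{-1})$. Your device of absorbing the factor $c=\lambda\sqrt{\lambda^2-1}$ into a modified $t=0$ nonlinearity $g_0(y)=f_0(cy)$ (using that the Onsager term vanishes at $t=0$, so all iterates from $t=1$ onward coincide exactly) turns that informal rescaling remark into a literal invocation of the theorem as stated; the separate treatment of the $t=0$ marginal via the test function $(a,b)\mapsto\psi(a,cb)$ is the right way to close the loop. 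One small note: your intermediate formulas give correlation $\sqrt{\gamma_{t+1}}/\lambda$ and normalized MSE $1-\gamma_{t+1}/\lambda^2$ at fixed $t$, whereas the paper's proof writes $\gamma_t$ in both places; your indexing is the correct one (the estimator $\hbx^t=F(\bx^t;\gamma_t)$ already applies one more denoising step to $\bx^t$), though the discrepancy washes out in the $t\to\infty$ limit and does not affect the stated conclusion.
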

\begin{remark}\label{rem:LipExpectation}
The assumption on $F(\,\cdot\,;\gamma):\reals\to\reals$ being Lipschitz continuous is required in order to apply our general theory. 
Note that this is implied by either of the following: $(i)$ $\supp(\nu_{X_0})\in [-M,M]$ for some constant $M$; $(ii)$ $\nu_{X_0}$ has log-concave density.
\end{remark}

It is interesting to compare the above result with the Bayes optimal estimation accuracy. The following statement is a  consequence of
the results of \cite{lelarge2016fundamental} (see Appendix \ref{app:BayesOpt}).
\begin{proposition} \label{thm:BayesOpt}
Consider the spiked matrix model (\ref{eq:SpikedDefSpecial}), with $\bx_0(n)\in\reals^n$ a vector with i.i.d. entries with distribution 
$\nu_{X_0}$ with bounded support and $\int x^2 \nu_{X_0}(\de x) = 1$. 
Then there exists a countable set $D\subseteq \reals_{\ge 0}$ such that, for $\lambda\in\reals\setminus D$, the Bayes-optimal accuracy in the rank-one estimation problem is given by
\begin{align}
\lim_{n\to\infty} \, \sup_{\hbx(\,\cdot\,)} \E\left\{\frac{\<\hbx(\bA),\bx_0\>^2}{\|\hbx(\bA)\|_2^2\|\bx_0\|^2_2} \right\} =
\frac{\gamma_{\sBayes}(\lambda)}{\lambda^2}\, ,\label{eq:BayesOpt}
\end{align}
where the supremum is over (possibly randomized) estimators, i.e. measurable functions $\hbx:\reals^{n\times n}\times [0,1]\to \reals^n$, where $[0,1]$ is endowed with the uniform measure.
Here $\gamma_{\sBayes}(\lambda)$ is the fixed point of the recursion (\ref{eq:SEspecial_2}) that maximizes the following free energy functional
\begin{align}
\Psi(\gamma,\lambda) = \frac{\lambda^2}{4}+\frac{\gamma^2}{4\lambda}-\frac{\gamma}{2}+\Info(\gamma)\, ,\label{eq:FreeEnergy}
\end{align}
where $\Info(\gamma) = \E\log\frac{\de p_{Y|X_0}}{\de p_{Y}}(Y,X_0)$ is the mutual information for the scalar channel
$Y = \sqrt{\gamma}\, X_0+G$, with $X_0\sim \nu_{X_0}$ and $G\sim\normal(0,1)$ mutually independent. 
\end{proposition}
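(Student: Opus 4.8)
The plan is to deduce \eqref{eq:BayesOpt} from the single-letter (replica-symmetric) formula for the per-coordinate mutual information $i_n(\lambda):=\tfrac1n I(\bx_0;\bA)$ established in \cite{lelarge2016fundamental} — which applies precisely in this i.i.d., bounded-support regime — together with the I-MMSE relation and a short passage from matrix estimation to vector estimation. Throughout, write $\bM_n:=\E[\bx_0\bx_0^{\sT}\mid\bA]$ and let $\MSE_n(\lambda):=\tfrac1{n^2}\E\norm{\bx_0\bx_0^{\sT}-\bM_n}_F^2$ be the normalized matrix minimum mean-square error.

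\emph{Step 1: single-letter formula and the exceptional set.} By \cite{lelarge2016fundamental}, $i_n(\lambda)\to\mathcal I(\lambda):=\sup_{\gamma\ge 0}\Psi(\gamma,\lambda)$, with the supremum attained at $\gamma=\gamma_{\sBayes}(\lambda)$ (the maximizer of the potential, singled out among the fixed points of \eqref{eq:SEspecial_2}). Each $i_n$ is concave as a function of $\lambda^2$ — this is the scalar fact that mutual information is concave in signal-to-noise ratio, applied to \eqref{eq:SpikedDefSpecial}, whose per-entry SNR is proportional to $\lambda^2$ — hence so is the limit $\mathcal I$, which is therefore differentiable in $\lambda$ outside an at most countable set $D$. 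This is the set $D$ of the statement: at a first-order transition $\mathcal I$ has a kink, the matrix MMSE jumps, and the clean formula below can fail.

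\emph{Step 2: differentiate and read off the MMSE.} The matrix I-MMSE identity for \eqref{eq:SpikedDefSpecial} has the form $\tfrac{\de}{\de\lambda}i_n(\lambda)=c\,\lambda\,\MSE_n(\lambda)$ with $c>0$ an explicit universal constant. For $\lambda\notin D$, pointwise convergence of the concave functions $i_n$ forces convergence of their derivatives at $\lambda$, so $\lim_n\MSE_n(\lambda)=\tfrac{1}{c\lambda}\mathcal I'(\lambda)$; and by the envelope theorem $\mathcal I'(\lambda)=\partial_\lambda\Psi(\gamma_{\sBayes}(\lambda),\lambda)$. Evaluating this partial derivative, using the stationarity relation $\gamma_{\sBayes}=\lambda^2(1-\mmse(\gamma_{\sBayes}))$ and the scalar identity $\Info'(\gamma)=\tfrac12\mmse(\gamma)$, gives
\[
\lim_{n\to\infty}\MSE_n(\lambda)=1-\big(\gamma_{\sBayes}(\lambda)/\lambda^2\big)^2=:1-q_{\sBayes}^2 ,\qquad \lambda\notin D .
\]
By orthogonality of the conditional expectation, $1-\MSE_n(\lambda)=\tfrac1{n^2}\E\norm{\bM_n}_F^2$, so $\tfrac1{n^2}\E\norm{\bM_n}_F^2\to q_{\sBayes}^2$; combined with the posterior-concentration estimates of \cite{lelarge2016fundamental} this upgrades to $\bM_n$ being asymptotically of rank one, $\tfrac1n\norm{\bM_n}_{\op}\to q_{\sBayes}$ in probability and — since $\nu_{X_0}$ has bounded support — in $L^1$.

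\emph{Step 3: matrix to vector, and the main obstacle.} For any estimator $\hbx(\bA)$, put $\hbv=\hbx/\norm{\hbx}_2$; using $\norm{\bx_0}_2^2/n\to1$ (a routine truncation, valid by bounded support) and $\norm{\bM_n}_{\op}\le\tr\bM_n=\norm{\bx_0}_2^2$ one obtains
\[
\E\Big[\tfrac{\<\hbx,\bx_0\>^2}{\norm{\hbx}_2^2\,\norm{\bx_0}_2^2}\Big]\ \le\ \tfrac1n\,\E\big[\hbv^{\sT}\bM_n\hbv\big]+o(1)\ \le\ \tfrac1n\,\E\norm{\bM_n}_{\op}+o(1) ,
\]
with both inequalities asymptotically tight for $\hbx$ the leading eigenvector of $\bM_n$; randomization brings no gain, the objective being affine in the randomizing variable. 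Hence $\lim_n\sup_{\hbx}\E[\,\cdot\,]=\lim_n\tfrac1n\E\norm{\bM_n}_{\op}=q_{\sBayes}=\gamma_{\sBayes}(\lambda)/\lambda^2$, which is \eqref{eq:BayesOpt}. The genuinely hard input is everything imported from \cite{lelarge2016fundamental}: not only the single-letter formula for $i_n$, but in particular the concentration estimates needed to pass from the Frobenius statement $\tfrac1{n^2}\norm{\bM_n}_F^2\to q_{\sBayes}^2$ to the operator-norm statement $\tfrac1n\norm{\bM_n}_{\op}\to q_{\sBayes}$ — equivalently, that the posterior self-overlap concentrates, so that $\bM_n$ carries a single macroscopic eigenvalue rather than several. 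Given that, the remaining points — interchanging $\lim_n$ with $\tfrac{\de}{\de\lambda}$, justified by concavity, and the matrix-to-vector reduction above — are routine.
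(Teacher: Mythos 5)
Your Steps~1--2 reconstruct the Lelarge--Miolane matrix-MMSE formula, which the paper simply cites (Theorem~\ref{thm:Leo}, item~2); that part is fine, and the concavity/envelope argument is how the countable exceptional set $D$ indeed arises. Your upper bound in Step~3 is also correct — and slightly cleaner than the paper's, which argues by contradiction: from any vector estimator beating $q_{\sBayes}$ one builds a rank-one matrix estimator whose Frobenius error falls below the matrix MMSE, contradicting item~2. The genuine gap is in your lower bound, at the sentence ``combined with the posterior-concentration estimates of \cite{lelarge2016fundamental} this upgrades to $\bM_n$ being asymptotically of rank one, $\tfrac1n\norm{\bM_n}_{\op}\to q_{\sBayes}$.'' The estimates in \cite{lelarge2016fundamental} that control the \emph{eigenstructure} of the posterior mean (replica symmetry of $\hbM_n^{\sBayes}$ and overlap concentration) are proved there only \emph{after averaging over a side-information parameter} $\eps$, corresponding to a channel where a fraction $\eps$ of the coordinates of $\bx_0$ are revealed: items~3 and~4 of Theorem~\ref{thm:Leo} are statements of the form $\int_0^{\oeps}(\cdot)\,\de\eps\to 0$, not pointwise statements at $\eps=0$. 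The Frobenius-norm fact $\tfrac1{n^2}\E\norm{\bM_n}_F^2\to q_{\sBayes}^2$ that you correctly extract from item~2 at $\eps=0$ does not by itself exclude a posterior mean with two or more eigenvalues of order $n$, so it does not yield $\tfrac1n\norm{\bM_n}_{\op}\to q_{\sBayes}$.

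The paper closes exactly this gap by introducing the $\eps$-revealed channel, invoking continuity $q(\lambda,\eps)\to q(\lambda,0)$ as $\eps\to0$ for $\lambda\notin D(0)$ (item~1), and lower-bounding $\lambda_1(\hbM_n^{\sBayes}(\bA))$ by the Rayleigh quotient evaluated at the $\eps$-model posterior mean $\hbx(\bA,\by)$. The $\eps$-averaged concentration of $\tfrac1n\norm{\hbx(\bA,\by)}_2^2$ around $q(\lambda,\eps)$ and the $\eps$-averaged replica symmetry then give $\liminf_n\E\,\lambda_1(\hbM_n^{\sBayes}(\bA))\ge q_{\sBayes}$ after sending $\oeps\to 0$. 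This is the step your proposal treats as an off-the-shelf import; it is in fact the technical heart of the proof, and requires the $\eps$-perturbation device your outline never introduces. To keep your more direct formulation you would have to either re-introduce that device and reproduce the paper's Rayleigh-quotient argument, or supply an independent proof that the second eigenvalue of $\hbM_n^{\sBayes}(\bA)$ is $o(1)$ at $\eps=0$ for $\lambda\notin D$, which is not available from the cited source.
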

Together with this proposition, Theorem \ref{thm:Special} precisely characterizes the gap between Bayes-optimal estimation and message passing algorithms for
rank-one matrix estimation. Simple calculus (together with the relation $\Info'(\gamma)=\mmse(\gamma)/2$ \cite{Guo05mutualinformation}) 
implies that the fixed point of the recursion (\ref{eq:SEspecial_2}) coincide with the stationary points of $\gamma\mapsto\Psi(\gamma,\lambda)$.
We therefore have the following characterization of the Bayes optimality of Bayes-AMP.
\begin{corollary}\label{coro:AMP-OPT}
Under the setting of Theorem \ref{thm:Special} (in particular, $\lambda>1$), let the function $\Psi(\gamma,\lambda)$ be defined as in Eq.~\eqref{eq:FreeEnergy}.
Then Bayes-AMP  asymptotically achieves the Bayes-optimal error (and $\gamma_{\sALG}(\lambda)=\gamma_{\sBayes}(\lambda)$)
 if and only if the global maximum of  $\gamma\mapsto \Psi(\gamma,\lambda)$ 
over $(0,\infty)$ is also the first stationary point of the same function (as $\gamma$ grows).
\end{corollary}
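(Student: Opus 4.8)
The plan is to compare two quantities: the limiting signal-to-noise ratio $\gamma_{\sALG}(\lambda)$ reached by Bayes-AMP, and the Bayes-optimal signal-to-noise ratio $\gamma_{\sBayes}(\lambda)$ that maximizes the free energy $\Psi(\gamma,\lambda)$ of Eq.~\eqref{eq:FreeEnergy}. By Theorem \ref{thm:Special}, $\gamma_{\sALG}(\lambda)$ is the smallest strictly positive solution of the fixed-point equation $\gamma=\lambda^2[1-\mmse(\gamma)]$; by Proposition \ref{thm:BayesOpt}, $\gamma_{\sBayes}(\lambda)$ is the global maximizer of $\Psi(\,\cdot\,,\lambda)$ over $(0,\infty)$. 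The corollary asserts that these coincide iff the global maximizer of $\Psi$ is its first stationary point, so the whole argument reduces to three observations: (a) the positive stationary points of $\gamma\mapsto\Psi(\gamma,\lambda)$ are exactly the positive fixed points of Eq.~\eqref{eq:SEspecial_2}; (b) the Bayes-AMP recursion $\gamma_{t+1}=\lambda^2[1-\mmse(\gamma_t)]$ started from $\gamma_0=\lambda^2-1$ converges monotonically to the smallest positive fixed point; and (c) Bayes-AMP achieving Bayes-optimal accuracy is equivalent to $\gamma_{\sALG}(\lambda)=\gamma_{\sBayes}(\lambda)$, which given (a) is equivalent to the smallest positive stationary point being the global maximizer.

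For step (a) I would differentiate $\Psi(\gamma,\lambda)=\frac{\lambda^2}{4}+\frac{\gamma^2}{4\lambda}-\frac{\gamma}{2}+\Info(\gamma)$ in $\gamma$, using the I-MMSE identity $\Info'(\gamma)=\mmse(\gamma)/2$ quoted from \cite{Guo05mutualinformation}. This gives $\partial_\gamma\Psi = \frac{\gamma}{2\lambda}-\frac12+\frac{\mmse(\gamma)}{2} = \frac{1}{2\lambda}\big(\gamma-\lambda^2[1-\mmse(\gamma)]\big)$, so $\partial_\gamma\Psi(\gamma,\lambda)=0$ iff $\gamma=\lambda^2[1-\mmse(\gamma)]$: the stationary points are precisely the fixed points. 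For step (b), note that $\mmse(\gamma)$ is non-increasing in $\gamma$ (more observations cannot hurt), so $g(\gamma):=\lambda^2[1-\mmse(\gamma)]$ is non-decreasing; a monotone map iterated from a point below its smallest fixed point increases monotonically up to that fixed point. One only needs to check that $\gamma_0=\lambda^2-1$ lies below (or at) the smallest positive fixed point — equivalently that $g(\gamma_0)\ge\gamma_0$, i.e. $\lambda^2[1-\mmse(\lambda^2-1)]\ge\lambda^2-1$, i.e. $\mmse(\lambda^2-1)\le \lambda^{-2}$; since $\mmse(\gamma)\le \mathrm{Var}(X_0)\cdot(1)/(1+\gamma\,\mathrm{Var})\le 1/(1+\gamma)$ for unit-variance $X_0$ (linear-estimator bound), at $\gamma=\lambda^2-1$ this is $\le 1/\lambda^2$, as required. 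This pins down $\gamma_{\sALG}(\lambda)$ as the smallest positive stationary point of $\Psi$, matching the ``first stationary point as $\gamma$ grows'' in the statement. Step (c) is then immediate: by Theorem \ref{thm:Special} the AMP overlap is $\gamma_{\sALG}(\lambda)/\lambda^2$ and by Proposition \ref{thm:BayesOpt} the optimal overlap is $\gamma_{\sBayes}(\lambda)/\lambda^2$, so equality of accuracies is equivalent to $\gamma_{\sALG}=\gamma_{\sBayes}$, hence (using (a)) to the first stationary point of $\Psi$ coinciding with its global maximizer.

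The main obstacle is not any single hard estimate but rather making the ``if and only if'' airtight at the boundary cases. In particular one must be careful that $\gamma_{\sALG}$ is genuinely the \emph{first} stationary point and not merely \emph{a} stationary point — this uses the monotonicity of $g$ together with the initialization bound above, and one should double-check whether $\gamma_0=\lambda^2-1$ could itself already be a fixed point (it can, when $\mmse$ is flat there), which is harmless but worth a line. One should also note that Proposition \ref{thm:BayesOpt} holds only for $\lambda\notin D$ for a countable exceptional set $D$; strictly speaking the corollary's characterization of \emph{Bayes-optimality} inherits this caveat, though the statement ``Bayes-AMP reaches its fixed point $\gamma_{\sALG}$'' from Theorem \ref{thm:Special} is unconditional. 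Finally, since $\Psi$ is smooth and the relevant interval $(0,\infty)$ is unbounded, one should confirm the global maximum is attained (e.g. $\Psi(\gamma,\lambda)\to-\infty$ faster than... actually $\Psi$ grows like $\gamma^2/(4\lambda)$, so the maximizer over $(0,\infty)$ is attained only because the recursion's fixed points are all bounded — one uses that $g(\gamma)\le\lambda^2$, so all fixed points lie in $(0,\lambda^2]$, and that $\Psi$ restricted to this compact interval attains its max, while for $\gamma>\lambda^2$ one checks $\partial_\gamma\Psi>0$ so no interior max lies there). These are routine once assembled but are the places where the proof could quietly go wrong.
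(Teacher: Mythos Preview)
Your approach is essentially identical to the paper's, which dispatches the corollary in the single sentence preceding it by invoking the I--MMSE relation $\Info'(\gamma)=\mmse(\gamma)/2$ to identify stationary points of $\Psi$ with fixed points of the state-evolution recursion. Your added step (b)---the monotonicity/initialization argument pinning down $\gamma_{\sALG}$ as the \emph{smallest} positive fixed point---is exactly the argument the paper already gives in the proof of Theorem~\ref{thm:Special} (Appendix~\ref{app:ProofRank1}), via the linear-estimator bound $\mmse(\gamma)\le 1/(1+\gamma)$.

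One concrete slip: your factorization
\[
\frac{\gamma}{2\lambda}-\frac12+\frac{\mmse(\gamma)}{2} \;=\; \frac{1}{2\lambda}\big(\gamma-\lambda^2[1-\mmse(\gamma)]\big)
\]
is false (the right-hand side equals $\frac{\gamma}{2\lambda}-\frac{\lambda}{2}+\frac{\lambda\,\mmse(\gamma)}{2}$). This traces to what is evidently a typo in Eq.~\eqref{eq:FreeEnergy}: the term $\gamma^2/(4\lambda)$ should read $\gamma^2/(4\lambda^2)$, after which $\partial_\gamma\Psi=\frac{1}{2\lambda^2}\big(\gamma-\lambda^2[1-\mmse(\gamma)]\big)$ and your step (a) goes through cleanly. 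Relatedly, your final paragraph correctly senses trouble with ``global maximum over $(0,\infty)$'': with either denominator $\Psi(\gamma,\lambda)\to+\infty$ as $\gamma\to\infty$ (since $\Info(\gamma)\le H(X_0)<\infty$ under the bounded-support hypothesis of Proposition~\ref{thm:BayesOpt}), so no global maximum exists on the whole half-line. The intended reading---consistent with Proposition~\ref{thm:BayesOpt}, which defines $\gamma_{\sBayes}$ as the fixed point of \eqref{eq:SEspecial_2} maximizing $\Psi$---is that the comparison is among stationary points, all of which lie in $(0,\lambda^2]$; with that reading your argument is complete.
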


As illustrated in  Section \ref{sec:TwoPoints}, this condition holds for some cases of interest,
and hence message passing is asymptotically optimal for these cases.

\begin{remark}
In some applications, it is possible to construct an initialization $\bx^0$ that is positively correlated with the signal $\bx_0$
and independent of $\bA$. If this is possible, then the spectral initialization is not required and Theorem \ref{thm:Special}
follows immediately from \cite{BM-MPCS-2011}. For instance, if  $\nu_{X_0}$ has  positive mean, then it is sufficient to
initialize $\bx^0 = \bfone$. This principle was exploited in \cite{deshpande2013finding,deshpande2014information,montanari2016non}.

However such a positively correlated initialization is not available in general: the spectral initialization analyzed here aims at overcoming this problem.
\end{remark}

\begin{remark}\label{rmk:OptBayes}
No polynomial-time algorithm is known that achieves estimation accuracy superior to the one guaranteed by 
Theorem \ref{thm:Special}. In particular, it follows from  the optimality of posterior mean with respect to square loss and the monotonicity of the function 
$\gamma\mapsto \lambda^2\{1-\mmse(\gamma)\}$ that Bayes AMP is optimal among  AMP algorithms.
That is, for any other sequence of nonlinearities $f_t(\,\cdot\,)$, we have 
\begin{align} 
\lim_{n\to\infty}\frac{\big|\<f_t(\bx^t),\bx_0\>\big|}{\|f_t(\bx^t)\|_2\|\bx_0\|_2}&= \frac{\mu_{t+1}}{\lambda \sigma_{t+1}} \le \frac{\sqrt{\gamma_{t+1}}}{\lambda}\, .
\end{align}
As further examples, \cite{javanmard2016phase} analyzes a semi-definite programming (SDP) algorithm for the
special case of a two-points symmetric mixture $\nu_{X_0} = (1/2)\delta_{+1}+(1/2)\delta_{-1}$. Theorem \ref{thm:Special}
implies that,  in this case, message passing is Bayes optimal (since $\gamma_{\sALG}=\gamma_{\sBayes}$ follows from \cite{deshpande2016asymptotic}).
In contrast, numerical simulations and non-rigorous calculations using the cavity method from statistical physics (see \cite{javanmard2016phase}) 
suggest that SDP is sub-optimal.
\end{remark}

\begin{remark}
A result analogous to Theorem \ref{thm:Special} for the symmetric two-points distribution $\nu_{X_0} = (1/2)\delta_{+1}+(1/2)\delta_{-1}$
is proved in \cite[Theorem 3]{mossel2016density} in the context of the stochastic block model of random graphs. Note, however,  that the
approach of \cite{mossel2016density} requires the graph to have average degree $d\to\infty$, $d=O(\log n)$. 
\end{remark}

\subsection{An example: Two-points distributions}
\label{sec:TwoPoints}

Theorem \ref{thm:Special} is already interesting in very simple cases. 
Consider the two-points mixture
\begin{align}
\nu_{X_0} &= \eps\, \delta_{a_+} +(1-\eps)\delta_{-a_-}\, ,\label{eq:EmpiricalSpecial}\\
& a_+ = \sqrt{\frac{1-\eps}{\eps}}\, ,\;\;\;\;\;\; a_- = \sqrt{\frac{\eps}{1-\eps}}\, .
\end{align}
Here the coefficients $a_+,a_-$ are chosen to ensure that $\int x\nu_{X_0}(\de x) = 0$, $\int x^2\nu_{X_0}(\de x) = 1$. 
The conditional expectation $F(y;\gamma)$ of Eq.~(\ref{eq:Fdef}) can be computed explicitly, yielding
\begin{align}
F(y;\gamma)&= \frac{\eps a_{+}e^{a_+ y-\gamma a_+^2/2}-(1-\eps)a_-e^{- a_- y-\gamma a_-^2/2}}{\eps e^{a_+ y-\gamma a_+^2/2}+(1-\eps)e^{-a_- y-\gamma a_-^2/2}}\, .\label{eq:F_2pts}
\end{align}
\begin{figure}[t!]
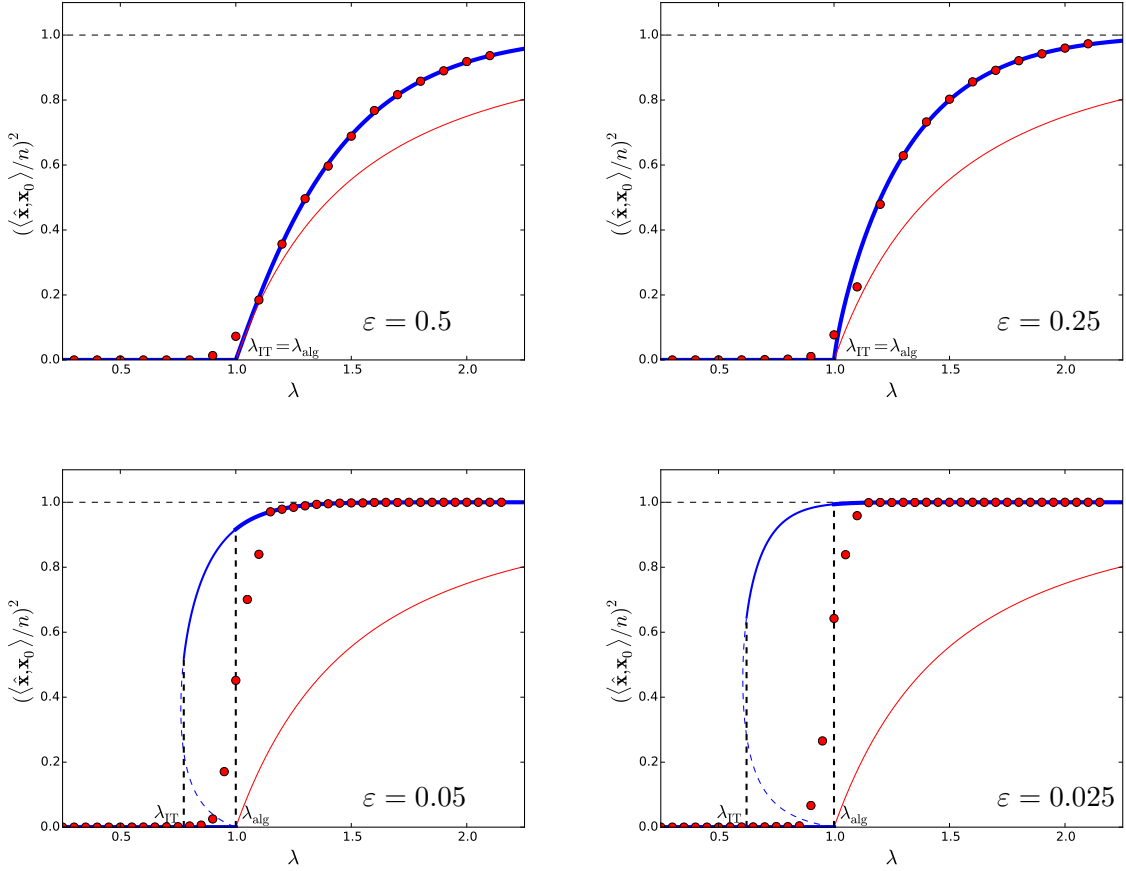

\includegraphics[width=0.48\textwidth]{eps05.pdf}\hspace{-0.1cm}
\includegraphics[width=0.48\textwidth]{eps025.pdf}\\
\vspace{-1.75cm}

\includegraphics[width=0.48\textwidth]{eps005.pdf}\hspace{-0.1cm}
\includegraphics[width=0.48\textwidth]{eps0025.pdf}
\put(-310,205){$\eps=0.5$}
\put(-70,205){$\eps=0.25$}
\put(-310,25){$\eps=0.05$}
\put(-70,25){$\eps=0.025$}
\caption{Estimation in the single spiked model (\ref{eq:SpikedDefSpecial}) with entries of $\bx_0$ following the two-points distribution of 
Eq.~(\ref{eq:EmpiricalSpecial}), and four different values of the sparsity $\eps\in\{0.025,0.05,0.25,0.5\}$. Continuous thick blue line: asymptotic accuracy achieved by AMP (with spectral initialization). Red circles: numerical simulations with the AMP algorithm (form matrices of dimension $n=2000$ and $t=200$ iterations).
Continuous thin blue line: Bayes optimal estimation accuracy. Dashed blue line: other fixed points of state evolution. Red line: Accuracy achieved by principal component analysis. Vertical dashed black lines: the thresholds $\lambda_{\sIT}$ and $\lambda_{\sALG}$.}\label{fig:curvesAMP}
\end{figure}

Figure \ref{fig:curvesAMP} reports the results of numerical simulations with the AMP algorithm decribed in
the previous section. We also plot $\gamma_*(\lambda)/\lambda^2$ as a function of $\lambda$, where $\gamma_*(\lambda)$ is the fixed point of the state-evolution equation (\ref{eq:SEspecial_2}). The figure shows plots for four values of $\eps\in (0,1/2]$. The qualitative behavior depends on the value of $\eps$. For $\eps$ close enough to $1/2$,
Eq.~(\ref{eq:SEspecial_2}) only has one stable fixed point\footnote{This is proved formally in \cite{deshpande2016asymptotic} for $\eps=1/2$ and
holds by a continuity argument for $\eps$ close enough to $1/2$. However, here we will limit ourselves to a heuristic discussion based on the numerical
solution of Eq.~(\ref{eq:SEspecial_2}).} 
that is also the  minimizer of the free energy functional (\ref{eq:FreeEnergy}). Hence
$\gamma_{\sALG}(\gamma) = \gamma_{\sBayes}(\lambda)$ for all values of $\lambda$: message passing is always Bayes optimal.

For $\eps$ small enough, there exists $\lambda_0(\eps)<1$ such that Eq.~(\ref{eq:SEspecial_2})  has three fixed points for $\lambda \in (\lambda_{0}(\eps),1)$:
$\gamma_0(\lambda)<\gamma_1(\lambda)<\gamma_2(\lambda)$ whereby $\gamma_0=0$ and $\gamma_2$ are stable and $\gamma_1$ is unstable.
AMP is controlled by the smallest stable fixed point, and hence $\gamma_{\sALG}(\lambda) = 0$ for all $\lambda<1$. On the other hand, by 
minimizing the free energy (\ref{eq:FreeEnergy}) over these fixed points, we obtain that there exists $\lambda_{\sIT}(\eps) \in  (\lambda_{0}(\eps),1)$
such that $\gamma_{\sBayes}(\lambda) = 0$ for $\lambda<\lambda_{\sIT}(\eps)$ while $\gamma_{\sBayes}(\lambda) = \gamma_2(\lambda)$ for 
$\lambda>\lambda_{\sIT}(\eps)$.
We conclude that AMP is asymptotically sub-optimal for $\lambda\in (\lambda_{\sIT}(\eps),1)$, while it
is asymptotically optimal for $\lambda\in [0,\lambda_{\sIT}(\eps))$ and $\lambda\in (1,\infty)$.

\section{Confidence intervals, $p$-values, asymptotic FDR control}
\label{sec:Inference}

As an application of Theorem \ref{thm:Special}, we can construct confidence intervals that achieve a 
pre-assigned coverage level $(1-\alpha)$, where $\alpha\in (0,1)$. Indeed, Theorem \ref{thm:Special} informally states that the AMP iterates $\bx^t$
are approximately Gaussian with mean (proportional to) the  signal $\bx_0$. This relation can be inverted to construct confidence intervals.

We begin by noting that we do not need to know the signal strength $\lambda$. Indeed, for $\lambda>1$, the latter can be estimated from
the maximum eigenvalue of $\bA$, $\lambda_{\max}(\bA)$, via
\begin{align}
\hlambda(\bA) \equiv \frac{1}{2}\Big\{\lambda_{\max}(\bA) + \sqrt{\lambda_{\max}(\bA)^2-4}\Big\}\, .
\label{eq:lambda_hat}
\end{align}
This is a consistent estimator for $\lambda>1$, and can replace $\lambda$ in the iteration of Eq.~(\ref{eq:AMPspecial0}) and initialization (\ref{eq:AMP_Bayes_In})
as well as in the state evolution iteration of Eqs.~\eqref{eq:SEspecial_1} and \eqref{eq:SEspecial_2}. 
We discuss two constructions of confidence intervals: the first one uses the Bayes AMP algorithm of Section \ref{sec:BayesAMP},
and the second instead uses the general algorithm of Section \ref{sec:GeneralRank1}. The optimality of Bayes AMP translates into shorter confidence intervals but also requires knowledge 
of the empirical distribution $\nu_{X_0}$.

\vspace{0.5cm} 

\noindent{\bf Bayes-optimal construction.} In order to emphasize the fact that we use the estimated $\lambda$ both in the AMP iteration and in the state evolution recursion,
we write $\obx^t$ for the Bayes AMP iterates and $\hgamma_t$ for the state evolution parameter, instead of $\bx^t$ and $\gamma_t$. 
We then form the intervals:
\begin{align}
\hJ_i(\alpha;t) = \left[\frac{1}{\hgamma_t} \ox^t_i- \frac{1}{\sqrt{\hgamma_t}} \Phi^{-1}\left(1- \frac{\alpha}{2} \right), \
  \frac{1}{\hgamma_t} \ox^t_i+\frac{1}{\sqrt{\hgamma_t}} \Phi^{-1} \left(1- \frac{\alpha}{2} \right) \right]\, . \label{eq:ConfInterval}
\end{align}
We can also define corresponding $p$-values by
\begin{align}
p_i = 2\left(1-\Phi\Big(\frac{1}{\sqrt{\hgamma_t}} |\ox^t_i|\Big)\right)\, .
\label{eq:pi_Bayes}
\end{align}

\vspace{0.5cm} 

\noindent{\bf General construction (no prior knowledge).} Given a sequence of Lipschitz functions $f_t:\reals\to \reals$, we  let $\bx^t$ be the general AMP iterates as per Section
 \ref{sec:GeneralRank1}, cf. Eq.~(\ref{eq:AMPspecial0}). In order to form confidence intervals, we need to estimate the parameters $\mu_t$, $\sigma_t$. 
In view of Theorem \ref{thm:Rank1}, a possible choice is given by
\begin{align}
\hsigma_t^2 & \equiv \frac{1}{n}\big\|f_{t-1}(\bx^{t-1})\big\|_2^2\,,\\
\hmu_t^2 & \equiv \frac{1}{n}\big\|\bx^t\big\|_2^2- \frac{1}{n}\big\|f_{t-1}(\bx^{t-1})\big\|_2^2\, .
\end{align}
We then construct confidence intervals and $p$-values
\begin{align}
\hJ_i(\alpha;t) &= \left[\frac{1}{\hmu_t} x^t_i- \frac{\hsigma_t}{\hmu_t} \Phi^{-1}\left(1-\frac{\alpha}{2}\right), \ 
\frac{1}{\hmu_t} x^t_i+\frac{\hsigma_t}{\hmu_t}  \Phi^{-1}\left(1-\frac{\alpha}{2} \right)\right]\, ,\label{eq:ConfInterval_2}\\
p_i(t) &= 2\left(1-\Phi\Big(\frac{1}{\hsigma_t} |x^t_i|\Big)\right)\, . \label{eq:pi_def}
\end{align}
\begin{corollary}\label{coro:ConfInt}
Consider the spiked matrix model (\ref{eq:SpikedDefSpecial}), under the assumptions of Theorem \ref{thm:Rank1} (in case of no prior knowledge)
or Theorem \ref{thm:Special} (for the Bayes optimal construction).
Defining the confidence intervals $\hJ_i(\alpha;t)$ as per Eqs. ~(\ref{eq:ConfInterval})  ~(\ref{eq:ConfInterval_2}), we have almost surely
\begin{align}
\lim_{n\to\infty}\frac{1}{n}\sum_{i=1}^n \ind\big(x_{0,i}\in \hJ_i(\alpha;t)\big) = 1-\alpha\, . \label{eq:AS-ConfInt}
\end{align}
Further assume that the fraction of non-zero entries in the spike is $\|\bx_0(n)\|_0/n\to \eps\in [0,1)$, and $\nu_{X_0}(\{0\}) = 1-\eps$. 
Then the  $p$-values constructed above are asymptoticaly valid for the nulls. Namely,
let $i_0 = i_0(n)$ any index such that $x_{0,i_0}(n) = 0$. Then, for any $\alpha\in [0,1]$, and any fixed $t\ge 0$
\begin{align}
\lim_{n\to\infty} \prob\big(p_{i_0(n)}(t)\le \alpha \big) = \alpha\, .  \label{eq:ValidNull}
\end{align}
\end{corollary}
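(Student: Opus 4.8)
The plan is to derive Corollary \ref{coro:ConfInt} as a direct consequence of the state evolution statements already established. Recall that Theorem \ref{thm:Special} (resp.\ Theorem \ref{thm:Rank1}) gives, for any sufficiently regular pseudo-Lipschitz $\psi$, that $\frac{1}{n}\sum_{i=1}^n \psi(x_{0,i}, x_i^t) \to \E\{\psi(X_0, \gamma_t X_0 + \gamma_t^{1/2} Z)\}$ (resp.\ the analogous statement with $(\mu_t,\sigma_t)$). The first step is to rewrite the empirical coverage $\frac1n\sum_i \ind(x_{0,i}\in \hJ_i(\alpha;t))$ in a form amenable to this. For the Bayes construction, the event $x_{0,i}\in \hJ_i(\alpha;t)$ is, by \myeqref{eq:ConfInterval}, exactly $|\ox^t_i/\hgamma_t - x_{0,i}| \le \hgamma_t^{-1/2}\Phi^{-1}(1-\alpha/2)$, i.e.\ $|\ox^t_i - \hgamma_t x_{0,i}|\le \hgamma_t^{1/2}\Phi^{-1}(1-\alpha/2)$.

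The main technical nuisance is that the interval uses the \emph{estimated} signal strength $\hlambda(\bA)$ and hence the estimated $\hgamma_t$ rather than the deterministic $\gamma_t$. I would handle this in two stages. First, fix the idealized indicator $g(x_0,x) = \ind\big(|x - \gamma_t x_0| \le \gamma_t^{1/2}\Phi^{-1}(1-\alpha/2)\big)$; this is bounded but not continuous, so I would approximate $g$ from above and below by Lipschitz functions $g_\delta^{\pm}$ that agree with $g$ except on a $\delta$-neighbourhood of the boundary, apply the state-evolution limit to $g_\delta^\pm$, and then let $\delta\to 0$, using that the limiting law of $(X_0, \gamma_t X_0 + \gamma_t^{1/2}Z)$ puts no mass on the boundary $\{|\gamma_t^{1/2}Z| = \gamma_t^{1/2}\Phi^{-1}(1-\alpha/2)\}$ (a consequence of $Z$ being Gaussian, hence atomless). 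The limit is then $\prob(|\gamma_t^{1/2}Z|\le \gamma_t^{1/2}\Phi^{-1}(1-\alpha/2)) = \prob(|Z|\le \Phi^{-1}(1-\alpha/2)) = 1-\alpha$, as desired. Second, to pass from $\gamma_t$ to $\hgamma_t$: since $\hlambda(\bA)\to\lambda$ almost surely (consistency of \myeqref{eq:lambda_hat} for $\lambda>1$, from BBAP) and the state-evolution recursion \myeqref{eq:SEspecial_1}--\myeqref{eq:SEspecial_2} is continuous in $\lambda$, we get $\hgamma_t \to \gamma_t$ a.s.; a sandwiching argument (the random interval with $\hgamma_t$ is, for $n$ large, contained in / contains the deterministic intervals with $\gamma_t\pm\eta$) reduces the $\hgamma_t$ case to the $\gamma_t$ case. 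The general-construction case \myeqref{eq:ConfInterval_2} is identical, with $(\hmu_t,\hsigma_t)$ in place of $\hgamma_t$; one checks $\hsigma_t^2 = \frac1n\|f_{t-1}(\bx^{t-1})\|_2^2 \to \sigma_t^2$ and $\hmu_t^2 = \frac1n\|\bx^t\|_2^2 - \hsigma_t^2 \to \mu_t^2$ by applying Theorem \ref{thm:Rank1} to the pseudo-Lipschitz functions $\psi(x_0,x)=f_{t-1}(x)^2$ and $\psi(x_0,x)=x^2$ (using $\E[(\mu_t X_0+\sigma_t G)^2] = \mu_t^2 + \sigma_t^2$ since $\E X_0^2 = 1$), and that $\mu_t>0$ for $\lambda>1$.

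For the $p$-value validity \myeqref{eq:ValidNull}, the statement is about a single coordinate $i_0(n)$ with $x_{0,i_0}=0$ rather than an empirical average, so the pseudo-Lipschitz machinery does not apply directly to that coordinate. The route is instead to invoke the \emph{joint} finite-dimensional convergence that underlies the state evolution proof: conditionally on $x_{0,i_0}=0$, the iterate $x^t_{i_0}$ (resp.\ $\ox^t_{i_0}$) converges in distribution to $\mu_t\cdot 0 + \sigma_t G = \sigma_t G$ (resp.\ $\hgamma_t\cdot 0 + \hgamma_t^{1/2}Z$, whose standardization has the right scale because the estimator $\hsigma_t$ converges to $\sigma_t$). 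More carefully, one can first establish that the \emph{empirical} distribution of $x^t_i$ over the null indices $\{i : x_{0,i}=0\}$ converges to $\normal(0,\sigma_t^2)$ — this follows from Theorem \ref{thm:Rank1} applied to $\psi(x_0,x) = \ind(x_0 = 0)h(x)$ for Lipschitz $h$, after the usual $\delta$-smoothing of the indicator in $x_0$, and using $\nu_{X_0}(\{0\}) = 1-\eps$ together with an exchangeability/symmetry argument over null coordinates to upgrade the empirical-CDF limit to convergence in distribution of the single coordinate $x^t_{i_0(n)}$. Then $p_{i_0}(t) = 2(1-\Phi(|x^t_{i_0}|/\hsigma_t)) \le \alpha$ iff $|x^t_{i_0}|/\hsigma_t \ge \Phi^{-1}(1-\alpha/2)$; since $x^t_{i_0}/\hsigma_t \Rightarrow G$ (as $\hsigma_t\to\sigma_t$ a.s.\ and $x^t_{i_0}/\sigma_t\Rightarrow G$), and the limit law is atomless, $\prob(|x^t_{i_0}|/\hsigma_t \ge \Phi^{-1}(1-\alpha/2)) \to \prob(|G|\ge \Phi^{-1}(1-\alpha/2)) = \alpha$. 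In the Bayes case one additionally uses $\hgamma_t^{1/2}$ as the standardization and the fact that the null-coordinate empirical law converges to $\normal(0,\gamma_t)$, so that $\ox^t_{i_0}/\hgamma_t^{1/2}\Rightarrow Z$.

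The main obstacle is the upgrade from the almost-sure \emph{empirical-average} statements of Theorems \ref{thm:Rank1}/\ref{thm:Special} to a statement about a \emph{single designated null coordinate} $i_0(n)$, needed for \myeqref{eq:ValidNull} — the pseudo-Lipschitz convergence does not see individual entries. The clean way around this is to note that the proof of the state evolution results (Lemma \ref{lemma:Distr} and the conditioning argument) actually delivers joint convergence of finitely many coordinates along a random but exchangeable selection, or equivalently that within the set of null coordinates the iterates are asymptotically i.i.d.\ $\normal(0,\sigma_t^2)$; making this rigorous — rather than hand-waving "by symmetry" — is the one place where some care beyond the black-box use of Theorem \ref{thm:Rank1} is required. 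Everything else (the $\delta$-smoothing of discontinuous test functions, the continuity of state evolution in $\lambda$, the consistency of $\hlambda$, $\hsigma_t$, $\hmu_t$) is routine.
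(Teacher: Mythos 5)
Your proposal follows essentially the same route as the paper's proof for both halves. For \eqref{eq:AS-ConfInt}, your $\delta$-smoothed indicator approximations from above and below, followed by $\delta\to 0$ via monotone convergence, is exactly the paper's construction of the Lipschitz functions $\hpsi_{\epsilon,\pm}$, $\psi_{\epsilon,\pm}$. For \eqref{eq:ValidNull}, your "empirical CDF over nulls $\to$ Gaussian, then exchangeability to a single coordinate" is the paper's argument.

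Two points worth flagging. First, for the Bayes-optimal construction your handling of the estimated $\hlambda$ is slightly incomplete: it is not only the scale $\hgamma_t$ that is estimated, but the iterate $\obx^t$ itself (the Bayes denoiser $\hf_t$ uses $\hlambda$ and $\hgamma_t$), so a sandwich on $\hgamma_t$ alone does not reduce the claim to one about $\bx^t$. The paper addresses this through a dedicated Lemma \ref{lemma:HatX}, which proves $\hgamma_t\to\gamma_t$ and, crucially, $\frac{1}{n}\|\obx^t-\bx^t\|_2^2\to 0$ almost surely, so that $\obx^t$ can be replaced by $\bx^t$ inside any pseudo-Lipschitz average. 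Second, you flagged the upgrade from empirical averages to the single null coordinate $i_0(n)$ as the one place where "some care beyond the black-box use of Theorem \ref{thm:Rank1} is required," worrying that the symmetry argument might be hand-waving. It is in fact both the key step and completely elementary: the $p$-values $(p_i(t))_{i\in S_0(n)}$ are exchangeable (permuting null coordinates preserves the law of $\bA$ and the algorithm is equivariant), so $\prob(p_{i_0(n)}(t)\ge\alpha)$ equals the average $\frac{1}{|S_0(n)|}\sum_{i\in S_0(n)}\prob(p_i(t)\ge\alpha)$ for every $n$; this average is then computed from the almost-sure empirical limit, using dominated convergence and localizing the indicator $\ind(x_{0,i}=0)$ via $u_\delta(x_0)=(1-|x_0|/\delta)_+$ together with $\nu_{X_0}(\{0\})=1-\eps$. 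No convergence in distribution of the individual iterate $x^t_{i_0}$ needs to be established.
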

The proof of this result is presented in Appendix \ref{app:CoroConfInt}. Notice that, by dominated convergence, 
this corollary also implies validity of the confidence intervals on average, namely $\lim_{n\to\infty}\frac{1}{n}\sum_{i=1}^n \prob\big(x_{0,i}\in \hJ_i(\alpha;t)\big) = 1-\alpha$.
As mentioned above, cf. Remark \ref{rmk:OptBayes}, the Bayes-optimal construction maximizes the ratio $\mu_t/\sigma_t$ and therefore
minimizes the length of confidence intervals. This requires however additional knowledge of the empirical distribution $\nu_{X_0}$.

Corollary \ref{coro:ConfInt} allows to control the probability of false positives when using the $p$-values
$p_i$,  see Eq.~\eqref{eq:ValidNull}. We might want to use these $p$-values to select a subset of variables $\hS\subseteq [p]$ to be considered for further exploration. 
For such applications, it is common to aim for false discovery rate (FDR) control. The $p$-values $p_i$ guarantee asymptotic FDR control through a simple 
Benjamini-Hochberg procedure \cite{benjamini1995controlling}.
For a threshold  $s\in [0,1]$, we define the following estimator of false discovery proportion \cite{efron2012large}:
\begin{align}
\hFDP(s;t) \equiv \frac{ n s}{1\vee \Big( \sum_{i=1}^n\ind_{\{p_i(t)\le s\}}\Big)}\, .
\label{eq:FDP_def}
\end{align}
Using this notion, we define a threshold and a rejection set as follows. Fix $\alpha \in (0,1)$, let
\begin{align}
s_*(\alpha;t) \equiv \inf\big\{\, s \in [0,1]: \; \hFDP(s;t)\ge \alpha\, \big\}\, ,\;\;\;\; \hS(\alpha;t) \equiv \big\{i\in [n]: \; p_i(t)<s_*(\alpha;t)\big\}
\label{eq:thresh_rej_set}
\end{align}
The false discovery rate for this procedure is defined as usual
\begin{align}
\FDR(\alpha,t;n) \equiv \E\left\{\frac{ |\hS(\alpha;t)\cap \{i:\, x_{0,i}=0\}| }{1\vee |\hS(\alpha;t)| }\right\}
\end{align}
Our next corollary shows that the above procedure is guaranteed to control FDR in an asymptotic sense. Its proof can be found in
Appendix \ref{app:FDR}.
\begin{corollary}
\label{corr:FDR}
Consider the spiked matrix model (\ref{eq:SpikedDefSpecial}), under the assumptions of Theorem \ref{thm:Rank1} (in case of no prior knowledge)
or Theorem \ref{thm:Special} (for the Bayes optimal construction). Further assume that the fraction of non-zero entries in the spike is $\|\bx_0(n)\|_0/n \to  \eps\in [0,1)$, and $\nu_{X_0}(\{0\}) = 1-\eps$.  Then, for any fixed $t\ge 0$,
\begin{align}
\lim_{n\to\infty}\FDR(\alpha,t;n) = (1-\eps)\alpha\, .
\label{eq:FDRbound}
\end{align}
\end{corollary}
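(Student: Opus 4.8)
The plan is to show that the random false discovery proportion concentrates, almost surely, on the constant $(1-\eps)\alpha$, and then obtain \eqref{eq:FDRbound} by bounded convergence (the FDP lies in $[0,1]$). Concentration follows once the empirical distribution of the $p$-values is known to have a deterministic limit, so the first step is to prove, almost surely and uniformly over $s\in[0,1]$,
\begin{align}
\frac1n\sum_{i=1}^n\ind\{p_i(t)\le s\}\ \longrightarrow\ G_*(s)\, ,\qquad
\frac1n\sum_{i:\,x_{0,i}=0}\ind\{p_i(t)\le s\}\ \longrightarrow\ (1-\eps)\,s\, ,
\end{align}
where, under the hypotheses of Theorem \ref{thm:Rank1}, $G_*(s)=\prob\big(2(1-\Phi(|(\mu_t/\sigma_t)X_0+G|))\le s\big)$ (and the analogous expression with $\sqrt{\gamma_t}\,X_0+Z$ under those of Theorem \ref{thm:Special}). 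The first limit is read off from the state evolution identity \eqref{eq:rank1_SE} (resp. \eqref{eq:special_AMP_conv}); the second is its specialization to the null coordinates, using $\nu_{X_0}(\{0\})=1-\eps$ and the fact that on the nulls $x_i^t$ is asymptotically $\normal(0,\sigma_t^2)$, so that null $p$-values are asymptotically $\Unif[0,1]$ — which is essentially \eqref{eq:ValidNull}.

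There are three routine but non-automatic points in this first step. First, the state evolution identities apply to pseudo-Lipschitz test functions only, so the indicator $\ind\{p_i(t)\le s\}=\ind\{|x_i^t|\ge\hsigma_t\Phi^{-1}(1-s/2)\}$ must be sandwiched between Lipschitz functions; the approximation error vanishes because the limit law $(\mu_t/\sigma_t)X_0+G$ has a density. Second, the random normalization $\hsigma_t$ is handled by the same device, replacing $\hsigma_t$ by $\sigma_t\pm\delta$ and using $\hsigma_t\to\sigma_t$ a.s. before letting $\delta\downarrow0$. Third, restricting the average to the null indices is legitimate since $\ind\{x_0=0\}$ may be approximated by $(1-|x_0|/\delta)_+$, and the fraction of indices with $0<|x_{0,i}|<\delta$ converges to $\nu_{X_0}((-\delta,\delta))-\nu_{X_0}(\{0\})$, which tends to $0$ with $\delta$. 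Monotonicity in $s$ of the averages, together with continuity of $G_*$, upgrades pointwise convergence to uniform convergence.

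Given these limits, $\hFDP(s;t)=s\big/\big(\tfrac1n(1\vee\sum_i\ind\{p_i(t)\le s\})\big)$ converges to $s/G_*(s)$ for every $s$ with $G_*(s)>0$, and the deterministic curve $s\mapsto s/G_*(s)=\big[(1-\eps)+\eps H(s)/s\big]^{-1}$, with $H$ the non-null $p$-value CDF, is continuous on $(0,1]$, equals $1$ at $s=1$, and tends to $0$ as $s\downarrow0$ (since $H(s)/s\to\infty$, the non-null $p$-values accumulating near $0$). Hence the deterministic threshold $s^*\equiv\inf\{s:\,s/G_*(s)\ge\alpha\}$ lies in $(0,1)$ and satisfies $s^*/G_*(s^*)=\alpha$. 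The heart of the proof — and where I expect the real work — is to show $s_*(\alpha;t)\to s^*$ a.s. This needs two facts. One is that $s\mapsto s/G_*(s)$ is strictly increasing, equivalently that $H(s)/s$ is strictly decreasing, which I would obtain from strict concavity of $H$: a short Gaussian computation gives $H_a''<0$ on $(0,1)$ for $a\neq0$, and $H$ is an average of the $H_a$. The other is that no spurious crossing of level $\alpha$ occurs on the scale $s\sim1/n$; this follows because the number of non-null $p$-values below $\alpha/n$ diverges (provided $\mu_t\neq0$), which forces $\hFDP(s;t)$ below $\alpha$ throughout $(0,s^*-\eta]$, but it requires a careful, though standard, Gaussian-tail estimate. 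With $s_*(\alpha;t)\to s^*$ established, continuity of $G_*$ gives $|\hS(\alpha;t)|/n\to G_*(s^*)$ and $|\hS(\alpha;t)\cap\{i:x_{0,i}=0\}|/n\to(1-\eps)s^*$, whence the FDP converges to $(1-\eps)s^*/G_*(s^*)=(1-\eps)\alpha$ almost surely, and bounded convergence yields \eqref{eq:FDRbound}.
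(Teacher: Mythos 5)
Your proposal is correct and follows the same line of attack as the paper's Appendix H: use state evolution (via Lipschitz sandwiching of indicators) to show the total and null empirical $p$-value CDFs converge a.s., deduce $s_*(\alpha;t)\to\bar s$ at the deterministic crossing of the limiting $\hFDP$ curve with level $\alpha$, then conclude by the continuous mapping theorem and dominated convergence. You are somewhat more explicit than the paper on two tacit points — the strict monotonicity of $s\mapsto s/G_*(s)$, which you rightly reduce to strict concavity of $H_a$ (indeed $H_a'(s)=e^{-b^2/2}\cosh(c(s)b)$ with $b=(\mu_t/\sigma_t)a$ and $c(s)=\Phi^{-1}(1-s/2)$, so $H_a''(s)=e^{-b^2/2}\,b\sinh(c(s)b)\,c'(s)<0$ for $a\neq0$), and the behaviour of $\hFDP$ at scale $s\sim 1/n$ where $\mu_t\neq 0$ is needed — but these are elaborations of the same argument rather than a different route.
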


\begin{remark}
\label{rem:FDR}
The procedure defined by threshold and rejection set in \myeqref{eq:thresh_rej_set} does not assume knowledge of the sparsity level $\eps$. If one knew $\eps$, then  an asymptotic false discovery rate of exactly $\alpha$ can be obtained by defining \cite{storey2002direct}
\[ \hFDP(s;t) \equiv \frac{ n (1-\eps)s}{1\vee \Big( \sum_{i=1}^n\ind_{\{p_i(t)\le s\}}\Big)}\, . \]
With the threshold and rejection set defined as in \myeqref{eq:thresh_rej_set}, such a procedure would have an asymptotic FDR equal to  $\alpha$, and higher power than the procedure using the estimator in \myeqref{eq:FDP_def}. 
\end{remark}


\def\ssc{{\sf c}}
\def\omu{{\overline{\mu}}}
\def\osigma{{\overline{\sigma}}}
 \def\ogamma{{\overline{\gamma}}}
 
\section{Estimation of rectangular rank-one matrices}
\label{sec:ExampleRectangular}

The algorithms and analysis developed in previous sections can be generalized to rectangular matrices. 
We illustrate this by generalizing the rank-one result of Theorem \ref{thm:Rank1}. 
We consider a data matrix $\bA\in\reals^{n\times d}$ given by
\begin{align}
\bA = \frac{\lambda}{n}\bu_0\bx_0^{\sT}+\bW\, ,\label{eq:RectangularSpikedRankOne}
\end{align}
where $(\bW_{ij})_{i\le n,j\le d}\sim_{iid}\normal(0,1/n)$. 
To be definite, we will think of sequences of instances indexed by $n$ and assume $n,d\to\infty$ with aspect ratio $d(n)/n\to \alpha\in (0,\infty)$.

We will make the following assumptions on the sequences of vectors
$\bu_0 = \bu_0(n)$, $\bx_0=\bx_0(n)$:
\begin{itemize}
\item[$(i)$] Their rescaled $\ell_2$-norms converge: $\lim_{n\to\infty}\|\bu_0(n)\|_2/\sqrt{n}= 1$,  $\lim_{n\to\infty}\|\bx_0(n)\|_2/\sqrt{d(n)}= 1$; 
\item[$(ii)$] The empirical distributions of the entries of $\bx_0(n)$ and $\bu_0(n)$ converges weakly to probability distributions $\nu_{X_0}$, $\nu_{U_0}$,  on $\reals$,
with unit second moment.
\end{itemize}

In analogy with the symmetric case, we initialize the AMP iteration by using the principal right singular vector of $\bA$, denoted by $\bphi_1$
(which we assume to have unit norm). In the present case, the phase transition for the principal singular vector takes place at 
$\lambda^2\sqrt{\alpha}=1$ \cite{paul2007asymptotics,BaiSilverstein}. Namely, if $\lambda^2\sqrt{\alpha}>1$ then the correlation between
$|\<\bx_0,\bphi_1\>|/\|\bx_0\|$ stays bounded away from zero as $n,d\to\infty$.

Setting $\bx^0= \sqrt{d}\bphi_1$ and  $g_{t-1}(\bu^{t-1}) =\bzero$, we consider the following AMP iteration:
\begin{align}
\bu^t & = \bA f_t(\bx^t) - \sb_t g_{t-1}(\bu^{t-1})\, ,\;\;\;\;\;\;\;
        \sb_t =\frac{1}{n}\sum_{i=1}^df_t'(x^t_i)\, ,\\
\bx^{t+1} & =\bA^{\sT} g_{t}(\bu^t)-\ssc_{t} f_t(\bx^t)\, ,\;\;\;\;\;\;\;\;\;\;
        \ssc_t =\frac{1}{n}\sum_{i=1}^ng_t'(u^t_i)\, .
\end{align}

The asymptotic characterization of this iteration is provided by the next theorem, which generalizes Theorem \ref{thm:Rank1}
to the rectangular case. 
\begin{theorem}\label{thm:Rank1-Rectangular}
Consider the $k=1$ spiked  matrix model  of Eq.~\eqref{eq:RectangularSpikedRankOne},  with $n,d\to\infty$, $d/n\to \alpha$.
Assume  $\bx_0(n)\in\reals^d$, $\bu_0(n)\in\reals^d$ to be two sequences of vectors satisfying assumptions $(i)$, $(ii)$ above, and $\lambda^2\sqrt{\alpha}>1$ . 
Consider the AMP iteration in Eq.~\eqref{eq:AMPspecial0}  with initialization  $\bx^0 = \sqrt{n} \, \bphi_1$ (where, without loss of generality $\<\bx_0,\bphi_1\> \ge 0$). 
Assume $f_t,g_t:\reals\to\reals$
to be Lipschitz continuous for each $t\in \naturals$. 

Let  $(\mu_t, \sigma_t)_{t \geq 0}$ be defined via the recursion
\begin{align}
\mu_{t+1} & = \lambda \E[ U_0\, g_t(\omu_t U_0+\osigma_t G)]\, ,  \;\;\;\;\;\;\;\;\;\;
 \sigma_{t+1}^2 = \E [g_t(\omu_t U_0 + \osigma_t G )^2]\, , \label{eq:Rect-X0}\\
\omu_{t} & = \lambda\alpha\, \E[ X_0 \, f_t(\mu_t X_0+\sigma_t G)]\, ,   \;\;\;\;\;\;\;\;\;\;
 \osigma_{t}^2 = \alpha\, \E [f_t(\mu_t X_0 + \sigma_t G )^2]\, , \label{eq:Rect-U0}
\end{align}
where $X_0\sim \nu_{X_0}$, $U_0\sim \nu_{U_0}$ and $G \sim \normal (0,1)$ are independent, and 
the initial condition is 
\begin{align}
\mu_0= \sqrt{\frac{1-\alpha^{-1}\lambda^{-4}}{1+\lambda^{-2}}}\, ,\;\;\;\;\; \sigma_0 = \sqrt{\frac{\lambda^{-2}+\alpha^{-1}\lambda^{-4}}{1+\lambda^{-2}}}\, .
\end{align}
(This is to be substituted in Eq.~\eqref{eq:Rect-U0} to yield $\omu_0,\osigma_0$.)

Then, for any function $\psi:\reals\times\reals\to\reals$  with $|\psi(\bx)-\psi(\by)|\le C(1+\|\bx\|_2+\|\by\|_2)\|\bx - \by\|_2$ for a universal constant $C>0$, the following holds almost surely for $t \geq 0$:
\begin{align}
\lim_{n \to \infty} \frac{1}{d(n)} \sum_{i=1}^{d(n)} \psi (x_{0,i},x^t_i) = \E \left\{ \psi( X_0, \mu_t X_0  +\sigma_t G) \right\} \ , \label{eq:rank1_SE_Rect_A}\\
\lim_{n \to \infty} \frac{1}{n} \sum_{i=1}^n \psi (u_{0,i},u^t_i) = \E \left\{ \psi( U_0, \omu_t U_0  +\osigma_t G) \right\} \ . \label{eq:rank1_SE_Rect_B}
\end{align}
\end{theorem}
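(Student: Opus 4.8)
The plan is to transcribe, into the rectangular setting, the proof of Theorem~\ref{thm:Rank1}; the only genuinely new analytic input is a rectangular analogue of the decoupling Lemma~\ref{lemma:Distr}. One could instead reduce to the symmetric case by passing to the self-adjoint dilation $\bM$, the $(n+d)\times(n+d)$ symmetric matrix with off-diagonal blocks $\bA,\bA^{\sT}$ and zero diagonal blocks: the rectangular AMP iteration introduced above is exactly a symmetric AMP run on $\bM$ with block--separable nonlinearities that alternate between the first $n$ and the last $d$ coordinates (with $\sb_t,\ssc_t$ the matching Onsager coefficients), the signal part of $\bM$ being a rank-two spike with eigenvalues $\pm\lambda\sqrt{\alpha}$ and eigenvectors $(\bu_0/\|\bu_0\|,\pm\,\bx_0/\|\bx_0\|)$, and $\bphi_1$ being (proportional to) the $\reals^d$-block of the leading eigenvector of $\bM$. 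However the dilated noise is not a GOE matrix — its spectrum follows a symmetrized Marchenko--Pastur law — so this route needs the rectangular counterpart of our main theorem (established in the supplementary material) rather than Theorem~\ref{thm:Rank1} verbatim. Below I describe the direct argument.

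\emph{Decoupling and the initial condition.} For $\lambda^2\sqrt{\alpha}>1$ the top singular value $s_1=s_1(\bA)$ separates from the bulk \cite{paul2007asymptotics}, and the rectangular version of Lemma~\ref{lemma:Distr} provides an approximate conditional representation $\bA\approx s_1\,\bphi_1^{{\rm L}}\bphi_1^{\sT}+\bPp_{\bphi_1^{{\rm L}}}\bA^{{\rm new}}\bPp_{\bphi_1}$, valid given the leading singular triple $(s_1,\bphi_1,\bphi_1^{{\rm L}})$, where $\bA^{{\rm new}}$ has i.i.d.\ $\normal(0,1/n)$ entries independent of that triple, and the error is negligible in the relevant norms as $n\to\infty$. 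Feeding this into the rectangular BBP / singular-vector asymptotics \cite{benaych2012singular} shows that $s_1$, $\<\bphi_1,\bx_0\>/\|\bx_0\|$ and $\<\bphi_1^{{\rm L}},\bu_0\>/\|\bu_0\|$ converge to explicit constants, that the component of $\bphi_1$ orthogonal to $\bx_0$ is asymptotically isotropic Gaussian, and that it is asymptotically independent of $(\bx_0,\bu_0,\bA^{{\rm new}})$. This is summarised by $\sqrt{d}\,\bphi_1=\mu_0\,\bx_0+\sigma_0\,\bg+o(\sqrt d)$ in $\ell_2$, with $\bg$ asymptotically $\normal(\bzero,\id_d)$ independent of the signal and of $\bA^{{\rm new}}$, and $(\mu_0,\sigma_0)$ exactly the values in the statement (the identity $\mu_0^2+\sigma_0^2=1$ reflecting $\|\bphi_1\|=1$).

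\emph{Reduction to AMP with fresh noise, and identification of the recursion.} Substituting the decoupled form of $\bA$ and the representation of $\bphi_1$ into the iteration, $(\bu^t,\bx^{t+1})$ become a fixed Lipschitz, block-separable functional of $\bx_0,\bu_0$, the fresh Gaussians $\bg,\bA^{{\rm new}}$, and the vectors $\bA^{{\rm new}}\bphi_1$, $(\bA^{{\rm new}})^{\sT}\bphi_1^{{\rm L}}$ produced by the projections: the term $s_1\bphi_1^{{\rm L}}\bphi_1^{\sT}$ is a bounded-rank perturbation accommodated by the generalized-AMP state-evolution theorem, and $-\sb_tg_{t-1}(\bu^{t-1})$, $-\ssc_tf_t(\bx^t)$ are precisely the debiasing corrections it demands. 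Invoking the standard state evolution for generalized rectangular AMP driven by i.i.d.\ Gaussian noise \cite{BM-MPCS-2011,berthier2017state}, carrying $\bx_0,\bu_0,\bg$ as frozen side information, yields for admissible $\psi$ that $\tfrac1d\sum_i\psi(x_{0,i},x^t_i)\to\E[\psi(X_0,\mu_tX_0+\sigma_tG)]$ and $\tfrac1n\sum_i\psi(u_{0,i},u^t_i)\to\E[\psi(U_0,\omu_tU_0+\osigma_tG)]$ for some sequence $(\mu_t,\sigma_t,\omu_t,\osigma_t)$. Reading it off the map: in $\bu^t=\bA f_t(\bx^t)-\sb_tg_{t-1}(\bu^{t-1})=\tfrac{\lambda}{n}\<\bx_0,f_t(\bx^t)\>\bu_0+(\text{rank-one})+\bW f_t(\bx^t)-(\cdots)$, the coefficient of $\bu_0$ tends to $\lambda\alpha\,\E[X_0f_t(\mu_tX_0+\sigma_tG)]=\omu_t$ (using $d/n\to\alpha$ and the $\psi$-limit with $\psi(a,b)=af_t(b)$) while the fresh-noise part has per-coordinate variance $\to\alpha\,\E[f_t(\mu_tX_0+\sigma_tG)^2]=\osigma_t^2$; symmetrically $\bx^{t+1}=\bA^{\sT}g_t(\bu^t)-\ssc_tf_t(\bx^t)$ gives $\mu_{t+1}=\lambda\,\E[U_0g_t(\omu_tU_0+\osigma_tG)]$ and $\sigma_{t+1}^2=\E[g_t(\omu_tU_0+\osigma_tG)^2]$. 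Together with the initial condition this is exactly Eqs.~\eqref{eq:Rect-X0}--\eqref{eq:Rect-U0}; a stability/continuity argument as in Appendix~\ref{sec:ProofMain} (Lipschitz $f_t,g_t$ make the recursion continuous in its inputs) absorbs the conditioning error and upgrades convergence in probability to almost-sure convergence.

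\emph{Main obstacle.} As in the symmetric case, the crux is the decoupling step: establishing the conditional representation of the rectangular $\bA$ given its leading singular triple with quantitative error control, and in particular showing that the part of $\bphi_1$ transverse to $\bx_0$ behaves like fresh Gaussian noise independent of $\bA^{{\rm new}}$. The rectangular geometry — two projections rather than one, the $\alpha$-dependence of the BBP constants, and (via the dilation) interaction with a rank-two effective spike and a Marchenko--Pastur bulk — makes the bookkeeping heavier than for Theorem~\ref{thm:Rank1}, but introduces no conceptually new difficulty once the decoupling lemma is in place.
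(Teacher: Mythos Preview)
Your proposal is correct. The paper does not spell out a proof of Theorem~\ref{thm:Rank1-Rectangular}; it simply remarks (after Theorem~\ref{thm:Main}) that the rectangular statements follow from the symmetric ones by ``a standard reduction (see, for instance, \cite[Section~6]{berthier2017state})'', and Appendix~\ref{app:RectangularGeneral} records the general rectangular result without further argument. That reduction is exactly the self-adjoint dilation you describe in your first paragraph: one embeds the rectangular iteration into a symmetric AMP on the $(n{+}d)\times(n{+}d)$ block matrix, and the block-structured (non-GOE) noise is already accommodated by the state-evolution framework of \cite{javanmard2013state,berthier2017state}. You instead opt for the direct transcription of the rank-one symmetric proof in Appendix~\ref{sec:ProofMain}: a rectangular conditioning lemma for the leading singular triple $(s_1,\bphi_1,\bphi_1^{\rm L})$, the auxiliary iteration driven by fresh noise $\bA^{\rm new}$, and the matching of the $(\mu_t,\sigma_t,\omu_t,\osigma_t)$ recursion. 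Both routes are sound and both hinge on the same genuinely new ingredient---the conditional decoupling of the outlier singular triple---so neither is a free corollary of Theorem~\ref{thm:Rank1}; the paper's reduction trades the two-projection bookkeeping for the block-noise machinery, while your direct route is more self-contained but correspondingly heavier in the places you flag in your final paragraph.
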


As a special class of examples covered by this setting, we can consider the case in which we are given i.i.d. 
Gaussian samples $(\by_i)_{i\le n}\sim\normal(\bzero,\bSigma)$, with covariance matrix $\bSigma = \rho^2\tbx_0\tbx_0^{\sT}+\id_d$ where $\tbx_0 = \bx_0/\sqrt{d}$.
Letting $\bA$ be the matrix with $i$-th row equal to $\by_i/\sqrt{n}$,  this takes the form of Eq.~\eqref{eq:RectangularSpikedRankOne},
with $\bu_{0}\sim\normal(0,\id_n)$, and $\lambda = \rho/\sqrt{\alpha}$. Notice that the sequence of random Gaussian vectors $\bu_0(n)$, $n\ge 1$ 
satisfies conditions $(i)$, $(ii)$ above almost surely, with limit distribution $\nu_{U_0}$ equal to the standard Gaussian measure.

In this case, the optimal choice of the function $g_t$ in Eq.~\eqref{eq:Rect-X0} is of course linear: $g_t(u) = a_tu$ for some $a_t>0$. The value
of the constant $a_t$ is immaterial, because it only amounts to a common rescaling of the  $\mu_t,\sigma_t$, which can be compensated
by a redefinition of $f_t$ in Eq.~\eqref{eq:Rect-U0}. We set $a_t = \lambda\omu_t/(\omu_t^2+\osigma_t^2)$. Substituting in Eq.~\eqref{eq:Rect-X0},
we obtain $\mu_{t+1} = \sigma_{t+1}^2=\gamma_{t+1}$, where
\begin{align}
\gamma_{t+1} = \frac{\lambda^2\ogamma_t^2}{1+\ogamma_t^2}\, ,\label{eq:GaussianCov-1}
\end{align}
where $\ogamma_t= \omu_t^2/\osigma_t^2$. Taking the ratio of the two equations in \eqref{eq:Rect-U0}, we obtain
\begin{align}
\ogamma_{t} = \lambda^2\alpha \frac{\E\{X_0 f_t(\gamma_t X+\sqrt{\gamma_t}G)\}^2}{\E\{f_t(\gamma_t X+\sqrt{\gamma_t}G)^2\}}\, . \label{eq:GaussianCov-2}
\end{align}
We thus reduced the problem of covariance estimation in the spiked model $\bSigma = \rho^2\tbx_0\tbx_0^{\sT}+\id_d$, to
the analysis of a one-dimensional recursion defined by Eqs.~\eqref{eq:GaussianCov-1}, \eqref{eq:GaussianCov-2}. 
%
\section{Degenerate cases and non-concentration}
\label{sec:Degenerate}

The spectral  initialization at unstable fixed points leads to a new phenomenon
that is not captured by previous theory \cite{BM-MPCS-2011}: the evolution of empirical averages (e.g. estimation accuracy) does not always concentrate
around a deterministic value. 
Our main result,  Theorem \ref{thm:Main} below, provides a description of this phenomenon by establishing a state evolution limit that is 
\emph{dependent on the random initial condition}. The initial condition converges in distribution to a well defined limit, which--- together with state evolution---yields a complete 
characterization of the asymptotic behavior of the message passing algorithm.

The non-concentration phenomenon arises when the deterministic low-rank component in Eq.~(\ref{eq:SpikedDef}) 
has   degenerate eigenvalues. This is unavoidable in cases in which
the underlying low-rank model to be estimated has symmetries. 

Here we illustrate this phenomenon on a simple model that we will refer to as the Gaussian Block Model (GBM).
For $q\ge 3$ a fixed integer, let $\bsigma=  (\sigma_1,\dots,\sigma_n)$ be a vector of vertex labels with $\sigma_i\in\{1,\dots,q\}$ and consider deterministic 
matrix $\bA_0\in\reals^{n\times n}$ (with $\rank(\bA_0) = q-1$) defined by:
\begin{align}
A_{0,ij}= \begin{cases}
(q-1)/n & \mbox{if $\sigma_i = \sigma_j$}\\
-1/n & \mbox{otherwise.}\\
\end{cases}
\end{align}
We assume  the vertex labeling to be perfectly balanced. i.e. $\sum_{i=1}^n\bfone_{\sigma_i=\sigma}=n/q$ for $\sigma\in\{1,\dots,q\}$:
While most of our discussion holds under an approximate balance condition, this assumption avoids some minor technical complications.
Notice that $\bA_0$ is an orthogonal  projector on a subspace
$\cV_n\in\reals^{n}$ of dimension $q-1$. 
We observe the noisy matrix (with noise $\bW\sim\GOE(n)$)
\begin{align}
\bA = \lambda \bA_0+\bW\, ,\label{eq:BlockMatrix}
\end{align}
and would like to estimate $\bA_0$ from these noisy observations. The matrix $\bA$ takes the form of Eq.~(\ref{eq:SpikedDef}) 
with $k=q-1$, $\lambda_1=\dots=\lambda_k = \lambda$ and $\bv_1$, \dots, $\bv_k$ an orthonormal basis of the space $\cV_n$.
We will assume $\lambda >1$ so that $k_* = k$. In particular, for $q\ge 3$, the low-rank signal has degenerate eigenvalues.

We use the following AMP algorithm to estimate $\bA_0$. We compute the top $k$ eigenvectors of $\bA$, denoted by $\bphi_1,\dots,\bphi_k\in\reals^n$
and generate $\bx^t\in\reals^{n\times q}$ for $t \geq 0$, according to
\begin{align}
\bx^0&= [\sqrt{n}\bphi_1|\cdots|\sqrt{n}\bphi_k|\bzero]\, ,\label{eq:BlockSpectral}\\
\bx^{t+1} & = \bA f(\bx^t) - f(\bx^{t-1})\, \sB_t^{\sT}\, ,\label{eq:AMP-Block}
\end{align}
where the  `Onsager coefficient' $\sB_t\in \reals^{q\times q}$ is a  matrix given by
\begin{align}
\sB_t = \frac{1}{n}\sum_{i=1}^n\frac{\partial f}{\partial \bx}(\bx^t_i,y_i)\,.
\end{align}
Here $\frac{\partial f}{\partial \bx}\in \reals^{q\times q}$ denotes the Jacobian matrix of the function $f:\reals^q\to\reals^q$. Furthermore, the function $f:\reals^q\to\reals^q$ is defined by
letting, for $\sigma\in\{1,\dots,q\}$:
\begin{align}
f(\bz)_\sigma = \lambda\left[\frac{q e^{z_{\sigma}}}{\sum_{\tau=1}^q e^{z_{\tau}}}-1\right]\, ,\label{eq:FunctionBlock}
\end{align}
and $f(\bx)$ is defined for $\bx\in\reals^{n\times q}$ by applying the same function row by row. 
This choice of the function $f$ corresponds to Bayes-optimal estimation as can be deduced from the state evolution analysis below: we will not discuss this point in  detail here.

The output $\bx^t$ after $t$ iterations of (\ref{eq:AMP-Block})  can be interpreted as an estimate of the labels $\bsigma$
in the following sense. Let $\bx_0\in\reals^{n\times q}$ be the matrix whose $i$-th row is  $\bx_{0,i}= \projp\be_{\bsigma_i}$, with $\projp\in\reals^{q\times q}$
the projector orthogonal to the all ones vector, and $\be_1,\dots,\be_q$ the canonical basis in $\reals^q$.
Note that $\bA_0= (q/n)\bx_0\bx_0^{\sT}$. Then $\bx^t$ is an estimator of $\bx_0$ (up to a permutation of the labels' alphabet
$\{1,\dots,q\}$).

Let ${\sf S}_q$ be the group of $q\times q$ permutation matrices. 
We evaluate the estimator $\bx^t$ via the overlap
\begin{align}
\Overlap_n(\lambda;t) \equiv \max_{\bPi\in {\sf S}_q}\frac{\<\bx^t,\bx_0\bPi\>}{\|\bx^t\|_F\|\bx_0\|_F}\, ,
\end{align}
where $\< \cdot, \cdot\>$ denotes the Frobenius inner product. In Figure \ref{fig:Degenerate}, we plot the evolution of the overlap in two sets of numerical simulations, 
for $q=3$ and $q=4$. Each curve is obtained by running AMP (with spectral initialization)
on a different realization of the random matrix $\bA$. The non-concentration phenomenon is quite clear: 
\begin{itemize}
\item For fixed number of iterations $t$ and large $n$,
the quantity $\Overlap_n(\lambda;t)$ has large fluctuations, that do not seem to vanish as $n\to\infty$.
\item Despite this, the algorithm is effective in reconstructing the signal: after $t= 10$ iterations, the accuracy achieved
is nearly independent of the initialization.
\end{itemize}

\begin{figure}[t!]
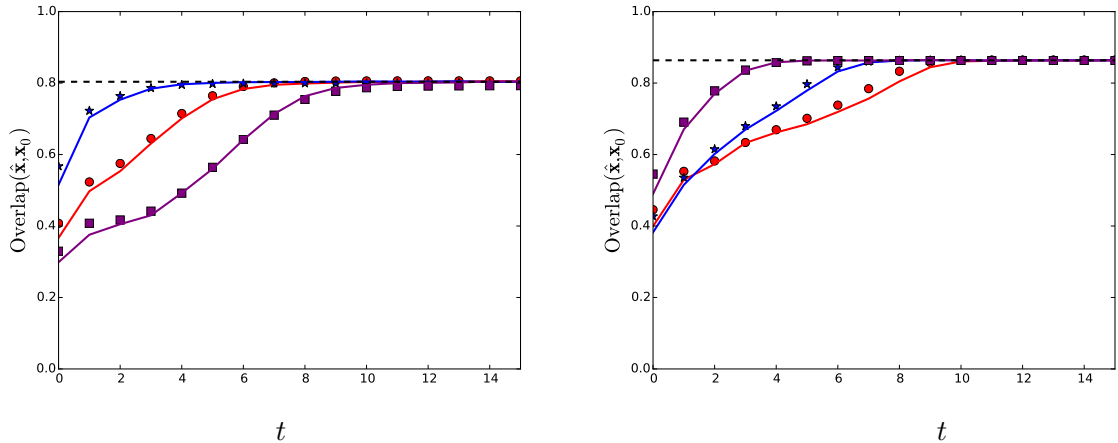

\includegraphics[width=0.48\textwidth]{block_k3_lam15.pdf}\hspace{-0.15cm}
\includegraphics[width=0.48\textwidth]{block_k4_lam175.pdf}
\put(-90,-10){$t$}
\put(-340,-10){$t$}
\caption{Estimation in the Gaussian Block Model of Eq.~(\ref{eq:BlockMatrix}) using the AMP algorithm with spectral initialization of
Eqs.~(\ref{eq:BlockSpectral}), (\ref{eq:AMP-Block}). We plot the reconstruction accuracy (overlap) as a function of the number of iterations
for $q=3$, $\lambda=1.5$, $n=6000$ (left frame),  and $q=4$, $\lambda=1.75$, $n=8000$ (right frame). Each set of symbols corresponds to a different realization 
of the random matrix $\bA$, and curves report the corresponding prediction of Theorem \ref{thm:Block}. Dashed black lines report the Bayes optimal accuracy as per
\cite{barbier2016mutual,lelarge2016fundamental}.} 
\label{fig:Degenerate}
\end{figure}

The empirical data in Figure \ref{fig:Degenerate} are well described by the state evolution prediction 
that is shown as continuous curves in the same figure. 
In this case, state evolution operates on the pair of matrices $\bM_t, \bQ_t\in\reals^{q\times q}$, which
are updated according to 
\begin{align}
\bM_{t+1}& = \lambda\E\Big\{f\big(q\bM_t\be_{\sigma} +\bQ_t^{1/2}\bG\big)\, \be_{\sigma}^{\sT}\projp\Big\}\,, \label{eq:SE_Block_1}\\
\bQ_{t+1}& = \E\Big\{f\big(q\bM_t\be_{\sigma} +\bQ_t^{1/2}\bG\big)\, f\big(q\bM_t\be_{\sigma} +\bQ_t^{1/2}\bG\big)^{\sT}\Big\}\, ,\label{eq:SE_Block_2}
\end{align}
where $f:\reals^q\to\reals^q$ is defined as per Eq.~(\ref{eq:FunctionBlock}), and expectation is with respect to 
$\sigma$ uniform in $\{1,\dots,q\}$ independent of $\bG\sim\normal(0,\id_q)$. Note that $\bQ_t$ is symmetric and 
both $\bQ_t\bfone= \bM_t\bfone = \bfone^{\sT} \bM_t = 0$ for all $t\ge 1$. 

The state evolution prediction for the present model is provided by the next theorem, which is proved in Appendix \ref{app:Block}.
\begin{theorem}\label{thm:Block}
Let $\bA\in\reals^{n\times n}$ be the random matrix of Eq.~(\ref{eq:BlockMatrix}) with $\lambda>1$, and let $\bphi_1,\dots,\bphi_k$ be its top $k$ eigenvectors.
Denote by $\bx^t$ the sequence of estimates produced by the AMP algorithm of Eq.~(\ref{eq:BlockSpectral}) with the spectral initialization in Eq.~(\ref{eq:AMP-Block}).

Let $\{\bM_t, \bQ_t\}_{t\ge 0}$ be the state evolution iterates with initialization $\bM_0= (\bx^0)^{\sT}\bx_0/n$ and $\bQ_0=\lambda^{-1}\diag(1,1,\dots,1,0)$ 
Then, for any function $\psi:\reals^{2q}\to \reals$ with $|\psi(\bx)-\psi(\by)|\le C(1+\|\bx\|_2+\|\by\|_2)\|\bx-\by\|_2$,  we have, almost surely
\begin{align}
\lim_{n\to\infty}\left|\frac{1}{n}\sum_{i=1}^n\psi(\bx_i^t,\bx_{0,i}) - \E\big\{\psi(q\bM_t\be_{\sigma}+\bQ_t^{1/2}\bG,\projp\be_{\sigma})\big\}\right|= 0\, . \label{eq:SE_Block}
\end{align}
where expectation is with respect to  $\sigma$ uniform in $\{1,\dots,q\}$ independent of $\bG\sim\normal(0,\id_q)$. 

Further as $n\to\infty$, $\bM_0$ converges in distribution as 
\begin{align}
\bM_0 \stackrel{{\rm d}}{\Rightarrow} \sqrt{q^{-1}\left(1-\lambda^{-2}\right)}\, 
\left[\begin{matrix}
\bO^{\sT}_{(q-1)\times q}\\
\bzero_{1\times q}
\end{matrix}\right]\, ,\label{eq:InitBlock}
\end{align}
where $\bO\in \reals^{q\times (q-1)}$ is Haar distributed orthogonal matrix with column space orthogonal to $\bfone$.
\end{theorem}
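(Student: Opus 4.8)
The plan is to derive Theorem \ref{thm:Block} as a special case of the general state-evolution result (Theorem \ref{thm:Main}), applied to the spiked model \eqref{eq:BlockMatrix} with $k=q-1$ and all spike eigenvalues equal to $\lambda$. The main work is of two kinds: (1) translating the general $(\bM_t,\bQ_t)$-recursion of Theorem \ref{thm:Main} into the explicit form \eqref{eq:SE_Block_1}--\eqref{eq:SE_Block_2} written in terms of the label variable $\sigma$; and (2) identifying the limiting distribution of the random initial matrix $\bM_0$, which is the genuinely new ingredient in the degenerate case.

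\medskip
\noindent\textbf{Step 1: Reduction to the general theorem.} I would first check that the AMP iteration \eqref{eq:BlockSpectral}--\eqref{eq:AMP-Block}, with the row-separable nonlinearity $f:\reals^q\to\reals^q$ of \eqref{eq:FunctionBlock}, fits the format of the general AMP covered by Theorem \ref{thm:Main}: the Onsager matrix $\sB_t$ is exactly the empirical average of the Jacobians, and $f$ is Lipschitz (it is a smooth bounded-gradient function of a softmax, so the entries and all partial derivatives are uniformly bounded). The spike here is $\bv_1,\dots,\bv_k$, an orthonormal basis of the $(q-1)$-dimensional subspace $\cV_n$, with common eigenvalue $\lambda>1$, so $k_*=k$ and all outlier eigenvalues are supercritical. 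Theorem \ref{thm:Main} then gives a state-evolution limit of the form \eqref{eq:SE_Block}, with the state matrices evolving by the general recursion; I would specialize that recursion using $\bx_{0,i}=\projp\be_{\sigma_i}$ and the identity $\bA_0=(q/n)\bx_0\bx_0^{\sT}$, noting $\|\bx_{0,i}\|^2 = 1-1/q$ and $\frac1n\sum_i \bx_{0,i}\bx_{0,i}^{\sT} = \frac{1}{q}\projp$ by the perfect-balance assumption. Matching the bilinear form $\lambda\<\bx_0, f(\bx^t)\>$-type terms that appear in \eqref{eq:LongAMP} to the softmax structure, and averaging over the uniform label $\sigma$, yields \eqref{eq:SE_Block_1}; squaring and averaging the noise part of the iterate yields \eqref{eq:SE_Block_2}. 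Along the way I would verify the invariances $\bM_t\bfone = \bfone^\sT\bM_t=\bQ_t\bfone=0$ for $t\ge1$ by direct inspection: $f(\bz)$ is not in $\mathrm{span}(\bfone)$ in general, but $\be_\sigma^\sT\projp$ kills the $\bfone$-component on the right in \eqref{eq:SE_Block_1}, and $\projp$ appearing in $\bx_{0,i}$ forces the left nullspace too, while $\bQ_t\bfone=0$ follows because the range of the iterate noise lies in the range of $\projp$ asymptotically — this is where the structure $\bA_0 = \projp$-type projector is used.

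\medskip
\noindent\textbf{Step 2: The random initial condition.} The initialization $\bx^0 = [\sqrt n\bphi_1|\cdots|\sqrt n\bphi_k|\bzero]$ uses the top $k$ eigenvectors of $\bA$. In the non-degenerate case these would have deterministic overlaps with the spike, but here the degeneracy means the eigenspace spanned by $\bphi_1,\dots,\bphi_k$ is a random rotation (within $\cV_n$, up to bulk corrections) of the true signal subspace, with no canonical alignment between individual eigenvectors and individual basis vectors $\bv_j$. Concretely, $\bM_0 = (\bx^0)^\sT \bx_0/n$ has $j$-th row $\sqrt n\,\bphi_j^\sT\bx_0/n$ (and last row zero). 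I would invoke the BBAP-type isotropic delocalization results for deformed GOE (the references \cite{benaych2011eigenvalues,knowles2013isotropic} already cited) to show: each $\bphi_j$ lies, up to an $o(1)$ error, in the signal subspace $\cV_n$; the squared length of its projection is $1-\lambda^{-2}$; and, crucially, the matrix of inner products $(\sqrt q\,\bphi_j^\sT \bv_\ell)_{j\le k,\ell\le k}$ converges in distribution to a Haar-random $k\times k$ orthogonal matrix scaled by $\sqrt{1-\lambda^{-2}}$. Combining this with $\bx_0 = $ (the $\bv_\ell$'s assembled through $\projp\be_\sigma$) and the change of basis between $\{\be_\sigma\}$ and the orthonormal basis of $\cV_n$, one gets \eqref{eq:InitBlock}: the $(q-1)\times q$ block is $\sqrt{q^{-1}(1-\lambda^{-2})}\,\bO^\sT$ for $\bO$ Haar on the Stiefel manifold of $q\times(q-1)$ matrices with columns orthogonal to $\bfone$, and the last row vanishes. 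I would also check the claimed initialization $\bQ_0 = \lambda^{-1}\diag(1,\dots,1,0)$ is consistent with the general theorem's formula for the initial second-moment matrix of the iterate, using that the bulk contribution of $\bW$ to $\bA\bphi_j$ has variance $\lambda^{-1}$ in the supercritical regime — this last row being zero reflects the padding $\bzero$ in $\bx^0$.

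\medskip
\noindent\textbf{Step 3: Conditioning on the random initialization and closing the argument.} Having identified the distributional limit of $(\bM_0,\bQ_0)$, the state-evolution limit \eqref{eq:SE_Block} follows from Theorem \ref{thm:Main} applied \emph{conditionally} on the realization of $\bphi_1,\dots,\bphi_k$: for each fixed (typical) realization, the general theorem provides convergence of the empirical average of $\psi$ to $\E\{\psi(q\bM_t\be_\sigma + \bQ_t^{1/2}\bG, \projp\be_\sigma)\}$ with the deterministic recursion started from that realization's $\bM_0$; since the map from $\bM_0$ to this limit is continuous, the overall statement holds almost surely, and the fluctuations in $\Overlap_n(\lambda;t)$ at small $t$ are precisely inherited from the fluctuations of $\bM_0$.

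\medskip
The main obstacle I anticipate is \textbf{Step 2}, namely pinning down the Haar-orthogonal limiting law of the overlap matrix $(\bphi_j^\sT\bv_\ell)_{j,\ell}$ in the degenerate spectrum. With degenerate eigenvalues there is no individual-eigenvector CLT to quote directly; instead one must argue via the rotational invariance of the problem restricted to the signal subspace — conditionally on the bulk, the restriction of $\bA$ to an orthonormal basis of $\cV_n$ is (asymptotically) an $O(k)$-invariant random matrix, forcing the overlap matrix to be conditionally Haar-distributed — together with an isotropic local law to control the interaction between the $k$ outlier directions and the bulk. Making this decoupling rigorous, and verifying that the error terms are $o(1)$ uniformly so they do not corrupt the AMP state-evolution induction, is the technical heart of the proof; everything else is bookkeeping on the softmax nonlinearity and the balanced-label combinatorics.
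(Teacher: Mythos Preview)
Your overall strategy---derive Theorem~\ref{thm:Block} as a corollary of Theorem~\ref{thm:Main}---is correct and is exactly what the paper does. However, you misidentify where the work lies.

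Your Step~2 is not an obstacle at all. The Haar distribution of the overlap matrix $\bProd_0=\bPhi^{\sT}\bV$ in the degenerate case is \emph{already contained in the statement of Theorem~\ref{thm:Main}}: its last sentence asserts that $\bProd$ converges in distribution to $(\id-\bLambda_S^{-2})^{1/2}\bR$ with $\bR$ Haar on $\cR(\bLambda)$, and when all $\lambda_i$ are equal, $\cR(\bLambda)$ is the full orthogonal group in $k$ dimensions. This is proved once and for all in Lemma~\ref{lem:ConvergenceEigenvectors} via the $O(k)$-invariance of the GOE restricted to the signal eigenspace; you simply quote it. The limit \eqref{eq:InitBlock} then follows by writing $\bM_0=(\bx^0)^{\sT}\bx_0/n$ in terms of $\bProd_0$ and a fixed change of basis (see below).

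The bookkeeping you gloss over is in Step~1: the dimension mismatch between Theorem~\ref{thm:Main}'s $\bU\in\reals^{k_*}=\reals^{q-1}$ and the Block model's $\bx_{0,i}=\projp\be_{\sigma_i}\in\reals^q$. The paper resolves this via the SVD $\bx_0=\sqrt{n/q}\,\bV\bS^{\sT}$, where $\bS\in\reals^{q\times k}$ has orthonormal columns spanning $\bfone^{\perp}$, so that the $\bU$-variable of Theorem~\ref{thm:Main} is $\tbv_i=\sqrt{q}\,\bS^{\sT}\projp\be_{\sigma_i}=\sqrt{q}\,\bfs_{\sigma_i}$ (rows of $\bS$). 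Theorem~\ref{thm:Main} then gives state evolution with $\bU=\sqrt{q}\,\bfs_\sigma$; the substitution $\tbM_t\equiv q^{-1/2}\bM_t\bS^{\sT}$, together with $\bS\bS^{\sT}=\projp$ and $\bfs_\sigma=\bS^{\sT}\projp\be_\sigma$, converts the general recursion \eqref{eq:Mt_def}--\eqref{eq:Qt_def} into \eqref{eq:SE_Block_1}--\eqref{eq:SE_Block_2}, identifies $\tbM_0=q^{-1/2}\bProd_0\bS^{\sT}=(\bx^0)^{\sT}\bx_0/n$, and---applying $\psi(\bx,\by)=\tilde\psi(\bx,\bS\by)$ in \eqref{eq:main_SE_result}---yields \eqref{eq:SE_Block}. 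Your ``matching the bilinear form to the softmax structure'' is this change of coordinates; without the matrix $\bS$ you will not write it cleanly, nor will the invariances $\tbM_t\bfone=0$, $\bQ_t\bfone=0$ be transparent (they are automatic since $\tbM_t\projp=\tbM_t$ by construction). Everything else, including your Step~3 (continuity of $(\bM_t,\bQ_t)$ in the initialization), matches the paper.
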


The continuous curves in Figure \ref{fig:Degenerate} are obtained as
described in the last theorem. For each experiment we generate a random matrix $\bA$ according to Eq.~(\ref{eq:BlockMatrix}),  compute 
the spectral initialization of Eq.~(\ref{eq:BlockSpectral}) and set $\bM_0= (\bx^0)^{\sT}\bx_0/n$. We then compute the state evolution sequence $\{(\bM_t,\bQ_t)\}_{t\ge 0}$
via Eqs.~(\ref{eq:SE_Block_1}), (\ref{eq:SE_Block_2}), and use Eq.~(\ref{eq:SE_Block}) to predict the evolution of the overlap. 
The variability in the initial condition $\bM_0$ leads to a variability in the predicted  trajectory $\{(\bM_t,\bQ_t)\}_{t\ge 0}$ that matches well with the empirical data.

Finally, as mentioned above, AMP converges to an accuracy that is roughly independent of the matrix realization for large $t$, and matches the Bayes optimal prediction
of \cite{barbier2016mutual,lelarge2016fundamental}.  While a full explanation of this phenomenon goes beyond the scope of the present paper, this behavior can be also explained by
Theorem \ref{thm:Block}: the  initialization $\bM_0$ breaks the symmetry between the $q$ blocks uniformly, as per Eq.~(\ref{eq:InitBlock}). Once the symmetry is broken, the
state evolution  iteration of Eqs.~(\ref{eq:SE_Block_1}), (\ref{eq:SE_Block_2}) converges to a fixed point that is unique up to permutations.


\section{Main result}
\label{sec:Symmetric}

\subsection{Notations and definitions}
\label{sec:Notation}

We say that a function $\psi:\reals^d\to \reals$ is pseudo-Lipschitz of order $k$ (and write $\psi\in \PL(k)$)
if there exists a constant $L$ such that $|\psi(\bx)-\psi(\by)|\le L(1+(\|\bx\|/\sqrt{d})^{k-1}+(\|\by\|/\sqrt{d})^{k-1})\|\bx-\by\|_2/\sqrt{d}$.

Recall that a sequence of probability distributions $\nu_n$ on $\reals^m$ \emph{converges weakly} to 
$\nu$ ($\nu_n\toweak \nu$) if, for any bounded Lipschitz function $\psi:\reals^m\to\reals$,
$\lim_{n\to\infty}\E \psi(\bX_n) = \E \psi(\bX)$ where expectation is with respect to $\bX_n\sim \nu_n$, $\bX\sim \nu$.
Given a (deterministic) sequence of matrices $\bZ_n\in\reals^{n \times d}$ indexed by $n$ (with $d\ge 1$ fixed), we 
say that the empirical distribution of $\bZ_n$ converges weakly to a probability distribution $\nu$ on $\reals^d$ if,
letting $\bz_{i}=\bZ_n^{\sT}\be_i$ denote the $i$-th row of $\bZ_n$, for  each $i$ we have 
\begin{align}
\frac{1}{n}\sum_{i=1}^n \delta_{\bz_{n,i}} \toweak \nu\, .
\end{align}
Equivalently, $\lim_{n\to\infty}n^{-1}\sum_{i=1}^n\psi(\bz_{i}) = \E\psi(\bz)$ for $\bz\sim \nu$ and any bounded Lipschitz function $\psi$.
We apply the same terminology if we are given $d$ vectors $(\bz^{(n)}_1,\dots, \bz^{(n)}_d)$, where $\bz^{(n)}_\ell\in\reals^n$:
in this case $\bZ_n$ is the matrix with columns $\bz^{(n)}_1,\dots, \bz^{(n)}_d$.

Given two probability measures $\mu$ (on the space $\cX$) and $\nu$ (on the space $\cY$), a coupling $\rho$ of
$\mu$ and $\nu$ is a probability distribution on $\cX\times\cY$ whose first marginal coincides with $\mu$ and second
coincides with $\nu$. We denote the set of couplings of $\mu,\nu$ by $\cC(\mu,\nu)$.
For $k\ge 1$, the Wasserstein-$k$ ($W_k$) distance between two probability measures $\mu$, $\nu$ on $\reals^d$
is defined by
\begin{align}
W_k(\mu,\nu) \equiv \inf_{\rho\in\cC(\mu,\nu)}\E_{(\bX,\bY)\sim \rho}\big\{\|\bX-\bY\|_2^k\}^{1/k}\, ,\label{eq:WassersteinDef}
\end{align}
where the infimum is over all the couplings of $\mu$ and $\nu$.
A sequence of probability distributions $\nu_n$ on $\reals^m$ \emph{converges in $W_k$} to 
$\nu$ ($\nu_n\towa{k} \nu$) if $\lim_{n\to\infty} W_k(\nu_n,\nu) = 0$. An equivalent definition is that, for any 
$\psi\in\PL(k)$, $\lim_{n\to\infty}\E \psi(\bX_n) = \E \psi(\bX)$ where expectation is with respect to $\bX_n\sim \nu_n$, $\bX\sim \nu$
\cite[Theorem 6.9]{villani2008optimal}. 

Generalizing from the definitions introduced for weak convergence, given sequence of matrices $\bZ_n\in\reals^{n\times d}$ indexed by $n$ (with $d\ge 1$ fixed), we 
say that the empirical distribution of $\bZ_n$ converges in $W_k$ to  $\nu$ (a probability distribution on $\reals^d$), if
letting $\bz_{i}=\bZ_n^{\sT}\be_i$ denote the $i$-th row of $\bZ_n$,
\begin{align}
\frac{1}{n}\sum_{i=1}^n \delta_{\bz_{n,i}} \towa{k} \nu\, .
\end{align}
Equivalently, $\lim_{n\to\infty}n^{-1}\sum_{i=1}^n\psi(\bz_i) = \E\psi(\bz)$ for any $\psi\in\PL(k)$ (where $\bz\sim \nu$). Again the
same terminology is used for $d$-tuples of vectors $(\bz^{(n)}_1,\dots, \bz^{(n)}_d)$.

We will typically use upper case bold symbols for matrices (e.g. $\bA$, $\bB$,\dots), lower case bold for vectors (e.g. $\bu$, $\bv,\dots$) and
lower case plain font for scalars (e.g. $x,y,\dots$). However, we will often denote random variables and random vectors using upper case.

We often consider vectors (or matrices) whose elements are indexed by arbitrary finite sets. For instance, given finite sets $S_1, S_2$,
$\bQ\in \reals^{S_1\times S_2}$ is a matrix $\bQ=(Q_{i,j})_{i\in S_1,j\in S_2}$. When there is an obvious ordering of the elements of $S_1$, $S_2$, such a matrix is
understood to be identified with a matrix in $\reals^{n_1\times n_2}$, where $n_i = |S_i|$. For instance $\reals^{[m]\times [n]}$ is identified with $\reals^{m\times n}$.
Given a vector $\bv\in\reals^m$ an a set $S\subseteq [m]$ we denote by $\bv_S\in\reals^S$ the subvector indexed by elements of $S$.
Analogously, for a matrix $\bM\in\reals^{m\times n}$, we let $\bM_{R,S}\in \reals^{R\times S}$ be the submatrix with row indices in $R$ and column indices in $S$.
If the submatrix includes all the rows, we adopt the shorthand $\bM_{[m],S}$. 

Finally, we adopt the convention that all vectors (including the rows of a matrix) are viewed as column vectors, unless explicitly transposed.

\subsection{Statement of the result: Symmetric case}

Recall the spiked model of Eq.~(\ref{eq:SpikedDef}), which we copy here for the reader's convenience:
\begin{align}
\bA = \sum_{i=1}^k\lambda_i \bv_i\bv_i^{\sT} + \bW \equiv \bV\bLambda\bV^{\sT}  +\bW\, .\label{eq:LowRankPlusNoise}
\end{align}
Here $\bv_i\in\reals^n$ are non-random orthonormal vectors and $\bW\sim\GOE(n)$.
We denote by $\bphi_1,\dots,\bphi_n$ the eigenvectors of $\bA$, with corresponding eigenvalues $z_1\ge  z_2\ge \dots\ge z_n$.

For a sequence of functions $f_t(\,\cdot \, ):\reals^{q}\times\reals\to \reals^q$, we consider the AMP algorithm that produces a sequence of iterates $\bx^t$ according to the recursion
\begin{align}
\bx^{t+1} = \bA f_t(\bx^t,\by) - f_{t-1}(\bx^{t-1},\by)\, \sB_t^{\sT}\, .  \label{eq:AMP}
\end{align}
Here $\by\in \reals^n$ is a fixed vector, and it is understood that $f(\cdot;t)$ is applied row-by-row. Namely, denoting by $\bx^t_i\in \reals^q$ the
$i$-th row of $\bx^t$, the $i$-th row of $f_t(\bx^{t};\by)$ is given by $f_t(\bx^t_i,y_i)$. 
The `Onsager coefficient' $\sB_t\in \reals^{q\times q}$ is a matrix given by
\begin{align}
\sB_t = \frac{1}{n}\sum_{i=1}^n\frac{\partial f_t}{\partial \bx}(\bx^t_i,y_i)\, ,\label{eq:OnsagerDef}
\end{align}
where $\frac{\partial f_t}{\partial \bx}\in \reals^{q\times q}$ denotes the Jacobian matrix of the function $f_t(\,\cdot,y):\reals^q\to\reals^q$.  The algorithm is initialized with  $\bx^0 \in \reals^{n \times q}$ and $f_{-1}(\bx^{-1},\by) \in  \reals^{n \times q}$ is taken to be the all-zeros matrix.

\begin{remark}
Notice that the present setting generalizes the one of Section \ref{sec:Example} in two directions (apart from the more general model for
the matrix $\bA$, cf. Eq.~\eqref{eq:LowRankPlusNoise}). First, the state of the algorithm is a matrix $\bx^t\in\reals^{n\times q}$ with $q$ an arbitrary fixed integer.
While it is natural to take $q$ equal to the number of outliers in the spectrum of $\bA$ (i.e. $q=k_*$ according to the notations introduced below), 
we believe that a more general choice of $q$ can be useful for certain applications. Further, the nonlinearity $f_t$ is a function of $\bx^t$
but also on the independent vector $\by$ that can be regarded as side information: again, we believe this additional freedom will be useful for future
applications of our main result.
\end{remark}

We will make the following assumptions:
\begin{enumerate}[font={\bfseries},label={(A\arabic*)}]
\item\label{ass:Spikes} The values $\lambda_i(n)$ have finite limits as $n\to \infty$, that we denote by $\lambda_i$. 
Further, assume there exist $k_+$, $k_-$ such that
$\lambda_1\ge \dots \lambda_{k_+}>1>\lambda_{k_++1}$ and $\lambda_{k-k_-}>-1> \lambda_{k-k_-+1}\ge \dots\ge \lambda_k$.
We let $S \equiv (1,\dots, k_+,k-k_-+1,\dots,k)$, $k_* = k_++k_-$ and $\hS \equiv  (1,\dots, k_+,n-k_-+1,\dots,n)$.
Further, we let $\bLambda_S$ denote the diagonal matrix with entries $(\bLambda_S)_{ii} = \lambda_i$, $i\in S$.
\item\label{ass:SpikesInit} Setting $q\ge k_*$, we initialize the iteration (\ref{eq:AMP}) by setting $\bx^0\in\reals^{n\times q}$ equal to the matrix with first $k_*$ ordered columns
 given by $(\sqrt{n}\bphi_i)_{i\in \hS}$, and $\bzero$ for the remaining $q-k_*$ columns. 
\item\label{ass:SpikesDistr} The joint empirical distribution of the vectors $(\sqrt{n}\bv_\ell(n))_{\ell\in S}$, and $\by$ has a limit
in Wasserstein-$2$ metric. 
Namely, if we let $\tbv_i = (\sqrt{n}v_{\ell,i})_{\ell\in S}\in\reals^{k_*}$, then there exists a random vector $\bU$ taking values
in $\reals^{k_*}$ and a random variable $Y$, with joint law $\mu_{\bU,Y}$, such that
\begin{align}
\frac{1}{n}\sum_{i=1}^n\delta_{\tbv_i,y_i} \towass \mu_{\bU,Y}\, .
\end{align}
\item\label{ass:Lip}  The functions $f_t(\,\cdot\,,\,\cdot\,):\reals^q\times\reals\to\reals^q$ are Lipschitz continuous.
\end{enumerate}

State evolution operates on the pair of matrices $\bM_t\in\reals^{q\times k_*}$, $\bQ_t\in\reals^{q\times q}$, with $\bQ_t\succeq \bzero$,
evolving according to
\begin{align}
\bM_{t+1} &= \E\Big\{f_t(\bM_t\bU+\bQ_t^{1/2}\bG,Y)\bU^{\sT}\big\} \bLambda_S\, , \label{eq:Mt_def} \\
\bQ_{t+1} & = \E\Big\{f_t(\bM_t\bU+\bQ_t^{1/2}\bG,Y) f_t(\bM_t\bU+\bQ_t^{1/2}\bG,Y)^{\sT}\Big\}\, , \label{eq:Qt_def}
\end{align}
where expectation is taken with respect to $(\bU,Y)\sim\mu_{\bU,Y}$ independent of $\bG\sim\normal(0,\id_q)$.
These recursions are initialized with $\bQ_0, \bM_0$ which will be specified in the statement of  Theorem \ref{thm:Main}  below.

We  denote by  $\cR(\bLambda) \subseteq \reals^{S\times [k]}$ the set of orthogonal matrices $\bR$ (with $\bR\bR^{\sT}= \id_{S}$)
such that $R_{ij}=0$ if $\lambda_i\neq\lambda_j$ or if $j\not \in S$. Notice that the $k_*\times k_*$ submatrix $\bR_{S,S}$ of $\bR\in\cR(\bLambda)$ is a
block-diagonal orthogonal matrix, with blocks in correspondence with the degenerate  $\lambda_i$'s. As such, these matrices form a compact
group, which we will denote by $\cR_*(\bLambda) \subseteq \reals^{k_*\times k_*}$. This group can be endowed with the Haar measure, which is just the product of Haar measures over the orthogonal group corresponding to
each block. We define the Haar measure on $\cR(\bLambda)$ by adding $k-k_*$ columns equal to $0$ for column indices $j\in [k]\setminus S$.
\begin{theorem}\label{thm:Main}
Let $(\bx^t)_{t \geq 0}$ be the AMP iterates generated by algorithm (\ref{eq:AMP}), under assumptions \ref{ass:Spikes} to \ref{ass:Lip},
for the spiked matrix model (\ref{eq:SpikedDef}). For $\eta_n\ge n^{-1/2+\eps}$ such that $\eta_n\to 0$ as  $n\to\infty$, define the set of matrices
\begin{align}
\cG_n(\bLambda)\equiv \Big\{\bQ \in \reals^{S\times [k]}:\, \min_{\bR\in\cR(\bLambda)}\|\bQ- (\id-\bLambda_{S}^{-2})^{1/2}\bR\|_{F}\le \eta_n\Big\}\, ,
\end{align}
Let $\bProd\equiv \bPhi_{\hS}^{\sT}\bV\in\reals^{k_*\times k}$ where $\bPhi_{\hS}\in\reals^{n\times k_*}$ is the matrix with columns $(\bphi_i)_{i\in\hS}$ and $\bV\in \reals^{n\times k}$ is the matrix 
with columns $(\bv_i)_{i\in [k]}$.  Denote by $\bProd_0\in \reals^{S\times S}$ the submatrix corresponding to the  $k_*$ columns of $\bProd$ with index in $S$, and let $\tbProd_0 = (\id-\bProd_0\bProd_0^{\sT})^{1/2}$.
 
Then, for any pseudo-Lipschitz function $\psi:\reals^{q+k_*+1}\to \reals$,  $\psi\in\PL(2)$, the following holds almost surely for $t \geq 0$:
\begin{align}
\lim_{n\to\infty}\left|\frac{1}{n}\sum_{i=1}^n\psi(\bx_i^t,\tbv_i,y_i) - \E\big\{\psi(\bM_t\bU+\bQ_t^{1/2}\bG,\bU,Y)\big\}\right|= 0\, . \label{eq:main_SE_result}
\end{align}
Here $\tbv_i = (\sqrt{n}v_{\ell,i})_{\ell\in S}\in\reals^{k_*}$ and expectation is with respect to $(\bU,Y)\sim \mu_{\bU,Y}$ independent of
$\bG\sim\normal(0,\id_q)$. Finally, $(\bM_t,\bQ_t)$ is the state evolution sequence specified by Eqs. \eqref{eq:Mt_def} and \eqref{eq:Qt_def} with initialization $(\bM_0)_{[k_*],[k_*]}=\bProd_0$, $(\bM_0)_{[q]\setminus [k_*],[k_*]}=\bzero$,
$(\bQ_0)_{[k_*],[k_*]} =\tbProd_0^2$, and $(\bQ_{0})_{i,j} = 0$ if $(i,j)\not \in [k_*]\times [k_*]$.

Further, $\prob(\bOmega\in \cG_n(\bLambda)) \ge 1-n^{-A}$ for any $A>0$ provided $n>n_0(A)$,  and $\bOmega$ converges in distribution to  $(\id-\bLambda_S^{-2})^{1/2}\bR$, with $\bR$ 
Haar distributed on $\cR(\bLambda)$.
\end{theorem}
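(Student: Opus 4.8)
The plan is to reduce the analysis to a standard (symmetric) AMP state evolution statement for an algorithm run on a GOE matrix with an independent initialization, exploiting the conditional-distribution lemma referenced in the introduction (Lemma \ref{lemma:Distr}). First I would set up the decomposition of $\bA$ conditional on its outlier eigenpairs: writing $\bPhi_{\hS}$ for the matrix of outlier eigenvectors and $\bD$ for the diagonal of outlier eigenvalues, the lemma provides the approximate representation $\bA \approx \bPhi_{\hS}\bD\bPhi_{\hS}^{\sT} + \bP^{\perp}\bA^{\mathrm{new}}\bP^{\perp}$, where $\bA^{\mathrm{new}}\sim\GOE(n)$ is independent of $(\bPhi_{\hS},\bD)$ and $\bP^{\perp}$ is the projector orthogonal to the outlier eigenvectors. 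The iteration \eqref{eq:AMP} with spectral initialization $\bx^0 = [\sqrt{n}\bphi_i]_{i\in\hS}\,|\,\bzero]$ can then be rewritten as an AMP iteration driven by $\bA^{\mathrm{new}}$ (plus an explicit low-rank perturbation) whose initialization is a deterministic function of the independent data $(\bPhi_{\hS},\bD)$, so that the hypotheses of the classical state evolution theorem for AMP on GOE matrices (e.g. \cite{BM-MPCS-2011,berthier2017state}) apply. The Onsager term in \eqref{eq:AMP}, which uses $\sB_t$ from \eqref{eq:OnsagerDef}, is exactly the one needed to cancel the correlation induced by the new matrix; one must check that the low-rank additive term and the $\bP^{\perp}$-projection introduce only vanishing corrections, which is where the quantitative error bounds in Lemma \ref{lemma:Distr} enter.

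Second, I would identify the limiting geometry of the spectral initialization. By the BBAP theory \cite{baik2005phase,benaych2011eigenvalues,capitaine2009largest,knowles2013isotropic}, for each outlier $i\in\hS$ the eigenvalue $z_i\to\lambda_i+\lambda_i^{-1}$ and the overlap $\<\bphi_i,\bv_i\>^2 \to 1-\lambda_i^{-2}$, with the components of $\bphi_i$ orthogonal to $\mathrm{span}(\bv_\ell)_{\ell\in S}$ becoming asymptotically Gaussian and isotropic in the orthogonal complement. In matrix form this says $\bOmega = \bPhi_{\hS}^{\sT}\bV$ concentrates (as stated) on the manifold $(\id-\bLambda_S^{-2})^{1/2}\cR(\bLambda)$ — the factor $(\id-\bLambda_S^{-2})^{1/2}$ is the deterministic overlap magnitude and the residual orthogonal freedom $\bR$ is Haar-distributed, reflecting the rotational invariance within each degenerate eigenspace (for non-degenerate $\lambda_i$ the corresponding block of $\bR$ is just $\pm 1$, fixed to $+1$ by the sign convention). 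The concentration estimate $\prob(\bOmega\in\cG_n(\bLambda))\ge 1 - n^{-A}$ should follow from isotropic local laws / eigenvector delocalization bounds for deformed Wigner matrices, giving polynomial-rate concentration of $\bOmega$ to the manifold, while the in-distribution limit to Haar on $\cR(\bLambda)$ follows from orthogonal invariance of the bulk (conditional on the outliers, $\bA^{\mathrm{new}}$ is GOE, hence rotationally invariant within $\cV_n^{\perp}$).

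Third, I would translate these two ingredients into the claimed formula \eqref{eq:main_SE_result}. The classical state evolution gives that, almost surely, $\frac1n\sum_i\psi(\bx_i^t,\tbv_i,y_i)\to \E\{\psi(\bM_t\bU+\bQ_t^{1/2}\bG,\bU,Y)\}$ where $(\bM_t,\bQ_t)$ obey \eqref{eq:Mt_def}, \eqref{eq:Qt_def} with \emph{initial conditions determined by the joint empirical distribution of $(\bx^0,\tbv,\by)$}. Since the rows of $\bx^0$ restricted to the first $k_*$ columns are $\sqrt{n}(\bphi_{i,j})_{i\in\hS}$, their joint second-moment structure with $\tbv_i$ is exactly $\bPhi_{\hS}^{\sT}\bV/\ (\text{appropriate normalization}) = \bOmega_0$ in the relevant sub-block, yielding $(\bM_0)_{[k_*],[k_*]}=\bOmega_0$ and $(\bQ_0)_{[k_*],[k_*]} = \id - \bOmega_0\bOmega_0^{\sT} = \tbOmega_0^2$ (the Gaussian residual), with zeros elsewhere; a density/Lipschitz-stability argument (using assumption \ref{ass:Lip} and the $\PL(2)$ hypothesis on $\psi$, together with the stability of AMP under perturbations of $f_t$ noted in the remark before Theorem \ref{thm:Special}) lets one replace the exact $\bA$ by its approximate representation and the exact initialization by its limiting law. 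Finally, plugging in $\bOmega_0\Rightarrow(\id-\bLambda_S^{-2})^{1/2}\bR$ from the second step gives the in-distribution statement for $\bOmega$, and — in the non-degenerate case — a genuinely deterministic state evolution.

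The main obstacle I expect is the first step: establishing the conditional-distribution representation of $\bA$ given its outlier eigenvectors with sharp enough error control, and then verifying that feeding this approximate (and now $n$-dependently perturbed, rank-$k_*$-corrected, $\bP^{\perp}$-projected) matrix into the AMP recursion does not destroy the state evolution limit. Concretely one must control how errors propagate through $t$ iterations of a nonlinear recursion — this is the content of Lemma \ref{lemma:Distr} plus an induction over $t$ — and handle the fact that the initialization, while independent of $\bA^{\mathrm{new}}$, is a somewhat delicate function of the outlier data whose empirical law must be pinned down via random matrix theory. The degenerate case adds the further subtlety that $\bM_0$ does not concentrate, so one cannot simply quote a deterministic-initialization theorem; instead one conditions on $\bOmega_0$, runs state evolution for that fixed (random) initial condition, and only then takes the distributional limit.
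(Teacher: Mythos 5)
Your proposal follows essentially the same route as the paper: condition on the outlier eigenpairs via Lemma \ref{lemma:Distr} to replace $\bA$ by the matrix $\tbA$ driven by the independent noise $\tbW$, invoke BBAP theory (Lemma \ref{lem:ConvergenceEigenvectors}) for the limiting geometry of $\bProd$, and run state evolution conditional on the resulting random initialization $\bM_0,\bQ_0$. What you leave schematic---verifying that the rank-$k_*$ correction and the $\bPp$-projection in $\tbA$ contribute only vanishing errors---is carried out in the paper by introducing an auxiliary iteration $\bfs^t$ (standard-form AMP on $\tbW$ alone, with $\tbV$ and $\sqrt{n}\bPhi_{\hS}$ supplied as side information and $\bfs^0=\bzero$) and proving by induction over $t$ that $\|\bx^t-(\bfs^t+\tbV\balpha_t^{\sT}+\sqrt{n}\bPhi_{\hS}\bbeta_t^{\sT})\|_F^2/n\to 0$.
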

The  theorem is proved for the  case of a rank one spike in Appendix \ref{sec:ProofMain}. The proof for the general case is given in Appendix \ref{sec:ProofMainGeneral}. In the following section, we provide a brief overview of the key steps in the proof.

\begin{remark}
Theorem \ref{thm:Main}  focuses on the case of symmetric square matrices $\bA$.
However, a standard reduction (see, for instance,  \cite[Section 6]{berthier2017state}) allows to obtain a completely analogous statement for 
rectangular matrices, namely  $\bA\in\reals^{n\times d}$ with
\begin{align}
\bA = \sum_{i=1}^k\lambda_i\bu_i\bv_i^{\sT} + \bW\, ,
\end{align}
where $\bW$ is a noise matrix with independent entries $W_{ij}\sim\normal(0,1/n)$. 
We already  considered the case $k=1$ of this model in Section \ref{sec:ExampleRectangular}. Given Theorem \ref{thm:Main},
the generalization to $k>1$ rectangular matrices is straightforward: we provide a precise statement in Appendix
\ref{app:RectangularGeneral}.

Another generalization of interest would be to non-Gaussian matrices. It might be possible to address this by using the
methods of \cite{bayati2015universality}.
\end{remark}

\section{Proof outline}
\label{sec:ProofOutline}

We first consider the rank one spiked model in Eq. \eqref{eq:SpikedDefSpecial}, and give an outline of the proof of Theorem \ref{thm:Rank1}.  Letting $\bv \equiv \frac{\bx_0}{\sqrt{n}}$,  Eq. \eqref{eq:SpikedDefSpecial} can be written as 
\beq 
\bA = \lambda\bv\bv^{\sT}+\bW. \label{eq:rank1_v} 
\eeq

Recalling that $(\bphi_1, z_1)$ are the principal eigenvector and eigenvalue of $\bA$, we write $\bA$ as the sum of a rank one projection onto the space spanned by $\bphi_1$, plus a matrix that is the restriction of $\bA$ to the subspace orthogonal to $\bphi_1$. That is,
\beq
\bA = z_1 \bphi_1 \bphi_1^{\sT} +  \bPp\left (\lambda \bv \bv^{\sT} + \bW \right)  \bPp\, ,
\eeq
where $\bPp= \bI - \bphi_1 \bphi_1^\sT$ is the projector onto the space orthogonal to $\bphi_1$.  
The proof of Theorem \ref{thm:Rank1} is based on an approximate representation of the conditional distribution of $\bA$ given $(\bphi_1, z_1)$. To this end, we define the matrix
\begin{align}
\tbA = z_1 \bphi_1 \bphi_1^{\sT} +  \bPp\left (\lambda \bv \bv^{\sT} +\tbW \right)  \bPp\, , 
\end{align}
where $\tbW \sim \GOE(n)$ is independent of $\bW$. 

The proof is based on a key technical lemma (Lemma \ref{lemma:Distr}) which shows that for large enough $n$,  the conditional distribution  of $\bA$ given $(\bphi_1, z_1)$ is close in (in total variation distance)  to that of $\tbA$ with high probability.
Given $\tbA$, we consider a sequence of AMP iterates $(\tbx^t)_{t \geq 0}$ obtained by replacing  $\bA$ with $\tbA$ in Eq.~\eqref{eq:AMPspecial0} . That is, we set
\begin{align}
\tbx^0 & = \sqrt{n}\, \sign(\<\bx_0,\bphi_1\>) \, \bphi_1, \,  \qquad 
\tbx^{t+1} = \tbA\, f_t(\tbx^t)-\sb_t f_{t-1}(\tbx^{t-1})\, .\label{eq:AMPspecial_tilde0}
\end{align} 
Theorem \ref{thm:Rank1} is proved in three steps:

\begin{enumerate}
\item Using the conditional distribution lemma (Lemma \ref{lemma:Distr}), we show that for any PL(2) test function $\psi: \reals \times \reals \to \reals$, almost surely
\beq
 \lim_{n \to \infty}  \frac{1}{n} \sum_{i=1}^n \psi (x^t_i, x_{0,i})
=  \lim_{n \to \infty}  \frac{1}{n} \sum_{i=1}^n \psi (\tilde{x}^t_i, x_{0,i}),
\label{eq:AMP_orig_mod}
\eeq
whenever the limit on the right exists. 

\item Step 1 allows us to establish Theorem \ref{thm:Rank1} by analyzing the modified AMP iteration in \myeqref{eq:AMPspecial_tilde0}. For the modified AMP, the initialization $\tbx^0$ is independent of $\tbW$. Consequently, adapting techniques from standard AMP analysis we show that the following holds almost surely for any PL(2) test function $\psi: \reals^3 \to \reals$:
\beq
\lim_{n \to \infty}  \frac{1}{n} \sum_{i=1}^n \psi (\tilde{x}^t_i, x_{0,i}, \sqrt{n} \varphi_{1,i}) = \E \left\{ \psi( \alpha_t  X_0 +  \beta_t L + \tau_t G_0, X_0, L) \right\}. \label{eq:mod_SE0}
\eeq
Here the random variables $(X_0, L, G_0)$ are jointly distributed as follows: $X_0 \sim \nu_{X_0}$ and $G_0 \sim \normal(0,1)$ are independent, and  $L= \sqrt{1-\lambda^{-2}} X_0+ \lambda^{-1} G_1$, where $G_1 \sim \normal(0,1)$ is independent of both $X_0$ and  $G_0$.  It is shown in Corollary \ref{coro:ConvergenceEigenvectors} that (almost surely) the empirical distribution of  
$(\bx_0, \sqrt{n}\bphi_{1})$ converges in $W_2$ to  the distribution of $(X_0, L)$. 
The constants $(\alpha_t, \beta_t, \tau_t)$ in \myeqref{eq:mod_SE0} are iteratively defined using a suitable state evolution 
recursion  given in Eqs. \eqref{eq:AlphaRec}--\eqref{eq:TauRec}.

\item The proof of Theorem \ref{thm:Rank1} is completed by showing that for $t \geq 0$,
\beq
\E \left\{ \psi( \alpha_t  X_0 +  \beta_t L + \tau_t G_0, X_0, L) \right\} = 
\E \left\{ \psi( \mu_t X_0 +  \sigma_t G, X_0 ) \right\},
\label{eq:SE_orig_mod}
\eeq
where $(\mu_t, \sigma_t)_{t \geq 0}$ are the state evolution parameters defined in the statement of Theorem \ref{thm:Rank1}.
\end{enumerate}
Combining Eqs. \eqref{eq:AMP_orig_mod}--\eqref{eq:SE_orig_mod} yields the claim of Theorem \ref{thm:Rank1}. The detailed proof of this theorem is given in Appendix 
\ref{sec:ProofMain}.

\vspace{10pt}
\emph{General case}: For the general spiked model  \myeqref{eq:SpikedDef}, the proof of the  state evolution result (\myeqref{eq:main_SE_result} of Theorem \ref{thm:Main})  is  along similar lines. Here the modified matrix $\tbA$ is defined as
\begin{align}
\tbA &\equiv \sum_{i\in \hS}z_i\bphi_i\bphi_i^{\sT} + \bPp \left(\sum_{i=1}^k\lambda_i \bv_i\bv_i^{\sT} \, + \, \tbW \right)\bPp \,,\label{eq:SpikedModelModified0}
\end{align}
where $\bPp$ is the projector onto the orthogonal complement of the space spanned by $(\bphi_i)_{i\in\hS}$, and $\tbW\sim\GOE(n)$ is independent of $\bW$. (Recall that $\hS$ contains the indices $i$  for which $|\lambda_i| >1$.) Lemma \ref{lemma:Distr} shows that with high probability the conditional distributions of $\bA$ and $\tbA$ are close in total variation distance.  We then consider iterates $(\tbx)_{t \geq 0}$ generated via the AMP iteration using $\tbA$:
\begin{align}
 \tbx^0 & =  \sqrt{n}  \, [\bphi_1|\cdots|\bphi_{k_*}|\bzero|\cdots|\bzero] \,  ,\\
\tbx^{t+1} &= \tbA\,  f_t(\tbx^t,\by) - f_{t-1}(\tbx^{t-1},\by)\, \sB_t^{\sT} \, .\label{eq:AMPspecial_tilde_gen0}
\end{align} 
Using  Lemma \ref{lemma:Distr}, we first show that once the state evolution result   \myeqref{eq:main_SE_result}   holds for  $\tbx^t$, it also holds for  $\bx^t$. The result for  $\tbx^t$ is then shown in two steps, which are analogous to Eqs. \eqref{eq:mod_SE0} and \eqref{eq:SE_orig_mod} for the rank one case. 


\section*{Acknowledgements}

We thank Leo Miolane for pointing out a gap in an earlier proof of Proposition \ref{thm:BayesOpt}.
A.~M. was partially supported by grants NSF CCF-1714305 and NSF
IIS-1741162. R.~V. was partially supported by a Marie Curie Career Integration Grant (Grant Agreement No. 631489).


\appendix

\section{Proof of Theorem \ref{thm:Rank1} and Theorem \ref{thm:Main} in the rank $1$ case}
\label{sec:ProofMain}

In this section we assume $k =q=1$, and hence  write $\bA = \lambda\bv\bv^{\sT}+\bW$ dropping the indices. 
In order for this to be a non-trivial perturbation of the standard GOE model, we will assume $\lambda>1$ (the case $\lambda<-1$ being equivalent). We will prove Theorem  \ref{thm:Rank1} and show that this implies   Theorem \ref{thm:Main} in the rank $1$ case.

For convenient extension from Theorem \ref{thm:Rank1}  to the general statement in Theorem \ref{thm:Main}, in this section we use the notation  $f(\bx;t) \equiv f_t(\bx)$, $\sqrt{n} \bv = \tbv \equiv  \bx_0$ and $U \equiv X_0$.
With this notation, we write the state evolution recursion
 in Eqs. \eqref{eq:MutRecX0}--\eqref{eq:SigtRecX0} as 
\begin{align}
\mu_{t+1} & = \lambda \E[ U f(\mu_t U+\sigma_t G; t)]\, ,  \label{eq:MutRec}\\
 \sigma_{t+1}^2 & = \E [f(\mu_t U + \sigma_t G; t )^2]\, , \label{eq:SigtRec}
\end{align}
 where $U \sim \nu_{X_0}$ and $G \sim \normal(0,1)$ are independent. 

\subsection{Reduction to conditional model}

We will begin by showing that Theorem \ref{thm:Rank1} implies   Theorem \ref{thm:Main} in the rank $1$ case.
\begin{remark}
In this case $\cR(\bLambda) = \cR_*(\bLambda)$ consists of the two $1\times 1$ matrices $\bR = +1$ and $\bR=-1$,
which implies
\begin{align}
\cG_n(\bLambda) = \big\{ q \in\reals:\; \big| |q|-(1-\lambda^{-2})^{1/2}\big|\le \eta_n\big\} \, .
\end{align}
Hence $\Omega=\<\bphi_1,\bv\>\in\cG_n(\bLambda)$ holds with the claimed probability  by  Lemma \ref{lem:ConvergenceEigenvectors}. Further,
conditional on this,  $|\Omega -(1-\lambda^{-2})^{1/2}|\le\eta_n$ and $|\Omega +(1-\lambda^{-2})^{1/2}|\le\eta_n$ each  hold with probability $1/2$ by symmetry.
This implies the weak convergence of $\bProd$ as in the statement.

It remains to prove Eq.~(\ref{eq:main_SE_result}). Let $\cG_n^+(\bLambda) = \cG_n(\bLambda)\cap\{\Omega\ge 0\}$
and  $\cG_n^-(\bLambda) = \cG_n(\bLambda)\cap\{\Omega<0\}$.
For $t\ge 0$, set $\bM_t = \mu_t(n)$, $\bQ_t = \sigma^2_t(n)$ (as these are $1\times 1$ matrices). 
For $\Omega\in\cG_n^+(\bLambda)$, the initialization in the statement of the theorem implies $|\mu_0(n) - \sqrt{1-\lambda^{-2}}|\le C \eta_n$, $|\sigma_0(n)-(1/\lambda)|\le C\eta_n$. 
Since for  any fixed $t$,  $\mu_t(n), \sigma_t(n)$ are continuous in 
the initial condition,  we have $|\mu_t(n) - \mu_t|\le \delta_t(\eta_n)$, $|\sigma_t(n)-\sigma_t|\le \delta_t(\eta_n)$ for some function $\delta_t$
such that $\delta_t(x)\to 0$ as $x\to 0$.
It follows from Theorem \ref{thm:Rank1} that, almost surely
\begin{align}
\lim_{n\to\infty}\left|\frac{1}{n}\sum_{i=1}^n\psi(x_i^t,\tv_i) -
  \E\big\{\psi(\mu_t(n)U+\sigma_t(n)G,U\big)\} \right|\,
 \bfone_{\{\Omega\in \cG^+_n(\bLambda)\} }= 0\, .  \label{eq:1_Gn+}
\end{align}
Considering next $\Omega\in\cG_n^-(\bLambda)$, we can apply Theorem \ref{thm:Rank1} to $\bA = \lambda(-\bv)(-\bv)^{\sT}+\bW$ to get
\begin{align}
\lim_{n \to \infty} \frac{1}{n} \sum_{i=1}^n \tilde{\psi} (x^t_i, -\tv_{i}) =  \E \left\{ \tilde{\psi}(  -\overline{\mu}_t U+\overline{\sigma}_t G, -U )
  \right\} \ ,\label{eq:LimitGminus}
\end{align}
where $(\omu_t, \osigma_t)$ satisfy Eqs.~(\ref{eq:MutRec}), (\ref{eq:SigtRec}) with $U$ replaced by $-U$, and initial condition
$\omu_0 = \sqrt{1-\lambda^{-2}}$,$\osigma_0 = 1/\lambda$.
It is easy to check that $(-\omu_t,\osigma_t)$ satisfies Eqs.~(\ref{eq:MutRec}), (\ref{eq:SigtRec}) with initial condition $\omu_0 = -\sqrt{1-\lambda^{-2}}$.
Since   $\Omega\in\cG_n^-(\bLambda)$, we have $|\mu_0(n) +\sqrt{1-\lambda^{-2}}|\le C \eta_n$, $|\sigma_0(n)-(1/\lambda)|\le C\eta_n$.
Again by continuity of state evolution in the initial condition,
we have $|\mu_t(n) + \omu_t|\le \delta_t(\eta_n)$, $|\sigma_t(n)-\osigma_t|\le \delta_t(\eta_n)$ for some function $\delta_t$
such that $\delta_t(x)\to 0$ as $x\to 0$. Therefore, by using $\tilde{\psi}(x,y) = \psi(x,-y)$, Eq.~(\ref{eq:LimitGminus}) implies
\begin{align}
\lim_{n\to\infty}\left|\frac{1}{n}\sum_{i=1}^n\psi(x_i^t,\tv_i) -
  \E\big\{\psi(\mu_t(n)U+\sigma_t(n)G,U\big)\} \right|\,
 \bfone_{\{\Omega\in \cG^-_n(\bLambda)\} }= 0\, .  \label{eq:1_Gn-}
\end{align}
The claim in Theorem \ref{thm:Main} then follows by from Eqs.~(\ref{eq:1_Gn+}) and (\ref{eq:1_Gn-}), using the fact that $\Omega\in\cG_n(\bLambda)$ eventually almost surely.
\end{remark}

The proof of Theorem \ref{thm:Rank1} is based on an approximate representation for the conditional distribution of 
$\bA$ given $(\bphi_1, z_1)$, that is established in Lemma \ref{lemma:Distr} below. Namely, we introduce the matrix
\begin{align}
\tbA = z_1 \bphi_1 \bphi_1^{\sT} +  \bPp\left (\lambda \bv \bv^{\sT} +\tbW \right)  \bPp\, , \label{eq:tbadef}
\end{align}
 where $(\bphi_1, z_1)$ are the principal eigenvector and eigenvalue of $\bA$ in \myeqref{eq:SpikedDefSpecial}, $\tbW \sim \GOE(n)$ is independent of $\bW$, and the matrix $\bPp= \bI - \bphi_1 \bphi_1^\sT$ is the projector onto the space orthogonal to $\bphi_1$. 
The bulk of our work consists in analyzing this simplified model, as per the next lemma, which is proved in the next section.
\begin{lemma}\label{lemma:SE-rank1-simple}
Consider  the modified spiked model \eqref{eq:tbadef} and let $\tbx^t$ be the AMP sequence obtained by replacing $\bA$ with $\tbA$ in
Eq.~\eqref{eq:AMPspecial0} . Namely, we set
\begin{align}
\tbx^0 & = \sqrt{n}\, \sign(\<\bv,\bphi_1\>)\bphi_1, \,  \qquad 
\tbx^{t+1} = \tbA\, f(\tbx^t; t)-\sb_t f(\tbx^{t-1}; t-1)\, .\label{eq:AMPspecial_tilde}
\end{align} 
Then the state evolution statement, Eq.~(\ref{eq:rank1_SE}), holds with $\bx^t$ replaced by $\tbx^t$.
\end{lemma}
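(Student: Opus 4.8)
The plan is to exploit the defining feature of the modified model \eqref{eq:tbadef}: the matrix $\tbW$ is independent of $\bW$, whereas $(\bphi_1,z_1)$ is a measurable function of $\bA=\lambda\bv\bv^{\sT}+\bW$ (with $\bv$ deterministic), so $\tbW$ is independent of $(\bphi_1,z_1)$. I would therefore work conditionally on the $\sigma$-algebra generated by $(\bphi_1,z_1)$, under which these are fixed while $\tbW\sim\GOE(n)$ still. Before conditioning, record the asymptotics this supplies: by the BBP transition and the associated eigenvector estimates (Lemma \ref{lem:ConvergenceEigenvectors} and Corollary \ref{coro:ConvergenceEigenvectors}), almost surely $z_1\to\lambda+\lambda^{-1}$ and, under the convention $\langle\bphi_1,\bv\rangle\ge 0$, the joint empirical distribution of the pairs $(\sqrt{n}v_{i},\sqrt{n}\varphi_{1,i})$ converges in $W_2$ to the law of $(X_0,L)$ with $X_0\sim\nu_{X_0}$ and $L=(1-\lambda^{-2})^{1/2}X_0+\lambda^{-1}G_1$, $G_1\sim\normal(0,1)$ independent of $X_0$.

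Next I would reduce the recursion \eqref{eq:AMPspecial_tilde} to a standard AMP recursion driven by a spiked GOE matrix. Expanding the projections with $\bPp=\bI-\bphi_1\bphi_1^{\sT}$ gives $\tbA=\lambda\bv\bv^{\sT}+\tbW+\bB_n$, where $\bB_n$ is a matrix of rank at most a fixed constant whose range lies in $\spn\{\bv,\bphi_1,\tbW\bphi_1\}$ and whose coefficients --- essentially $z_1$, $\langle\bphi_1,\bv\rangle$, $\bphi_1^{\sT}\tbW\bphi_1$, and $\|\bPp\tbW\bphi_1\|$ --- converge to deterministic limits under the conditioning. Substituting this into \eqref{eq:AMPspecial_tilde}, the iterate $\tbx^{t+1}$ equals $\tbW f(\tbx^t;t)-\sb_t f(\tbx^{t-1};t-1)$ plus a correction supported on $\spn\{\bv,\bphi_1,\tbW\bphi_1\}$ with controlled coefficients; and the initialization $\tbx^0=\sqrt{n}\,\sign(\langle\bv,\bphi_1\rangle)\bphi_1$ is, conditionally, a deterministic vector independent of $\tbW$. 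This is exactly the regime of state evolution for AMP with a GOE matrix and a finite-rank planted component, and I would invoke that machinery --- in the form underlying \cite{BM-MPCS-2011,berthier2017state} and developed for the general-rank situation in Appendix \ref{sec:ProofMainGeneral} --- while tracking the overlaps of the iterates with both $\bv$ and $\bphi_1$. The outcome is the joint limit \eqref{eq:mod_SE0}: almost surely $\tfrac1n\sum_i\psi(\tilde{x}^t_i,x_{0,i},\sqrt{n}\varphi_{1,i})\to\E\{\psi(\alpha_t X_0+\beta_t L+\tau_t G_0,X_0,L)\}$, where $(\alpha_t,\beta_t,\tau_t)$ satisfy a state-evolution recursion whose initial data is read off from the overlap limits above and $G_0\sim\normal(0,1)$ is independent of $(X_0,G_1)$.

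It then remains to perform the algebraic identification \eqref{eq:SE_orig_mod}: for every $t$, the law of $(\alpha_t X_0+\beta_t L+\tau_t G_0,\,X_0)$ coincides with that of $(\mu_t X_0+\sigma_t G,\,X_0)$, where $(\mu_t,\sigma_t)$ are the parameters of \eqref{eq:MutRecX0}--\eqref{eq:SigtRecX0}. Since $L$ is an affine--Gaussian function of $X_0$, we have $\alpha_t X_0+\beta_t L+\tau_t G_0=\big(\alpha_t+\beta_t(1-\lambda^{-2})^{1/2}\big)X_0+\big(\beta_t\lambda^{-1}G_1+\tau_t G_0\big)$, whose second summand is centered Gaussian independent of $X_0$; setting $\mu_t=\alpha_t+\beta_t(1-\lambda^{-2})^{1/2}$ and $\sigma_t^2=\beta_t^2\lambda^{-2}+\tau_t^2$, one checks by induction on $t$ that these coincide with the iterates of \eqref{eq:MutRecX0}--\eqref{eq:SigtRecX0}, using the initial values $\mu_0=\sqrt{1-\lambda^{-2}}$, $\sigma_0=1/\lambda$ obtained from the overlap limits. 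Specializing \eqref{eq:mod_SE0} to test functions not depending on the third argument then yields Eq.~\eqref{eq:rank1_SE} with $\bx^t$ replaced by $\tbx^t$, which is the assertion of the lemma; finally one transfers these conditional-on-$(\bphi_1,z_1)$ statements back to unconditional almost-sure statements.

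The step I expect to be the main obstacle is the rigorous handling of the finite-rank correction $\bB_n$, and specifically of the direction $\tbW\bphi_1$: unlike $\bv$ and $\bphi_1$, this direction is a functional of the very noise matrix $\tbW$ driving the recursion, so it cannot be treated as part of a fixed planted subspace. One must show that its only net effect is an additional Onsager-type term already absorbed into the state-evolution recursion (equivalently, that the component of $\tbx^t$ along $\tbW\bphi_1$ merges into the effective Gaussian noise $\tau_t G_0$). Closely related is the coupling, induced by the entrywise nonlinearity $f_t$, between the component of $\tbx^t$ along $\bphi_1$ and its component in the orthogonal complement (on which $\tbW$ acts as a clean $\GOE(n-1)$); handling this is precisely what forces one to carry $\sqrt{n}\bphi_1$ along in the state-evolution description rather than the single signal direction $\bv$ alone.
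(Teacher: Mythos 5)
Your proposal captures the paper's high-level strategy faithfully: treat $(\bphi_1,z_1)$ as fixed (which is legitimate because $\tbW$ is independent of them), reduce the iteration to a standard AMP driven by $\tbW$ with side information $(\bx_0,\sqrt{n}\bphi_1)$ whose joint empirical law converges to that of $(X_0,L)$, and then perform the algebraic change of variables $\mu_t=\alpha_t+\beta_t\sqrt{1-\lambda^{-2}}$, $\sigma_t^2=\beta_t^2\lambda^{-2}+\tau_t^2$. The identification in your last paragraph is exactly Eqs.\ \eqref{eq:mut_def0}--\eqref{eq:sigt_def0} and the Stein's-lemma induction behind Eq.\ \eqref{eq:betat1_prop}.

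The gap is precisely the step you flag, and the resolution you sketch there is not the right one. You propose to view $\tbA=\lambda\bv\bv^{\sT}+\tbW+\bB_n$ with $\bB_n$ a bounded-rank correction whose range includes the direction $\tbW\bphi_1$, and then hope that the effect of this random direction appears as ``an additional Onsager-type term.'' It does not, and the generic ``GOE plus finite-rank planted component'' machinery does not apply as stated, because $\bB_n$ is not independent of the driving matrix $\tbW$. What actually tames the $\tbW\bphi_1$ direction is a change of nonlinearity, not a change of Onsager coefficient. Expanding $\bPp\tbW\bPp\, f(\tbx^t;t)$ and recombining terms gives $\tbW g(\tbx^t;t)-\bdel^t$, where $g(\tbx^t;t)=f(\tbx^t;t)-\langle\bphi_1,f(\tbx^t;t)\rangle\bphi_1$ is $f$ with its $\bphi_1$-component removed, and $\bdel^t=\bphi_1\bphi_1^{\sT}\tbW g(\tbx^t;t)$. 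In other words, the problematic $-\tbW\bphi_1\langle\bphi_1,f\rangle$ contribution is absorbed \emph{into the argument of $\tbW$} by projecting the nonlinearity off $\bphi_1$, rather than producing an extra additive correction to be cancelled by an Onsager term. The residual $\bdel^t$ is an $O(1/\sqrt{n})$ rank-one remainder that must be shown to vanish in normalized norm (\myeqref{eq:bdel_norm}); this requires its own argument because $\bphi_1^{\sT}\tbW g(\tbx^t;t)$ is not obviously small.

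Even after this recombination, $g$ is not a separable function of the iterate, because its coefficient $\langle\bphi_1,f(\tbx^t;t)\rangle/\sqrt{n}$ is a global, data-dependent scalar. The second missing ingredient in your sketch is the surrogate $\tilg$, obtained from $g$ by replacing that scalar with its deterministic state-evolution limit $\E\{L\,f(\alpha_t X_0+\beta_t L+\tau_t G_0;t)\}$. Only $\tilg$ is a separable function of $(\bfs^t,\bx_0,\sqrt{n}\bphi_1)$, so only the comparison iteration $\bfs^{t+1}=\tbW\,\tilg(\cdot)-\tilde{\sb}_t\,\tilg(\cdot)$ is in the Javanmard--Montanari/Berthier et al.\ framework. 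The proof then tracks $\bDelta^t=\tbx^t-(\bfs^t+\alpha_t\bx_0+\beta_t\sqrt{n}\bphi_1)$ and proves $\|\bDelta^t\|_2^2/n\to 0$ by induction, using the consistency of the coefficients $(\alpha_t,\beta_t)$ with their empirical counterparts (Eqs.\ \eqref{eq:lim_betat}--\eqref{eq:lim_alpht}), the Lipschitz control of $f$ and $\tilg$, and the vanishing of $\bdel^t$. Your second remark, that one must carry $\sqrt{n}\bphi_1$ as a separate coordinate in the state-evolution description, is correct and is exactly why the triple $(\alpha_t,\beta_t,\tau_t)$ rather than the pair $(\mu_t,\sigma_t)$ is tracked in the intermediate step \eqref{eq:SE_conv1}.
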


\begin{proof}[Proof of Theorem \ref{thm:Rank1}]
For any $\eps \in (0, \eps_0)$, Lemma \ref{lemma:Distr} bounds the total variation distance between the  conditional joint distributions of $(\tbA,\bphi_1)$ and $(\bA,\bphi_1)$ given $(z_1, \bphi_1) \in \cE_{\eps}$, where from \myeqref{eq:cE_eps} 
\beq
\cE_{\eps} = \left\{ \abs{z_1 - (\lambda + \lambda^{-1})} \leq \eps, \quad (\bphi_1^{\sT}\bv)^2 \geq 1- \lambda^{-2} - \eps  \right\}.
\eeq
 Since $\tbx^t$ and $\bx^t$ are obtained by applying the same deterministic algorithm to  $(\tbA,\bphi_1)$ and $(\bA,\bphi_1)$
it follows that   there exists a coupling of the laws of $\bA$ and $\tbA$ such that, for $(z_1,\bphi_1)\in\cE_{\eps}$
\begin{align}
\prob\left\{\sum_{i=1}^n\psi(x_i^t,\tv_i) \neq \sum_{i=1}^n\psi(\tx_i^t,\tv_i) \, \Big \vert \, z_1, \bphi_1 \right\}\, \le \frac{1}{c(\eps)}\, e^{-nc(\eps)} \,   \label{eq:rank1_coup1}
\end{align}
for some constant $c(\eps) >0$. With this coupling, we therefore have
\begin{align}
\prob\left\{\sum_{i=1}^n\psi(x_i^t,\tv_i)  \neq \sum_{i=1}^n\psi(\tx_i^t,\tv_i) \right\} &  \leq 
\prob( \cE_{\eps}^c) +  \frac{e^{-nc(\eps)}}{c(\eps)} \, \leq   \frac{2 e^{-nc(\eps)}}{c(\eps)},
 \label{eq:rank1_coup2}
\end{align}
where the last inequality is  obtained using \myeqref{eq:SpikeEvent} of Lemma \ref{lemma:Distr}.
Therefore by Borel-Cantelli, $\sum_{i=1}^n\psi(x_i^t,\tv_i) =\sum_{i=1}^n\psi(\tx_i^t,\tv_i)$ eventually  almost surely. 
Theorem \ref{thm:Rank1} hence follows by applying Lemma \ref{lemma:SE-rank1-simple}.
\end{proof}

\subsection{Proof of Lemma \ref{lemma:SE-rank1-simple}}
\label{sec:SE-rank1-simple}

In this section we analyze the simplified recursion \myeqref{eq:AMPspecial_tilde}, that uses the conditional model  \eqref{eq:tbadef}. 
Since there is no possibility of confusion, we will drop the tilde and write $\bx^t$ instead of $\tbx^t$. Recall that $\bx_0 = \sqrt{n}\,\bv$, and for two equal-length vectors $\bx, \by$, we write $\< \bx, \by\>$ for the Euclidean inner product $\bx^{\sT}\by$.

To simplify notation, we will assume that $\< \bv, \bphi_1 \> \geq 0$. The proof for the case  $\< \bv, \bphi_1 \> \leq 0$ is identical except for a sign change in the definition in \myeqref{eq:Ldef}.

From \myeqref{eq:tbadef} and \myeqref{eq:AMPspecial_tilde}, we have
\begin{align} 
 \bx^{t+1}  & = z_1  \< \bphi_1, f(\bx^t; t) \> \bphi_1 + \lambda  \bPp \bv \bv^{\sT} \bPp f(\bx^t; t) + \bPp \tbW \bPp f(\bx^t; t) 
 -  \sb_t  f(\bx^{t-1}; t-1) \\
& = z_1  \< \bphi_1, f(\bx^t; t) \> \bphi_1 +  \lambda  \bPp \bv \left[ \< \bv, f(\bx^t; t)  \> - \< \bv, \bphi_1 \>  \< \bphi_1, f(\bx^t; t) \> \right]   \nonumber \\
&\qquad +\bPp \tbW \left[ f(\bx^t; t) -    \< \bphi_1, f(\bx^t; t) \> \bphi_1 \right]     -  \sb_t  f(\bx^{t-1}; t-1) \nonumber \\
& = \left[ z_1  \< \bphi_1, f(\bx^t; t) \>  - \sb_t \< \bphi_1, f(\bx^{t-1}; t-1) \> \right] \bphi_1  +  \lambda \< \bPp\bv, f(\bx^t; t)  \> \,  \bPp \bv   \nonumber \\
& \qquad  +   \tbW  \left[ f(\bx^t; t)  - \< \bphi_1, f(\bx^t; t) \> \bphi_1 \right]     - \sb_t \left[ f(\bx^{t-1}; t-1) - \< \bphi_1, f(\bx^{t-1}; t-1)  \bphi_1  \right] \nonumber \\
& \qquad \qquad - \, 
\bphi_1 \bphi_1^{\sT} \tbW  \left[ f(\bx^t; t)  - \< \bphi_1, f(\bx^t; t) \> \bphi_1 \right], \label{eq:xt1_full}
\end{align}
where we have used $\bPp = \bI - \bphi_1\bphi_1^{\sT}$ to obtain \eqref{eq:xt1_full}.
Defining 
\begin{align}
& g(\bx^t; t)  = f(\bx^t; t) - \< \bphi_1, f(\bx^t; t) \> \bphi_1,  \label{eq:gtdef} \\
& \bdel^t   = \bphi_1 \bphi_1^{\sT} \tbW  \left[ f(\bx^t; t)  - \< \bphi_1, f(\bx^t; t) \> \bphi_1 \right],   \label{eq:bdeldef}
\end{align}
we can write \myeqref{eq:xt1_full} as 
\begin{align} 
 \bx^{t+1}  & 
= \left[ z_1  \< \bphi_1, f(\bx^t; t) \>  - \sb_t \< \bphi_1, f(\bx^{t-1}; t-1) \> -\lambda \<\bv,\bphi_1\>\< \bPp\bv, f(\bx^t; t)  \>
\right] \bphi_1  +  \lambda  \< \bPp\bv, f(\bx^t; t)  \> \bv  \nonumber \\
& \qquad  +   \tbW  g(\bx^t; t)  - \sb_t g(\bx^{t-1}; t-1)\, - \bdel^t. \label{eq:xt1_exp}
\end{align}

Note that (almost surely) the empirical distribution of  $(\sqrt{n} \bv, \sqrt{n}\bphi_{1})$ converges in $W_2$ to  the distribution of $(U,
L)$, where  $U\sim \nu_{X_0}$ and  
\beq
L = \sqrt{1-\lambda^{-2}} U+ \lambda^{-1} G_1 \label{eq:Ldef} 
\eeq
with  $G_1\sim\normal(0,1)$ independent of $U$, see Corollary \ref{coro:ConvergenceEigenvectors}.

We define
\begin{align} 
\tilde{X}_t  \equiv \alpha_t U+\beta_t L+\tau_t G_0, \label{eq:tilXt_def} 
\end{align}  
where $G_0\sim\normal(0,1)$ is independent of $(U,L)$ and the constants $(\alpha_t, \beta_t, \tau_t)$  are defined via the following recursion. Starting with 
$\beta_0 = 1, \alpha_0= \tau_0=0$, so that $\tilde{X}_0= L$,  we compute for $t\ge 0$
\begin{align}
\alpha_{t+1} & = \lambda\left\{\E\{U\, f(\tilde{X}_t;t)\}- \sqrt{1-\frac{1}{\lambda^2}} \E\{L\, f(\tilde{X}_t;t)\}\right\}\, ,   \label{eq:AlphaRec} \\
\beta_{t+1} & = 2\lambda  \E \{ L\, f(\tilde{X}_t;t)\} - \E \left\{ f'(\tilde{X}_t; t) \right\} \E\{L\, f(\tilde{X}_{t-1};t-1)\} -\sqrt{\lambda^2-1} \, \E\{U\, f(\tilde{X}_t;t)\} \label{eq:BetaRec} \\
\tau_{t+1}^2 & = \E\{ f(\tilde{X}_t;t)^2\} - \left( \E\{L\, f(\tilde{X}_t;t)\} \right)^2\, .\label{eq:TauRec}
\end{align}
Here and below, we assume the convention  $\E\{L\, f(\tilde{X}_{-1}; \, -1)\}  =1/\lambda$.

We will prove \myeqref{eq:rank1_SE} in two steps.  We  show that almost surely 
\begin{align}
\lim_{n \to \infty}  \frac{1}{n} \sum_{i=1}^n \psi (x^t_i, x_{0,i}, \sqrt{n} \varphi_{1,i}) = \E \left\{ \psi( \alpha_t  U +  \beta_t L + \tau_t G_0, U, L) \right\}.
\label{eq:SE_conv1}
\end{align}
We then claim that 
\begin{align}
 \E \left\{ \psi( \alpha_t  U +  \beta_t L + \tau_t G_0, U ) \right\} = \E \left\{ \psi( \mu_t U +  \sigma_t G, U ) \right\}.  \label{eq:SE_conv2}
\end{align}

\subsection*{ Proof of \myeqref{eq:SE_conv2}}
For $(\alpha_t, \beta_t, \tau_t^2)$ defined via the recursion in  Eqs.~\eqref{eq:AlphaRec} -- \eqref{eq:TauRec}, we show below that for $t \geq 0$
\begin{align}
\beta_{t+1} = \lambda \E \{ L f(\tilde{X}_t; t) \}. \label{eq:betat1_prop}
\end{align}
 Using \myeqref{eq:betat1_prop},  we observe that the recursion in  Eqs. \eqref{eq:AlphaRec} -- \eqref{eq:TauRec} is equivalent to the recursion in Eqs. \eqref{eq:MutRec} -- \eqref{eq:SigtRec} if we set
\begin{align}
\mu_t & \equiv \alpha_t + \beta_t \sqrt{1- \lambda^{-2}}, \label{eq:mut_def0} \\
\sigma_t^2 & \equiv \beta_t^2 \lambda^{-2} + \tau_t^2 \label{eq:sigt_def0}.
\end{align}

Recalling that  $L = \sqrt{1-\lambda^{-2}} U + \lambda^{-1} G_1$, we have 
\begin{align}
 \E \left\{ \psi( \alpha_t  U +  \beta_t L + \tau_t G_0, U ) \right\}  
 = \E \left\{ \psi \left( (\alpha_t + \beta_t \sqrt{1- \lambda^{-2}} ) U +  \beta_t \lambda^{-1}G_1 + \tau_t G_0, U \right) \right\}.
\end{align}
Since $U \sim \mu_U$, $G_0 \sim \normal(0,1)$, and $G_1 \sim \normal(0,1)$ are independent,  we use  \myeqref{eq:mut_def0} and \myeqref{eq:sigt_def0} to observe that $(\alpha_t + \beta_t \sqrt{1- \lambda^{-2}} )U = \mu_t U$, and 
$ \beta_t \lambda^{-1}G_1 + \tau_t G_0 \ed\sigma_t Z_0$.  We finally show  \myeqref{eq:betat1_prop}.

\emph{Proof of \myeqref{eq:betat1_prop}}: Using the definition of $\beta_{t+1}$ in \myeqref{eq:BetaRec}, it suffices to show that, for $t\ge 0$, 
\begin{align}
\lambda \E \{ L f(\tilde{X}_t; t) \} = 
\E \{ f'(\tilde{X}_t; t) \} \E\{L\, f(\tilde{X}_{t-1};t-1) \} + \sqrt{\lambda^2-1} \, \E\{U\, f(\tilde{X}_t;t)\}.
 \label{eq:ind_claim}
\end{align}
We prove Eqs.~\eqref{eq:betat1_prop} and \eqref{eq:ind_claim} inductively. 
 
For $t=0$, using the definition of $L$ in  \myeqref{eq:Ldef} we write the LHS of \myeqref{eq:ind_claim} as
\begin{align}
\lambda \E \{ L f(\tilde{X}_0; 0) \} &  = \lambda \left[ \E \{ \lambda^{-1} G_1  f(\tilde{X}_0; 0)  \} 
+  \E \{ \sqrt{1- \lambda^{-2}} U f(\tilde{X}_0; 0) \} \right]  \nonumber \\ 
& =  \E \{ G_1  f(\tilde{X}_0; 0) \} +  \sqrt{\lambda^2-1} \, \E\{U\, f(\tilde{X}_0;0)\}, \nonumber \\
& \stackrel{(a)}{=} \frac{1}{\lambda}\E \{  f'(\tilde{X}_0; 0) \} +  \sqrt{\lambda^2-1} \, \E\{U\, f(\tilde{X}_0;0)\},
\end{align}
where the last equality $(a)$ is obtained  by noting that 
$\tilde{X}_0 =  L = \sqrt{1-\lambda^{-2}}U + \, \lambda^{-1}G_1$, and  then applying Stein's lemma (Gaussian integration by parts).  Thus \myeqref{eq:ind_claim} holds for $t=0$ since $ \E \{ L f(\tilde{X}_{-1}; -1) \} =1/\lambda$. 

Assume towards induction that  Eqs.~\eqref{eq:betat1_prop} and \eqref{eq:ind_claim} holds for $t=0, \ldots, (r-1)$.  For $t=r$, we have
\begin{align}
\lambda \E \{ L f(\tilde{X}_{r }; r) \} &= \E \{ G_1  f(\tilde{X}_r; r) \} +  \sqrt{\lambda^2-1} \, \E\{ U\, f(\tilde{X}_r;r) \}, \nonumber \\ 
& \stackrel{(a)}{=}   \lambda^{-1} \beta_r \E \{ f'(\tilde{X}_r; r) \} +  \sqrt{\lambda^2-1} \, \E\{U\, f(\tilde{X}_r;r)\},  \nonumber \\
& \stackrel{(b)}{=}  \E \{ L f(\tilde{X}_{r-1}; r-1) \} \E \{ f'(\tilde{X}_r; r) \} +  \sqrt{\lambda^2-1} \, \E\{U\, f(\tilde{X}_r;r)\}
\end{align}
where $(a)$ is obtained by noting that $\tilde{X}_r  = \alpha_r U+\sqrt{1- \lambda^{-2}} \beta_r U +  \lambda^{-1} \beta_r
G_1+\tau_r G_0$, and then applying Stein's lemma; step $(b)$ follows from the induction hypothesis \myeqref{eq:betat1_prop} for $t=r-1$. This proves  \myeqref{eq:betat1_prop}, thus completing the proof of \myeqref{eq:SE_conv2}.

\subsection*{ Proof of \myeqref{eq:SE_conv1}}
Let 
\begin{align}
\tilg(\bx^t; t) & = f(\bx^t; t) - \E \{ L f(\alpha_t U + \beta_t L +\tau_t G_0;t) \}  \sqrt{n} \bphi_1. \label{eq:tilg_def}
\end{align}
Note that $\tilde{g}(\bx^t; t)$ is a separable function obtained by replacing the scalar coefficient of $\bphi_1$  in \myeqref{eq:gtdef} by a deterministic value.

Define a related iteration to generate $(\bfs^t)_{t \geq 0}$ as follows.
\begin{align}
\bfs^{t+1}&  =  \tbW \tilg(\bfs^t + \alpha_t \bx_0  + \beta_t  \sqrt{n} \bphi_1; t ) - \tilde{\sb}_t \tilg (\bfs^{t-1} +  \alpha_{t-1} \bx_0  + \beta_{t-1} \sqrt{n} \bphi_1 ; t-1), \label{eq:st1_def} \\ 
\tilde{\sb}_t &= \frac{1}{n} \sum_{i =1}^n f'(s^t_i+ \alpha_t x_{0,i} + \beta_t  \sqrt{n} \varphi_{1,i} ; t).
\end{align}
The iteration is initialized with 
\begin{align} 
\bfs^0 =  \bx^0 - \alpha_0 \bx_0 - \beta_0 \sqrt{n} \bphi_1 =\mathbf{0},  
\label{eq:bfs0_def} 
\end{align}
where the last equality holds because $\bx^0 =\sqrt{n} \bphi_1$, $\alpha_0=0$, and $\beta_0=1$. 

Noting that: $(i)$ the empirical distribution of $(\bx_0, \sqrt{n} \bphi_1)$ converges in $W_2$ to the distribution of $(U,L)$, and $(ii)$ 
the iteration for $\bfs^t$ is of the standard AMP form in \cite{javanmard2013state}, for any pseudo-Lipschitz function $\tilde{\psi}$ we have almost surely:
\begin{align}
\lim_{n \to \infty}  \frac{1}{n} \sum_{i=1}^n \tilde{\psi} (s^t_i, x_{0,i}, \sqrt{n} \varphi_{1,i}) = 
\E \left\{ \tilde{\psi} ( \tau_t G_0, U, L ) \right\}. \label{eq:st_SE}
\end{align}
where  $\tau_t$ is determined by the  recursion:
\begin{align}
\tau_{t+1}^2 =   \E\{ f( \alpha_t  U +  \beta_t L + \tau_t G_0 ;t)^2\}- \left( \E\{L\, f(\alpha_t  U +  \beta_t L + \tau_t G_0;t)\} \right)^2,
\end{align}
initialized with $\tau_0=0$. Note that this expression for $\tau_{t+1}^2$ matches with that in \myeqref{eq:TauRec}.

Now, choosing $\tilde{\psi} (u, v, z) = \psi(u + \alpha_t v+ \beta_t z, v, z)$ for a pseudo-Lipschitz function $\psi$, \myeqref{eq:st_SE} implies that almost surely
\begin{align}
\lim_{n \to \infty}  \frac{1}{n} \sum_{i=1}^n \psi (s^t_i + \alpha_t x_{0,i} +  \beta_t \sqrt{n} \varphi_{1,i}, x_{0,i}, \sqrt{n} \varphi_{1,i}) = 
\E \left\{ \psi ( \tau_t G_0 + \alpha_t U + \beta_t L, U, L ) \right\}.  \label{eq:st_SE0}
\end{align}
Therefore to prove \myeqref{eq:SE_conv1} it suffices to show that almost surely
\begin{align}
\lim_{n \to \infty}  \left[ \frac{1}{n} \sum_{i=1}^n \psi (x^t_i, x_{0,i}, \sqrt{n} \varphi_{1,i})  -  \frac{1}{n} \sum_{i=1}^n 
\psi (s^t_i + \alpha_t x_{0,i} +  \beta_t \sqrt{n} \varphi_{1,i}, x_{0,i}, \sqrt{n} \varphi_{1,i})  \right] =0. 
\label{eq:psi_xs}
\end{align}
We define
\begin{align}
\bDelta^t = \bx^t - \left( \bfs^t + \alpha_t \bx_0 + \beta_t \sqrt{n} \bphi_1 \right),  \label{eq:bDel_def}
\end{align}
and inductively prove \myeqref{eq:psi_xs} together with the following claims:
\begin{align}
 & \lim_{n \to \infty} \frac{1}{n} \norm{\bDelta^t}^2  = 0, \label{eq:norm_Del} \\
&  \limsup_{n \to \infty} \frac{1}{n} \norm{\bx^t}^2 < \infty, \qquad    
 \limsup_{n \to \infty} \frac{1}{n} \norm{\bfs^t +  \alpha_t \bx_0 + \beta_t \sqrt{n} \bphi_1 }^2 < \infty. \label{eq:norm_finite}
\end{align}
The base case of $t=0$ is easy to verify. Indeed, from the definition of $\bfs^0$ in \myeqref{eq:bfs0_def},  we have $\bDelta^0 = \mbf{0}$
and the equality in \myeqref{eq:psi_xs} holds. Furthermore, since $\bx^0= \sqrt{n}\bphi_1$, we have
$\norm{\bx^0}^2/n = 1$.  

With the induction hypothesis that Eqs.~\eqref{eq:psi_xs} -- \eqref{eq:norm_finite} hold for $t=0, 1, \ldots, r$, we now prove the claim for $t=r+1$.   By the pseudo-Lipschitz property  of $\psi$, for $i \in [n]$ and some constant $C$  we have:
\begin{align}
& \abs{ \psi (x^r_i, x_{0,i}, \sqrt{n} \varphi_{1,i})  -\psi (s^r_i + \alpha_t x_{0,i} +  \beta_r \sqrt{n} \varphi_{1,i}, x_{0,i}, \sqrt{n} \varphi_{1,i}) }  \nonumber \\
& \leq C \abs{\Delta^r_i} \left( 1 +  \abs{x_{0,i}} + \abs{x^r_i} + \abs{s_i^r + \alpha_r x_{0,i} + \beta_r \varphi_{1,i}} \right). \label{eq:psiPL00}
\end{align}
(In what follows we use $C >0$ to denote a generic absolute constant whose value may change as we progress though the proof.)

From \myeqref{eq:psiPL00}, we have
\begin{align}
& \frac{1}{n} \abs{ \sum_{i=1}^n  \left[  \psi (x^r_i, x_{0,i}, \sqrt{n} \varphi_{1,i}) - \psi (s^t_i + \alpha_r x_{0,i} +  \beta_r \sqrt{n} \varphi_{1,i}, x_{0,i}, \sqrt{n} \varphi_{1,i}) \right]} \nonumber \\
& \leq  C \left[ \frac{1}{n} \sum_{i=1}^n \abs{\Delta^r_i}   + \frac{1}{n} \sum_{i=1}^n \abs{\Delta^r_i} \abs{x_{0,i}}   + \frac{1}{n} \sum_{i=1}^n \abs{\Delta^r_i} \abs{x^r_i} + \frac{1}{n} \sum_{i=1}^n \abs{\Delta^r_i}\abs{s_i^r + \alpha_r x_{0,i} + \beta_r \phi_{1,i}}  \right] \nonumber \\
& \leq C \left[ \frac{\norm{\bDelta^r}}{\sqrt{n}} + \frac{\norm{\bDelta^r}}{\sqrt{n}} \frac{\norm{\bx^r}}{\sqrt{n}} 
+ \frac{\norm{\bDelta^r}}{\sqrt{n}} \frac{\norm{\bfs^r + \alpha_r \bx_0 + \beta_r \bphi_1}}{\sqrt{n}} \right]
\end{align}

Substituting the expressions for $\bx^{r+1}$ and $\bfs^{r+1}$ from \myeqref{eq:xt1_exp} and  \myeqref{eq:st1_def} into definition of $\bDelta^{r+1}$ from \myeqref{eq:bDel_def},  and recalling that $\bx_0 = \sqrt{n} \bv$, we get 
\begin{align}
\bDelta^{r+1}  &=   \left[ z_1  \frac{\<  \sqrt{n} \bphi_1, f(\bx^r; r) \>}{n}  - \sb_r  \frac{\< \sqrt{n}\bphi_1, f(\bx^{r-1}; r-1) \> }{n}
-\lambda \<\bv,\bphi_1\> \frac{\< \bPp \bx_0, f(\bx^r; r)  \>}{n}  - \beta_{r+1} \right] \sqrt{n} \bphi_1   \nonumber \\
&  +  \left[ \lambda  \frac{\< \bPp \bx_0, f(\bx^r; r) \>}{n} - \alpha_{r+1} \right] \bx_0 +   \tbW  \left[ g(\bx^r; r)  - \tilg(\bfs^r + \alpha_r \bx_0  + \beta_r  \sqrt{n} \bphi_1; r ) \right]  \nonumber \\
& + \tilde{\sb}_r \tilg (\bfs^{r-1} +  \alpha_{r-1} \bx_0  + \beta_{r-1} \sqrt{n}\bphi_1) - \sb_r g(\bx^{r-1}; r-1) \, - \bdel^r.
\end{align}
Note that $\norm{\bx_0}^2/n = \norm{\bphi_1}^2 =1$. We show that $\norm{\bDelta^{r+1}}^2/n  \to 0$ almost surely by proving that the following limits hold almost surely:
\begin{align}
& \lim_{n \to \infty} \left[ z_1  \frac{\<  \sqrt{n} \bphi_1, f(\bx^r; r) \>}{n}  - \sb_r  \frac{\< \sqrt{n}\bphi_1, f(\bx^{r-1}; r-1) \> }{n}
-\lambda \<\bv,\bphi_1\> \frac{\< \bPp \bx_0, f(\bx^r; r)  \>}{n} \right] = \beta_r,   \label{eq:lim_betat}\\
& \lim_{n \to \infty}  \lambda  \frac{\< \bPp \bx_0, f(\bx^r; r) \>}{n}  = \alpha_r,  \label{eq:lim_alpht} \\
&  \lim_{n \to \infty} \frac{1}{n} \norm{\tbW  \left[ g(\bx^r; r)  - \tilg(\bfs^r + \alpha_r \bx_0  + \beta_r  \sqrt{n} \bphi_1; r ) \right]}^2 = 0, \label{eq:Wg_diff_norm} \\
& \lim_{n \to \infty}  \frac{1}{n} \norm{\tilde{\sb}_r \tilg (\bfs^{r-1} +  \alpha_{r-1} \bx_0  + \beta_{r-1}; \, r-1)  - \sb_r g(\bx^{r-1}; r-1)}^2 =0,
\label{eq:btg_diff_norm} \\
& \lim_{n \to \infty} \frac{1}{n} \norm{\bdel^r}^2 =0. \label{eq:bdel_norm}
\end{align}

\emph{Proof of \myeqref{eq:lim_betat}}: 
From standard results on spiked random matrices \cite{baik2005phase,benaych2012singular},  we know that almost surely,
\begin{align}
\lim_ {n \to \infty} z_1 =  \lambda + \frac{1}{\lambda}, \qquad \lim_{n \to \infty} |\<\bv,\bphi_1\>| = \sqrt{1- \frac{1}{\lambda^2}}. \label{eq:spiked_rm_lim}
\end{align}
Consider the function $\psi(u,v,z) = z f(u; t)$. Since $f(\cdot; t)$ is Lipschitz, it is easy to check that $\psi$ is pseudo-Lipschitz. Therefore, by the induction hypothesis,  using \myeqref{eq:psi_xs}  and \myeqref{eq:st_SE0} with $t=r$ and $t=(r-1)$, we have
\begin{align}
 \lim_{n \to \infty} \frac{\<  \sqrt{n} \bphi_1, f(\bx^r; r) \>}{n}  & = \E \left\{ L f( \tau_r G_0 + \alpha_r U + \beta_r L; \, r) \right\} \quad \text{a.s.}  \label{eq:phi1_f_lim}\\
\lim_{n \to \infty} \frac{\< \sqrt{n}\bphi_1, f(\bx^{r-1}; r-1) \> }{n}  & = \E \left\{ L f( \tau_{r-1} G_0 + \alpha_{r-1} U + \beta_{r-1} L; \, r-1) \right\} \quad \text{a.s.}  \label{eq:phi1_f_limB}
 \end{align}
Next, consider the term
\[ 
\frac{\< \bPp \bx_0, f(\bx^r; r)  \>}{n} = \frac{\< \bx_0, f(\bx^r; r)  \>}{n} 
 - \< \bv , \bphi_1 \> \frac{\< \sqrt{n} \bphi_1, f(\bx^r; r)  \>}{n}.   
 \]
 Using the induction hypothesis and considering the pseudo-Lipschitz function $\psi(u,v,z) = v f(u;r)$, we have from \myeqref{eq:psi_xs}  and \myeqref{eq:st_SE}:
 \begin{align}
 \lim_{n \to \infty} \frac{\< \bx_0, f(\bx^r; r)  \>}{n}  = \E \left\{ U f( \tau_{r} G_0 + \alpha_{r} U + \beta_{r} L;r) \right\} \quad \text{a.s.}
 \end{align}
 Using this together with \myeqref{eq:phi1_f_lim} and \myeqref{eq:spiked_rm_lim}, we get
 \begin{align}
& \lim_{n \to \infty} \frac{\< \bPp \bx_0, f(\bx^r; r)  \>}{n}  \nonumber  \\
& =  \E \left\{ U f( \tau_{r} G_0 + \alpha_{r} U + \beta_{r} L) \right\}
 - \sqrt{1- \frac{1}{\lambda^2}} \E \left\{ L f( \tau_r G_0 + \alpha_r U + \beta_r L; \, r) \right\} \ \text{a.s.} \label{eq:Px0perp_f}
 \end{align}
 Next consider
 \[  \sb_r = \frac{1}{n} \sum_{i=1}^n f'(x^r_i; r). \]
The induction hypothesis implies that the empirical distribution of $\bx^r$ converges weakly to the distribution of 
 $\alpha_r U + \beta_r L + \tau_r G_0$. Combining this with the Lipschitz property of $f( \cdot; r)$,  from  \cite[Lemma 5]{BM-MPCS-2011} we have
 \begin{align}
 \lim_{n \to \infty} \sb_r =  \E\{ f'(\alpha_r U + \beta_r L + \tau_r G_0; r) \}  \ \text{a.s.}  \label{eq:fpr_conv}
 \end{align}
 
 Finally, combining the results in \myeqref{eq:spiked_rm_lim} -- \myeqref{eq:fpr_conv}, we obtain
 \begin{align}
  & \lim_{n \to \infty}  \left[ z_1  \frac{\<  \sqrt{n} \bphi_1, f(\bx^r; r) \>}{n}  - \sb_r  \frac{\< \sqrt{n}\bphi_1, f(\bx^{r-1}; r-1) \> }{n}
-\lambda \<\bv,\bphi_1\> \frac{\< \bPp \bx_0, f(\bx^r; r)  \>}{n} \right] \nonumber \\
& = (\lambda + \lambda^{-1})  \E \left\{ L f( \tau_{r} G_0 + \alpha_{r} U + \beta_{r} L;r) \right\}  \nonumber \\
& \quad - \E\{ f'(\alpha_r U + \beta_r L + \tau_r G_0; \, r) \} \E \left\{ L f( \tau_{r-1} G_0 + \alpha_{r-1} U + \beta_{r-1} L; \, r-1) \right\}  \nonumber \\
& \quad   - \sqrt{\lambda^2-1}\left[ \E \left\{ U f( \tau_{r} G_0 + \alpha_{r} U + \beta_{r} L;\, r) \right\} 
 - \sqrt{1- \frac{1}{\lambda^2}} \E \left\{ L f( \tau_r G_0 + \alpha_r U+ \beta_r L; \, r) \right\}  \right] \nonumber \\ 
 & = 2\lambda  \E \left\{ L f( \tau_{r} G_0 + \alpha_{r} U + \beta_{r} L;\, r) \right\}  - \sqrt{\lambda^2-1} \,  \E \left\{ U f( \tau_{r} G_0 + \alpha_{r} U + \beta_{r} L;\, r) \right\} \nonumber \\
& \quad - \E\{ f'(\alpha_r U + \beta_r L + \tau_r G_0;\, r) \} \E \left\{ L f( \tau_{r-1} G_0 + \alpha_{r-1} U + \beta_{r-1} L; \, r-1) \right\}  \nonumber  \\
& = \beta_{r+1},  \label{eq:phi1_coeff}
 \end{align}
where the last inequality follows from the definition in \myeqref{eq:BetaRec}.

\emph{Proof of \myeqref{eq:lim_alpht}}: From \myeqref{eq:Px0perp_f}, we have almost surely
 \begin{align}
& \lim_{n \to \infty} \lambda \frac{\< \bPp \bx_0, f(\bx^r; r)  \>}{n}  \nonumber  \\
& =  \lambda \left[  \E \left\{ U f( \tau_{r} G_0 + \alpha_{r} U + \beta_{r} L;\, r) \right\}
 - \sqrt{1- \frac{1}{\lambda^2}} \E \left\{ L f( \tau_r G_0 + \alpha_r U+ \beta_r L; \, r) \right\} \right] = \alpha_{r+1}, \label{eq:lamb_Px0perp_f}
 \end{align}
where the last inequality follows from the definition in \myeqref{eq:AlphaRec}.

\emph{Proof of \myeqref{eq:Wg_diff_norm}}:  We have
\begin{align}
& \frac{1}{n} \norm{\tbW  \left[ g(\bx^r; r)  - \tilg(\bfs^r + \alpha_r \bx_0  + \beta_r  \sqrt{n} \bphi_1; r ) \right]}^2 \nonumber  \\
& \leq   { \| \tbW \| }_{\op}^2  \,  \frac{1}{n}\norm{g(\bx^r; r)  - \tilg(\bfs^r + \alpha_r \bx_0  + \beta_r  \sqrt{n} \bphi_1; r )}^2. \label{eq:tWg_CS}
\end{align}
Since $\tbW \sim\GOE(n)$, we know that \cite{AGZ} almost surely
\begin{align}
\lim_{n \to \infty} { \| \tbW \| }_{\op}^2 =4.
\label{eq:normtbW_lim}
\end{align}
Using the definitions  of the functions $g$ and $\tilg$ from \myeqref{eq:gtdef} and \myeqref{eq:tilg_def}, we write 
\begin{align}
 &  \frac{1}{n} \norm{g(\bx^r; r) - \tilg(\bx^r; r) + \tilg(\bx^r; r)   - \tilg(\bfs^r + \alpha_r \bx_0  + \beta_r  \sqrt{n} \bphi_1; r )}^2   \nonumber \\
 & \leq \frac{2}{n}\left[  \norm{g(\bx^r; r) - \tilg(\bx^r; r) }^2 +  
 \norm{ \tilg(\bx^r; r)   - \tilg(\bfs^r + \alpha_r \bx_0  + \beta_r  \sqrt{n} \bphi_1; r )}^2  \right]  \nonumber \\
 & =  2 \norm{\bphi_1}^2 \left[ \E \{ L f(\alpha_r U + \beta_r L +\tau_r G_0) \} - 
 \frac{\< \sqrt{n} \bphi_1, f(\bx^r; r) \>}{n} \right]^2
  \nonumber \\ 
  & \quad  +  \frac{2}{n}   \norm{ f(\bx^r; r)   - f(\bfs^r + \alpha_r \bx_0  + \beta_r  \sqrt{n} \bphi_1; r )}^2.
  \label{eq:gtild_diff1}
\end{align}
Noting that $\norm{\bphi_1}=1$, the first term on the RHS of \myeqref{eq:gtild_diff1} tends to zero almost surely, as shown in \myeqref{eq:phi1_f_lim}. For the last term in \myeqref{eq:gtild_diff1}, we use the fact that $f(\,\cdot\, ; r)$ is Lipschitz to write
\begin{align}
\frac{1}{n}   \norm{ f(\bx^r; r)   - f(\bfs^r + \alpha_r \bx_0  + \beta_r  \sqrt{n} \bphi_1; r )}^2 \leq  \frac{C}{n}
 \| \bDelta^r \|^2,
\end{align}
where $C>0$ is an absolute constant. By the induction hypothesis $\| \bDelta^r \|^2/n \to 0$ almost surely.  Therefore, using \myeqref{eq:gtild_diff1} and \myeqref{eq:normtbW_lim} in \myeqref{eq:tWg_CS} yields the result in \myeqref{eq:Wg_diff_norm}.

\emph{Proof of \myeqref{eq:btg_diff_norm}}:    Using the inequality $(x+y+z)^2 \leq 3 (x^2  + y^2 + z^2)$ for $x,y,z \in \reals$ and dropping the time index  $(r-1)$ within $\tilg$ for brevity, we write
\begin{align}
 & \frac{1}{n} \norm{ \tilde{\sb}_r \tilg (\bfs^{r-1} +  \alpha_{r-1} \bx_0  + \beta_{r-1})  - \sb_r g(\bx^{r-1})}^2   \nonumber \\
&  \leq \frac{3}{n} \Bigg[ \norm{ \tilde{\sb}_r \tilg (\bfs^{r-1} +  \alpha_{r-1} \bx_0  + \beta_{r-1}) - \sb_r \tilg (\bfs^{r-1} +  \alpha_{r-1} \bx_0  + \beta_{r-1}) }^2  \nonumber \\
& \quad  \qquad +    \sb_r^2 \norm{ \tilg (\bfs^{r-1} +  \alpha_{r-1} \bx_0  + \beta_{r-1})  - \tilg (\bx^{r-1}) }^2 
 +  \sb_r^2 \norm{ \tilg(\bx^{r-1}) - g(\bx^{r-1})}^2 \Bigg] \label{eq:Bg_expand}
\end{align}
First consider the first term in \myeqref{eq:Bg_expand}, which  using the Lipschitz property of $\tilde{g}$  can be bounded as 
\begin{align}
\frac{1}{n}\norm{\tilg (\bfs^{r-1} +  \alpha_{r-1} \bx_0  + \beta_{r-1})}^2 ( \tilde{\sb}_r  -  \sb_r )^2 
\leq \frac{C}{n} \norm{\bfs^{r-1} +  \alpha_{r-1} \bx_0  + \beta_{r-1}}^2 ( \tilde{\sb}_r  -  \sb_r )^2. 
\label{eq:bT1}
\end{align}
From the induction hypothesis in \myeqref{eq:norm_finite} for $t=(r-1)$, we have
\begin{equation}
\limsup_{n \to \infty} \frac{1}{n} \norm{\bfs^{r-1} +  \alpha_{r-1} \bx_0  + \beta_{r-1}}^2 < \infty.
\label{eq:fin_norm}
\end{equation}
Next, we claim that 
\begin{align}
\lim_{n \to \infty} \tilde{\sb}_r    =  \lim_{n \to \infty} \frac{1}{n} \sum_{i =1}^n f'(s^r_i+ \alpha_r x_{0,i} + \beta_t  \sqrt{n} \,  \varphi_{1,i} ; r)  =  \E\{ f'(\alpha_r U + \beta_r L + \tau_r G_0; r) \}  \ \text{a.s.} 
\label{eq:fprs_conv}
\end{align}
Indeed,  the result in \myeqref{eq:st_SE}  implies that the empirical distribution of 
$(\bfs^r + \alpha_r \bx_{0} + \beta_r  \sqrt{n} \,  \bphi_{1})$ converges weakly to the distribution of 
 $\alpha_r U + \beta_r L + \tau_r G_0$. Combining this with the Lipschitz property of $f( \,\cdot\, ; r)$,   \myeqref{eq:fprs_conv} follows from  \cite[Lemma 5]{BM-MPCS-2011}. The limiting value for $\sb_r$ is the same, as shown in \myeqref{eq:fpr_conv}. Therefore, from \myeqref{eq:bT1} we have
 \begin{align}
\lim_{n \to \infty} \frac{1}{n}\norm{\tilg (\bfs^{r-1} +  \alpha_{r-1} \bx_0  + \beta_{r-1})}^2 ( \tilde{\sb}_r  -  \sb_r )^2  =0 \quad \text{ a.s. } \label{eq:gtil_b_btil}
 \end{align}
 
  Next, using the Lipschitz property of  $\tilg$, the second term in \myeqref{eq:Bg_expand} can be bounded as
  \begin{align}
  \frac{\sb_r^2}{n}  \norm{ \tilg (\bfs^{r-1} +  \alpha_{r-1} \bx_0  + \beta_{r-1})  - \tilg (\bx^{r-1}) }^2  \leq
  \frac{ \sb_r^2 C}{n} \norm{\bDelta^{r-1}}^2.  \label{eq:tilgs_tilgx}
  \end{align}
By the induction hypothesis, we have ${\| \bDelta^{r-1} \|}^2/n \to 0$ almost surely. Since $\sb_r$ has already been shown to  approach a finite limit almost surely, we therefore have
 \begin{align}
 \lim_{n \to \infty}   \frac{\sb_r^2}{n}  \norm{ \tilg (\bfs^{r-1} +  \alpha_{r-1} \bx_0  + \beta_{r-1})  - \tilg (\bx^{r-1}) }^2 = 0 \quad \text{a.s.} \label{eq:br_gtil_gtil}
 \end{align}
 
 Finally,  we have 
 \begin{align}
& \lim_{n \to \infty}  \frac{\sb_r^2}{n} \norm{ \tilg(\bx^{r-1}) - g(\bx^{r-1})}^2  \nonumber \\
&  =
\lim_{n \to \infty}  \sb_r^2 \norm{\bphi_1}^2 \left[ \E \{ L f(\alpha_{r-1} U + \beta_{r-1} L +\tau_{r-1} G_0;\, r-1) \} - 
 \frac{\< \sqrt{n} \bphi_1, f(\bx^{r-1}; r-1) \>}{n} \right]^2 \stackrel{(a)}{=} 0  \quad \text{ a.s}, \label{eq:br_gtil_g}
 \end{align}
where $(a)$ follows from  \myeqref{eq:phi1_f_limB}. Using \myeqref{eq:gtil_b_btil}, \myeqref{eq:br_gtil_gtil} and \myeqref{eq:br_gtil_g} in \myeqref{eq:Bg_expand} yields the result in \myeqref{eq:btg_diff_norm}.

\emph{Proof of \myeqref{eq:bdel_norm}}:  Using the definition of $\bdel^t$ in \myeqref{eq:bdeldef}, we write
\begin{align}
& \bdel^r  = \bphi_1 \< \bphi_1, \tbW g(\bx^r; r) \> \nonumber  \\
& = \bphi_1 \left[  \< \bphi_1, \, \tbW \tilg(\bfs^r + \alpha_r \bx_0 + \beta_r \bphi_1; r) \> + \< \bphi_1,  \,\tbW ( g(\bx^r; r) - \tilg(\bfs^r + \alpha_r \bx_0 + \beta_r \bphi_1; r)) \>    \right] \nonumber \\
& = \bphi_1 \left[  \< \bphi_1,  \bfs^{r+1} + \tilde{\sb}_r \tilg(\bfs^{r-1} + \alpha_{r-1} \bx_0 + \beta_{r-1} \bphi_1; r-1)  \> + \< \bphi_1,  \tbW ( g(\bx^r; r) - \tilg(\bfs^r + \alpha_r \bx_0 + \beta_r \bphi_1; r)) \>    \right],
\end{align}
where the last equality follows from \myeqref{eq:st1_def}. Therefore,
\begin{align}
\frac{1}{n} \norm{\bdel^r}^2   \leq  &  \frac{3}{n} \Bigg[  \< \bphi_1,  \bfs^{r+1} \>^2 
+ \tilde{\sb}_r^2 \< \bphi_1, \tilg(\bfs^{r-1} + \alpha_{r-1} \bx_0 + \beta_{r-1} \bphi_1; r-1) \>^2   \nonumber  \\ 
&  \qquad + \< \bphi_1,  \tbW ( g(\bx^r; r) - \tilg(\bfs^r + \alpha_r \bx_0 + \beta_r \bphi_1; r)) \>^2  \Bigg].  \label{eq:delr_expand}
\end{align}
Consider the first term in \myeqref{eq:delr_expand}. We almost surely have,
\begin{align}
\lim_{n \to \infty} \frac{1}{n} \< \bphi_1,  \bfs^{r+1} \>^2 = \lim_{n \to \infty}  \left[\frac{1}{n}   \< \sqrt{n} \bphi_1,  \bfs^{r+1} \> \right]^2  
\stackrel{(a)}{=} \left[ \E \left\{ (\tau_{r+1} G_0 L ) \right\} \right]^2 \stackrel{(b)}{=} 0,\label{eq:bphi_bfs} 
\end{align}
where $(a)$ is obtained by applying the state evolution result \myeqref{eq:st_SE} for $\bfs^{r+1}$ with the pseudo-Lipschitz function
$\psi(s,x,y) = ys$. The equality $(b)$ holds because $L, G_0$ are independent. 

For the second term in \myeqref{eq:delr_expand}, using the definition of $\tilg$ in \myeqref{eq:tilg_def} we write
\begin{align}
& \frac{1}{n} \< \bphi_1, \tilg(\bfs^{r-1} + \alpha_{r-1} \bx_0 + \beta_{r-1} \bphi_1; r-1) \>^2  \nonumber \\
&  =  \left[ \frac{1}{n} \< \sqrt{n} \bphi_1,  f(\bfs^{r-1} + \alpha_{r-1} \bx_0 + \beta_{r-1} \bphi_1; r-1) \> - \E \{ Lf(\tau_{r-1}G_0 + \alpha_{r-1} U + \beta_{r-1} L; r-1) \} \right]^2. \label{eq:phi1_gtil}
\end{align}
Now, applying the state evolution result  \myeqref{eq:st_SE0} to the pseudo-Lipschitz function $\psi(u,v,z) = z f(u;r-1)$, we obtain
\begin{align}
\lim_{n \to \infty} \frac{\< \sqrt{n}\bphi_1, f(\bfs^{r-1} + \alpha_{r-1} \bx_0 + \beta_{r-1} \bphi_1; r-1) \> }{n}  & = \E \left\{ L f( \tau_{r-1} G_0 + \alpha_{r-1} U + \beta_{r-1} L; \, r-1) \right\} \ \text{a.s.}  \label{eq:phi1_fs_limB}
 \end{align}
Using this in \myeqref{eq:phi1_gtil}, and recalling from \myeqref{eq:fprs_conv} that $\sb_r$ converges to a finite value,  we get
\begin{align}
\lim_{n \to \infty} \frac{\sb_r^2}{n} \< \bphi_1, \tilg(\bfs^{r-1} + \alpha_{r-1} \bx_0 + \beta_{r-1} \bphi_1; r-1) \>^2 = 0 \ \text{a.s.}
\end{align}

Finally, for the third term in \myeqref{eq:delr_expand}, using Cauchy-Schwarz we have
\begin{align}
\frac{1}{n} \< \bphi_1,  \tbW ( g(\bx^r; r) - \tilg(\bfs^r + \alpha_r \bx_0 + \beta_r \bphi_1; r)) \>^2 \leq \| \tbW \|_{\op}^2 \cdot 
\frac{1}{n} \| g(\bx^r; r) - \tilg(\bfs^r + \alpha_r \bx_0 + \beta_r \bphi_1; r) \|^2. \label{eq:phi1_tw_tg}
\end{align}
Recall that $\| \tbW \|_{\op}^2 \to 4$ almost surely. The last term in \myeqref{eq:phi1_tw_tg} can be bounded as 
\begin{align}
& \lim_{n \to \infty} \frac{1}{n} \| g(\bx^r; r) - \tilg(\bfs^r + \alpha_r \bx_0 + \beta_r \bphi_1; r) \|^2  \nonumber  \\
&\leq  \lim_{n \to \infty} \left[ \frac{2}{n} \| g(\bx^r; r) - \tilg(\bx^r; r) \|^2 + \frac{2}{n} \| \tilg(\bx^r; r)-  \tilg(\bfs^r + \alpha_r \bx_0 + \beta_r \bphi_1; r) \|^2 \right] =0 \text{ a.s}\label{eq:gx_gtils}
\end{align}
from the arguments in \myeqref{eq:tilgs_tilgx} -- \myeqref{eq:br_gtil_g}.

To summarize, we have proven that \myeqref{eq:lim_betat} -- \myeqref{eq:bdel_norm} hold, and  consequently \myeqref{eq:psi_xs} and \myeqref{eq:norm_Del} hold for $t=(r+1)$.  Finally, we need to verify that the conditions in \myeqref{eq:norm_finite} also hold for $t=(r+1)$. But these immediately follow from \myeqref{eq:st_SE0} and \myeqref{eq:psi_xs} with  $t=r$  by considering  the pseudo-Lipschitz function $\psi(u,v,w) = u^2$.


\section{Proof of Theorem \ref{thm:Main}: General case}
\label{sec:ProofMainGeneral}

Throughout this appendix, we use the notation $f(\bx,y;t) = f_t(\bx,y;t)$.

The last statement of the theorem, that $\boldsymbol{\Omega} \in \cG_n(\bLambda)$ with the claimed probability and the weak convergence of $\boldsymbol{\Omega}$,  follows from Lemma \ref{lem:ConvergenceEigenvectors}.

It remains to prove the state evolution result \myeqref{eq:main_SE_result}. To reduce book-keeping, we will assume $k_-=0$ so that $k_*=k_+$, i.e.,  all the large rank-one perturbations are positive-definite. The general case is completely analogous. 

We will use Lemma \ref{lemma:Distr}, which states that the law of $\bA$ in \myeqref{eq:SpikedDef} is close in total variation to the law of 
\begin{align}
\tbA  &= \sum_{i=1}^{k_*}z_i\bphi_i\bphi_i^{\sT}  +  \bPp \left (  \sum_{i=1}^{k_*} \lambda_i \bv_i\bv_i^{\sT} +\tbW \right)  \bPp\, , \label{eq:tbadef_gen}
\end{align}
where $(z_1, \ldots, z_{k_*})$ are the first $k_*$ ordered eigenvalues of $\bA$ in \myeqref{eq:SpikedDef}, and  $(\bphi_1, \ldots, \bphi_{k_*})$ are the corresponding eigenvectors. The matrix $\bPp$ is the projector onto the space orthogonal to the column space of $\bPhi_{\hat{S}}$, where 
\beq
\bPhi_{\hat{S}} \equiv [ \bphi_1 \mid \bphi_2 \ldots \mid \bphi_{k^*} ].  \label{eq:bPhS_def}
\eeq
We also define
\beq
\bZ_{\hS}  \equiv \diag(z_1, \ldots, z_{k_*}).  \label{eq:ZhS}
\eeq

We will first prove the convergence result \myeqref{eq:main_SE_result} assuming $(\bx^t)_{t\ge 0}$ were generated using the AMP iteration with $\tbA$, i.e.: 
\begin{align}
 \bx^0 & =  \sqrt{n}  \, [\bphi_1|\cdots|\bphi_{k_*}|\bzero|\cdots|\bzero] \,  ,\\
\bx^{t+1} &= \tbA\,  f(\bx^t,\by;t) - f(\bx^{t-1},\by;t-1)\, \sB_t^{\sT} \, .\label{eq:AMPspecial_tilde_gen}
\end{align} 
For any $\eps \in (0, \eps_0)$, Lemma \ref{lemma:Distr} bounds the total variation distance between the  conditional joint distributions of $(\tbA,\bphi_1)$ and 
$(\bA,\bphi_1)$ given $(z_1, \bphi_1) \in \cE_{\eps}$, where $\cE_{\eps}$ is defined in \myeqref{eq:cE_eps}. Then, using steps similar to \myeqref{eq:rank1_coup1} -- \myeqref{eq:rank1_coup2},  it follows that there exists a coupling of the laws of $\bA$ and $\tbA$ such that
\begin{align}
\prob\left\{ \frac{1}{n}\sum_{i=1}^n\psi(\bx_i^t(\bA),\tbv_i,y_i) \neq \frac{1}{n}\sum_{i=1}^n\psi(\bx_i^t(\tbA),\tbv_i,y_i) \right\} \le \frac{1}{c(\eps)}\, e^{-nc(\eps)}\, ,
\end{align}
for some constant $c(\eps) >0$. Here we have emphasized the dependence on the matrix $\bA$. By Borel-Cantelli, the two averages coincide eventually almost surely.
Therefore, once   \myeqref{eq:main_SE_result}  of Theorem \ref{thm:Main} holds for  $\tbA$, it also holds for  $\bA$.

Let us now turn to the analysis of the iteration  (\ref{eq:AMPspecial_tilde_gen}). Define
\begin{align}
\bLambda_{S} & \equiv \diag(\lambda_1, \ldots, \lambda_{k_*}),  \\
\tbV  & \equiv  \sqrt{n} \, [\bv_1 \mid  \bv_2 \ldots \mid  \bv_{k^*} ]
= \sqrt{n}  \bV_S \,, \label{eq:Un_def} \\ 
g(\bx^t,\by;t)  &\equiv f(\bx^t,\by;t)  - \bPhi_{\hS}\, [\bPhi_{\hS}^{\sT} f(\bx^t,\by;t)],  \label{eq:gdef_gen} \\
\bdel^t & \equiv \bPhi_{\hS} \left[  \bPhi_{\hS}^{\sT} \tbW g(\bx^t,\by;t) \right]. \label{eq:del_def_gen} 
\end{align}
With these definitions, using \myeqref{eq:tbadef_gen} in \myeqref{eq:AMPspecial_tilde_gen}  and noting that $\bPp = \bI - \bPhi_{\hS} \bPhi_{\hS}^{\sT}$, we can write
\begin{align} 
 \bx^{t+1}  & = \bPhi_{\hS} \left[ \bZ_{\hS} \, \bPhi_{\hS}^{\sT} f(\bx^t, \by; t) - \bPhi_{\hS}^{\sT} f(\bx^{t-1}, \by; t-1) \sB_t^{\sT} 
 -  \frac{1}{n} \bPhi_{\hS}^{\sT} \tbV\bLambda_{S} \tbV^{\sT} \bPp f(\bx^t, \by; t) \right]  \nonumber \\
& + \frac{1}{n} \tbV \bLambda_{S}\tbV^\sT \bPp f(\bx^t, \by; t)   \, + \, \tbW\,  g(\bx^t,\by;t) - g(\bx^{t-1},\by;t-1)\, \sB_t^{\sT} \, - \bdel^t.  \label{eq:xt1_exp_gen}
 \end{align}
 
 Let $(\bU, \bL)$ be random pair of vectors $\bU, \bL \in \reals^{k_*}$, where $\bU \sim \mu_{\bU}$ and,
for  $\bG_1\sim \normal(\mathbf{0},\id_q)$ independent of $\bU$, we let
\begin{align}
 \bL =  \bProd_0\bU \, +  \tbProd_0\bG_1  \, .\label{eq:Ldef_gen} 
\end{align}
Here $\bProd_0, \tbProd_0\in\reals^{k_*\times k_*}$ are defined as in the statement of the theorem.
By Lemma \ref{lem:ConvergenceEigenvectors} we have, almost surely, for any $\psi\in\PL(2)$, 
\begin{align}
\lim_{n\to\infty}\left|\frac{1}{n}\sum_{i=1}^n\psi(\sqrt{n}\bPhi_{\hS,i},\tbv_i)
      - \E\big\{\psi(\bL,\bU)\big\}\right|= 0\, ,
\label{eq:ConvergenceEvectors_Specialized}
\end{align}
where $\bPhi_{\hS,i} =\bPhi_{\hS}^{\sT}\be_i$ is the rescaled $i$-th row of $\bPhi_{\hS}$, and $\tbv_i = \tbV^{\sT}\be_i$ 
is the $i$-th row of $\tbV$.

Next define
\begin{align} 
\tbX_t  \equiv \balpha_t \bU \, + \, \bbeta_t \bL \, + \, \btau_t \bG_0, \label{eq:tilXt_def_gen} 
\end{align}  
where $\bG_0\sim \normal( \mathbf{0}, \bI_q)$ is independent of $(\bU, \bL)$. The matrices $\balpha_t \in \reals^{q \times k_*}$, 
$\bbeta_t \in \reals^{q \times k_*}$, and $\btau_t \in \reals^{q \times q}$  are measurable on the sigma-algebra $\sigma(\bProd,\bZ_{\hS})$ and defined via the following recursion. Starting with 
\beq 
\balpha_0 = \mathbf{0}_{q \times k_*}, \quad   \bbeta_0 = \begin{bmatrix} \id_{k_*} \\ \mathbf{0}_{(q- k_*) \times k_*}  \end{bmatrix}, \quad     \btau_0 = \mathbf{0}_{q \times q},
\label{eq:alph_beta_ini}
\eeq  
we compute (here expectations are with respect to $\tbX_{\cdot}$, $\bU$, $\bL$, $Y$, at $\bProd_0$, $\tbProd_0$, $\bZ_{\hS}$ fixed), for $t\ge 0$,
\begin{align}
\balpha_{t+1} & = \E \{ f(\tilde{\bX}_t, Y;t) \bU^\sT \}\bLambda_S - \E \{ f(\tilde{\bX}_t, Y;t)\bL^{\sT} \} \bProd_0\bLambda_{S}\, ,   \label{eq:AlphaRec_gen} \\
\bbeta_{t+1} & =    \E \{ f(\tilde{\bX}_t, Y;t)\bL^{\sT} \} (\bZ_{\hS}+  \bProd_0\bLambda_{S}\bProd_0^{\sT}) - \E \{ f'(\tilde{\bX}_t, Y;t) \} \E \{ f( \tilde{\bX}_{t-1}, Y ;  t-1)\bL^{\sT} \}  \nonumber \\ 
& \quad - \E \{ f(\tilde{\bX}_t, Y;t) \bU^{\sT} \}  \bLambda_{S}\bProd_0^{\sT},\label{eq:BetaRec_gen} 
\\
\btau_{t+1}^2 & = \E\{ f(\tilde{\bX}_t, Y;t) f(\tilde{\bX}_t, Y;t)^{\sT}  \} -  \E \{ f(\tilde{\bX}_t, Y;t)\bL^{\sT} \} \E \{\bL  f(\tilde{\bX}_t, Y;t)^{\sT}  \}\, . \label{eq:TauRec_gen}
\end{align}
In \myeqref{eq:BetaRec_gen}, we have used $f'$ as shorthand to denote the Jacobian matrix $ \frac{\partial f}{\partial \bx} \in \reals^{q\times q}$.
Further, we set by convention $\E \{ f(\tilde{\bX}_{-1}, Y;t=-1)\bL^{\sT} \}= [ \bLambda_S^{-1} \mid \mathbf{0}_{k_* \times (q- k_*) }]^{\sT}$.

We will prove \myeqref{eq:main_SE_result}  by establishing the two lemmas below.

\begin{lemma}\label{lemma:ConvergenceToMean}
For any pseudo-Lipschitz function  $\psi:\reals^{q+2k_*+1}\to \reals$, 
the following limit holds almost surely:
\begin{align}
\lim_{n \to \infty}\left|\frac{1}{n}\sum_{i=1}^n\psi(\bx_i^t,\tbu_i,  \sqrt{n}\bPhi_{\hS,i},y_i) -
\E\big\{\psi(\balpha_t \bU  +  \bbeta_t \bL  + \btau_t \bG_0, \, \bU, \,  \bL, \,Y) \big\}\right| = 0\, .\label{eq:SE_conv1_gen}
\end{align}
\end{lemma}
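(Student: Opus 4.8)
The plan is to mirror, in the rank-$k$ setting, the three-step argument used for \eqref{eq:SE_conv1} in the rank-one case. First I would expand the iterate $\bx^{t+1}$ exactly as in \eqref{eq:xt1_exp_gen}, splitting it into (a) a component lying in the column span of $\bPhi_{\hS}$, (b) a component along the spike directions $\tbV$, (c) the ``bulk'' term $\tbW\, g(\bx^t,\by;t) - g(\bx^{t-1},\by;t-1)\,\sB_t^{\sT}$, and (d) the residual $\bdel^t$ of \eqref{eq:del_def_gen}. Next I would introduce a reference iteration $(\bfs^t)_{t\ge 0}$: replace the random matrix coefficients that multiply the columns of $\bPhi_{\hS}$ inside $g(\,\cdot\,,\by;t)$ by their (conditional) state-evolution limits — call the frozen map $\tilg$ — and let $\bfs^t$ evolve by a \emph{standard} AMP iteration driven by $\tbW$ applied to $\tilg$ evaluated at $\bfs^t$ shifted by the spike- and eigenvector-contributions (with coefficients $\balpha_t,\bbeta_t$), together with the matching Onsager correction, initialized at $\bfs^0=\bzero$. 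Since this iteration's initialization is a deterministic function of $(\bPhi_{\hS},\bZ_{\hS})$ and hence independent of $\tbW$, and since by Lemma \ref{lem:ConvergenceEigenvectors} (see \eqref{eq:ConvergenceEvectors_Specialized}) the empirical distribution of $(\tbV,\sqrt{n}\bPhi_{\hS},\by)$ converges in $W_2$ to the law of $(\bU,\bL,Y)$, the matrix-valued state-evolution theorem of \cite{javanmard2013state,BM-MPCS-2011} applies to $(\bfs^t)$ and produces the covariance $\btau_t$ of \eqref{eq:TauRec_gen}.

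The heart of the proof is then an induction on $t$ establishing that the discrepancy $\bDelta^t$ — equal to $\bx^t$ minus $\bfs^t$ minus the $\tbV$- and $\bPhi_{\hS}$-contributions with coefficients $\balpha_t,\bbeta_t$ of \eqref{eq:AlphaRec_gen}--\eqref{eq:BetaRec_gen} — obeys $\|\bDelta^t\|_F^2/n\to 0$ almost surely, together with the a-priori bound $\limsup_n\|\bx^t\|_F^2/n<\infty$. Granting the hypothesis at step $r$, I would plug \eqref{eq:xt1_exp_gen} and the $\bfs$-recursion into $\bDelta^{r+1}$ and check term by term: (i) the coefficient of $\bPhi_{\hS}$ converges to $\bbeta_{r+1}$, using the spiked-random-matrix limits $z_i\to\lambda_i+\lambda_i^{-1}$ and $\bProd_0\bProd_0^{\sT}\to\id-\bLambda_S^{-2}$ together with the induction hypothesis and \eqref{eq:ConvergenceEvectors_Specialized}; (ii) the coefficient of $\tbV$ converges to $\balpha_{r+1}$; (iii) $\|\tbW(g-\tilg)\|_F^2/n\to 0$, bounding $\|\tbW\|_{\op}\to 2$ and splitting $g-\tilg$ into a part controlled by the convergence of the $\bPhi_{\hS}$-coefficient and a part controlled by the Lipschitz property of $f$ and $\|\bDelta^r\|_F^2/n\to 0$; (iv) the Onsager-type difference terms vanish, using $\sB_r\to\E\{f'(\tbX_r,Y;r)\}$ (via \cite[Lemma 5]{BM-MPCS-2011}) and the induction hypothesis; (v) $\|\bdel^r\|_F^2/n\to 0$, by expressing $\bdel^r$ through $\bfs^{r+1}$ and the frozen Onsager term as in \eqref{eq:delr_expand}. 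The claim \eqref{eq:SE_conv1_gen} then follows from the pseudo-Lipschitz bound on $\psi$ together with the finite second-moment estimates. It is convenient to run this whole argument conditionally on $(\bProd,\bZ_{\hS})$, so that $\bProd_0,\tbProd_0,\bZ_{\hS}$ act as deterministic matrices with the stated limits and $\balpha_t,\bbeta_t,\btau_t$ behave as constants.

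The step I expect to be the main obstacle is (i)--(ii): establishing the matrix identities that make the limiting coefficients of $\bPhi_{\hS}$ and $\tbV$ collapse exactly to $\bbeta_{r+1}$ and $\balpha_{r+1}$ as defined by \eqref{eq:AlphaRec_gen}--\eqref{eq:BetaRec_gen}. This is the multivariate analogue of \eqref{eq:betat1_prop}--\eqref{eq:ind_claim}: it requires a Gaussian integration-by-parts (Stein) computation that tracks the non-commuting matrices $\bProd_0$, $\bLambda_S$, $\bZ_{\hS}$ and respects the degenerate blocks, and I would prove the needed identity by a parallel induction on $t$, exactly as the scalar identity \eqref{eq:betat1_prop} was proved in the rank-one case.
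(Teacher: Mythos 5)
Your proposal follows essentially the same route as the paper's proof: the frozen map $\tilg$, the auxiliary standard-AMP iteration $\bfs^t$ initialized at zero and analyzed via \cite{berthier2017state,javanmard2013state}, the discrepancy $\bDelta^t$ and the inductive $\|\bDelta^t\|_F^2/n\to 0$ argument with term-by-term control of the $\bPhi_{\hS}$-coefficient, the $\tbV$-coefficient, the $\tbW(g-\tilg)$ term, the Onsager mismatch, and $\bdel^t$. One small mislocation: for \emph{this} lemma the limits in items (i)--(ii) do \emph{not} require a Stein/integration-by-parts identity --- the recursion \eqref{eq:AlphaRec_gen}--\eqref{eq:BetaRec_gen} is \emph{defined} precisely so that the limiting coefficients of $\bPhi_{\hS}$ and $\tbV$ collapse to $\bbeta_{r+1}$ and $\balpha_{r+1}$; the only inputs needed are the induction hypothesis, $\bPhi_{\hS}^{\sT}\tbV/\sqrt{n}=\bProd_0$, $z_i\to\lambda_i+\lambda_i^{-1}$, and \cite[Lemma 5]{BM-MPCS-2011} for the Onsager coefficient. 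The Stein computation tracking $\bProd_0,\bLambda_S,\bZ_{\hS}$ (the matrix analogue of \eqref{eq:betat1_prop}--\eqref{eq:ind_claim}) is needed only in the companion Lemma \ref{lemma:ExpConvergence}, to translate the $(\balpha_t,\bbeta_t,\btau_t)$ parametrization into the $(\bM_t,\bQ_t)$ parametrization of Theorem \ref{thm:Main}; you have correctly identified the difficulty but placed it in the wrong lemma.
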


\begin{lemma}\label{lemma:ExpConvergence}
For any pseudo-Lipschitz function  $\psi:\reals^{q+k_*+1}\to \reals$, 
the following limit holds almost surely:
\begin{align}
\lim_{n\to\infty}\left|\E\big\{\psi(\balpha_t \bU  +  \bbeta_t \bL  + \btau_t \bG_0, \, \bU,  \,Y) \big\} -
\E \left\{ \psi(\bM_t\bU+\bQ_t^{1/2}\bG,  \bU,  \,Y) \right\}\right| = 0\, .  \label{eq:SE_conv2_gen}
\end{align}
\end{lemma}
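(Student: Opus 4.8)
The plan is to show that the intermediate triple $(\balpha_t,\bbeta_t,\btau_t)$ of Eqs.~\eqref{eq:AlphaRec_gen}--\eqref{eq:TauRec_gen} is, asymptotically, a mere reparametrization of the clean state evolution $(\bM_t,\bQ_t)$. Set
\[
\widehat{\bM}_t \equiv \balpha_t+\bbeta_t\bProd_0,\qquad \widehat{\bQ}_t \equiv \bbeta_t\tbProd_0^2\bbeta_t^{\sT}+\btau_t^2 .
\]
Since $\bL=\bProd_0\bU+\tbProd_0\bG_1$ with $\bG_1\sim\normal(\bzero,\id_{k_*})$ independent of $(\bU,Y,\bG_0)$, one has $\tbX_t=\balpha_t\bU+\bbeta_t\bL+\btau_t\bG_0=\widehat{\bM}_t\bU+(\bbeta_t\tbProd_0\bG_1+\btau_t\bG_0)$, and the second summand is a centered Gaussian with covariance $\widehat{\bQ}_t$ independent of $(\bU,Y)$; hence $\tbX_t\ed\widehat{\bM}_t\bU+\widehat{\bQ}_t^{1/2}\bG$ jointly with $(\bU,Y)$, and $\E\{\psi(\balpha_t\bU+\bbeta_t\bL+\btau_t\bG_0,\bU,Y)\}=\E\{\psi(\widehat{\bM}_t\bU+\widehat{\bQ}_t^{1/2}\bG,\bU,Y)\}$. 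Therefore it suffices to prove $\widehat{\bM}_t-\bM_t\to\bzero$ and $\widehat{\bQ}_t-\bQ_t\to\bzero$ almost surely; the conclusion then follows from the $\PL(2)$ bound on $\psi$, continuity of $\bQ\mapsto\bQ^{1/2}$ on the PSD cone, and the fact that $\bProd_0$ ranges over a fixed compact set, so that (using assumption~\ref{ass:Lip} and finiteness of the second moments of $(\bU,Y)$) all the matrices $\bM_t,\bQ_t,\widehat{\bM}_t,\widehat{\bQ}_t$ and the iterates $\tbX_t$ have second moments bounded uniformly in $n$.

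Next I record the ingredients, all holding almost surely as $n\to\infty$ (we treat the case $k_-=0$, all spikes positive). From standard results on deformed Wigner matrices \cite{baik2005phase,benaych2011eigenvalues,capitaine2009largest} and Lemma~\ref{lem:ConvergenceEigenvectors}: $\bZ_{\hS}\to\bLambda_S+\bLambda_S^{-1}$ and $\bProd_0\bProd_0^{\sT}\to\id-\bLambda_S^{-2}$, hence $\tbProd_0^2=\id-\bProd_0\bProd_0^\sT\to\bLambda_S^{-2}$; moreover every subsequential limit of $\bProd_0$ has the form $(\id-\bLambda_S^{-2})^{1/2}\bR$ with $\bR\in\cR_*(\bLambda)$ block–diagonal along the degenerate eigenvalue groups, hence commuting with $\bLambda_S$, so that $\bProd_0\bLambda_S-\bLambda_S\bProd_0\to\bzero$, $\bProd_0^\sT\bProd_0\to\id-\bLambda_S^{-2}$, and $\bProd_0\bLambda_S\bProd_0^{\sT}\to\bLambda_S-\bLambda_S^{-1}$; in particular $\bZ_{\hS}+\bProd_0\bLambda_S\bProd_0^\sT\to2\bLambda_S$. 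I will also use a matrix Stein identity for the recursion: writing $\bP_t\equiv\E\{f(\tbX_t,Y;t)\bU^{\sT}\}$ and $\bB_t\equiv\E\{f(\tbX_t,Y;t)\bL^{\sT}\}$, Gaussian integration by parts with respect to $\bG_1$ (which enters $\tbX_t$ only through $\bbeta_t\tbProd_0\bG_1$) gives
\[
\bB_t=\bP_t\bProd_0^{\sT}+\E\{f'(\tbX_t,Y;t)\}\,\bbeta_t\tbProd_0^2 ,
\]
where $f'$ is the Jacobian, exactly as in the rank-one computation leading to \eqref{eq:betat1_prop}.

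The core is an induction on $t$ with the single invariant $\bbeta_t=\bB_{t-1}\bLambda_S+o(1)$, where by the stated convention $\bB_{-1}=[\,\bLambda_S^{-1}\mid\bzero\,]^{\sT}$, so $\bB_{-1}\bLambda_S=\bbeta_0$ and the base case $t=0$ is exact; the base case also gives $\widehat{\bM}_0=\bbeta_0\bProd_0=\bM_0$ and $\widehat{\bQ}_0=\bbeta_0\tbProd_0^2\bbeta_0^\sT=\bQ_0$ exactly. For the step, assume $\bbeta_t=\bB_{t-1}\bLambda_S+o(1)$ and $\widehat{\bM}_t-\bM_t,\widehat{\bQ}_t-\bQ_t\to\bzero$. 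Since $\tbX_t\ed\widehat{\bM}_t\bU+\widehat{\bQ}_t^{1/2}\bG$ and $f$ is Lipschitz with uniformly bounded inputs, the quantities $\bP_t,\bB_t,\E\{f'(\tbX_t,Y;t)\}$ and $\E\{f(\tbX_t,Y;t)f(\tbX_t,Y;t)^\sT\}$ agree with their clean analogues at $(\bM_t,\bQ_t)$ up to $o(1)$ (by the continuity arguments standard in AMP state-evolution analysis, cf.\ \cite{BM-MPCS-2011,javanmard2013state,berthier2017state}); in particular $\bP_t\bLambda_S=\bM_{t+1}+o(1)$ and $\E\{ff^\sT\}=\bQ_{t+1}+o(1)$. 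Substituting the invariant and $\tbProd_0^2=\bLambda_S^{-2}+o(1)$ into the Stein identity yields $\bB_t\bLambda_S=\bP_t\bProd_0^\sT\bLambda_S+\E\{f'\}\bB_{t-1}+o(1)=\bP_t\bLambda_S\bProd_0^\sT+\E\{f'\}\bB_{t-1}+o(1)$, where the last step uses $\bProd_0^\sT\bLambda_S=\bLambda_S\bProd_0^\sT+o(1)$. Plugging this together with $\bZ_{\hS}+\bProd_0\bLambda_S\bProd_0^\sT=2\bLambda_S+o(1)$ into \eqref{eq:BetaRec_gen}, the three Onsager-type terms telescope to $\bbeta_{t+1}=\bB_t\bLambda_S+o(1)$, closing the invariant. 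Given the invariant, \eqref{eq:AlphaRec_gen} gives $\widehat{\bM}_{t+1}=\balpha_{t+1}+\bbeta_{t+1}\bProd_0=\bP_t\bLambda_S-\bB_t\bProd_0\bLambda_S+\bB_t\bLambda_S\bProd_0+o(1)=\bP_t\bLambda_S+o(1)=\bM_{t+1}+o(1)$ (using $\bProd_0\bLambda_S=\bLambda_S\bProd_0+o(1)$), and since $\bbeta_{t+1}\tbProd_0=\bB_t\bLambda_S\bLambda_S^{-1}+o(1)=\bB_t+o(1)$, identity \eqref{eq:TauRec_gen} gives $\widehat{\bQ}_{t+1}=(\bbeta_{t+1}\tbProd_0)(\bbeta_{t+1}\tbProd_0)^\sT+\btau_{t+1}^2=\bB_t\bB_t^\sT+(\E\{ff^\sT\}-\bB_t\bB_t^\sT)+o(1)=\bQ_{t+1}+o(1)$. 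This closes the induction and proves the lemma.

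The main obstacle I anticipate is precisely the bookkeeping in the inductive step — verifying that the term carrying $\bZ_{\hS}$, the term $\E\{f'\}\bB_{t-1}$, and the term $\bP_t\bLambda_S\bProd_0^\sT$ in \eqref{eq:BetaRec_gen} cancel against one another using exactly the Stein identity and the commutation $\bProd_0\bLambda_S=\bLambda_S\bProd_0$; this is the matrix-valued analogue of the scalar Onsager identity \eqref{eq:betat1_prop} from the rank-one proof, and getting the convention $\bB_{-1}=[\,\bLambda_S^{-1}\mid\bzero\,]^\sT$ to be consistent with $\bbeta_0$ at the base case is the piece most prone to sign/index errors. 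A secondary, purely technical, point is to make the propagation of the $o(1)$ errors through finitely many Lipschitz maps uniform in $n$, which rests on the compactness of the range of $\bProd_0$ and the uniform second-moment bounds on the iterates established in the first paragraph.
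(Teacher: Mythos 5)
Your proof is correct and takes essentially the same route as the paper's: you define the same reparametrization $\widehat{\bM}_t=\balpha_t+\bbeta_t\bProd_0$, $\widehat{\bQ}_t=\bbeta_t\tbProd_0^2\bbeta_t^{\sT}+\btau_t^2$ (the paper's $\tbM_t,\tbQ_t$), prove the same invariant $\bbeta_{t+1}=\E\{f(\tbX_t;t)\bL^{\sT}\}\bLambda_S+o(1)$ by the same matrix Stein identity and the same commutation/convergence facts for $\bProd_0$ and $\bZ_{\hS}$, and close by the same continuity argument chaining back to the clean state evolution $(\bM_t,\bQ_t)$. The only cosmetic difference is that the paper packages the final chaining as $\tbM_{t+1}=\cuF_t(\tbM_t,\tbQ_t)+o(1)$ together with continuity of $\cuF_t,\cuG_t$, while you invoke the same continuity directly on the intermediate quantities $\bP_t,\bB_t,\E\{f'\},\E\{ff^{\sT}\}$ — logically equivalent.
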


\subsection{Proof of Lemma \ref{lemma:ExpConvergence}}

For $(\balpha_t, \bbeta_t, \btau_t^2)$ defined via the recursion in  Eqs.~\eqref{eq:AlphaRec_gen} -- \eqref{eq:TauRec_gen}.
We show below that for $t \geq -1$, almost surely,
\begin{align}
\lim_{n\to\infty}\big\|\bbeta_{t+1} -  \E \{ f(\tilde{\bX}_t; t)  \bL^{\sT} \}\, \bLambda_{S}\big\|_F = 0\, .  \label{eq:betat1_prop_gen}
\end{align}
In order to see how this implies the lemma, denote the functions that enter the state evolution recursion (\ref{eq:Mt_def}), (\ref{eq:Qt_def}) by
\begin{align}
\cuF_t(\bM,\bQ)& \equiv \E\Big\{f(\bM\bU+\bQ^{1/2}\bG,Y;t)\bU^{\sT}\Big\} \bLambda_S\, , \label{eq:cuF_t} \\
\cuG_t(\bM,\bQ)& \equiv \E\Big\{f(\bM\bU+\bQ^{1/2}\bG,Y;t) f(\bM\bU+\bQ^{1/2}\bG,Y;t)^{\sT}\big\} \, . \label{eq:cuG_t}
\end{align}
Note that these are continuous functions by the Lipschitz continuity of $f(\cdots)$. Further let 
\begin{align}
\tbM_t & \equiv \balpha_t+\bbeta_t\bProd_0\, ,\\
\tbQ_t & \equiv \btau_t^2+\bbeta_t\tbProd_0^2\bbeta_t\, ,
\end{align}
and notice that, by construction,
\begin{align}
\E\big\{\psi(\balpha_t \bU  +  \bbeta_t \bL  + \btau_t \bG_0, \, \bU,  \,Y) \big\}=
\E \left\{ \psi(\tbM_t\bU+\tbQ_t^{1/2}\bG,  \bU,  \,Y) \right\}\, .
\end{align}
Using Eq.~(\ref{eq:betat1_prop_gen}) together with $\|\bProd_0\bLambda_S-\bLambda_S\bProd_0\|_F\le C\eta_n\to 0$ (which holds eventually almost surely 
since $\bProd\in\cG_n(\bLambda)$) and $\|\bProd_0\bProd_0^{\sT}-(\id-\bLambda_S^{-2})\|_F\le C\eta_n \to 0$  (which also holds because $\bProd\in\cG_n(\bLambda)$) in
Eqs.~(\ref{eq:AlphaRec_gen}) to  (\ref{eq:TauRec_gen}), we get
\begin{align}
\lim_{n\to \infty}\big\|\tbM_{t+1}-\cuF_t(\tbM_t,\tbQ_t)\big\|_F & = 0\, ,  \label{eq:lim_tMt1} \\
\lim_{n\to \infty}\big\|\tbQ_{t+1}-\cuG_t(\tbM_t,\tbQ_t)\big\|_F & = 0\, .     \label{eq:lim_tQt1}
\end{align}
The functions $\cuF_t(\cdot), \cuG_t(\cdot)$ in Eqs. \eqref{eq:cuF_t} -- \eqref{eq:cuG_t} are exactly the ones defining the recursion for $\bM_{t+1}, \bQ_{t+1}$ in Eqs. \eqref{eq:Mt_def} -- \eqref{eq:Qt_def}.
From the initialization in \myeqref{eq:alph_beta_ini},  we also have $\tbM_0= \bM_0$. Hence, by induction, and using the continuity of $\cuF_t(\,\cdot\, )$, $\cuG_t(\,\cdot\, )$,  Eqs. \eqref{eq:lim_tMt1} -- \eqref{eq:lim_tQt1} imply that $\lim_{n\to\infty}\|\tbM_t-\bM_t\|_F=0$, $\lim_{n\to\infty}\|\tbQ_t-\bQ_t\|_F=0$ 
(almost surely), therefore implying Eq.~(\ref{eq:SE_conv2_gen}).

 We are now left with the task of proving Eq. ~(\ref{eq:betat1_prop_gen}), which we do by induction. The base case follows immediately from the initialization 
\myeqref{eq:alph_beta_ini} and our convention on $\E \{ f(\tilde{\bX}_t; t)  \bL^{\sT} \}$. Assuming towards induction that (\myeqref{eq:betat1_prop_gen}) is true for $t=0, \ldots, r$, and
using the definition of  $\bL$ in \eqref{eq:Ldef_gen} we have
\begin{align}
 \E \{ f(\tilde{\bX}_r, Y;r)\bL^{\sT} \}   & = \E \{ f(\tilde{\bX}_r, Y;r)\bU^{\sT} \}\bProd_0^{\sT} +   \E \{ f(\tilde{\bX}_r, Y;r) \bG_1^{\sT} \}  \tbProd_0^{\sT}  \nonumber \\
& =\E \{ f(\tilde{\bX}_r, Y;r) \bU^{\sT}  \} \bProd_0^{\sT} +  \E \{ f'(\tilde{\bX}_r, Y;r)  \} \bbeta_r  \tbProd_0\tbProd_0^{\sT}\, ,
\end{align}
where the last identity follows from Stein's lemma. Substituting In Eq.~(\ref{eq:BetaRec_gen}), we get
\begin{align}
\bbeta_{r+1}-\E \{ f(\tilde{\bX}_t; t)  \bL^{\sT} \}\, \bLambda_{S} & =    \E \{ f(\tilde{\bX}_r, Y;r)\bU^{\sT} \} \big[\bProd_0^{\sT}(\bZ_{\hS}+  \bProd_0\bLambda_{S}\bProd_0^{\sT}-\bLambda_S)-\bLambda_S\bProd_0^{\sT}\big]\nonumber \\
& \  +  \E \{ f'(\tilde{\bX}_t, Y;t) \} \, \bbeta_r\tbProd_0\tbProd_0^{\sT}\big(\bZ_{\hS}+  \bProd_0\bLambda_{S}\bProd_0^{\sT}-\bLambda_S\big)\\
 & \ - \E \{ f'(\tilde{\bX}_t, Y;t) \} \E \{ f( \tilde{\bX}_{t-1}, Y ;  t-1)\bL^{\sT} \}  \nonumber 
\end{align}
The claim then follows by using the induction hypothesis, together with the fact that, almost surely: $\|\bProd_0\bLambda_S-\bLambda_S\bProd_0\|_F\to 0$; 
$\|\bProd_0\bProd_0^{\sT}-(\id-\bLambda_S^{-2})\|_F\to 0$; $\|\bZ_{\hS}-(\bLambda_S-\bLambda_S^{-1})\|_F\to 0$.

\subsection{Proof of Lemma \ref{lemma:ConvergenceToMean}}

Let 
\begin{align}
\tilg(\bx^t, \by ; t) & = f(\bx^t,\by;t)  - \sqrt{n} \bPhi_{\hS}\, \E \{ \bL f(\balpha_t \bU  +  \bbeta_t \bL  + \btau_t \bG_0, \, Y \, ;t)^\sT \},\label{eq:tilg_def_gen}
\end{align}
and define the iteration $(\bfs^t)_{t \geq 0}$ as follows.
\begin{align}
\bfs^{t+1}&  =  \tbW \tilg(\bfs^t +  \tbV \balpha_t^{\sT}  +   \sqrt{n} \bPhi_{\hS} \, \bbeta_t^{\sT}, \, \by; t ) -  
\tilg (\bfs^{t-1} +  \tbV \balpha_{t-1}^{\sT} +  \sqrt{n} \bPhi_{\hS} \, \bbeta_{t-1}^{\sT}, \, \by ; t-1) \tilde{\sB}_t^{\sT}, \label{eq:st1_def_gen} \\ 
\tilde{\sB}_t &= \frac{1}{n}\sum_{i=1}^n\frac{\partial f}{\partial \bx}( (\bfs^t +  \tbV \balpha_t^{\sT}  +   \sqrt{n} \bPhi_{\hS} \, \bbeta_t^{\sT})_i , y_i ; t)   
%
\end{align}
The iteration is initialized with 
\beq
\bfs^0   =  \bx^0 - \tbV \balpha_0^{\sT}  -   \sqrt{n} \bPhi_{\hS} \, \bbeta_0^{\sT} = \mathbf{0},   
\label{eq:s0_def_gen}
\eeq
where the last equality follows from assumption \ref{ass:SpikesInit} which sets $\bx^0 = \sqrt{n} \bPhi_{\hS} \, \bbeta_0^{\sT}$, and from the definition of  $\balpha_0, \bbeta_0$ in \eqref{eq:alph_beta_ini}.

Since the empirical distribution of $(\tbV, \sqrt{n} \bPhi_{\hS})$
converges in $W_2$ to the distribution of $(\bU, \bL)$, and the iteration for $\bfs^t$ is of the standard AMP form\footnote{The convergence statement in  \cite[Theorem 1]{berthier2017state}, is in probability.
However exploiting the additional separability structure as in \cite[Theorem 1]{javanmard2013state} yields almost sure convergence.} in \cite[Theorem 1]{berthier2017state}, for any pseudo-Lipschitz function $\tilde{\psi}$ we have:
\begin{align}
\lim_{n \to \infty}  \left|\frac{1}{n} \sum_{i=1}^n \tilde{\psi} (\bfs^t_i, \tbv_{i}, \sqrt{n} \bPhi_{\hS,i}, y_i) -
\E \left\{ \tilde{\psi} ( \btau_t  \bG_0, \bU, \bL, Y ) \right\}\right|=0\, . \label{eq:st_SE_gen}
\end{align}
where  $\btau_t$ is determined by the  recursion \myeqref{eq:TauRec_gen}. Therefore,  choosing 
\begin{align}
\tilde{\psi} (\bfs^t_i, \tbv_{i}, \sqrt{n} \bPhi_{\hS,i}, y_i) = \psi(\bfs^t_i +   \balpha_t\tbv_{i} + \sqrt{n} \bbeta_t\bPhi_{\hS,i}, \tbv_{i}, \sqrt{n} \bPhi_{\hS,i}, y_i)
\end{align}
 for a pseudo-Lipschitz function $\psi$, \myeqref{eq:st_SE_gen} implies that almost surely
\begin{align}
\lim_{n \to \infty}  \left|\frac{1}{n} \sum_{i=1}^n \psi(\bfs^t_i +   \balpha_t\tbv_{i} + \sqrt{n} \bbeta_t\bPhi_{\hS,i}, \tbv_{i}, \sqrt{n} \bPhi_{\hS,i}, y_i) -
\E \left\{ \psi ( \btau_t \bG_0 + \balpha_t \bU + \bbeta_t \bL, \bU, \bL, Y ) \right\}\right| = 0.  \label{eq:st_SE0_gen}
\end{align}
Therefore to prove \myeqref{eq:SE_conv1_gen} it suffices to show that almost surely
\begin{align}
\lim_{n \to \infty}  \left|  \frac{1}{n}\sum_{i=1}^n\psi(\bx_i^t,\tbv_i, \sqrt{n}\bPhi_{\hS,i} ,y_i)   -  
\frac{1}{n} \sum_{i=1}^n \psi(\bfs^t_i +   \balpha_t\tbv_{i} + \sqrt{n} \bbeta_t\bPhi_{\hS,i}, \tbv_{i}, \sqrt{n} \bPhi_{\hS,i}, y_i) \right| =0. 
\label{eq:psi_xs_gen}
\end{align}
We define the discrepancy  $\bDelta^t\in\reals^{n\times q}$ by
\begin{align}
\bDelta^t= \bx^t - \left( \bfs^t +  \tbV \balpha_t +  \sqrt{n} \bPhi_{\hS} \bbeta_t^{\sT} \right),  \label{eq:bDel_def_gen}
\end{align}
and inductively prove \myeqref{eq:psi_xs_gen} together with the following claims:
\begin{align}
 & \lim_{n \to \infty} \frac{1}{n} \norm{\bDelta^t}_{F}^2  = 0, \label{eq:norm_Del_gen} \\
&  \limsup_{n \to \infty} \frac{1}{n} \norm{\bx^t}_{F}^2 < \infty, \label{eq:norm_finite_gen_1}\\
& \limsup_{n \to \infty} \frac{1}{n} \norm{ \bfs^t +  \tbV \balpha_t^{\sT}  +  \sqrt{n} \bPhi_{\hS} \bbeta_t^{\sT} }_{F}^2 < \infty. \label{eq:norm_finite_gen_2}
\end{align}
The base case of $t=0$ is easy to verify. Indeed, from the definition of $\bfs^0$ in \myeqref{eq:s0_def_gen},  we have $\bDelta^0 = \mbf{0}$
and the equality \myeqref{eq:psi_xs_gen} holds. Furthermore,  Eqs.~\eqref{eq:norm_finite_gen_1} and \eqref{eq:norm_finite_gen_2} also hold for $t=0$ since the initial condition  
$\bx^0=  \sqrt{n} \bPhi_{\hS} \, \bbeta_0^{\sT}$ and the definitions of $\balpha_0, \bbeta_0$ in \eqref{eq:alph_beta_ini} imply
\begin{align}
 \frac{1}{n} \norm{\bx^0}_{F}^2 =  \frac{1}{n} \norm{\bfs^0 +  \tbV \balpha_0^{\sT}  +  \sqrt{n} \bPhi_{\hS} \bbeta_0^{\sT} }_F^2  = \Tr(\bbeta_0 \bbeta_0^{\sT}) = k_*\, .
\end{align}

With the induction hypothesis that Eqs.~\eqref{eq:psi_xs_gen} to \eqref{eq:norm_finite_gen_2} hold for $t=0, 1, \ldots, r$, we now prove the claim for $t=r+1$.   By the pseudo-Lipschitz property  of $\psi$, for some constant $C>0$  we have:
\begin{align}
& \frac{1}{n} \abs{  \sum_{i=1}^n \left[ \psi(\bx_i^{r+1},\tbv_i, \sqrt{n}\bPhi_{\hS,i} ,y_i)   -  
 \psi(\bfs^{r+1}_i +   \balpha_{r+1}\tbv_i + \sqrt{n} \bbeta_r\bPhi_{\hS,i}, \tbv_{i}, \sqrt{n} \bPhi_{\hS,i}, y_i)  \right]}  \nonumber \\
&   \leq C \left[  \frac{\norm{\bDelta^{r+1}}_F}{\sqrt{n}} + \frac{\norm{\bDelta^{r+1}}_F}{\sqrt{n}} \frac{\norm{\bx^{r+1}}_F}{\sqrt{n}} 
+ \frac{\norm{\bDelta^{r+1}}_F}{\sqrt{n}} \frac{\norm{ \bfs^{r+1} +  \tbV \balpha_{r+1}^{\sT}  +  \sqrt{n} \bPhi_{\hS} \bbeta_{r+1}^{\sT} }_F}{\sqrt{n}} \right]. \label{eq:psiPL00_gen}
\end{align}
Substituting the expressions for $\bx^{r+1}$ and $\bfs^{r+1}$ from \myeqref{eq:xt1_exp_gen} and  \myeqref{eq:st1_def_gen} into definition of $\bDelta^{r+1}$ from \myeqref{eq:bDel_def_gen},  we get 
\begin{align}
& \bDelta^{r+1}  \nonumber \\
&=   \sqrt{n} \bPhi_{\hS} \left[ \bZ_{\hS}   \frac{\sqrt{n} \bPhi_{\hS}^{\sT} f(\bx^r, \by; r)}{n} -  
\frac{\sqrt{n} \bPhi_{\hS}^{\sT} f(\bx^{r-1}, \by; r-1)}{n} \sB_r^{\sT} 
 -   \frac{\bPhi_{\hS}^{\sT} \tbV}{\sqrt{n}} \left(\frac{\bLambda_S\tbV^{\sT} \bPp f(\bx^r, \by; r)}{n} \right) - \bbeta_{r+1}^{\sT} \right]  \nonumber \\
& \  + \tbV \left[   \frac{\bLambda_S\tbV^\sT \bPp f(\bx^r, \by; r)}{n}  - \balpha_{r+1}^{\sT} \right]   \, 
+ \, \tbW \left[   g(\bx^r,\by;r) -  \tilg(\bfs^r +  \tbV \balpha_r^{\sT}  +   \sqrt{n} \bPhi_{\hS} \, \bbeta_r^{\sT}, \, \by; r )  \right] \nonumber \\ 
& \  + \left[  \tilg (\bfs^{r-1} +  \tbV \balpha_{r-1}^{\sT} +  \sqrt{n} \bPhi_{\hS} \, \bbeta_{r-1}^{\sT}, \, \by ; r-1) \tilde{\sB}_r^{\sT} - g(\bx^{r-1},\by;r-1)\, \sB_r^{\sT} \right] - \bdel^r.
\end{align}

We now show that $\norm{\bDelta^{r+1}}^2/n  \to 0$ almost surely by proving that the following limits hold almost surely.
\begin{align}
& \lim_{n \to \infty}  \left[ \bZ_{\hS} \,  \frac{\sqrt{n} \bPhi_{\hS}^{\sT} f(\bx^r, \by; r)}{n} -  
\frac{\sqrt{n} \bPhi_{\hS}^{\sT} f(\bx^{r-1}, \by; r-1)}{n} \sB_r^{\sT} 
  \right. \nonumber \\
& \phantom{AAAAAAA} \left. -  \frac{\bPhi_{\hS}^{\sT} \tbV}{\sqrt{n}} \left(\frac{\bLambda_S\tbV^{\sT} \bPp f(\bx^r, \by; r)}{n} \right) -\bbeta_{r+1}^{\sT} \right] =0,  \label{eq:lim_beta_r1_gen}  
\end{align}
\begin{align}
& \lim_{n \to \infty}  \left[\frac{\bLambda_S\tbV^\sT \bPp f(\bx^r, \by; r)}{n}  - \balpha_{r+1}^{\sT}\right] = 0, \label{eq:lim_alpht_gen} \\
& \lim_{n \to \infty}  \frac{1}{n} \norm{\tbW \left[   g(\bx^r,\by;r) -  \tilg(\bfs^r +  \tbV \balpha_r^{\sT}  +   \sqrt{n} \bPhi_{\hS} \, \bbeta_r^{\sT}, \, \by; r )  \right] }_{F}^2 =0, \label{eq:Wg_diff_norm_gen} \\
& \lim_{n \to \infty}  \frac{1}{n} \norm{ \tilg (\bfs^{r-1} +  \tbV \balpha_{r-1}^{\sT} +  \sqrt{n} \bPhi_{\hS} \, \bbeta_{r-1}^{\sT}, \, \by ; r-1) \tilde{\sB}_t^{\sT} - g(\bx^{r-1},\by;r-1)\, \sB_r^{\sT} }_F^2=0,  \label{eq:btg_diff_norm_gen} \\
& \lim_{n \to \infty}  \frac{1}{n} \norm{\bdel^r}_F^2 =0. \label{eq:bdel_norm_gen}
\end{align}

We now proceed to prove  Eqs.~(\ref{eq:lim_beta_r1_gen}) to (\ref{eq:bdel_norm_gen}). In the following, expectations are understood to be taken with respect to
the random variables $\bU, \bL, \bG_0$. To lighten notation, given two sequences $A_n$, $B_n$, we write $A_n=B_n+o_n(1)$ if $\lim_{n\to\infty}|A_n-B_n|=0$ almost surely (and we will
not mention `almost surely' explicitly).

\vspace{0.1cm}

\noindent\emph{Proof of \myeqref{eq:lim_beta_r1_gen}}.  From standard results on spiked random matrices, we have $ \bZ_{\hS} = \bLambda_{S} + \bLambda_{S}^{-1}+o_n(1)$, see e.g. 
\cite{benaych2011eigenvalues}.  Further, by definition, we have that 
\begin{align}
\frac{\bPhi_{\hS}^{\sT} \tbV}{\sqrt{n}} =\bProd_0\label{eq:bPhitbU}
\end{align}
Using the induction hypothesis for the pseudo-Lipschitz function 
$\psi(\bx_i^t,\tbv_i, \bPhi_{\hS,i} ,y_i) = $ $\sqrt{n}\bPhi_{\hS,i} f(\bx^t_i, y_i; t)^\sT$ (where $\bPhi_{\hS,i} \in \reals^{k_*}$ and $f(\bx^t_i, y_i; t) \in \reals^{q}$ are column vectors), we obtain,
for all $t\in\{0,\dots,r\}$,
\begin{align}
\frac{\sqrt{n} \bPhi_{\hS}^{\sT} f(\bx^t, \by; t)}{n} = 
\E[\bL f( \balpha_t \bU + \bbeta_t \bL + \btau_t \bG_0 ,Y; t)^{\sT}]+o_n(1)\, ,  \label{eq:bPhifT}
\end{align}
By a similar application of the induction hypothesis for the pseudo-Lipschitz functions $\tbv_i f(\bx^t_i, y_i; t)^\sT$ and $\sqrt{n}\bPhi_{\hS,i} f(\bx^t_i, y_i; t)^\sT$ we get
\begin{align}
& \frac{\tbV^{\sT} \bPp f(\bx^r, \by; r)}{n}   = \frac{\tbV^{\sT} f(\bx^r, \by; r) }{n} - \frac{\tbV^{\sT} \bPhi_{\hS}}{\sqrt{n}} 
\frac{\sqrt{n}\bPhi_{\hS}^{\sT} f(\bx^r, \by; r) }{n} \nonumber  \\
&   =  \E \{  \bU f(\balpha_r \bU + \bbeta_r \bL + \btau \bG_0, Y ; r )^\sT \}  -  \bProd_0^{\sT} \, \E \{  \bL  f(\balpha_r \bU + \bbeta_r \bL + \btau \bG_0, Y ; r )^\sT \}  +o_n(1)\, . \label{eq:tbUsTbPp}
\end{align}
The induction hypothesis implies that the empirical distribution of $\bx^r$ converges in $W_2$ to the distribution of 
 $\balpha_r \bU + \bbeta_r \bL + \btau_r \bG_0$. Combining this with the Lipschitz property of $f( \cdot, \cdot; r)$,  as in \cite[Lemma 5]{BM-MPCS-2011} we obtain
 \begin{align}
 \sB_r =  \E \{ f'(\balpha_r \bU + \bbeta_r \bL + \btau_r \bG_0, Y; r) \}  +o_n(1)\, ,\label{eq:fpr_conv_gen}
 \end{align}
 where $f'$ denotes the Jacobian $\frac{\partial f}{\partial \bx} \in \reals^{q \times q}$.
 
 Finally, combining the results in \myeqref{eq:bPhitbU} -- \myeqref{eq:fpr_conv_gen}, we obtain
 \begin{align}
 & \left[ \bZ_{\hS} \,  \frac{\sqrt{n} \bPhi_{\hS}^{\sT} f(\bx^r, \by; r)}{n} -  
\frac{\sqrt{n} \bPhi_{\hS}^{\sT} f(\bx^{r-1}, \by; r-1)}{n} \sB_r^{\sT} 
 -   \frac{\bPhi_{\hS}^{\sT} \tbV}{\sqrt{n}} \left(\frac{\bLambda_S\tbV^{\sT} \bPp f(\bx^r, \by; r)}{n} \right)  \right]  \nonumber \\
& = \bZ_{\hS}\E[\bL f( \balpha_r \bU + \bbeta_r \bL + \btau_r \bG_0 ,Y; r)^{\sT}]  \nonumber \\
& \    - \E[\bL f( \balpha_{r-1} \bU + \bbeta_{r-1} \bL + \btau_{r-1} \bG_0 ,Y; r-1)^{\sT}] \E \{ f'(\balpha_r \bU + \bbeta_r \bL + \btau_r \bG_0, Y; r)^{\sT} \}  \nonumber \\
&  \    -  \bProd_0\bLambda_S \E \{  \bU f(\balpha_r \bU + \bbeta_r \bL + \btau \bG_0, Y ; r )^\sT \}  + \bProd_0\bLambda_S\bProd_0^{\sT} \, \E \{  \bL  f(\balpha_r \bU + \bbeta_r \bL + \btau \bG_0, Y ; r )^\sT \}  +o_n(1) \nonumber \\
& = \bbeta_{r+1}^{\sT} + o_n(1), \nonumber
 \end{align}
 where  the last equality follows from the definition of $\bbeta_{r+1}$ in \myeqref{eq:BetaRec_gen}.
 
\vspace{0.1cm}

\noindent\emph{Proof of \myeqref{eq:lim_alpht_gen}}. Follows from  \myeqref{eq:tbUsTbPp} and the definition of $\balpha_{r+1}$ in Eq.~(\ref{eq:AlphaRec_gen}).

\vspace{0.1cm}

\noindent\emph{Proof of \myeqref{eq:Wg_diff_norm_gen}}. We have
\begin{align}
&  \lim_{n \to \infty}  \frac{1}{n} \norm{\tbW \left[   g(\bx^r,\by;r) -  \tilg(\bfs^r +  \tbV \balpha_r^{\sT}  +   \sqrt{n} \bPhi_{\hS} \, \bbeta_r^{\sT}, \, \by; r )  \right] }_{F}^2\nonumber \\
 & \leq   {\| \tbW \|}_{\op}^2 \, \frac{2}{n} \left[ \| g(\bx^r,\by;r) - \tilg(\bx^r,\by;r) \|_F^2 + 
  \| \tilg(\bx^r,\by;r) - \tilg(\bfs^r +  \tbV \balpha_r^{\sT}  +   \sqrt{n} \bPhi_{\hS} \, \bbeta_r^{\sT}, \, \by; r ) \|_F^2 \right].
\end{align}
Recalling that ${\| \tbW \|}_{\op}^2 \to 4$ almost surely, we bound 
\begin{align}
\frac{2}{n} \| g(\bx^r,\by;r) - \tilg(\bx^r,\by;r) \|_F^2  &  \leq \| \bPhi_{\hS} \|_F^2 \norm{ \frac{\bPhi_{\hS}^{\sT}f(\bx^r,\by;r) }{n}  - 
\E \{ \bL f(\balpha_t \bU  +  \bbeta_t \bL  + \btau_t \bG_0, \, Y \, ;t)^\sT \} }_F^2  \nonumber \\
&  =o_n(1),
\end{align} 
where we used \myeqref{eq:bPhifT} together with $\| \bPhi_{\hS} \|_F^2 = k_*$. Finally, using the Lipschitz property of $f$ we have 
\begin{align}
& \frac{1}{n} \| \tilg(\bx^r,\by;r) - \tilg(\bfs^r +  \tbV \balpha_r^{\sT}  +   \sqrt{n} \bPhi_{\hS} \, \bbeta_r^{\sT}, \, \by; r ) \|_F^2 \\
&  =  \frac{1}{n}\| f(\bx^r,\by;r) - f(\bfs^r +  \tbV \balpha_r^{\sT}  +   \sqrt{n} \bPhi_{\hS} \, \bbeta_r^{\sT}, \, \by; r ) \|_F^2 
 \leq C \frac{\| \bDelta_r \|^2_F}{n} 
\end{align}
The proof is completed by noting that ${\| \bDelta_r \|^2_F}/{n} \to 0$ a.s. by the induction hypothesis.

\vspace{0.1cm}

\noindent\emph{Proof of \myeqref{eq:btg_diff_norm_gen}}:  We have 
\begin{align}
 \frac{1}{n} \norm{ \tilg (\bfs^{r-1} +  \tbV \balpha_{r-1}^{\sT} +  \sqrt{n} \bPhi_{\hS} \, \bbeta_{r-1}^{\sT}, \, \by ; r-1) \tilde{\sB}_r^{\sT} - g(\bx^{r-1},\by;r-1)\, \sB_r^{\sT} }_F^2   \leq 3 \left[ T_1 +T_2 + T_3 \right]  \label{eq:T1T2T3_gen}
\end{align}
where
\begin{align}
 T_1 & = \frac{1}{n} \norm{ \tilg ( \bfs^{r-1} +  \tbV \balpha_{r-1}^{\sT} +  \sqrt{n} \bPhi_{\hS} \, \bbeta_{r-1}^{\sT}, \, \by ; r-1) 
 ( \tilde{\sB}_r^{\sT}  - {\sB}_r^{\sT}) }_F^2, \nonumber \\
T_2 & =  \frac{1}{n} \norm{ \tilg (\bfs^{r-1} +  \tbV \balpha_{r-1}^{\sT} +  \sqrt{n} \bPhi_{\hS} \, \bbeta_{r-1}^{\sT}, \, \by ; r-1)  - \tilg(\bx^{r-1},\by;r-1)}_F^2  {\| {\sB}_r^{\sT} \|}_F^2, \nonumber \\ 
T_3 & =  \frac{1}{n} \norm{ g(\bx^{r-1},\by;r-1) -  \tilg(\bx^{r-1},\by;r-1)}_F^2  {\| {\sB}_r^{\sT} \|}_F^2. \label{eq:T3_def_gen}
\end{align}
We now show that $T_1, T_2, T_3$ each  tend to zero almost surely. Using the Lipschitz property of $\tilde{g}$,  the term $T_1$ can be bounded as 
\begin{align}
T_1 
\leq \frac{C}{n} \norm{\bfs^{r-1} +  \tbV \balpha_{r-1}^{\sT} +  \sqrt{n} \bPhi_{\hS} \, \bbeta_{r-1}^{\sT} }_F^2 \norm{ \tilde{\sB}_r  -  \sB_r }_F^2. 
\label{eq:bT1_gen}
\end{align}
From the induction hypothesis in \myeqref{eq:norm_finite_gen_2} for $t=(r-1)$, we have
$ \limsup_{n \to \infty} \frac{1}{n} \| \bfs^{r-1} +  \alpha_{r-1} \bx_0  + \beta_{r-1} \|_F^2$ $< \infty$, almost surely.
The result in \myeqref{eq:st_SE0_gen}  implies that the empirical distribution of $\bfs^{r} +  \tbV \balpha_{r}^{\sT} +  \sqrt{n} \bPhi_{\hS} \, \bbeta_{r}^{\sT}$ converges in $W_2$ to the distribution  of $\balpha_r \bU + \bbeta_r \bL + \btau_r \bG_0$. Combining this with the Lipschitz property of $f( \,\cdot\, ; r)$ as in \cite[Lemma 5]{BM-MPCS-2011}, we have 
\begin{align}
\tilde{\sB}_r    =  
 \frac{1}{n}\sum_{i=1}^n f'( (\bfs^r +  \tbV \balpha_r^{\sT}  +   \sqrt{n} \bPhi_{\hS} \, \bbeta_r^{\sT})_i , y_i ; r)   
 =  \E \{ f'(\balpha_r \bU + \bbeta_r \bL + \btau_r \bG_0, Y; r) \}  +o_n(1).
\label{eq:fprs_conv_gen}
\end{align}
Noting from Eqs.  \eqref{eq:fpr_conv_gen} and \eqref{eq:fprs_conv_gen} that $\sB_r$ and $\tilde{\sB}_r$ have the same limiting value,  from \myeqref{eq:bT1_gen}  we conclude that
$T_1 = o_n(1)$.    Next, using the Lipschitz property of  $\tilg$,  $T_2$ can be bounded as
  \begin{align}
T_2  \leq
  \frac{ C}{n} \norm{\bDelta^{r-1}}_F^2 \norm{\sB_r}_F^2.  
  \label{eq:tilgs_tilgx_gen}
  \end{align}
By the induction hypothesis, we have ${\| \bDelta^{r-1} \|}^2/n \to 0$ almost surely. Furthermore, $\norm{\sB_r}_F^2$ tends to a  finite limit almost surely (due to \myeqref{eq:fpr_conv_gen}).  We therefore have  $T_2 = o_n(1)$. 
Finally, we have
 \begin{align}
 T_3 & \leq 
{\| {\sB}_r^{\sT} \|}_F^2   \norm{\bPhi_{\hS}}_F^2 \norm{ \E[\bL f( \balpha_{r-1} \bU + \bbeta_{r-1} \bL + \btau_{r-1} \bG_0 ,Y; r-1)^{\sT}] \} -   \frac{\sqrt{n} \bPhi_{\hS}^{\sT} f(\bx^{r-1}, \by; r-1)}{n} }_F^2   \nonumber  \\
& = o_n(1), \label{eq:br_gtil_g_gen}
 \end{align}
where the last inequality follows from  \myeqref{eq:bPhifT}.   Therefore, we have shown that $T_1, T_2, T_3$ are all $o_n(1)$ and the result follows from \myeqref{eq:T1T2T3_gen}.

\vspace{0.1cm}

\noindent
\emph{Proof of \myeqref{eq:bdel_norm_gen}}: Using the definition of $\bdel^t$ in \myeqref{eq:del_def_gen} and the recursion for $\bfs^{t+1}$ defined in \myeqref{eq:st1_def_gen}, we can write
\begin{align}
 \bdel^r  & = \bPhi_{\hS} \left[  \bPhi_{\hS}^{\sT} \tbW g(\bx^r,\by;r) \right] \nonumber \\
& = \bPhi_{\hS} \Big[   \bPhi_{\hS}^{\sT} \Big( \bfs^{r+1} + \tilg( \bfs^{r-1} +  \tbV \balpha_{r-1}^{\sT} +  \sqrt{n} \bPhi_{\hS} \, \bbeta_{r-1}^{\sT}, \, \by ; r-1) \tilde{\sB}_r^{\sT} \Big)  \nonumber  \\ 
& \quad  +  \bPhi_{\hS}^{\sT} \tbW \Big( g(\bx^r,\by;r) -   \tilg (\bfs^{r} +  \tbV \balpha_{r}^{\sT} +  \sqrt{n} \bPhi_{\hS} \, \bbeta_{r}^{\sT}, \, \by ; r) \Big) \Big]. 
\end{align}
Therefore $\norm{\bdel^r}_F^2/n  \leq 3( T_1 + T_2 + T_3)$, where
\begin{align}
&  T_1=  \frac{1}{n} \| \bPhi_{\hS}^{\sT} \bfs^{r+1} \|_F^2, \qquad 
T_2  =   \frac{1}{n} \| \bPhi_{\hS}^{\sT}  \tilg (\bfs^{r-1} +  \tbV \balpha_{r-1}^{\sT} +  \sqrt{n} \bPhi_{\hS} \, \bbeta_{r-1}^{\sT}, \, \by ; r-1)  \|_F^2 
\| \tilde{\sB}_r^{\sT} \|_F^2, \nonumber  \\
& T_3 = \frac{1}{n} \| \bPhi_{\hS}^{\sT}    \tbW ( g(\bx^r,\by;r) -   \tilg (\bfs^{r} +  \tbV \balpha_{r}^{\sT} +  \sqrt{n} \bPhi_{\hS} \, \bbeta_{r}^{\sT}, \, \by ; r) )   \|_F^2. \label{eq:T1T2T3_last}
\end{align}
We now show that $\norm{\bdel^r}_F^2/n=0$ by showing that $T_1, T_2, T_3$  are each $o_n(1)$.  

Applying the state evolution result  in \myeqref{eq:st_SE0_gen} to the pseudo-Lipschitz function $\psi(\bfs_i^{r+1},\tbv_i, \sqrt{n} \bPhi_{\hS,i} ,y_i)$  $= \sqrt{n} \bPhi_{\hS,i}^{\sT} \bfs_i^{r+1}$, we obtain
\beq
\frac{1}{n} \sqrt{n} \bPhi_{\hS}^{\sT} \bfs^{r+1}  = \E \{ \bL (\btau_{r+1} \bG_{0})^{\sT} \} + o_n(1) = o_n(1),
\eeq
where the last inequality holds because $\bL$ and $\bG_0$ are independent. Therefore $T_1 = o_n(1)$.

For the second term $T_2$, using the definition of $\tilg$ in \myeqref{eq:tilg_def_gen} (and for brevity, dropping the time-index $(r-1)$ within the function) we write
\begin{align}
& \frac{1}{\sqrt{n}} \bPhi_{\hS}^{\sT}  \tilg (\bfs^{r-1} +  \tbV \balpha_{r-1}^{\sT} +  \sqrt{n} \bPhi_{\hS} \, \bbeta_{r-1}^{\sT}, \, \by) \nonumber \\
&  = \frac{1}{\sqrt{n}}    \bPhi_{\hS}^{\sT} \left[ f( \bfs^{r-1} +  \tbV \balpha_{r-1}^{\sT} +  \sqrt{n} \bPhi_{\hS} \, \bbeta_{r-1}^{\sT}, \, \by)  - \sqrt{n} \bPhi_{\hS}\, \E \{ \bL f(\balpha_{r-1} \bU  +  \bbeta_{r-1} \bL  + \btau_{r-1} \bG_0, \, Y )^\sT \} \right] \nonumber \\ 
& = \frac{ \sqrt{n} \bPhi_{\hS}^{\sT} f(\bfs^{r-1} +  \tbV \balpha_{r-1}^{\sT} +  \sqrt{n} \bPhi_{\hS} \, \bbeta_{r-1}^{\sT}, \by)}{n} -  
\E \{ \bL f(\balpha_{r-1} \bU  +  \bbeta_{r-1} \bL  + \btau_{r-1} \bG_0, \, Y )^\sT \}  \nonumber \\
& = o_n(1), \label{eq:phi1_gtil_gen}
\end{align}
where the last  equality is obtained by applying the state evolution result  \myeqref{eq:st_SE0_gen} to the pseudo-Lipschitz function $\psi(\bfs_i^{r+1},\tbv_i, \sqrt{n} \bPhi_{\hS,i} ,y_i) = \sqrt{n} \bPhi_{\hS,i} f( \bfs^{r-1}_i + \balpha_{r-1}\tbv_i + \bbeta_{r-1} \sqrt{n}  \bPhi_{\hS,i}  )^{\sT}$. Using \myeqref{eq:phi1_gtil_gen} and recalling from \myeqref{eq:fprs_conv_gen} that $\tilde{\sB}_r$ converges to a finite value, we conclude that $T_2= o_n(1)$. Finally we bound $T_3$ in \myeqref{eq:T1T2T3_last} using Cauchy-Schwarz as follows:
\begin{align}
& T_3  \leq  \| \tbW \|^2_{\op} \,  \frac{1}{n} \norm{g(\bx^r,\by;r) -   \tilg (\bfs^{r} +  \tbV \balpha_{r}^{\sT} +  \sqrt{n} \bPhi_{\hS} \, \bbeta_{r}^{\sT}, \, \by ; r)}_F^2 \nonumber \\
&  \leq  \| \tbW \|^2_{\op} \left[  \frac{2}{n} \norm{g(\bx^r,\by;r) -  \tilg(\bx^r,\by;r)}_F^2 +    
\frac{2}{n} \norm{\tilg(\bx^r,\by;r) - \tilg (\bfs^{r} +  \tbV \balpha_{r}^{\sT} +  \sqrt{n} \bPhi_{\hS} \, \bbeta_{r}^{\sT}, \, \by ; r) }_F^2 \right]  \nonumber  \\
& = o_n(1),
\end{align}
where the last inequality holds because  $\| \tbW \|^2_{\op}=4 + o_n(1)$, and   $\frac{1}{n} \norm{g(\bx^r,\by;r) -  \tilg(\bx^r,\by;r)}_F^2$ and $\frac{1}{n} \norm{\tilg(\bx^r,\by;r) - \tilg (\bfs^{r} +  \tbV \balpha_{r}^{\sT} +  \sqrt{n} \bPhi_{\hS} \, \bbeta_{r}^{\sT}, \, \by ; r) }_F^2$ are each $o_n(1)$ from the arguments in \myeqref{eq:T3_def_gen} -- \myeqref{eq:br_gtil_g_gen}. This finishes the proof of \myeqref{eq:bdel_norm_gen}.

\vspace{0.1cm}

Thus based on the induction hypothesis we have shown that \myeqref{eq:lim_beta_r1_gen} -- \myeqref{eq:bdel_norm_gen} hold,  therefore \myeqref{eq:psi_xs_gen} and \myeqref{eq:norm_Del_gen} hold for $t=(r+1)$.  Finally, we need to verify that the conditions in Eqs.~\eqref{eq:norm_finite_gen_1},  \eqref{eq:norm_finite_gen_2}  also hold for $t=(r+1)$. But these immediately follow from \myeqref{eq:st_SE0_gen} and \myeqref{eq:psi_xs_gen} with  $t=r$  by considering  the pseudo-Lipschitz function $\psi(\bx_i^t,\tbu_i, \bPhi_{\hS,i} ,y_i)=  (\bx_i^t)^{\sT} \bx_i^t$.  This completes the proof of Theorem \ref{thm:Main}.

\subsection{Conditioning lemma}

Let $\bA$ be a spiked random matrix with distribution as per Eq.~(\ref{eq:SpikedDef}), with
$\lambda_1\ge \dots \lambda_{k_+}>1>\lambda_{k_+}$ and $\lambda_{k-k_-}>-1> \lambda_{k-k_-+1}\ge \dots\ge \lambda_k$. 
Recall that $\bz = (z_1,\dots,z_{n})$ are the  ordered eigenvalues of $\bA$ with $\bphi_1$,\dots $\bphi_n$
being the corresponding eigenvectors. Also recall that $\hS =  \{1,\dots, k_+\}\cup \{n-k_-+1,\dots,n\}$, $S = \{1,\dots, k_+\}\cup \{k-k_-+1,\dots,k\}$, and $k_* = k_++k_-$.   Let $\blambda_S = (\lambda_i)_{i\in S}$,  $\bz_{\hS} = (z_i)_{i\in\hS}$,  and 
$\bPhi_{\hS} =(\bphi_i)_{i\in\hS}$ (we will view $\bPhi_{\hS}$ as a matrix with dimensions $n\times k_*$, with columns given by the $\bphi_i$'s).
\begin{lemma}\label{lemma:Distr}
With the above definitions,  let 
\begin{align}
\tbA &\equiv \sum_{i\in \hS}z_i\bphi_i\bphi_i^{\sT} + \sum_{i=1}^k\lambda_i \bPp\bv_i\bv_i^{\sT} \bPp+\bPp\tbW\bPp\, ,\label{eq:SpikedModelModified}
\end{align}
where $\bPp$ is the projector onto the orthogonal complement of the space spanned by $(\bphi_i)_{i\in\hS}$,
and $\tbW\sim\GOE(n)$ is independent of $\bW$.
Let $\rho(x) = x+x^{-1}$, $\eta(x) = 1-x^{-2}$, and define the event
\begin{align}
\cE_{\eps} \equiv\left\{ \; \max_{i\le k_*}\big|(\bz_{\hS})_i-\rho(\lambda_{S,i})\big|\le \eps,\;\; \min_{i\in S}\big\|\bPhi_{\hS}^{\sT}\bv_i\big\|_2^2-\eta(\lambda_i)\ge
-\eps\right\}\, .  \label{eq:cE_eps}
\end{align}
Then there exists a constant $\eps_0>0$ such that for all $\eps\in (0,\eps_0)$ there is $c(\eps)>0$, such that
\begin{align}
\prob\big\{\cE_{\eps}\big\} \ge 1-\frac{1}{c(\eps)}\, e^{-n c(\eps)}\, .  \label{eq:SpikeEvent}
\end{align} 
Further (for a suitable version of the conditional probabilities):
\begin{align}
\sup_{(\bz_{\hS},\bPhi_{\hS})\in \cE_{\eps}}\Big\|\prob\big(\bA\in\,\cdot\,\big|\bz_{\hS},\bPhi_{\hS}\big) -
\prob\big(\tbA\in\,\cdot\,\big|\bz_{\hS},\bPhi_{\hS}\big)\Big\|_{\sTV}\le \frac{1}{c(\eps)}\, e^{-n c(\eps)}\, .  \label{eq:CompareRM}
\end{align}
\end{lemma}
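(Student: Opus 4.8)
The plan is to treat the two assertions of the lemma separately, the probability bound \eqref{eq:SpikeEvent} first and the conditional-law comparison \eqref{eq:CompareRM} second.

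For \eqref{eq:SpikeEvent}, I would start from the standard description of the BBAP transition for the additively deformed GOE: for each $i$ with $|\lambda_i|>1$ the associated extreme eigenvalue of $\bA$ satisfies $z_i\to\rho(\lambda_i)$ and the squared alignment $\|\bPhi_{\hS}^\sT\bv_i\|_2^2\to\eta(\lambda_i)$ almost surely \cite{baik2005phase,benaych2011eigenvalues,benaych2012singular}. I would then upgrade both statements to the exponential scale. The eigenvalue part follows either from the large deviation principle for the extreme eigenvalues of a finite-rank deformation of a GOE, or elementarily: on the event $\{\|\bW\|_{\op}\le 2+\delta\}$ the outlier $z_i$ is a Lipschitz function of $\bW$ and of the finitely many quadratic forms $\bv_j^\sT(z\,\id-\bW)^{-1}\bv_j$, and these concentrate at rate $e^{-cn}$ by the Gaussian (log-Sobolev) concentration inequality for $\bW\sim\GOE(n)$ together with the $e^{-cn}$ tail on $\|\bW\|_{\op}-2$ \cite{AGZ}. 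For the alignments, one observes that on the event that the spectral gap separating the $k_*$ outliers from the bulk is at least some fixed $\delta>0$ --- an event of probability $1-e^{-cn}$ --- each map $\bW\mapsto\bv_i^\sT\bPhi_{\hS}\bPhi_{\hS}^\sT\bv_i$ is $O_\delta(1)$-Lipschitz, being a contour integral of the resolvent around the outliers, so Gaussian concentration again yields an $e^{-cn}$ deviation bound; intersecting all these events gives \eqref{eq:SpikeEvent}.

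For \eqref{eq:CompareRM} I would use an explicit change of variables on matrix space. Fix $(\bz_{\hS},\bPhi_{\hS})\in\cE_\eps$, put $\bPp=\id-\bPhi_{\hS}\bPhi_{\hS}^\sT$, and parametrize a symmetric matrix whose $k_*$ extreme eigenpairs are $(\bz_{\hS},\bPhi_{\hS})$ as $\bPhi_{\hS}\bZ_{\hS}\bPhi_{\hS}^\sT+\bPp\bB\bPp$, with $\bB$ symmetric and supported on $\mathrm{span}(\bPhi_{\hS})^\perp$. Writing $\bM\equiv\sum_{i=1}^k\lambda_i\bv_i\bv_i^\sT$ and splitting $\bPhi_{\hS}\bZ_{\hS}\bPhi_{\hS}^\sT+\bPp\bB\bPp-\bM$ into its four blocks relative to $\mathrm{span}(\bPhi_{\hS})\oplus\mathrm{span}(\bPhi_{\hS})^\perp$, the GOE weight $\exp(-\tfrac n4\|\,\cdot\,\|_F^2)$ factorizes as $\exp(-\tfrac n4 C(\bz_{\hS},\bPhi_{\hS}))\exp(-\tfrac n4\|\bB-\bPp\bM\bPp\|_F^2)$, while the Jacobian of the partial spectral decomposition equals, up to a factor depending only on $(\bz_{\hS},\bPhi_{\hS})$, the quantity $L(\bB)\equiv\prod_{i\in\hS}\bigl|\det(z_i\bPp-\bPp\bB\bPp)\bigr|$ (determinant on the $(n-k_*)$-dimensional subspace). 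Hence the conditional law of the block $\bB$ under $\bA$ has density $\propto L(\bB)\exp(-\tfrac n4\|\bB-\bPp\bM\bPp\|_F^2)\ind_R(\bB)$, where $R$ is the region on which the $z_i$, $i\in\hS$, really are the extreme eigenvalues, whereas under $\tbA$ the conditional law of the same block is exactly $\propto\exp(-\tfrac n4\|\bB-\bPp\bM\bPp\|_F^2)$, since $\tbW$ is independent of $(\bz_{\hS},\bPhi_{\hS})$ and the compression of a GOE to a fixed subspace is again a GOE. The total variation distance in \eqref{eq:CompareRM} therefore equals that between these two laws on the $\bB$-block, which I would bound by two estimates: (i) the complement of $R$ has only $e^{-cn}$ Gaussian mass, because on $\cE_\eps$ one has $\|\bPp\bM\bPp\|_{\op}<1+O(\eps)$ --- the vectors $\bPp\bv_i$, $i\in S$, are nearly orthogonal with $\|\bPp\bv_i\|_2^2\approx\lambda_i^{-2}$ and $\bPp$ kills the outlier directions, so only the subunit $\lambda_i$'s survive --- so the extreme eigenvalues of $\bPp\bM\bPp+\bPp\tbW\bPp$ stay in $[-2-O(\eps),\,2+O(\eps)]$, strictly inside the range of the $\rho(\lambda_i)$, $i\in S$, with probability $1-e^{-cn}$ (a large-deviation bound for the extreme eigenvalues of a finite-rank deformation of a GOE); and (ii) the tilt $L(\bB)/\E L(\bB)$ is uniformly close to $1$.

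Estimate (ii) is the main obstacle. Writing $L(\bB)=\exp\bigl(\sum_{i\in\hS}\tr\log(z_i\,\id-\bB')\bigr)$ with $\bB'$ the restriction of $\bPp\bB\bPp$, each exponent is a linear spectral statistic of the finite-rank-deformed GOE matrix $\bB'\ed\bPp\bM\bPp+\GOE(n-k_*)$ against the test function $x\mapsto\log(z_i-x)$, which is real analytic near the spectrum since $z_i>\rho(\lambda_i)-\eps>2$. I would isolate this into a concentration estimate for such statistics: on the event $\{\|\bB'\|_{\op}\le z_i-\delta\}$ the map $\bB'\mapsto\tr\log(z_i\,\id-\bB')$ is $O(\sqrt n)$-Lipschitz in Frobenius norm, so Gaussian concentration controls its fluctuation about the mean, while a Jensen bound $\E L\ge\exp(\E\log L)$ controls the normalization; combined with the $O(n^{-1/2})$ fluctuations of the finitely many outliers of $\bB'$ (which alter $L$ by only $e^{o(n)}$) and with the uniform lower bound on $z_{k_+}-2$ valid on $\cE_\eps$ (which lets the residual $(\bz_{\hS},\bPhi_{\hS})$-dependence be absorbed into $c(\eps)$), this is what yields \eqref{eq:CompareRM}. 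The determinantal Jacobian is precisely the correction that distinguishes the genuine conditional law of the deflated matrix from a freshly sampled GOE, and quantifying its effect at the exponential scale is where the bulk of the technical effort would go.
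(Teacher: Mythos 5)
Your approach to \eqref{eq:CompareRM} is genuinely different from the paper's, and it contains a gap that I do not see how to close with the estimates you propose.

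The paper never performs the partial spectral change of variables, so it never meets the determinantal Jacobian. Its argument is a two-step conditioning device: for a \emph{fixed} orthonormal frame $\bU=(\bu_i)_{i\le k_*}$, condition on the Gaussian vectors $\bA\bu_i=\xi_i\bu_i$; by rotational invariance one may take $\bu_i=\be_i$, so this simply freezes the first $k_*$ rows and columns of $\bA$ and, by the independence of GOE entries, leaves the block $\bA_{QQ}$ with its unconditional Gaussian law --- no change-of-variables factor appears. That linear-conditioning law is taken as the version of the conditional probability, and the remaining work is to intersect with the event that the $(\xi_i,\bu_i)$ really are the extremal eigenpairs; this has complementary probability $e^{-cn}$ because the deflated spikes $\tilde\lambda_i=\lambda_i\|\bPp\bv_i\|_2^2$ are subcritical on $\cE_\eps$, which is the estimate the paper actually carries out. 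Your route instead parametrizes $\bA$ by $(\bz_{\hS},\bPhi_{\hS},\bB)$, which does pick up the Jacobian $L(\bB)=\prod_{i\in\hS}\bigl|\det(z_i\bPp-\bPp\bB\bPp)\bigr|$. Identifying that factor is a real observation --- it is exactly what the paper's conditioning trick is designed to avoid --- but it leaves you with more to prove, not less.

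The gap is in your estimate (ii), the claim that $L/\E L$ is uniformly close to one. Your Lipschitz-plus-Gaussian-concentration argument gives $O(1)$ fluctuations for $\log L$ (an $O(\sqrt n)$-Lipschitz functional at concentration scale $n^{-1/2}$), and this is sharp: linear spectral statistics of the GOE against an analytic test function such as $x\mapsto\log(z-x)$ with $z>2$ obey a CLT with strictly positive limiting variance, so $\log L$ does not self-average. A log-density with non-vanishing $O(1)$ fluctuations means the Radon--Nikodym derivative $L/\E L$ is a non-degenerate order-one random variable, and the total variation distance between the tilted law $\propto L(\bB)\exp(-\tfrac n4\|\bB-\bPp\bM\bPp\|_F^2)$ and the untilted one is then bounded below by a constant, not $e^{-cn}$. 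So the concentration you invoke does not establish (ii); it points in the opposite direction. To finish along your route you would need a genuinely different idea --- for instance, replace the total-variation coupling by an absolute-continuity argument showing that a uniformly bounded density tilt cannot change almost-sure limits of concentrating empirical averages --- or switch to the paper's device, which produces an exact Gaussian conditional law for the deflated block and never sees the Vandermonde correction at all.
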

\begin{proof}
The probability lower bound \myeqref{eq:SpikeEvent} follows for instance from \cite{benaych2012large}. 

In order to prove \myeqref{eq:CompareRM}, we will proceed in two steps: first conditioning on a
given set of eigenvectors (without ordering) and then conditioning on the event that these are actually the outlier eigenvectors. 
To reduce  book-keeping, we will assume that $k_-=0$ (and hence $k_+=k_*$): all large rank-one perturbations are positive semidefinite.

Fix real numbers $\xi_1>\xi_2>\dots > \xi_{k_*}$ and $\bu_1,\dots, \bu_{k_*}\in\reals^n$ an orthonormal set.
We claim that the conditional distribution\footnote{Formally, we consider a realization of the conditional distribution of $\bA$ given the random vectors $(\bx_i)_{i\le k_*}$ defined by
$\bx_1 = \bA\bu_1$, \dots $\bx_{k_*}=\bA\bu_{k_*}$, evaluated at $\bx_i = \xi_i \bu_i$, $i\le k_*$.} of $\bA$ given that $\bxi= (\xi_i)_{i\le k_*}$ are eigenvalues with eigenvectors $\bU =(\bu_i)_{i\le k_*}$ 
is the same as the one of 
\begin{align}
\tbA_{\bxi,\bU} &\equiv \sum_{i\in \hS}\xi_i\bu_i\bu_i^{\sT} + \sum_{i=1}^k\lambda_i \bPp_{\bU}\bv_i\bv_i^{\sT} \bPp_{\bU}+\bPp_{\bU}\tbW\bPp_{\bU}\, ,
\end{align}
where $\bPp_{\bU}$ is the projector onto the orthogonal complement of ${\rm span}(\bu_1,\dots,\bu_{k_*})$.  To prove this claim, note that by rotational 
invariance of the $\GOE(n)$ distribution, it is sufficient to consider the case in which the eigenvectors coincide with the first $k_*$ vectors of the canonical 
basis: $\bu_1=\be_1$, \dots $\bu_{k_*} = \be_{k_*}$. Conditioning on this is equivalent to conditioning on the event that the entries in the first $k_*$ rows and columns 
of $\bA$ are all equal to $0$ except on the diagonal where they are $A_{ii}=\xi_i$.  By independence of the entries of $\bA$, the distribution of the
block $\bA_{QQ}$ with $Q=\{k_*+1,\dots, n\}$ (or, equivalently, the matrix $\bPp_{\bU}\bA\bPp_{\bU}$) is not changed by the conditioning. Hence the  distribution of the block $\bA_{QQ}$ is the same as the distribution of
$\sum_{i=1}^k\lambda_i (\bv_i\bv_i^{\sT})_{QQ} +\tbW_{QQ}$ for $\tbW$ independent of $\bW$, which proves our claim.

The conditional distribution of $\bA$ given the ordered eigenvalues $(z_i)_{i\le k_*}$ and the corresponding eigenvectors $(\bphi_i)_{i\le k_*}$ is therefore the same as the one of $\tbA$ of Eq.~(\ref{eq:SpikedModelModified})
conditioned on the event that the largest eigenvalues of $\tbA$ are $z_1,\dots,z_{k_*}$. Letting $\tbA^{\perp}= \sum_{i=1}^k\lambda_i \bPp\bv_i\bv_i^{\sT} \bPp+\bPp\tbW\bPp$, 
and denoting by $z_{\max}(\tbA^{\perp})$ its top eigenvalue, we therefore have
\begin{align}
\prob\big(\bA\in\; \cdot \; \big|\bz_{\hS},\bPhi_{\hS}\big) =\prob\big(\tbA\in\; \cdot \; \big|\bz_{\hS},\bPhi_{\hS}, z_{\max}(\tbA^{\perp}) < z_{k_*}\big) \, ,\label{eq:RepresentationConditional}
\end{align}
Note that, defining $\tbv_i = \bPp\bv_i/\|\bPp\bv_i\|_2$, $\tlambda_i =\lambda_i \|\bPp\bv_i\|^2_2$
\begin{align}
\tbA^{\perp}= \sum_{i=1}^k\tlambda_i \tbv_i\tbv_i^{\sT}  +\bPp\tbW\bPp\, .
\end{align}
Note that for $\bz_{\hS},\bPhi_{\hS}\in \cE_{\eps}$, and all $\eps$ small enough, we have 
\beq \{z_{\max}(\tbA^{\perp}) < z_{k_*}\}\supseteq \{ z_{\max}(\tbA^{\perp}) < 2+\eps\}. \label{eq:zmaxE} \eeq
However, by rotational invariance of $\GOE(n)$, the eigenvalues of $\tbA^{\perp}$ are distributed as the ones of $\hbA\in\reals^{(n-k_*)\times (n-k_*)}$, defined by
\begin{align}
\hbA= \sum_{i=1}^k\tlambda_i \br_i\br_i^{\sT} +\sqrt{\frac{n}{n-k_*}}\hbW\, .
\end{align}
where $\br_1,\dots,\br_{k_*}\in\reals^{n-k_*}$ are an orthonormal set and $\hbW\sim\GOE(n-k_*)$. 
Note that, on $\cE_{\eps}$, and for all $\eps$ small enough, $0\le \tlambda_i =  \lambda_i(1-\|\bPhi_{\hS}^{\sT}\bv_i\|_2^2)\le (1-\eps)$
for $i\le k_*$. Hence $\hbA$ is a subcritical spiked model. Consequently, using \myeqref{eq:zmaxE} and applying again the result  from \cite{benaych2012large}, we obtain, on $\cE_{\eps}$,
\begin{align}
\prob\big(z_{\max}(\tbA^{\perp}) \ge  z_{k_*}|\bz_{\hS},\bPhi_{\hS}\big) \le \prob\big(\|\hbA\|_{\op}\ge 2+\eps |\bz_{\hS},\bPhi_{\hS}\big)\le \frac{1}{c(\eps)}\, e^{-c(\eps)n}\, .
\end{align}
Finally, using \myeqref{eq:RepresentationConditional}, we get, for a suitable $c_*(\eps)$,
\begin{align}
\prob\big(\bA\in\; \cdot \; \big|\bz_{\hS},\bPhi_{\hS}\big)\le \frac{\prob\big(\tbA\in\; \cdot \; \big|\bz_{\hS},\bPhi_{\hS}\big) }
{1-\prob\big(z_{\max}(\tbA^{\perp}) \ge z_{k_*}\big|\bz_{\hS},\bPhi_{\hS} \big) }\le \prob\big(\tbA\in\; \cdot \; \big|\bz_{\hS},\bPhi_{\hS}\big) + \frac{1}{c_*(\eps)}\, e^{-nc_*(\eps)}\, ,
\end{align}
and
\begin{align}
\prob\big(\bA\in\; \cdot \; \big|\bz_{\hS},\bPhi_{\hS}\big) \ge \prob\big(\tbA\in\; \cdot \; , z_{\max}(\tbA^{\perp}) < z_{k_*}\big|\bz_{\hS},\bPhi_{\hS}\big)\ge 
\prob\big(\tbA\in\; \cdot \; \big|\bz_{\hS},\bPhi_{\hS}\big) -\frac{1}{c_*(\eps)}\, e^{-nc_*(\eps)}\, .
\end{align}
This completes the proof of \myeqref{eq:CompareRM}.
\end{proof}

\section{Asymptotics of the eigenvectors of spiked random matrices}

In this appendix, we collect some consequences of known facts about the eigenvectors of random matrices
distributed according to the spiked model (\ref{eq:SpikedDef}). We copy the definition here for the reader's convenience:
\begin{align}
\bA &= \sum_{i=1}^k\lambda_i(n) \bv_i\bv_i^{\sT} + \bW\,  \nonumber \\
& = \bV \bLambda \bV^{\sT} + \bW\,  \label{eq:SpikedDef2a} \\
& \equiv \bA_0 +\bW\, . \label{eq:SpikedDef2}
\end{align}
Here $\bW\sim \GOE(n)$, $\bv_i = \bv_i(n)\in\reals^n$ are orthonormal vectors and the values $\lambda_i(n)$ have finite limits as $n\to \infty$, that we denote by $\lambda_i$. 
Further, $\lambda_1\ge \dots \lambda_{k_+}>1>\lambda_{k_++1}$ and $\lambda_{k-k_-}>-1> \lambda_{k-k_-+1}\ge \dots\ge \lambda_k$.
Let $S \equiv (1,\dots, k_+,k-k_-+1,\dots,k)$, $k_* = k_++k_-$ and $\hS \equiv  (1,\dots, k_+,n-k_-+1,\dots,n)$. 
Denote by $\bphi_1,\dots,\bphi_n$ the eigenvectors of $\bA$, with corresponding eigenvalues $z_1\ge  z_2\ge \dots\ge z_n$.

The sets of matrices $\cR(\bLambda)\subseteq \reals^{S\times [k]}$ and $\cR(\bLambda)\subseteq \reals^{S\times S}$ are defined 
as in Section \ref{sec:Symmetric}.
\begin{lemma}\label{lem:ConvergenceEigenvectors}
Let  $\bA$ be the random matrix of Eq.~(\ref{eq:SpikedDef2}).
For $\eps>0$ and $\eta_n\ge n^{-1/2+\eps}$ such that $\eta_n\to 0$ as  $n\to\infty$, define the set of matrices
\begin{align}
\cG_n(\bLambda)\equiv \Big\{\bM\in \reals^{k_*\times k}:\, \min_{\bR\in\cR(\bLambda)}\|\bM- (\id-\bLambda_{k_*}^{-2})^{1/2}\bR\|_{F}\le \eta_n\Big\}\, ,
\end{align}
Further, assume that the joint empirical distribution of the vectors $(\sqrt{n}\bv_\ell(n))_{\ell\in S}$, has a limit
in Wasserstein-$2$ metric. 
Namely, if we let $\tbv_i = (\sqrt{n} v_{\ell,i})_{\ell\in S}\in\reals^{k_*}$, then there exists a random vector $\bU$ taking values
in $\reals^{k_*}$  with law $\mu_{\bU}$, such that
\begin{align}
\frac{1}{n}\sum_{i=1}^n\delta_{\tbv_i} \towass \mu_{\bU}\, .
\end{align}
Further define, for $i\le n$,  $\tbphi_i = (\sqrt{n}\varphi_{\ell,i})_{\ell\in\hS}\in \reals^{k_*}$. 

Let $\bProd\equiv \bPhi_{\hS}^{\sT}\bV\in\reals^{k_*\times k}$ where $\bPhi_{\hS}\in\reals^{n\times k_*}$ is the matrix with columns $(\bphi_i)_{i\in\hS}$ and $\bV\in \reals^{n\times k}$ is the matrix 
with columns $(\bv_i)_{i\in [k]}$. Denote by $\bProd_0\in \reals^{k_*\times k_*}$ the submatrix corresponding to the  $k_*$ columns of $\bProd$ with index in $S$, and let $\tbProd_0 = (\id-\bProd_0\bProd_0^{\sT})^{1/2}$.
Then, for any pseudo-Lipschitz function $\psi:\reals^{k_*+k_*}\to \reals$,  we have (almost surely)
\begin{align}
\lim_{n\to\infty}\left|\frac{1}{n}\sum_{i=1}^n\psi(\tbphi_i,\tbv_i)
      - \E\big\{\psi(\bProd_0\bU+\tbProd_0\bG,\bU)\big\}\right|= 0\, .
\label{eq:ConvergenceEvectors}
\end{align}
where  expectation is with respect to $\bU\sim \mu_{\bU}$ independent of
$\bG\sim\normal(0,\id_{k_*})$.

Further $\prob(\bProd\in \cG_n(\bLambda)) \ge 1-n^{-A}$ for any $A>0$ provided $n>n_0(A)$,  and $\bProd$ converges in distribution to  $(\id-\bLambda_{k_*}^{-2})^{1/2}\bR$, with $\bR$  Haar distributed on $\cR(\bLambda)$.
\end{lemma}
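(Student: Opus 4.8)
\noindent\textbf{Proof proposal for Lemma \ref{lem:ConvergenceEigenvectors}.} The lemma contains two assertions that I would establish separately: (b)~the high-probability inclusion $\bProd\in\cG_n(\bLambda)$ together with $\bProd\Rightarrow(\id-\bLambda_{k_*}^{-2})^{1/2}\bR$ with $\bR$ Haar on $\cR(\bLambda)$, and (a)~the $\PL$-limit \eqref{eq:ConvergenceEvectors} for the joint empirical distribution of the rescaled rows of $(\bPhi_{\hS},\bV_S)$. Treating $-\bA$ in place of $\bA$ reduces everything to $k_-=0$, which I describe. The workhorse is the exact orthogonal decomposition $\bphi_\ell=\bV\bc_\ell+\boldsymbol{\psi}_\ell$ with $\bc_\ell=\bV^{\sT}\bphi_\ell\in\reals^k$ and $\bV^{\sT}\boldsymbol{\psi}_\ell=\bzero$ ($\ell\in\hS$), combined with the identity $\bphi_\ell=-(\bW-z_\ell)^{-1}\bV\bLambda\bc_\ell$, valid because $z_\ell$ lies outside the bulk of $\bW$ with overwhelming probability, which gives $\boldsymbol{\psi}_\ell=-\bPp_{\bV}(\bW-z_\ell)^{-1}\bV\bLambda\bc_\ell$ with $\bPp_{\bV}\equiv\id-\bV\bV^{\sT}$. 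From the isotropic local law for deformed Wigner matrices \cite{knowles2013isotropic} (together with \cite{benaych2011eigenvalues,capitaine2009largest,benaych2012large}), for any fixed $\eps>0$ and $A>0$, with probability at least $1-n^{-A}$ for $n$ large: $|z_i-\rho(\lambda_i)|\le n^{-1/2+\eps}$ for $i\in\hS$; $\|\bProd_0\bProd_0^{\sT}-(\id-\bLambda_{k_*}^{-2})\|_F\le n^{-1/2+\eps}$ (where $\bProd_0\bProd_0^{\sT}=\bPhi_{\hS}^{\sT}\bV_S\bV_S^{\sT}\bPhi_{\hS}$); $\|\bProd_{[k]\setminus S}\|_F\le n^{-1/2+\eps}$; and the entries of $\bProd_0\bProd_0^{\sT}$ between distinct degenerate groups are $O(n^{-1/2+\eps})$. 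Hence $\bProd_0=(\id-\bLambda_{k_*}^{-2})^{1/2}\bR_n$ with $\bR_n$ orthogonal and block-diagonal with respect to the groups up to $O(n^{-1/2+\eps})$, and since the columns of $\bProd$ outside $S$ are $O(n^{-1/2+\eps})$ this already yields $\bProd\in\cG_n(\bLambda)$ with probability $\ge1-n^{-A}$ whenever $\eta_n\ge n^{-1/2+\eps}$, which is the hypothesis.

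For the Haar limit of $\bR_n$ I would exploit rotational invariance of $\GOE(n)$: conjugating by a fixed orthogonal matrix, one may assume $\bv_i=\be_i$, so $\bA_0=\diag(\lambda_1,\dots,\lambda_k,0,\dots,0)$ (this rotation is legitimate only for part (b), as it destroys the convergence of the empirical law of the $\tbv_i$). Writing $\bA$ in $(k,n-k)$ block form and eliminating the bulk block, an eigenvector $(\bc;\bw)$ with eigenvalue $z$ obeys the reduced $k$-dimensional equation $(\bA_0^{(k)}-m_{\mathrm{sc}}(z)\id+\bE)\bc=z\bc$ with $\bE=\bW_{11}-\big(\bW_{12}(\bW_{22}-z)^{-1}\bW_{21}-m_{\mathrm{sc}}(z)\id\big)$ and $\|\bE\|_{\op}=O(n^{-1/2+\eps})$. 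A second-order expansion --- using the concentration/CLT for linear and quadratic functionals of $(\bW_{22}-z)^{-1}$ underlying the isotropic law --- shows that $\sqrt n\,\bE$ restricted to a degenerate group of multiplicity $m$ converges in law to a $\GOE(m)$-type matrix with explicit variance, asymptotically independent of the remaining randomness, while the components of $\bc$ outside its own group are $O(n^{-1/2+\eps})$. Therefore, within each group the normalized direction $\bc/\|\bc\|_2$ is asymptotically an eigenvector of this limiting $\GOE$-type matrix, so the collection of these directions is Haar on the orthogonal group of the block (and Rademacher $\pm1$ on one-dimensional blocks), independently across blocks; reassembling gives $\bR_n\Rightarrow\bR$ Haar on $\cR(\bLambda)$ and hence $\bProd\Rightarrow(\id-\bLambda_{k_*}^{-2})^{1/2}\bR$, completing (b).

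For part (a), using $\bV^{\sT}\boldsymbol{\psi}_\ell=\bzero$ and $\|\bProd_{[k]\setminus S}\|_F=o_n(1)$, the $i$-th rescaled row of $\bPhi_{\hS}$ decomposes as $\tbphi_i=\bProd_0\tbv_i+\sqrt n\,\boldsymbol{\Psi}^{\sT}\be_i+\br_i^{(n)}$, where $\boldsymbol{\Psi}=[\boldsymbol{\psi}_\ell]_{\ell\in\hS}\in\reals^{n\times k_*}$ and $\tfrac1n\sum_{i=1}^n\|\br_i^{(n)}\|_2^2\le\|\bProd_{[k]\setminus S}\|_F^2\to0$; moreover $\boldsymbol{\Psi}^{\sT}\boldsymbol{\Psi}=\id-\bProd\bProd^{\sT}=\tbProd_0^2+o_n(1)$ by $\bPhi_{\hS}^{\sT}\bPhi_{\hS}=\id$, so the columns of $\sqrt n\,\boldsymbol{\Psi}$ are delocalized with Gram matrix $\tbProd_0^2$. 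It then suffices to show that the empirical distribution of $(\sqrt n\,\boldsymbol{\Psi}^{\sT}\be_i,\tbv_i)_{i\le n}$ converges in $W_2$ to the product $\normal(\bzero,\tbProd_0^2)\otimes\mu_{\bU}$. I would obtain this from the representation $\boldsymbol{\psi}_\ell=-\bPp_{\bV}(\bW-z_\ell)^{-1}\bV\bLambda\bc_\ell$ --- so every entry of $\boldsymbol{\Psi}$ is a linear functional of resolvent entries of $\bW$ at the fixed vectors $\bv_j$ --- by invoking the isotropic local law and the accompanying central limit theorem for eigenvector entries of deformed Wigner matrices: this gives coordinate-wise asymptotic normality, the covariance $\tbProd_0^2$ from the Gram computation, and asymptotic independence from $\tbv_i$ (the latter because $\langle\boldsymbol{\psi}_\ell,\bv_j\rangle=0$ identically, so the only $\bv$-dependence of $\boldsymbol{\psi}_\ell$ is through the globally-controlled resolvent). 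Plugging this together with the hypothesis $\tfrac1n\sum_i\delta_{\tbv_i}\towass\mu_{\bU}$ into a pseudo-Lipschitz $\psi$ evaluated at $\tbphi_i=\bProd_0\tbv_i+\sqrt n\,\boldsymbol{\Psi}^{\sT}\be_i+\br_i^{(n)}$, and absorbing the $o_n(1)$ errors using that $W_2$-convergence controls $\PL(2)$ functionals, yields \eqref{eq:ConvergenceEvectors}.

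The main obstacle is the last step: proving the joint asymptotic normality and empirical independence of the $\bV$-orthogonal components $\boldsymbol{\psi}_\ell$ --- simultaneously over $\ell\in\hS$, jointly with the spike coordinates $\tbv_i$, and with a quantitative $W_2$ rate strong enough both to absorb the $n^{-1/2+\eps}$ corrections and to hold uniformly over the Haar rotation inside the degenerate blocks. The other steps are essentially bookkeeping on top of the spiked-matrix asymptotics of \cite{benaych2011eigenvalues,knowles2013isotropic}.
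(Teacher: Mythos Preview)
Your outline is broadly sound on part (b) (the concentration $\bProd\in\cG_n(\bLambda)$ via \cite{benaych2011eigenvalues,knowles2014outliers} is exactly what the paper does), but you take a substantially harder route than the paper on both the Haar limit and the empirical convergence \eqref{eq:ConvergenceEvectors}, and the ``main obstacle'' you flag is in fact completely bypassed by the paper's argument.

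The paper's key simplification is an exact distributional symmetry. For any $\bR\in\tcR(\bLambda)$ (the block-orthogonal group commuting with $\bLambda$), replacing $\bV\mapsto\bV\bR$ leaves $\bA$ unchanged; equivalently there is an orthogonal $\bT$ fixing each eigenspace $W_j$ of $\bA_0$ with $\bT\bA\bT^{\sT}\ed\bA$ by rotational invariance of $\GOE(n)$. Two consequences follow immediately: (i)~$\bProd\ed\bProd\bR$ for $\bR$ Haar on $\tcR(\bLambda)$, which together with the concentration $\|\bProd-\bL_0(\id-\bLambda_{k_*}^{-2})^{1/2}\bR_0\|_F\le C\eta_n$ gives the weak limit $(\id-\bLambda_{k_*}^{-2})^{1/2}\bR$ with no CLT for a perturbation matrix needed; and (ii)~writing $\bPhi=\bV\bProd^{\sT}+\bV_{\perp}\tbProd^{\sT}$ with $\bV_{\perp}$ an orthonormal $n\times k_*$ frame in $W_0=\bV^{\perp}$, the same transitivity argument shows $\bV_{\perp}$ is \emph{exactly} Haar on the Stiefel manifold $\cS_{k_*}(W_0)$.

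Point (ii) is what dissolves your obstacle. Because $\bV_{\perp}$ is Haar on $\cS_{k_*}(W_0)$, the paper couples it to an independent Gaussian matrix via $\bV_{\perp}=n^{-1/2}\bP_{\perp}\bG\bSigma^{-1/2}$ with $\bG$ having i.i.d.\ $\normal(0,1)$ entries, $\bP_{\perp}=\id-\bV\bV^{\sT}$, and $\bSigma=\bG^{\sT}\bP_{\perp}\bG/n$. The law of large numbers gives $\bSigma\to\id$ and $n^{-1}\sum_j\bg_j\obv_j^{\sT}\to\bzero$, whence $n^{-1}\sum_i\|\bv_i^{\perp}-\bg_i\|_2^2\to 0$. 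The joint Gaussianity and independence from $\tbv_i$ are then manifest (the $\bg_i$ are literally independent of everything), and \eqref{eq:ConvergenceEvectors} reduces to a triangular-array LLN plus a $\PL(2)$ perturbation bound. No isotropic-law CLT, no resolvent identities for $\boldsymbol{\psi}_\ell$, and no uniformity over the Haar rotation are required.

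Your resolvent/CLT route could in principle be pushed through, but note that the CLT for eigenvector entries gives distributional statements for fixed test vectors, not directly the $W_2$ convergence of the empirical measure of all $n$ rows jointly with the deterministic $\tbv_i$; bridging that gap is real work. The symmetry argument sidesteps it entirely.
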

Before proving this lemma, we state and prove a simple but useful estimate.
\begin{lemma}\label{lemma:Simple}
Let $\psi:\reals^q\to\reals$, $\psi\in\PL(2)$, and $\bU,\bDelta\in\reals^{n\times q}$ be matrices with rows denoted by $\bu_i$, $\bdelta_i$, respectively, for $i \in [n]$.
Then, there exists a constant $C$ (uniquely dependent on $q$ and on the function $\psi$) such that 
\begin{align}
\left|\frac{1}{n}\sum_{i=1}^n\psi(\bu_i+\bdelta_i) - \frac{1}{n}\sum_{i=1}^n\psi(\bu_i)  \right|\le C\, \frac{\|\bDelta\|_F}{\sqrt{n}}\,\left(1+\frac{\|\bDelta\|_F}{\sqrt{n}}+\frac{\|\bU\|_F}{\sqrt{n}}
\right)\, .
\end{align}
\end{lemma}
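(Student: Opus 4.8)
The plan is to apply the pseudo-Lipschitz bound separately to each row and then aggregate, with the cross terms handled by Cauchy--Schwarz. First I would recall that $\psi\in\PL(2)$ on $\reals^q$ means there is a constant $L$ (depending only on $q$ and $\psi$) with $|\psi(\bx)-\psi(\by)|\le L\big(1+\|\bx\|_2/\sqrt q+\|\by\|_2/\sqrt q\big)\|\bx-\by\|_2/\sqrt q$ for all $\bx,\by\in\reals^q$. Applying this with $\bx=\bu_i+\bdelta_i$, $\by=\bu_i$, and using $\|\bu_i+\bdelta_i\|_2\le\|\bu_i\|_2+\|\bdelta_i\|_2$, I obtain for each $i\in[n]$
\[
\big|\psi(\bu_i+\bdelta_i)-\psi(\bu_i)\big|\ \le\ \frac{L}{\sqrt q}\Big(1+\frac{2\|\bu_i\|_2}{\sqrt q}+\frac{\|\bdelta_i\|_2}{\sqrt q}\Big)\|\bdelta_i\|_2\, .
\]
Bounding the left-hand side of the lemma by $\tfrac{1}{n}\sum_{i=1}^n|\psi(\bu_i+\bdelta_i)-\psi(\bu_i)|$ via the triangle inequality and substituting the per-row estimate then reduces the claim to controlling three sums.

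The three sums are, up to constants depending only on $L$ and $q$: $\tfrac{1}{n}\sum_i\|\bdelta_i\|_2$, $\tfrac{1}{n}\sum_i\|\bu_i\|_2\|\bdelta_i\|_2$, and $\tfrac{1}{n}\sum_i\|\bdelta_i\|_2^2$. I would bound the first by Cauchy--Schwarz, $\sum_i\|\bdelta_i\|_2\le\sqrt n\,\big(\sum_i\|\bdelta_i\|_2^2\big)^{1/2}=\sqrt n\,\|\bDelta\|_F$, giving a term $\le C_1\,\|\bDelta\|_F/\sqrt n$; the second again by Cauchy--Schwarz, $\sum_i\|\bu_i\|_2\|\bdelta_i\|_2\le\|\bU\|_F\|\bDelta\|_F$, giving $\le C_2\,(\|\bU\|_F/\sqrt n)(\|\bDelta\|_F/\sqrt n)$; and the third is exactly $\|\bDelta\|_F^2/n=(\|\bDelta\|_F/\sqrt n)^2$ up to the constant, i.e. $\le C_3(\|\bDelta\|_F/\sqrt n)^2$. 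Collecting the three and factoring out $\|\bDelta\|_F/\sqrt n$ yields the asserted bound with $C=\max(C_1,C_2,C_3)$; one checks a single choice such as $C=2L$ works uniformly in $q\ge 1$.

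There is no genuine obstacle here: the whole argument is a short computation. The only points requiring care are bookkeeping of the $\sqrt q$ and $\sqrt n$ normalizations so that the final constant depends solely on $\psi$ (through $L$) and on $q$, and checking that the extra $\|\bdelta_i\|_2$ produced when passing from $\|\bu_i+\bdelta_i\|_2$ to $\|\bu_i\|_2+\|\bdelta_i\|_2$ is absorbed into the stated $\big(1+\|\bDelta\|_F/\sqrt n+\|\bU\|_F/\sqrt n\big)$ factor rather than generating an uncontrolled term. I would also remark that this estimate is exactly what lets one pass between functionals of $\bx^t$ and of its surrogate $\bfs^t+\tbV\balpha_t^{\sT}+\sqrt n\,\bPhi_{\hS}\bbeta_t^{\sT}$ in the induction of the main proof, once the discrepancy $\bDelta^t$ has been shown to have vanishing normalized Frobenius norm.
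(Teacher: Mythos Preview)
Your proposal is correct and follows essentially the same route as the paper: apply the $\PL(2)$ bound row-by-row, use the triangle inequality to reduce to $\tfrac{1}{n}\sum_i\|\bdelta_i\|_2(1+\|\bu_i\|_2+\|\bdelta_i\|_2)$, and finish with Cauchy--Schwarz on each of the three resulting sums. The paper's proof is just a terser version of exactly this computation.
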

\begin{proof}
Since $\psi\in\PL(2)$, we have
\begin{align}
\left|\frac{1}{n}\sum_{i=1}^n\psi(\bu_i+\bdelta_i) - \frac{1}{n}\sum_{i=1}^n\psi(\bu_i)  \right|&
\le \frac{1}{n}\sum_{i=1}^n\big|\psi(\bu_i+\bdelta_i) - \psi(\bu_i) \big|\\
& \le \frac{C}{n}\sum_{i=1}^n\|\bdelta_i\|_2\big(1+\|\bu_i\|_2+\|\bdelta_i\|_2\big)\, ,
\end{align}
and the claim follows by applying Cauchy-Schwarz inequality.
\end{proof}
\begin{proof}[Proof of Lemma \ref{lem:ConvergenceEigenvectors}]
Decomposing  $\bPhi = \bPhi_{\hS}$ in the component along $\bV$ and the one orthogonal, we have
$\bPhi = \bV\bProd^{\sT} + \bPhi_{\perp}$ where $\bProd = \bPhi^{\sT} \bV\in \reals^{k_*\times k}$  and $\bV^{\sT}\bPhi_{\perp} = \bzero$.  Further
taking the singular value decomposition $\bPhi_{\perp} =  \bV_{\perp} \bSigma_{\perp}\bU_{\perp}^{\sT}$, we get
\begin{align}
\bPhi = \bV\bProd^{\sT} + \bV_{\perp} \tbProd^{\sT}\label{eq:DecompositionEvectors}
\end{align}
where $\bV_{\perp}\in\reals^{n\times k_*}$ is an orthogonal matrix with $\bV^{\sT}\bV_{\perp} = \bzero$,
$\bV^{\sT}_{\perp}\bV_{\perp} = \id_{k_*}$ and $\tbProd\in\reals^{k_*\times k_*}$. Notice that $\tbProd$ is only defined up to right multiplication
by a $k_*\times k_*$ orthogonal matrix. In order to fix this freedom, notice that, by orthogonality of $\bPhi$, we get $\tbProd\tbProd^{\sT} = \id_{k_*}-
\bProd\bProd^{\sT}$. We therefore select $\tbProd$ to  be a symmetric positive semi-definite square root   $\tbProd= (\id_{k_*}-
\bProd\bProd^{\sT})^{1/2}$.

Let $W_0, W_1,\dots ,W_{\ell}\subseteq \reals^n$ be  the eigenspaces corresponding to distinct eigenvalues of $\bA_0$  (cf.~ Eq.(\ref{eq:SpikedDef2}),
with $W_0$ corresponding to the null eigenvalue.
In other words, $W_0$ is the orthogonal complement of $\spn(\bv_1,\dots,\bv_k)$. Further letting $\lambda_{(1)} >\lambda_{(2)} >\dots \lambda_{(\ell)}$ the distinct
eigenvalues of $\bA_0$, and $S(j) \equiv\{i\in [k]:\; \lambda_i = \lambda_{(j)}\}$, each $W_j =\spn(\bv_i:\; i\in S(j)\}$.

Let $\tcR(\bLambda) \subseteq \reals^{k\times k}$ denote the group of orthogonal matrices $\bR$ such that $R_{ij}=0$ if $\lambda_i\neq\lambda_j$.
We note that replacing $\bV$ by $\bV \bR$ in \myeqref{eq:SpikedDef2a} for any $\tcR(\bLambda)$ leaves $\bA$ unchanged.  Note that each such $\bR$ corresponds to a unique orthogonal matrix $\bT\in\reals^{n\times n}$ which leaves invariant $W_0,\dots, W_{\ell}$, and $\bV \bR = \bT \bV$. We therefore have $\bT\bA\bT^{\sT} \ed \bA$, and therefore $\bT\bPhi\ed \bPhi$. 
Also, this symmetry group acts transitively on the Stiefel manifold of orthogonal matrices $\bV_{\perp}\in\reals^{n\times k_*}$ with columns in $W_0$, to be denoted by $\cS_{k_*}(W_0)$.
We conclude that $\bV_{\perp}$ is Haar-distributed on $\cS_{k_*}(W_0)$.
Further,  $\bProd \ed \bProd \bR$ where $\bR$ is Haar distributed on $\tcR(\bLambda)$.

It follows from \cite[Proposition 5.1.(a)]{benaych2011eigenvalues} and  \cite[Theorem 3.3]{knowles2014outliers} that for any $A>0$, the following holds with probability 
larger than $1-n^{-A}$ for $n\ge n_0(A)$:
\begin{align}
\sum_{j\in [k]: \lambda_{j}\neq \lambda_{i}}|\Omega_{ij}|& \le \eta_n\, \;\;\;\;\; \forall i\in [k_*]\, ,\\
\left|\sigma_{\max}(\bProd_{S(i),S(i)}) -\Big(1-\frac{1}{\lambda_{(i)}^2}\Big)\right| &\le \eta_n\, \;\;\;\;\; \forall i\in [\ell], \, |\lambda_{(i)}|>1\, ,\\
\left|\sigma_{\min}(\bProd_{S(i),S(i)}) -\Big(1-\frac{1}{\lambda_{(i)}^2}\Big)\right| &\le \eta_n\, \;\;\;\;\; \forall i\in [\ell], \, |\lambda_{(i)}|>1\, .
\end{align}
This implies that there exists orthogonal matrices $\bL_0\in \cR_*(\bLambda)$, $\bR_0\in \tcR(\bLambda)$ such that
\begin{align}
\|\bProd- \bL_0(\id_{k_*}-\bLambda^{-2}_{k_*})^{1/2}\bR_0\|_{F}\le C\, \eta_n\, .  \label{eq:OmL0R0}
\end{align}
Since $\bL_0$ commutes with $\bLambda_{k_*}$,  \myeqref{eq:OmL0R0} implies (after rescaling $\eta_n$ by a constant)  that $\bProd\in\cG_n(\bR)$ with the claimed probability. 
Further, letting $\bR$ be  Haar distributed on $\tcR(\bLambda)$,
\begin{align}
\bProd &\ed \bProd \bR =  \bL_0(\id_{k_*}-\bLambda^{-2}_{k_*})^{1/2}\bR_0\bR +\bDelta\\
    & \ed (\id_{k_*}-\bLambda^{-2}_{k_*})^{1/2}\bR +\bDelta\, ,
\end{align}
with $\prob(\|\bDelta\|_F> C\,\eta_n)\le n^{-A}$, which implies the claimed convergence in distribution of $\bProd$.

We are now left with the task of proving the convergence result (\ref{eq:ConvergenceEvectors}). Notice that, by the decomposition  (\ref{eq:DecompositionEvectors}),
we have
\begin{align}
\tbphi_i &= \bProd_0\tbv_i + \bProd_1\hbv_i + \tbProd \bv_i^{\perp}\, ,
\end{align}
where $\hbv_i = (\sqrt{n} v_{\ell,i})_{\ell\in [k]\setminus S}\in\reals^{k-k_*}$, $\bProd_1\in\reals^{k_*\times (k-k_*)}$ is the submatrix of $\bProd$ with columns
indexed by $[k]\setminus S$, and   $\bv_i^{\perp} =
\sqrt{n}\bV_{\perp}^{\sT} \be_i$. We will also write $\obv_i= \sqrt{n}\bV^{\sT}\be_i$ for the rescaled $i$-th row of $\bV$.
Let $\bG\in\reals^{n\times k_*}$ be a random matrix with i.i.d. entries $G_{ij}\sim\normal(0,1)$. 
Then we can construct $\bV_{\perp} = n^{-1/2}\bP_{\perp}\bG \bSigma^{-1/2}$, where $\bP_{\perp}= \id-\bV\bV^{\sT}$ is the projector orthogonal to $\bV$ and
$\bSigma = \bG^{\sT}\bP_\perp\bG/n$. 
Denoting by $\bg_i$ the $i$-th row of $\bG$, we have 
\begin{align}
\bv^{\perp}_i &= \bSigma^{-1/2}\bg_i -\bSigma^{-1/2} \left(\frac{1}{n}\sum_{j=1}^n\bg_j\obv_j^{\sT}\right)\obv_i\, ,\\
\bSigma & = \frac{1}{n}\sum_{i=1}^n\bg_i\bg_i^{\sT}- \left(\frac{1}{n}\sum_{j=1}^n\bg_j\obv_j^{\sT}\right) \left(\frac{1}{n}\sum_{j=1}^n\bg_j\obv_j^{\sT}\right)^{\sT}\, .
\end{align}
By the law of large numbers, we have  the almost sure limits $\lim_{n\to\infty}\bSigma=\bfone$ and $\lim_{n\to\infty} n^{-1} \sum_{j=1}^n\bg_j\obv_j^{\sT}$  $= \bzero$ 
(which hold conditional on  $\bProd$). Further using  the fact that $\lim\sup_{n\to\infty} \|\bG\|^2_{F}/n<\infty$ and $\|\bV\|_F^2=k$, again almost surely, we obtain
\begin{align}
\lim_{n\to\infty} \frac{1}{n}\sum_{i=1}^n\big\|\bv^{\perp}_i -\bg_i\big\|_2^2 & = 0\, .
\end{align}
Further notice that on $\cG_n(\bLambda)$, $\|\bProd_1\|_{\op}\le C\eta_n\to 0$ and $\|\tbProd-\tbProd_0\|_{\op}\le C\eta_n\to 0$.
Since by Borel-Cantelli $\cG_n(\bLambda)$ holds eventually almost surely,
\begin{align}
\lim_{n\to\infty}\frac{1}{n}\sum_{i=1}^n\big\|\tbphi_i-\big(\bProd_0\tbv_i +\tbProd_0\bg_i\big)\big\|_2^2 =0\, .
\end{align}
Using Lemma \ref{lemma:Simple}, we obtain (almost surely)
\begin{align}
\lim_{n\to\infty}\left|\frac{1}{n}\sum_{i=1}^n\psi(\tbphi_i,\tbv_i)-\frac{1}{n}\sum_{i=1}^n \psi\big(\bProd_0\tbv_i +\tbProd_0\bg_i,\tbv_i\big)\right| =0\, .
\end{align}
The proof is concluded by applying the law of large numbers for triangular arrays to the sum $n^{-1}\sum_{i=1}^n \psi\big(\bProd_0\tbv_i +\tbProd_0\bg_i,\tbv_i\big)$.
\end{proof}

\begin{corollary}\label{coro:ConvergenceEigenvectors}
Under the assumptions of Lemma \ref{lem:ConvergenceEigenvectors}, further assume $\lambda_i \neq\lambda_j$ for all $i\neq j$. 
Assume that the signs of eigenvectors $(\bphi_{\ell})_{\ell\in\hS}$ are chosen so that $\<\bv_{\ell},\bphi_{\ell}\>\ge 0$. Then, almost surely the joint
empirical distribution of $(\sqrt{n}\bphi_{\ell})_{\ell\in\hS}$ and  $(\sqrt{n}\bv_\ell)_{\ell\in S}$ converges in $W_2$ to 
the law of $(\bProd_0\bU+\tbProd_0\bG,\bU)$, where $\bProd_0 = (\id_{k_*}-\bLambda_{k_*}^{-2})^{1/2}$, $\tbProd_0 = \bLambda_{k_*}^{-1}$. Namely, if we let $\tbv_i = (\sqrt{n} v_{\ell,i})_{\ell\in S}\in\reals^{k_*}$ and  $\tbphi_i = (\sqrt{n} \bphi_{\ell,i})_{\ell\in \hS}\in\reals^{k_*}$, then for any $\psi\in \PL(2)$, we have (almost surely)
\begin{align}
\lim_{n\to\infty}\left|\frac{1}{n}\sum_{i=1}^n\psi(\tbphi_i,\tbv_i) - \E\big\{\psi(\bProd_0\bU+\tbProd_0\bG,\bU)\right|=0\, .
\end{align}
\end{corollary}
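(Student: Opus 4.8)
\textbf{Proof proposal for Corollary \ref{coro:ConvergenceEigenvectors}.}

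The plan is to specialize Lemma \ref{lem:ConvergenceEigenvectors} to the case where all the limiting eigenvalues $\lambda_i$ are distinct, and then identify the matrices $\bProd_0$, $\tbProd_0$ explicitly. First I would invoke Lemma \ref{lem:ConvergenceEigenvectors} directly: under the stated hypotheses it gives, almost surely, the convergence
\begin{align}
\lim_{n\to\infty}\left|\frac{1}{n}\sum_{i=1}^n\psi(\tbphi_i,\tbv_i) - \E\big\{\psi(\bProd_0\bU+\tbProd_0\bG,\bU)\big\}\right| = 0
\end{align}
for any $\psi\in\PL(2)$, where $\bProd_0\in\reals^{k_*\times k_*}$ is the principal submatrix of $\bProd=\bPhi_{\hS}^{\sT}\bV$ indexed by $S$ and $\tbProd_0=(\id-\bProd_0\bProd_0^{\sT})^{1/2}$. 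The only remaining work is to pin down $\bProd_0$ and $\tbProd_0$ deterministically, which is where the distinctness assumption enters.

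When all $\lambda_i$ are distinct, the group $\tcR(\bLambda)$ reduces to the diagonal sign matrices $\diag(\pm 1,\dots,\pm 1)$, so the Haar-limit statement of Lemma \ref{lem:ConvergenceEigenvectors} forces $\bProd$ to converge to $(\id-\bLambda_{k_*}^{-2})^{1/2}\bR$ with $\bR$ a uniformly random diagonal sign matrix. Restricting to the $S\times S$ block, this says $|\,\Omega_{0,ii}|\to\sqrt{1-\lambda_i^{-2}}$ and $\Omega_{0,ij}\to 0$ for $i\ne j$, while off-diagonal entries relating indices with distinct eigenvalues vanish at rate $\eta_n$. The sign convention $\<\bv_\ell,\bphi_\ell\>\ge 0$ (equivalently $\Omega_{0,\ell\ell}\ge 0$) selects $\bR=\id$, so $\bProd_0 \to (\id_{k_*}-\bLambda_{k_*}^{-2})^{1/2}$ almost surely, and consequently $\tbProd_0 \to (\id_{k_*}-(\id_{k_*}-\bLambda_{k_*}^{-2}))^{1/2} = \bLambda_{k_*}^{-1}$. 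Here I use that $\bLambda_{k_*}$ and hence its functions are diagonal, so the symmetric square root is unambiguous and continuous on the relevant domain.

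To finish, I would replace the random $\bProd_0,\tbProd_0$ by their deterministic limits $(\id_{k_*}-\bLambda_{k_*}^{-2})^{1/2}$ and $\bLambda_{k_*}^{-1}$ inside the expectation $\E\{\psi(\bProd_0\bU+\tbProd_0\bG,\bU)\}$: since $\psi$ is pseudo-Lipschitz of order $2$ and $\bU,\bG$ have finite second moments, the map $(\bM,\bN)\mapsto \E\{\psi(\bM\bU+\bN\bG,\bU)\}$ is continuous, so the difference between the random and deterministic versions tends to zero almost surely. Combining with the display above yields the claimed $W_2$ convergence of the joint empirical distribution of $(\sqrt{n}\bphi_\ell)_{\ell\in\hS}$ and $(\sqrt{n}\bv_\ell)_{\ell\in S}$ to the law of $(\bProd_0\bU+\tbProd_0\bG,\bU)$. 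The main obstacle — though a mild one — is bookkeeping the sign/permutation ambiguity: one must check that the convention $\<\bv_\ell,\bphi_\ell\>\ge 0$ is compatible with the block structure and indeed removes exactly the sign freedom, so that the limit is the specific positive square root rather than an arbitrary element of $\cR(\bLambda)$; this is precisely where distinctness of the $\lambda_i$ is essential, since otherwise a genuine orthogonal ambiguity would survive and the limit would remain random.
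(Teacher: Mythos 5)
Your proof is correct and follows essentially the same route as the paper's: reduce $\cR(\bLambda)$ to diagonal sign matrices under the distinctness hypothesis, use the sign convention together with the almost-sure $\cG_n(\bLambda)$ concentration (the $\eta_n$-rate bound, applied via Borel--Cantelli) to identify the deterministic limits $\bProd_0 \to (\id_{k_*}-\bLambda_{k_*}^{-2})^{1/2}$ and $\tbProd_0\to\bLambda_{k_*}^{-1}$, then exploit pseudo-Lipschitz continuity of $(\bM,\bN)\mapsto\E\{\psi(\bM\bU+\bN\bG,\bU)\}$ to replace the random parameters by their limits inside Eq.~\eqref{eq:ConvergenceEvectors}. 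One small caution: what pins down $\bProd_0$ almost surely is the $\cG_n$ event, not the Haar distributional limit per se, so the phrase ``the Haar-limit statement ... forces $\bProd$ to converge'' should really be read as invoking the concentration bound, as you do implicitly when you invoke the $\eta_n$ rate.
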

\begin{proof}
This follows from the observation that in this case the group $\cR(\bLambda)$ consists only of diagonal matrices $\bR$ with $+1/-1$ entries on the
diagonal. Under the further assumption  that $\<\bv_{\ell},\bphi_{\ell}\>\ge 0$ we can restrict attention to the case $\bR =[\id|\bzero]$ (where the two blocks correspond to 
columns with indices in $S$ and $[k]\setminus S$. The claim is proved by using 
Eq.~(\ref{eq:ConvergenceEvectors}) and controlling the effect of deviation $\|\bProd-(\id- \bLambda_{k_*}^{-2})^{1/2}\|_F\le C\eta_n$.
\end{proof}

\section{Proof of Proposition \ref{thm:BayesOpt}}
\label{app:BayesOpt}

\subsection{Preliminaries}

Following \cite{andrea2008estimating}, we define a more general model where, in addition to
obsevations $\bA$, we observe a random subset of the coordinates of $\bx_0$. Namely, we define $\by\in (\reals\cup\{*\})^n$ by
\begin{align}
y_i = \begin{cases}
x_{0,i} & \mbox{ with probability $\eps$,}\\
* &  \mbox{ with probability $1-\eps$,}\\
\end{cases}
\end{align}
independently across $i\in\{1,\dots,n\}$. 
We define $\bM_n \equiv \bx_0\bx_0^{\sT}/n$ and
\begin{align}
\hbx(\bA,\by) \equiv \E\left\{\bx_0\big|\bA,\by\right\}\, ,\;\;\;\;\;\;
\hbM^{\sBayes}_n(\bA,\by) = \E\Big\{\frac{1}{n}\bx_0\bx_0^{\sT}\Big|\bA,\by\Big\}\, .
\end{align}
We also let $\hbM^{\sBayes}_n(\bA) \equiv \E\{\bx_0\bx_0^{\sT}/n|\bA\}$.
The following theorem summarizes a few results proven in  \cite{lelarge2016fundamental}.
\begin{theorem}[\cite{lelarge2016fundamental}]\label{thm:Leo}
There exists a function $\gamma_{\sBayes}:(\lambda,\eps)\to \reals$ and, 
for any $\eps \in [0,1]$, there exists a countable set $D(\eps)$ such that, the following hold:
\begin{enumerate}
\item For every $\lambda \in \reals_{\ge 0}\setminus D(0)$, $\lim_{\eps\to 0}\gamma_{\sBayes}(\lambda,\eps) = \gamma_{\sBayes}(\lambda,0)= \gamma_{\sBayes}(\lambda)$. 
\item For every $\lambda \in \reals_{\ge 0}\setminus D(\eps)$, 
\begin{align}
\lim_{n\to\infty}\E\left\{\|\bM_n-\hbM_n^{\sBayes}(\bA,\by)\|^2_{F}\right\} 
=1- \frac{\gamma^2_{\sBayes}(\lambda,\eps)}{\lambda^4}\, .\label{eq:MMSE}
\end{align}
\item For every $\lambda\in \reals_{\ge 0}$, and every $\oeps>0$,
\begin{align}
\lim_{n\to\infty}\int_0^{\oeps}\E\Big\{\Big\|\hbM_n^{\sBayes}(\bA,\by)-\frac{1}{n}\hbx(\bA,\by)\hbx(\bA,\by)^{\sT}\Big\|_F^2\Big\} \de\eps = 0\, . \label{eq:Replicas-0}
\end{align}
\item Letting $\bx^{(1)},\bx^{(2)}\sim \prob(\,\cdot\,|\bA,\by)$ denote two independent samples from the posterior, for every $\lambda\in \reals_{\ge 0}$, and every $\oeps>0$,
\begin{align}
\lim_{n\to\infty}\int_0^{\oeps}\E\left\{\left(\frac{1}{n}\<\bx^{(1)},\bx^{(2)}\>-\frac{\gamma_{\sBayes}(\lambda,\eps)}{\lambda^2}\right)^2\right\} \de\eps = 0\, . \label{eq:Replicas}
\end{align}
\end{enumerate}
\end{theorem}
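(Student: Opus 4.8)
\emph{Strategy.} Theorem~\ref{thm:Leo} is not to be proved from scratch: it is a repackaging of the main results of \cite{lelarge2016fundamental}, with the side-information channel $\by$ (revelation probability $\eps$) introduced in the spirit of \cite{andrea2008estimating} precisely so that overlap concentration becomes available. The plan is to show that each of the four items follows from a corresponding statement there, after matching normalizations and exceptional sets.

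The backbone is the replica-symmetric formula for the limiting normalized mutual information, $\tfrac1n I(\bx_0;\bA,\by)\to\phi(\lambda,\eps)$, where $\phi$ is obtained by optimizing over $\gamma\ge0$ an $\eps$-version of the potential $\Psi(\gamma,\lambda)$ of \eqref{eq:FreeEnergy}, and $\gamma_{\sBayes}(\lambda,\eps)$ is defined as the optimizer; this is exactly the content of the Guerra/Aizenman--Sims--Starr interpolation bounds proven in \cite{lelarge2016fundamental} (see also \cite{miolane2017fundamental}), which I would quote first. Item~2 then comes from the matrix I-MMSE identity: differentiating $\tfrac1n I(\bx_0;\bA,\by)$ in the appropriate SNR variable yields, up to an explicit constant, the matrix MMSE $\E\|\bM_n-\hbM_n^{\sBayes}(\bA,\by)\|_F^2$. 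Since the normalized mutual information is concave in the SNR and converges pointwise, its derivative converges at every point of differentiability of the limit, i.e. for all parameter values outside a countable set $D(\eps)$; evaluating $\partial\phi$ via the envelope theorem turns the optimizer $\gamma_{\sBayes}$ into the stated formula $1-\gamma_{\sBayes}^2/\lambda^4$. Item~1 is continuity of $\eps\mapsto\gamma_{\sBayes}(\lambda,\eps)$ at $\eps=0$: this follows from joint continuity of $\Psi$ together with the fact that, for $\lambda\notin D(0)$, the optimizer is unique and hence varies continuously.

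Items~3 and~4 are the overlap-concentration statements, and this is where the channel $\by$ earns its keep. By the Nishimori identity $\tfrac1n\langle\bx^{(1)},\bx^{(2)}\rangle$ and $\tfrac1n\langle\bx_0,\hbx(\bA,\by)\rangle$ have the same limiting law, and $\|\hbM_n^{\sBayes}-\tfrac1n\hbx\hbx^{\sT}\|_F^2$ is controlled by the fluctuation of this overlap; so both items reduce to showing that, integrated over $\eps\in[0,\oeps]$, the variance of the overlap vanishes. The mechanism is that adding the revelation channel perturbs the free energy by a term whose $\eps$-derivative equals (a constant times) the overlap variance; since the free energy is bounded and monotone in $\eps$, integrating in $\eps$ forces the integrated variance to go to zero (the argument of \cite{andrea2008estimating}, made rigorous in \cite{lelarge2016fundamental}; it can also be phrased through the Ghirlanda--Guerra identities or concentration of the free energy). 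The value on which the overlap concentrates is then identified as $\gamma_{\sBayes}(\lambda,\eps)/\lambda^2$ by matching against Item~2.

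The heaviest single input is of course the replica-symmetric formula itself (the matching interpolation bounds), but since we may quote \cite{lelarge2016fundamental} the main obstacle is essentially bookkeeping: reconciling the SNR normalization of \eqref{eq:SpikedDefSpecial} with that of \cite{lelarge2016fundamental}, tracking the Frobenius-norm conventions in \eqref{eq:MMSE}, and handling the exceptional sets $D(\eps)$ (differentiability of the limiting mutual information) carefully so that Items~1 and~2 can simply be read off. I would organize the write-up as: (a) quote the limiting mutual information and the definition of $\gamma_{\sBayes}(\lambda,\eps)$; (b) matrix I-MMSE plus concavity to obtain Item~2, with Item~1 as a byproduct of the envelope theorem and continuity; (c) the $\by$-perturbation/overlap-concentration lemma to obtain Items~3 and~4.
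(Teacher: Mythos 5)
You correctly identify that Theorem~\ref{thm:Leo} is not proved in this paper at all: the text states explicitly that it ``summarizes a few results proven in \cite{lelarge2016fundamental}'' and offers no argument, so the intended ``proof'' is a pointer to that reference (together with \cite{andrea2008estimating} for the $\eps$-channel device). Your sketch of how the four items fall out of the Lelarge--Miolane machinery---replica-symmetric formula via interpolation, matrix I-MMSE plus concavity of the mutual information in the SNR to extract Item~2 off a countable exceptional set, continuity of the optimizer at $\eps=0$ for Item~1, and overlap concentration in $\eps$-average via the free-energy perturbation and Nishimori to get Items~3 and~4---is an accurate reconstruction of what that reference actually does, and goes somewhat beyond what the paper bothers to spell out; there is no gap, just more detail than the paper records.
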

Note that Eq.~\eqref{eq:MMSE} implies
\begin{align}
\lim_{n\to\infty}\E\Big\{\|\hbM^{\sBayes}_n(\bA,\by)\|_F^2\Big\} = \lim_{n\to\infty}\E\Big\{\<\hbM^{\sBayes}_n(\bA,\by),\bM_n\>\Big\} =  \frac{\gamma^2_{\sBayes}(\lambda,\eps)}{\lambda^4}\, . \label{eq:LimMBayes}
\end{align}
Further, by Jensen's inequality, $\gamma_{\sBayes}(\lambda,\eps)$ is monotone non-decreasing in $\eps$.

\subsection{Upper bound}

For the proof of the upper bound we will set $\eps=0$ (no side information $\by$ is revealed) and 
we will write $\gamma = \gamma_{\sBayes}(\lambda) = \gamma_{\sBayes}(\lambda,0)$.

We begin by proving that $\gamma/\lambda^2$ is an upper bound on the left-hand side of Eq.~(\ref{eq:BayesOpt}). Indeed assume towards contradiction that
there exists an estimator $\hbx_{n}:\reals^{n\times n}\to \sqrt{n}\sS^{n-1}$ (with $\sS^{n-1}$ the unit sphere in $n$ dimensions) and a sequence $(n(m))_{m\in\naturals}$ such that
\begin{align}
\lim_{m\to \infty} \frac{1}{n^2}\,\E\big\{\big|\<\hbx_{n(m)}(\bA),\bx_0\>\big|^2 \big\} = \frac{ \gamma_+}{\lambda^2}>\frac{\gamma}{\lambda^2}\, .
\end{align}
Given such an estimator, we define
\begin{align}
\hbM_n(\bA) = \frac{b_n}{n}\hbx_{n}(\bA) \hbx_{n}(\bA)^{\sT}\, ,\;\;\;\; b_n = \frac{1}{n^2}\E\{\<\hbx_n(\bA),\bx_0\>^2\}\, .
\end{align}
Then  we get
\begin{align}
\lim_{n\to\infty}\E\big\{\big\|\hbM_n(\bA)-\bM_n\big\|_F^2\big\} &= \lim_{n\to\infty}\left\{1-\frac{2 b_n}{n^2}\E\{\<\hbx_n(\bA),\bx_0\>^2\}+ \frac{b_n^2}{n^2}\E\{\|\hbx_n(\bA)\|_2^4\}\right\}\\
& = 1-   \left\{\lim_{n\to\infty}\frac{1}{n^2}\E\{\<\hbx_n(\bA),\bx_0\>^2\}\right\}^2\\
&= 1- \frac{\gamma^2_+}{\lambda^4} <1- \frac{\gamma^2}{\lambda^4}\, ,
\end{align}
which contradicts the fact (\ref{eq:MMSE}), thus proving our claim. 

\subsection{Lower bound}

We next prove that $\gamma_{\sBayes}(\lambda,0)/\lambda^2$ is a lower bound on the left-hand side of Eq.~(\ref{eq:BayesOpt}), by 
exhibiting an estimator $\hbx_*:\reals^{n\times n}\to \sqrt{n}\sS^{n-1}$ that achieves the claimed accuracy. Without loss of generality,
we will assume $\gamma_{\sBayes}(\lambda,0)>0$ because the claim is trivial otherwise. 
Throughout, we will assume $\lambda\in\reals_{\ge 0}\setminus D(0)$ as per Theorem \ref{thm:Leo} and write $q(\lambda,\eps) = \gamma_{\sBayes}(\lambda,\eps)/\lambda^2$
for brevity, with $q_*= q(\lambda,0)$. Further, we denote by $\E_{\eps}$ expectation with respect to $\eps\sim\Unif([0,\oeps])$,
with $\oeps$ a sufficiently small constant.

Denote by  $\bv_1(\hbM^{\sBayes}_n(\bA))$ the principal eigenvector of $\hbM^{\sBayes}_n(\bA)$, and $\lambda_1(\hbM^{\sBayes}_n(\bA))$ the corresponding eigenvalue.
We set $\hbx_*(\bA) = \sqrt{n}\, \bv_1(\hbM^{\sBayes}_n(\bA))$, whence
\begin{align}
\E\left\{\frac{\<\hbx_*(\bA),\bx_0\>^2}{\|\hbx_*(\bA)\|_2^2 \|\bx_0\|^2_2} \right\} 
&=\E\left\{ \frac{ \<\bv_1(\hbM^{\sBayes}_n(\bA)),\bM_n \bv_1(\hbM^{\sBayes}_n(\bA))\>}
{\|\bx_0\|^2_2/n}\right\}\nonumber\\
&= \E\left\{ \frac{ \<\bv_1(\hbM^{\sBayes}_n(\bA)), \, \hbM^{\sBayes}_n(\bA)\bv_1(\hbM^{\sBayes}_n(\bA))\> }{\|\bx_0\|^2_2/n}
\right\}\nonumber\\
&=\E\left\{ \frac{\lambda_1(\hbM^{\sBayes}_n(\bA))}{\|\bx_0\|^2_2/n} \right\}\, .
%
\label{eq:EigenvalueBound}
\end{align}
%

Let $(\bx^{(\ell)})_{\ell\ge 1}$ be i.i.d. samples from the posterior $\prob(\bx_0\in\, \cdot\,|\bA,\by)$.
Using Theorem \ref{thm:Leo}, see Eq.~\eqref{eq:Replicas}, we have, as $n\to\infty$,
\begin{align}
\E_{\eps}\Big\{ \Big(\E\big\{\tfrac{1}{n}\|\hbx(\bA,\by)\|^2_2\big\}-q(\lambda,\eps)\Big)^2\Big\} & =
\E_{\eps}\Big\{ \Big(\E\big\{ \tfrac{1}{n}\<\bx^{(1)},\bx^{(2)}\>\big\} -q(\lambda,\eps)\Big)^2\Big\} \nonumber\\
& \le \E_{\eps}\E\Big\{ \Big(\frac{1}{n}\<\bx^{(1)},\bx^{(2)}\>-q(\lambda,\eps)\Big)^2\Big\} \to 0\, . \label{eq:VarX}
\end{align}
Therefore, by the triangular inequality with respect to the norm $\|f-g\| = \E_{\eps}\{(f(\eps)-g(\eps))^2\}^{1/2}$,
\begin{align}
\lim_{n\to\infty}\frac{1}{n^2}\E_{\eps}\left\{\Big( \E \|\hbx(\bA,\by)\|_2^2 \Big)^2\right\}  = 
\E_{\eps}\{q(\lambda,\eps)^2\}\, .\label{eq:X2}
\end{align}
Further, using Eqs.~\eqref{eq:Replicas-0} and triangular inequality with respect to the norm 
$\|\bX\| \equiv [\E_{\eps}\E\{\|\bX\|_F^2\}]^{1/2}$, we get (with the shorthands $\hbx = \hbx(\bA,\by)$ and $\hbM = \hbM_n^{\sBayes}(\bA,\by)$)
\begin{align*}
\left|\frac{1}{n} \big(\E_{\eps}\E  \|\hbx\|_2^4\big)^{1/2} - \big( \E_{\eps}\E  \|\hbM\|_F^2\big)^{1/2}\right|
&\le \Big[ \E_{\eps}\E  \|\hbM-\frac{1}{n}\hbx\hbx^{\sT}\|_F^2 \Big]^{1/2}\to 0\, .
\end{align*}
Using Eq.~\eqref{eq:LimMBayes}, this implies
\begin{align}
\lim_{n\to\infty}\frac{1}{n^2}\E_{\eps}\E  \|\hbx(\bA,\by)\|_2^4 =
\lim_{n\to\infty}\E_{\eps}\E \|\hbM_n^{\sBayes}(\bA,\by)\|_F^2=
\E_{\eps} \big\{ q(\lambda,\eps)^2\big\}\, .\label{eq:X4}
\end{align}
Using Eqs.~\eqref{eq:VarX}, \eqref{eq:X2}, \eqref{eq:X4}, we obtain
\begin{align}
\lim_{n\to\infty}  \E_{\eps}\E\left\{ \left(\frac{1}{n}\|\hbx(\bA,\by)\|_2^2-q(\lambda,\eps)\right)^2\right\} = 0\, .\label{eq:NormConcentration}
\end{align}

Next note that, by Eq.~\eqref{eq:Replicas-0}
\begin{align*}
&\left|\frac{1}{n}\E_{\eps}\E\big\{\<\hbx(\bA,\by),\hbM_n^{\sBayes}(\bA)\hbx(\bA,\by)\>\big\}-\E_{\eps}
\E\big\{\<\hbM_n^{\sBayes}(\bA),\hbM^{\sBayes}_n(\bA,\by)\>\big\}\right|^2 \\
&\le\E\big\{  \|\hbM_n^{\sBayes}(\bA)\|_F^2\big\} \, \E_{\eps}\E\left\{\left\|\hbM_n^{\sBayes}(\bA)-\frac{1}{n}\hbx(\bA,\by) \hbx(\bA,\by)^{\sT}\right\|_F^2\right\} \to 0\, .
\end{align*}
Therefore
\begin{align}
\lim_{n\to\infty}\frac{1}{n}\E_{\eps}\E\{\<\hbx(\bA,\by),\hbM_n^{\sBayes}(\bA)\hbx(\bA,\by)\>\} &= \lim_{n\to\infty}
\E_{\eps}\E\{\<\hbM_n^{\sBayes}(\bA), \, \hbM_n^{\sBayes}(\bA,\by)\>\}\\ 
&= \lim_{n\to\infty}\E_{\eps}\E\{\|\hbM_n^{\sBayes}(\bA)\|_F^2\} =  q_*^2\, . \label{eq:KeyXMX}
\end{align}

We proceed as follows from Eq.~\eqref{eq:EigenvalueBound} for a fixed $\delta >0$:
\begin{align}
& \E \left\{ \frac{\lambda_1(\hbM^{\sBayes}_n(\bA))}{\|\bx_0 \|_2/n} \right\}\ge \E_{\eps}\E\left\{\frac{\<\hbx(\bA,\by),\hbM^{\sBayes}_n(\bA) \hbx(\bA,\by)\>}{\|\hbx(\bA,\by)\|_2^2 \ \  \|\bx_0 \|_2/n}\right\} \nonumber \\
&\ge \frac{1}{n(q_*+\delta)(1+\delta)} \E_{\eps}\E\left\{\<\hbx(\bA,\by),\hbM^{\sBayes}_n(\bA) \hbx(\bA,\by)\>\bfone_{\frac{1}{n}\|\hbx(\bA,\by)\|_2^2\le q_*+\delta} \   \bfone_{\frac{1}{n}\|\bx_0 \|_2^2 \leq 1+ \delta }  \right\} \nonumber \\
&\ge \frac{1}{n(q_*+\delta)(1+\delta)} \Bigg( \E_{\eps}\E\left\{\<\hbx(\bA,\by),\hbM^{\sBayes}_n(\bA) \hbx(\bA,\by)\>\right\} \nonumber \\
& \ - 
\Big[ \E_{\eps}\E\{\<\hbx(\bA,\by),\hbM^{\sBayes}_n(\bA) \hbx(\bA,\by)\>^2\} \Big]^{\frac{1}{2}} \Big[\E_{\eps}\left\{\prob\left(\tfrac{1}{n}\|\hbx(\bA,\by)\|_2^2 > q_*+\delta\right)\right\} + 
\prob(\tfrac{1}{n} \|\bx_0\|^2_2  \geq 1 + \delta  )  \Big]^{\frac{1}{2}}\, \Bigg).
\label{eq:EigLB}
\end{align}
Next note that
\begin{align}
 \E_{\eps}\E\{\<\hbx(\bA,\by),\hbM^{\sBayes}_n(\bA) \hbx(\bA,\by)\>^2\}&\le 
 \Big( \E_{\eps}\E \|\hbx(\bA,\by) \|_2^4\Big)^{1/2} \Big(  \E\|\hbM^{\sBayes}_n(\bA) \|_F^4 \Big)^{1/2} 
 \nonumber \\
&\le \Big(  \frac{1}{n^2} \E_{\eps}\E\{  \|\bx_0\|_2^4\} \Big) \le C\, .
\end{align}
Further for any $\delta>0$, we can choose $\oeps$ small enough so that 
$\E_{\eps}\left\{\prob\left(\frac{1}{n}\|\hbx(\bA,\by)\|_2^2> q_*+\delta\right)\right\}\to 0$. Indeed, this follows from
Eq.~\eqref{eq:NormConcentration} and Markov's inequality, together with the fact that $q(\lambda,\eps)\to q_*$ as $\eps\to 0$. We also have $\prob( \frac{1}{n}\|\bx_0\|^2_2  \geq 1 + \delta ) \to 0$ from the law of large numbers.

 Using Eqs.~\eqref{eq:EigLB} and \eqref{eq:KeyXMX} in Eq.~\eqref{eq:EigenvalueBound}, we conclude
\begin{align*}
\lim\inf_{n\to\infty}\E\left\{\frac{\<\hbx_*(\bA),\bx_0\>^2}{\|\hbx_*(\bA)\|_2^2\|\bx_0\|^2_2} \right\}  &= \lim\inf_{n\to\infty} \frac{\E \{\lambda_1(\hbM^{\sBayes}_n(\bA))\}}{1+\delta} \\
 &\ge \frac{1}{(q_*+\delta)(1+ \delta)}\lim\inf_{n\to\infty}\frac{1}{n}\E_{\eps}\E\left\{\<\hbx(\bA,\by),\hbM^{\sBayes}_n(\bA) \hbx(\bA,\by)\>\right\}\\
& \ge \frac{q_*^2}{(q_*+\delta)(1+ \delta)}\, .
\end{align*}
The desired lower bound follows since $\delta$ can be taken arbitrary small.

\section{Proofs for Section \ref{sec:SparseSpike}: Sparse spike} \label{app:sparse_spike}

\subsection{Reduction to three-points priors}
\label{sec:Reduction}

In this appendix we prove that the map $S(\gamma;\theta)$ defined in Eq.~\eqref{eq:Sdef} is indeed a lower bound on the  state evolution map.
\begin{lemma}\label{lemma:Reduction3Points}
Let $S_*, S$ be defined as in Eqs.~\eqref{eq:Sstar_def}--\eqref{eq:Sdef}. Then
\begin{align}
\inf_{\nu\in\cF_{\eps}}S_* (\gamma,\theta;\nu) = S(\gamma;\theta)\, ,\label{eq:Reduction}
\end{align}
where $\cF_{\eps} = \{\nu_{X_0}: \; \nu_{X_0}(\{0\})\ge 1-\eps, \,  \int x^2\nu_{X_0}(\de x) = 1\}$. 
\end{lemma}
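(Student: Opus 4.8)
The plan is to recognize that for fixed $\gamma$ and $\theta$, the map $\nu \mapsto S_*(\gamma,\theta;\nu)$ can be lower-bounded by optimizing over a \emph{relaxed} feasible set consisting of measures supported on at most three points, two of which are free and one of which is pinned at $0$. First I would rewrite $S_*(\gamma,\theta;\nu)$ in terms of the two functionals $N(\nu) \equiv \E\{X_0\,\eta(\sqrt{\gamma}X_0+G;\theta)\}$ and $D(\nu) \equiv \E\{\eta(\sqrt{\gamma}X_0+G;\theta)^2\}$, where expectation is over $G\sim\normal(0,1)$ and $X_0\sim\nu$; both $N$ and $D$ are obtained by integrating, against $\nu$, the one-dimensional functions $x\mapsto n_\gamma(x) \equiv x\,\E\{\eta(\sqrt{\gamma}x+G;\theta)\}$ and $x\mapsto d_\gamma(x)\equiv \E\{\eta(\sqrt{\gamma}x+G;\theta)^2\}$ respectively. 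Thus $S_*(\gamma,\theta;\nu) = \big(\int n_\gamma\,\de\nu\big)^2 / \int d_\gamma\,\de\nu$, a ratio of a squared linear functional of $\nu$ to a linear functional of $\nu$, subject to the two linear constraints $\int x^2\,\de\nu = 1$ and $\nu(\{0\})\ge 1-\eps$ (equivalently $\int \bfone_{\{x\ne 0\}}\,\de\nu \le \eps$), plus $\nu\ge 0$ and $\int\de\nu = 1$.

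The key step is a Choquet/extreme-point argument on the convex set $\cF_\eps$ of admissible priors. Conditioning on $\{x\ne 0\}$, write $\nu = (1-p')\delta_0 + p'\,\tilde\nu$ with $p'\le\eps$ and $\tilde\nu$ a probability measure on $\reals\setminus\{0\}$; the second-moment constraint becomes $p'\int x^2\,\de\tilde\nu = 1$. For fixed $p'$, minimizing the ratio $\big((1-p')n_\gamma(0) + p'\!\int n_\gamma\,\de\tilde\nu\big)^2/\big((1-p')d_\gamma(0)+p'\!\int d_\gamma\,\de\tilde\nu\big)$ over $\tilde\nu$ with $\int x^2\,\de\tilde\nu$ fixed: since the objective depends on $\tilde\nu$ only through the two scalars $\int n_\gamma\,\de\tilde\nu$ and $\int d_\gamma\,\de\tilde\nu$ (note $n_\gamma(0)=0$, simplifying matters), and the admissible $\tilde\nu$'s with a prescribed value of $\int x^2\,\de\tilde\nu$ form a convex set, the relevant extreme points are measures supported on at most two points $\{a_1,a_2\}$ (two linear constraints: total mass and second moment pin down a simplex whose vertices are two-atom measures, or one-atom measures when $a_1=a_2$). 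Substituting $\tilde\nu = q\delta_{a_1} + (1-q)\delta_{a_2}$ and absorbing $p'$ gives $\nu = (1-p')\delta_0 + p'q\,\delta_{a_1} + p'(1-q)\,\delta_{a_2}$; the constraint $\int x^2 = 1$ reads $p'(qa_1^2+(1-q)a_2^2)=1$. If $p'<\eps$ is strictly interior one can check the ratio is not increased by raising $p'$ to $\eps$ (rescaling the atoms inward), so it suffices to take $p'=\eps$, which is exactly the three-point family $\pi_{p,a_1,a_2} = (1-\eps)\delta_0 + \eps p\,\delta_{a_1} + \eps(1-p)\,\delta_{a_2}$ with $pa_1^2+(1-p)a_2^2 = 1$. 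This shows $\inf_{\nu\in\cF_\eps} S_*(\gamma,\theta;\nu) \ge S(\gamma;\theta)$; the reverse inequality is immediate since each $\pi_{p,a_1,a_2}\in\cF_\eps$.

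The main obstacle I anticipate is making the extreme-point reduction fully rigorous: one must justify that the infimum over $\tilde\nu$ is attained (or approximated) at a two-atom measure despite the objective being a \emph{ratio} rather than a linear functional, and one must verify the monotonicity-in-$p'$ claim that lets us set $p'=\eps$. The cleanest route is probably to argue directly about the auxiliary minimization of $N^2$ subject to $D$ and the second moment both fixed (so the ratio is monotone in each), reducing to a genuinely linear problem where Bauer's maximum/minimum principle applies and the extreme points of the constraint set are explicitly the $\le 2$-atom measures; care is needed because the functions $n_\gamma, d_\gamma$ are unbounded, so one should first truncate / use that $n_\gamma, d_\gamma$ have at most quadratic growth and $\int x^2\,\de\nu=1$ to ensure all integrals are finite and the relevant sets are weakly compact in an appropriate sense. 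Once that functional-analytic bookkeeping is in place, the remaining computation (that $S_*$ on a three-point prior reduces to six Gaussian integrals) is routine and need not be carried out here.
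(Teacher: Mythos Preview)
Your high–level strategy (reduce to the two linear functionals $N(\nu)=\int n_\gamma\,d\nu$ and $D(\nu)=\int d_\gamma\,d\nu$, observe the minimum of $N^2/D$ over the convex image in $\reals^2$ is attained on the boundary, then invoke an extreme–point principle) is exactly how the paper begins. But the place you flag as ``the main obstacle'' is where your argument actually breaks, and the paper closes that gap with a \emph{problem–specific} analytic lemma that your generic convex–analysis route cannot supply.

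Concretely: the minimum of $z_1^2/z_2$ over the convex image $K'$ lies on $\partial K'$, but it can lie in the \emph{relative interior} of an edge (along a segment $z(t)=(a+bt,c+dt)$ the function $t\mapsto z_1(t)^2/z_2(t)$ may have an interior minimum). At such a $z^*$, the supporting linear functional is constant on the whole edge, and the abstract fact ``extreme points of $\{\tilde\nu:\int x^2\,d\tilde\nu=c\}$ are two–atom'' only guarantees that \emph{some} two–atom $\tilde\nu$ maximizes that functional; its image may land at an endpoint of the edge, not at $z^*$. Your proposed fix (freeze $D$ as well and minimize the linear functional $N$) adds a third linear constraint to $\tilde\nu$, so the extreme points become three–atom measures and you obtain four–atom $\nu$'s, not the required three. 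So the counting is off by one; you miscount when you write ``the extreme points of the constraint set are explicitly the $\le 2$-atom measures'' in the fixed–$D$ problem.

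The paper avoids this by proving that for every Lagrange vector $\balpha=(\alpha_0,\alpha_1,\alpha_2)$ the function
\[
F_{\balpha}(x)=\alpha_0 x^2+\alpha_1\, x\,\E\{\eta(x+G;\theta)\}+\alpha_2\,\E\{\eta(x+G;\theta)^2\}
\]
has at most \emph{two} local maxima on $(0,\infty)$ (their Lemma on $F_{\balpha}$, proved via an auxiliary lemma on the equation $\theta x/\tanh(\theta x)=x^2+a$). This is a hard calculus fact about the soft–threshold nonlinearity, obtained by differentiating three times and counting sign changes; it is not a consequence of Bauer/Choquet. Its payoff is that \emph{every} maximizer of each linear functional over the feasible set (not merely some extreme one) has $\nu^+$ supported on at most two points, so the constrained maximizer is unique and in particular the specific boundary point $z^*$ is the image of a three–point prior. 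Your proposal omits exactly this structural step.

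A secondary issue: your monotonicity–in–$p'$ claim (pushing $p'$ up to $\eps$ by ``rescaling atoms inward'') is neither proved nor obviously true. The paper sidesteps it by writing $\nu=(1-\eps)\delta_0+\eps\nu^+$ with $\nu^+\in\cuP([0,\infty))$, i.e.\ allowing $\nu^+$ itself to place mass at $0$; then no monotonicity argument is needed.

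Bottom line: the reduction to a boundary point of a 2D convex set is right, but to pin that boundary point to a three–atom prior you must use the soft–threshold structure, as the paper does; a purely abstract extreme–point/Carath\'eodory argument yields one atom too many.
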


By rescaling the distribution $\nu$, it is sufficient to prove this lemma for $\gamma=1$, and replacing $\cF_{\eps}$ by
$\cF_{\eps,\gamma} = \{\nu: \; \nu(\{0\})\ge 1-\eps, \,  \int x^2\nu(\de x) = \gamma\}$.  With $G \sim \normal(0,1)$, we define the functions
\begin{align}
f_1(x) &\equiv x\, \E\big\{\eta(x+G;\theta)\big\}\, ,\\
f_2(x) & \equiv  \E\big\{\eta(x+G;\theta)^2\big\}\, ,\\
F_{\balpha}(x) & \equiv \alpha_0x^2+\alpha_1 f_1(x) +\alpha_2 f_2(x)\, .
\end{align}
(We omit  the dependence on  $\gamma$, $\theta$, since they are fixed throughout the proof.)
Notice that $f_1,f_2:\reals\to\reals$ are even (namely $f_i(-x) =
f_i(x)$) and analytic on $\reals$. Further $f_1(x)>0$ for all $x\neq 0$, with $f_1(0) = 0$ and $\inf_{x\in\reals} f_2(x)>0$. Moreover
\begin{align}
S_*(1,\theta;\nu) = \frac{\big(\int f_1(x) \nu(\de x)\big)^2}{\int f_2(x) \nu(\de x)}\, .
\end{align} 
Because the $f_i$'s are even, it is sufficient to prove the lemma by considering $\nu$ with support on $\reals_{\ge 0}$, i.e. to consider the class
$\cF^+_{\eps,\gamma}= \{\nu: \; \supp(\nu)\subseteq\reals_{\ge 0}\, ,\nu(\{0\})\ge 1-\eps, \,  \int x^2\nu(\de x) = \gamma\}$.
We also define $\cF^+_{\eps}= \{\nu: \; \supp(\nu)\subseteq\reals_{\ge 0}\, ,\nu(\{0\})\ge 1-\eps  \}$ (dropping the second moment constraint).

The next two lemmas establish analytic facts that will be crucial in the proof of Lemma \ref{lemma:Reduction3Points}.
\begin{lemma}\label{lemma:Tanh}
Let $\theta\in \reals_{>0}$, $a\in\reals$ be given, and consider the equation
\begin{align}
\frac{\theta x}{\tanh \theta x} = x^2+a\, .
\end{align}
For $a\le 0$, this equation has exactly one solution for $x\in (0,\infty)$. For $a>0$ it has at most two solutions for $x\in (0,\infty)$. 
\end{lemma}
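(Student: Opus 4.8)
\textbf{Proof plan for Lemma \ref{lemma:Tanh}.} The plan is to recast the equation as a question about the zeros of a single auxiliary function and then use convexity/concavity arguments plus a careful sign analysis. Write $h(x) \equiv \frac{\theta x}{\tanh(\theta x)} - x^2 - a$ for $x \in (0,\infty)$; solutions of the equation in the lemma are exactly the zeros of $h$. The first ingredient I would establish is the behavior of the function $\phi(x) \equiv \frac{\theta x}{\tanh(\theta x)}$: it is even, smooth on $\reals$, satisfies $\phi(0^+)=1$ (removable), is strictly increasing on $(0,\infty)$, and $\phi(x) = \theta x + o(1)$ as $x \to \infty$ (since $\tanh\to 1$). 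In particular $\phi(x) - 1 \ge 0$ with equality only at $x=0$, and more precisely $\phi$ is \emph{convex} on $(0,\infty)$ — this is the key analytic fact I would need to check, e.g. by differentiating twice, or by writing $\phi(x) = \sum_k c_k x^{2k}$ as a power series with nonnegative coefficients (the expansion $\frac{z}{\tanh z} = 1 + \frac{z^2}{3} - \frac{z^4}{45} + \cdots$ has mixed signs, so the power-series route is \emph{not} immediate and I would instead argue convexity directly from the second derivative or from a known representation of $\coth$).

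Granting that $\phi$ is convex and increasing on $(0,\infty)$ with $\phi(0^+)=1$, the equation becomes $\phi(x) - x^2 = a$. Consider $\psi(x) \equiv \phi(x) - x^2$. Then $\psi(0^+) = 1$, $\psi(x)\to -\infty$ as $x\to\infty$ (the $-x^2$ dominates the linear growth of $\phi$), and $\psi'' = \phi'' - 2$. Here is where the two cases split. For $a \le 0$: I claim $\psi$ is strictly decreasing on the region where $\psi(x) \le 0$, which forces uniqueness of the solution $\psi(x) = a$. To see this, note $\psi(0^+) = 1 > 0 \ge a$ and $\psi \to -\infty$, so at least one solution exists by continuity; for uniqueness I would show that once $\psi$ drops to the level $a \le 0$ it stays below, i.e. $\psi$ cannot be non-monotone in a way that creates a second crossing of a non-positive level. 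The cleanest way: show $\psi'(x) < 0$ whenever $\psi(x) \le 0$. Indeed $\psi(x) \le 0$ means $\phi(x) \le x^2$, and combined with $\phi(x) \ge \max(1, \theta x)$ (convexity gives $\phi$ lies above its tangent lines; in particular above the line through $(0,1)$ with the asymptotic slope, or simply $\phi(x)\ge \theta x$) one gets a lower bound on $x$; on that range $\phi'(x) = \theta\coth(\theta x) - \theta^2 x \,\mathrm{csch}^2(\theta x)$ can be bounded, and $\psi'(x) = \phi'(x) - 2x$. I would verify $\phi'(x) \le \theta$ for all $x>0$ (since $\phi$ is convex and asymptotically has slope $\theta$, its derivative is increasing up to $\theta$), so $\psi'(x) \le \theta - 2x < 0$ as soon as $x > \theta/2$; and the condition $\psi(x)\le 0$ together with $\phi(x)\ge 1$ forces $x \ge 1 > \ldots$ — this needs a small quantitative check to guarantee $x > \theta/2$ at any non-positive level, which may require splitting on whether $\theta$ is small or large, or a slightly different bookkeeping. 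That is the one place I expect genuine friction.

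For $a > 0$: here $\psi$ can cross the positive level $a$ going up and then coming back down. Since $\psi(0^+) = 1$, $\psi\to -\infty$, and $\psi$ is smooth, the number of solutions of $\psi(x) = a$ equals $1$ plus twice the number of strict interior local maxima above level $a$ (roughly). The claim is at most two solutions, i.e. at most one "hump." I would prove this by showing $\psi' = \phi' - 2x$ has at most two zeros on $(0,\infty)$ — equivalently $\phi'(x) = 2x$ has at most two solutions — which follows from $\phi'$ being increasing and concave, or more robustly from the fact that $\phi'$ is increasing while $2x$ is linear through the origin with $\phi'(0^+) = 0$: so $\phi'(x) - 2x$ starts at $0$, and since $\phi'$ is increasing and bounded by $\theta$ while $2x\to\infty$, the difference is eventually negative; concavity of $\phi'$ (third derivative sign of $\phi$) would give at most two sign changes. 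Combined with $\psi(0^+) = 1$ this yields at most two crossings of any horizontal level, in particular at most two solutions for $a>0$, completing the proof.

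\textbf{Main obstacle.} The crux is the two structural facts about $\phi(x) = \theta x \coth(\theta x)$: that it is convex on $(0,\infty)$ and that $\phi'$ is itself concave (so $\psi'$ has at most two zeros). These are elementary but the derivatives of $\coth$ are messy, and the sign of $\phi'''$ is not obviously one-signed from crude estimates; I would likely prove convexity of $\phi$ via the integral/series representation $\coth z = \frac1z + \sum_{n\ge1}\frac{2z}{z^2+n^2\pi^2}$, giving $\phi(x) = 1 + \sum_{n\ge1} \frac{2\theta^2 x^2}{\theta^2 x^2 + n^2\pi^2}$, each summand of which is a manifestly increasing, bounded, \emph{concave-then-convex}... — hmm, actually each term $\frac{2u}{u+n^2\pi^2}$ (with $u = \theta^2 x^2$) is concave in $u$ but we need convexity in $x$; composing with $u = \theta^2 x^2$ requires care. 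An alternative and probably safer route is to avoid global convexity and instead directly count zeros of $h'$ by a Rolle-type argument bootstrapped from the explicit small-$x$ and large-$x$ asymptotics of $h$; I would pursue whichever of these is cleanest once the calculation is in front of me. Everything else (existence by intermediate value theorem, the endpoint behaviors) is routine.
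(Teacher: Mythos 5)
Your strategy is sound, but the two halves compare differently to the paper's proof.

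For $a>0$ you land on essentially the same argument the paper uses: set $u(y)=y/\tanh y$ and show $u''>0$, $u'''<0$, whence $g(x)=u(\theta x)-x^2-a$ has $g'''<0$; combined with $g'(0^+)=0$ and $g'(\infty)=-\infty$ this forces $g$ to be (weakly) increasing then decreasing, hence at most two zeros. Your hesitation about the series route for convexity is well placed; the clean way is direct differentiation, which the paper does: $u''(x)=\frac{2}{\sinh^2 x}\bigl(\frac{x}{\tanh x}-1\bigr)>0$, and $u'''$ has an explicit form whose sign reduces (after the substitution $y=2x$) to the elementary inequality $y(2+\cosh y)-3\sinh y>0$, proved by differentiating three times. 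One small looseness in your write-up: you say ``$\psi'=\phi'-2x$ has at most two zeros,'' which by itself does not bound the zeros of $\psi-a$ by two (three monotone pieces would allow three crossings). The correct and stronger statement, which is exactly what the concavity of $\phi'-2x$ gives you, is that $\phi'-2x$ is concave, vanishes at $0^+$, and tends to $-\infty$, so its superlevel set $\{\phi'-2x\ge 0\}$ is an interval $[0,b]$; thus $\phi'-2x$ changes sign at most once, $\psi$ is increasing then decreasing, and $\psi-a$ has at most two zeros for \emph{any} $a$.

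For $a\le 0$ your route is genuinely different from the paper's, and in fact the friction you anticipate is not there. The paper rewrites the equation as $\frac1\theta\tanh(\theta x)=\frac{x}{x^2+a}$ and compares a strictly increasing LHS against a RHS that for $a\le 0$ is negative on $(0,\sqrt{-a})$ and strictly decreasing from $+\infty$ to $0$ on $(\sqrt{-a},\infty)$; this gives uniqueness with no derivative computations and no convexity. Your route --- show $\psi'(x)<0$ whenever $\psi(x)\le 0$ --- also works, and the quantitative check you flagged is immediate: from $\coth(\theta x)>1$ you have $\phi(x)>\theta x$, so $\psi(x)\le 0$ gives $\theta x<\phi(x)\le x^2$ and hence $x>\theta$; together with $\phi'(x)<\theta$ (from convexity and the asymptotic slope, or by the direct inequality $1-e^{-2y}<2y$) this yields $\psi'(x)<\theta-2x<-\theta<0$, with no case split on the size of $\theta$. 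The price of your route relative to the paper's is that you need convexity of $\phi$ already for the $a\le 0$ case, which the paper defers to the $a>0$ case only. Either is fine; just fill in the $u''>0$ and $u'''<0$ computations explicitly rather than gesturing at the series representation, since those are the load-bearing facts.
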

\begin{proof}
For $a\le 0$, rewrite this equation as
\begin{align}
\frac{1}{\theta}\tanh \theta x = \frac{x}{x^2+a}\, . \label{eq:Tanh}
\end{align}
The left-hand side is strictly  increasing and positive on $(0,\infty)$. The right-hand side $h(x) = x/(x^2+a)$  is strictly negative for $x\in (0,\sqrt{-a})$,
and decreasing and stricly positive on $(\sqrt{-a},\infty)$. Further, $h(x)\uparrow+\infty$ as $x\downarrow \sqrt{-a}$ and $h(x)\downarrow 0$ as 
$x\uparrow +\infty$. Hence the equation has exactly one solution $x_*$ on $(0,\infty)$ for $a \leq 0$, with $x_*\in (\sqrt{-a},\infty)$.

Next consider the case $a>0$. Define $u(x) = x/\tanh(x)$. It is easy to compute
\begin{align}
u'(x) & = \frac{1}{\tanh x}-\frac{x}{\sinh^2 x}\, ,\\
u''(x) & = \frac{2}{\sinh^2 x}\left(  \frac{x}{\tanh(x)} -1 \right)\, ,\\
u'''(x) & = - \frac{2}{\sinh^2 x}\left( \frac{x(2 + \cosh^2 x) - 3 \sinh x \cosh x}{\sinh^2 x}
 \right)\,.
\end{align}
In particular, we have $u''(x)>0$ and $u'''(x)<0$ for $x\in (0,\infty)$. Solutions of Eq.~\eqref{eq:Tanh} are zeros of $g(x) \equiv u(\theta x)-x^2-a$.
The above calculation yields $g'''(x)<0$ and
\begin{align}
g'(x) & = \frac{\theta }{\tanh \theta x}-\frac{\theta^2x}{\sinh^2 \theta x}+\theta^2 x-2x\, ,\\
g''(x)  & = \frac{2\theta^2}{\sinh^2 \theta x}\left( \frac{\theta x}{\tanh \theta x} - 1 \right) - 2\, .
\end{align}
In particular, we have $g''(0+) =(2\theta^2/3)-2$ and $g''(+\infty) = -2$. Hence $g$ is convex for $x\in(0,x_0)$, and concave for $x\in (x_0,\infty)$, 
where $x_0= 0$ for $\theta<\sqrt{3}$. Further $g'(0+) = 0$, and $g'(x)\downarrow -\infty$ for 
$x\uparrow +\infty$. Therefore $g$ is increasing on $(0,x_0]$ and has a unique local maximum $x_*$ on $(x_0,\infty)$.
Hence $g$ is strictly increasing on $(0,x_*)$ and strictly decreasing on $(x_*,\infty)$ 
It follows that $g(x) =0$ can have at most two solutions.
\end{proof}

\begin{lemma}\label{lemma:Falpha}
For any nonzero vector $\balpha\in \reals^3$, the function $x\mapsto F_{\balpha}(x)$ has at most two local maxima on $(0,\infty)$.
\end{lemma}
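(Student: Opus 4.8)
The plan is to reduce the claim about the number of local maxima of $F_{\balpha}$ to a counting statement about zeros of $F_{\balpha}'$, and then express $F_{\balpha}'$ in a form where Lemma \ref{lemma:Tanh} applies. First I would write $F_{\balpha}(x)=\alpha_0 x^2+\alpha_1 f_1(x)+\alpha_2 f_2(x)$ with $f_1(x)=x\,\E\{\eta(x+G;\theta)\}$ and $f_2(x)=\E\{\eta(x+G;\theta)^2\}$, and compute the derivatives $f_1'$ and $f_2'$ explicitly. The key computation is the identity, valid for the soft-threshold function, that $\E\{\eta(x+G;\theta)\} = \E\{\eta'(x+G;\theta)\}\cdot(\text{something})$ does not hold directly, but rather one uses Gaussian integration by parts (Stein's lemma): writing $m(x)\equiv\E\{\eta(x+G;\theta)\}$ and $v(x)\equiv\E\{\eta(x+G;\theta)^2\}$, one obtains $m'(x)=\E\{\eta'(x+G;\theta)\} = \prob(|x+G|>\theta)$ and $v'(x)=2\E\{\eta(x+G;\theta)\eta'(x+G;\theta)\}=2\E\{\eta(x+G;\theta)\ind_{|x+G|>\theta}\} = 2\,\E\{(x+G-\theta\,\mathrm{sign}(x+G))\ind_{|x+G|>\theta}\}$. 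A further integration by parts relates $v'(x)$ to $m(x)$ and $m'(x)$; concretely one finds $v'(x) = 2\,m(x) - 2x\,m'(x)$ after using $\E\{G\,\eta(x+G;\theta)\}=m'(x)$. These are the algebraic facts I would establish first.

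Next I would assemble $F_{\balpha}'(x)$. Using $f_1(x)=x\,m(x)$ we get $f_1'(x)=m(x)+x\,m'(x)$, and from the above $f_2'(x)=v'(x)=2m(x)-2x\,m'(x)$. Hence
\begin{align}
F_{\balpha}'(x) &= 2\alpha_0 x + \alpha_1(m(x)+x\,m'(x)) + 2\alpha_2(m(x)-x\,m'(x))\nonumber\\
&= 2\alpha_0 x + (\alpha_1+2\alpha_2)\,m(x) + (\alpha_1-2\alpha_2)\,x\,m'(x).\nonumber
\end{align}
Now I would divide by $m'(x)$, which is strictly positive on $(0,\infty)$ (since $\prob(|x+G|>\theta)>0$ for every real $x$), so that zeros of $F_{\balpha}'$ on $(0,\infty)$ coincide with zeros of
\[
H(x) \;\equiv\; 2\alpha_0\,\frac{x}{m'(x)} + (\alpha_1+2\alpha_2)\,\frac{m(x)}{m'(x)} + (\alpha_1-2\alpha_2)\,x.
\]
The plan is to show that $m(x)/m'(x)$ and $x/m'(x)$ have enough shared monotonicity/convexity structure that $H$ changes sign at most a bounded number of times. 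The natural move is to recognize that $m(x)/m'(x)$ is, up to scaling, essentially the ratio $\theta x/\tanh(\theta x)$ that appears in Lemma \ref{lemma:Tanh}: indeed, in the relevant regime the soft-threshold estimator behaves like a shrinkage whose reciprocal-slope is governed by a $\coth$-type function. One should verify that the equation $F_{\balpha}'(x)=0$ can be massaged into the form $\theta x/\tanh(\theta x) = x^2 + a$ (or a monotone transform thereof) treated in Lemma \ref{lemma:Tanh}, which guarantees at most two solutions in $(0,\infty)$; between consecutive critical points $F_{\balpha}$ alternates increasing/decreasing, so at most two of them are local maxima. Finally I would handle the degenerate subcases (e.g. $\alpha_1-2\alpha_2=0$, or $\balpha$ lying in a lower-dimensional face) separately — these are easier since $F_{\balpha}'$ then has an even simpler structure.

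The main obstacle I anticipate is making rigorous the passage from $F_{\balpha}'$ to the $\tanh$-equation of Lemma \ref{lemma:Tanh}: the function $m(x)/m'(x)$ is not literally $\theta x/\tanh(\theta x)$, so one needs either an exact algebraic reduction (perhaps valid only after a change of variables, or after replacing the Gaussian average by the two-point structure relevant to $\cF_{\eps}$) or a convexity argument showing $H$ is, say, first convex then concave on $(0,\infty)$ with controlled boundary behavior — which is exactly the strategy used in the proof of Lemma \ref{lemma:Tanh}. I expect the cleanest route is to differentiate $H$ once more, use the sign information on $m''$, $m'''$ (analogues of the $u''>0$, $u'''<0$ facts for $x/\tanh x$) to conclude $H'$ vanishes at most twice, hence $H$ is monotone on at most three intervals, hence has at most two zeros, hence $F_{\balpha}$ has at most two local maxima on $(0,\infty)$. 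Establishing those higher-derivative sign facts for the Gaussian-averaged soft threshold is the technical crux.
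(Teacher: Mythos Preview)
Your proposal has a genuine gap: you are trying to extract the $\tanh$ structure of Lemma~\ref{lemma:Tanh} at the level of the \emph{first} derivative $F_{\balpha}'$, but at that level the relevant functions are $m(x)=\E\{\eta(x+G;\theta)\}$ and $m'(x)=\Phi(x-\theta)+\Phi(-x-\theta)$, which involve Gaussian \emph{distribution functions}; the ratio $m(x)/m'(x)$ is \emph{not} $\theta x/\tanh(\theta x)$, and your ``main obstacle'' is not a technicality but the actual issue. The paper's proof differentiates twice more: writing $F_{\balpha}''(x)=2\alpha_0+H(x)$ with $H(x)=b_1[\Phi(x-\theta)+\Phi(-x-\theta)]+b_2 x[\phi(x-\theta)-\phi(x+\theta)]$, one computes $F_{\balpha}'''=H'$ in terms of the Gaussian \emph{densities} $\phi(x\pm\theta)$ only. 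Because $\phi(x-\theta)/\phi(x+\theta)=e^{2\theta x}$, the identity
\[
\frac{\phi(x-\theta)-\phi(x+\theta)}{\phi(x-\theta)+\phi(x+\theta)}=\tanh(\theta x)
\]
holds \emph{exactly}, and $H'(x)=0$ rearranges to $\theta x/\tanh(\theta x)=x^2+a$, so Lemma~\ref{lemma:Tanh} applies on the nose. The correct counting is then: $F_{\balpha}'''$ has $\le 2$ zeros $\Rightarrow$ $F_{\balpha}''$ has $\le 3$ zeros $\Rightarrow$ $F_{\balpha}$ is concave on at most two intervals $\Rightarrow$ at most two local maxima. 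Your counting (``$H'$ vanishes at most twice, hence $H$ is monotone on at most three intervals, hence has at most two zeros'') is off by one: three monotone pieces allow up to three zeros, not two.

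There is also a computational slip: $v'(x)=2\,\E\{\eta(x+G;\theta)\,\eta'(x+G;\theta)\}=2\,\E\{\eta(x+G;\theta)\}=2m(x)$, since $\eta$ already vanishes on $[-\theta,\theta]$; there is no $-2x\,m'(x)$ term. Consequently $F_{\balpha}'(x)=2\alpha_0 x+(\alpha_1+2\alpha_2)m(x)+\alpha_1\,x\,m'(x)$, with coefficient $\alpha_1$ (not $\alpha_1-2\alpha_2$) on $x\,m'(x)$, matching the paper's expression.
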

\begin{proof}
We compute first two derivatives of $F_{\balpha}$ to get
\begin{align}
F'_{\balpha}(x) & = 2\alpha_0 x +(\alpha_1+2\alpha_2)\E \eta(x+G;\theta) + \alpha_1 x\, \big[\Phi(x-\theta)+\Phi(-x-\theta)\big]\, ,\\
F''_{\balpha}(x) & = 2\alpha_0 + H(x)\,,\\
H(x) & \equiv b_1\, \big[\Phi(x-\theta)+\Phi(-x-\theta)\big] + b_2x\,  \big[\phi(x-\theta)-\phi(x+\theta)\big]\, ,
\end{align}
where we defined $b_1= 2(\alpha_1+\alpha_2)$, $b_2=\alpha_1$. We claim that $F''_{\balpha}(x)=0$ for at most three values of $x\in (0,\infty)$.
Hence there are at most two disjoint intervals $I_1=(a_1,b_1)$, $I_2=(a_2,b_2)\subseteq \reals$ (with, potentially, $b_2=\infty$) such that $F''_{\balpha}(x)<0$ for $x\in I_1\cup I_2$,
(because $F'''_{\balpha}$ must vanish at the boundary of these intervals).
Since $F_{\balpha}$ is concave in these intervals and convex outside, it can have at most one local maximum in each of the intervals. This proves the lemma.

In order to prove the claim that $F''_{\balpha}(x)=0$ for at most three values of $x\in (0,\infty)$), we compute the derivative 
\begin{align}
F'''_{\balpha}(x) = H'(x) = (b_1+b_2-b_2x^2) \big[\phi(x-\theta)-\phi(x+\theta)\big]+b_2\theta x\big[\phi(x-\theta)+\phi(x+\theta)\big]\,,
\end{align}
and show that $H'(x) =0$ can have at most two solutions in $(0,\infty)$. From this it follows that $H(x) = -2\alpha_0$ can have at most three solutions  in $(0,\infty)$
(because otherwise it would have more than two stationary points by the intermediate value theorem).

If $b_2=0$, then necessarily $b_1\neq 0$, and the claim that that $H'(x) =0$ has at most two solutions  is trivial. We can therefore assume $b_2\neq 0$.
Re-organizing the terms, we get $H'(x)=0$ (for $x\in (0,\infty)$) if and only if
\begin{align}
-\frac{b_1+b_2}{b_2}+x^2 = \frac{\theta x}{\tanh\theta x}\, .
\end{align}
By Lemma \ref{lemma:Tanh}, this equation can have at most two solutions in $(0,\infty)$,  which completes the proof.
\end{proof}

We are now in position to prove Lemma \ref{lemma:Reduction3Points}.
\begin{proof}[Proof of Lemma \ref{lemma:Reduction3Points}.]
Obviously  the right-hand side of Eq.~\eqref{eq:Reduction} is no smaller than the left-hand side. We will prove that the infimum on the left-hand side is achieved at 
$\nu=\pi_{p,a_1,a_2}$ for a certain three points prior, hence establishing the lemma.

Denote by $\cuP(\reals)$ the space of probability distributions on $\reals$ endowed with the $W_2$ metric \eqref{eq:WassersteinDef}. Let  $H:\cuP(\reals)\to \reals^2_{\ge 0}$, and
$Q \subseteq \reals_{\ge 0}$ be defined by
\begin{align}
H(\nu) & \equiv \Big(\int f_1(x)\, \nu(\de x), \; \int f_2(x)\, \nu(\de x)\Big)\, ,\\ 
\cQ_{\eps,\gamma}& \equiv \big\{ H(\nu):\; \nu\in \cF_{\eps,\gamma}\big\}\, .
\end{align}
Note that $H$ is continuous in the $W_2$ metric (because $f_1$, $f_2$ are continuous, with $|f_1(x)|, |f_2(x)|\le C(1+x^2)$). 
Further $\cF_{\eps,\gamma}$ is sequentially compact in the same metric, and therefore $\cQ_{\eps,\gamma}$ is compact. Since $\inf_{x\in\reals} f_2(x)>0$
and $\int f_1(x)\, \nu(\de x)>0$ unless $\nu = \delta_0$, it follows that $\cQ_{\eps,\gamma}\subseteq \reals_{>0}^2$ (i.e., is bounded away from the coordinate axes).
Finally, since $H$ is linear and $\cF_{\eps,\gamma}$ is convex, it follows that $\cQ_{\eps,\gamma}$ is convex as well.

We have 
\begin{align}
\inf_{\nu\cF_{\eps}} S_*(\gamma,\theta;\nu) = \inf_{(z_1,z_2)\in\cQ_{\eps,\gamma}} \frac{z^2_1}{z_2}\equiv S_{\min}\, .
\end{align}
Notice that the infimum on the right-hand side is achieved at a point 
$(z_{1*},z_{2*}) \in \cQ_{\eps,\gamma}$ because $\cQ_{\eps,\gamma}$ is compact and $(z_1,z_2)\mapsto z_1^2/z_2$ is continuous on $\reals^2_{>0}$. Furthermore,  $(z_{1*},z_{2*})$ must be on the boundary of $\cQ_{\eps,\gamma}$. Indeed, if this wasn't the case
$(z_{1*}-\delta,z_{2*})$ would be feasible for $\delta$ small enough, and achieve a smaller ratio  $z_1^2/z_2$.

To complete the proof, we will show that, for any point $\bz = (z_1,z_2)$ on
the boundary $\dcQ$ of $\cQ = \cQ_{\eps,\gamma}$, there exists  $\pi_{p,a_1,a_2}\in\cF_{\eps,\gamma}$ such that
$H(\pi_{p,a_1,a_2})= \bz$, whence $S_*(1,\theta;\pi_{p,a_1,a_2}) = S_{\min}$. Since $\cQ$ is convex, any point $\bz\in \dcQ$ is a maximizer of a
linear function $\alpha_1z_1+\alpha_2 z_2$ subject to $\bz\in \cQ$,
for some nonzero vector $(\alpha_1,\alpha_2)\in\reals^2$. It is therefore sufficient to show that for any $(\alpha_1,\alpha_2)\neq (0,0)$ 
the maximizer is unique and takes the form $\bz = H(\pi_{p,a_1,a_2})$. This optimization problem can be rewritten as
\begin{equation}
\begin{aligned}
\mbox{maximize}&\;\; \alpha_1\int f_1(x)\,  \nu(\de x) +  \alpha_2\int f_2(x)\,  \nu(\de x) \, ,\\
\mbox{subject to}&\;\; \nu \in\cF^+_{\eps}\, ,\\
&\;\; \int x^2 \nu(\de x) = \gamma\, .
\end{aligned}
\end{equation}
The claim follows if this problem has a unique maximizer at a three-points distribution $\gamma = \pi_{p,a_1,a_2}$.
By strong duality, there exist a Lagrange parameter $\alpha_0\in\reals$, such that all maximizers of the last optimization problem are
also maximizers of 
\begin{equation}\label{eq:Lagrangian3Points}
\begin{aligned}
\mbox{maximize}&\;\; \int F_{\balpha}(x)\,  \nu(\de x) \, ,\\
\mbox{subject to}&\;\; \nu \in\cF^+_{\eps}\, ,
\end{aligned}
\end{equation}
where we recall that $F_{\balpha}(x) = \alpha_0x^2+\alpha_1 f_1(x)+\alpha_2f_2(x)$. 
Note that the constraint $\nu \in\cF^+_{\eps}$ is equivalent to $\nu = (1-\eps)\delta_0+\eps\nu^+$ with $\nu^+\in\cuP((0,\infty))$ (a probability
distribution with support in $(0,\infty)$. Therefore,
$\nu$ is a solution of problem \eqref{eq:Lagrangian3Points} if and only if $\nu^+$ is supported on the global maxima of $F_{\balpha}$. However,
by Lemma \ref{lemma:Falpha}, the set of global maxima contains at most two points, and therefore $\nu^+$is supported on at most two points, which proves our claim.
\end{proof}

\subsection{Proof of Proposition \ref{prop:sparse_spike}} 

For $t \geq 0$, let $\gamma_t \equiv \mu_t^2/\sigma_t^2$. We will first show the inequality 
in \eqref{eq:sparse_spike_lb}, which is equivalent to showing $\gamma_{t+1} \geq \ugamma_{t+1}$. From the definitions, we have 
$\gamma_{0} = \ugamma_{0} = (\lambda^2-1)$.  Assume towards induction that 
$\gamma_{s} \geq \ugamma_{s}$ for  $0 \leq s \leq t$.   We observe that $\gamma_{t+1}$ can be computed from $\gamma_t$ as
\beq
\gamma_{t+1} = \lambda^2 S_* (\gamma_t,\theta_t;\nu_{X_0}),
\eeq
where the function $S_*$ is defined in \myeqref{eq:Sstar_def}. Indeed, since the soft-thresholding function satisfies 
$\eta(x; \theta \sigma) = \sigma \eta(x/\sigma \, ; \theta)$ for any $\theta, \sigma >0$, we have
\begin{align}
\gamma_{t+1} = \lambda^2 \,  \frac{[\E\{ X_0 \,  \eta(\mu_t X_0+\sigma_t G; \, \theta_t \sigma_t)\} ]^2}{\E \{ \eta(\mu_t X_0 + \sigma_t G; \,  \theta_t \sigma_t )^2 \}} = 
\lambda^2 \frac{ [ \E\{ X_0 \, \sigma_t \eta(\gamma_t X_0+ G; \, \theta_t )\} ]^2}{\E \{ \sigma_t^2 \eta(\gamma_t X_0+ G; \, \theta_t  )^2\}} 
= \lambda^2  S_*(\gamma_t,\theta_t;\nu_{X_0}).
\end{align}

Next, we note that $S_*(\gamma,\theta;\nu_{X_0})$ is non-decreasing in $\gamma$. To see this, we use the definition in \eqref{eq:Sstar_def} to compute the derivative:
\begin{align}
\frac{\partial  S_* (\gamma,\theta;\nu_{X_0}) }{\partial \gamma} = 
\frac{ \E\{\eta(\sqrt{\gamma}X_0+G;\theta)^2\}  \E\{ X_0^2 \, \ind( | \sqrt{\gamma} X_0 + G | > \theta ) \} \, - \, [\E\{X_0\eta(\sqrt{\gamma}X_0+G;\theta)\}]^2 }{\sqrt{\gamma} \, [\E\{\eta(\sqrt{\gamma}X_0+G;\theta)^2\} ]^2} \geq 0,
\end{align}
where the inequality is due to the Cauchy-Schwarz (after noting that $\eta(\sqrt{\gamma}X_0+G;\theta) = 
\ind(| \sqrt{\gamma}X_0+G | > \theta)\, \eta(\sqrt{\gamma}X_0+G;\theta)$). Therefore, using the induction hypothesis we have
\begin{align}
\gamma_{t+1} = \lambda^2 S_* (\gamma_t,\theta_t;\nu_{X_0}) &  \geq \lambda^2 S_* (\ugamma_t,\theta_t;\nu_{X_0}) 
 \geq \lambda^2 \inf_{\pi_{X_0} \in \cF_{\eps}}  S_* (\ugamma_t,\theta_t;\pi_{X_0}),
 \label{eq:gam_t1_lb}
\end{align}
where $\cF_{\eps} = \{\pi_{X_0}: \; \pi_{X_0}(\{0\})\ge 1-\eps, \,
\int x^2\pi_{X_0}(\de x) = 1\}$. By Lemma \ref{lemma:Reduction3Points}, the infimum is achieved on a three-points prior, whence:
\beq
\inf_{\pi_{X_0} \in \cF_{\eps}}  S_* (\ugamma_t,\theta_t;\pi_{X_0})  = S(\ugamma_t, \theta_t) \equiv 
\inf\Big\{S_* (\gamma,\theta;\pi_{p,a_1,a_2}) :\;\; pa_1^2+(1-p)a_2^2 =1, p\in[0,1]\Big\}\, .
\label{eq:three_pt_equiv}
\eeq
Recalling from Eq. \eqref{eq:ugam_rec} that  $\ugamma_{t+1}= \lambda^2 S(\ugamma_t, \theta_t)$, Eqs. \eqref{eq:gam_t1_lb} and \eqref{eq:three_pt_equiv} imply
\beq
\gamma_{t+1} \geq \ugamma_{t+1},
\eeq
as required.  

Next we prove the equality in Eq. \eqref{eq:sparse_spike_lb}. For this, we define the AMP iteration 
\begin{align}
{\bx'}^{\,t+1} &= \bA\, \hbx'^{\, t}-\sb_t \hbx'^{\, t-1}\, ,\;\;\;\; \hbx'^{\, t}  = \eta(\bx'^{\, t}; \, \theta_t \sigma_t)\, ,\label{eq:SparseAlgMod}\\
&\;\;\;\;\;\sb'_t = \frac{1}{n}\|\hbx'^{\, t}\|_0\, , \nonumber
\end{align}
initialized with $\bx'^{\, 0} = \sqrt{n} \bphi_1$. The difference between  $\hbx'^{\, t}$ and $\hbx^t$ is that the former is produced using the deterministic threshold $\theta_t \sigma_t$ (whose computation would require knowledge of the distribution  $\nu_{X_0}$), and the latter using the threshold $ \theta_t \hat{\sigma}_t$ which is computed from data.  The  result of Theorem \ref{thm:Rank1} can be directly applied to the iterates $\{ \bx'^{\, t} \}_{t\geq 0}$, but not to to the iterates $\{ \bx^{\, t} \}_{t\geq 0}$ (as the data-derived threshold makes the soft-thresholding denoiser non-separable).
We will show below that for $t \geq 0$, almost surely, 
\beq
\lim_{n \to \infty} \frac{1}{n} \| \bx^t - \bx'^{\, t} \|_2^2=0 \, .
\label{eq:xxpr_eq}
\eeq
Equation \eqref{eq:xxpr_eq}  implies that, almost surely,
\beq 
\lim_{n \to \infty}  \frac{1}{n} \| \hbx^t -\hbx'^{\, t} \|^2_2 =0 \, .
\label{eq:xxpr_eq1}
\eeq
Indeed,
\begin{align}
 \frac{1}{n} \| \hbx^t -\hbx'^{\, t} \|^2_2 & =  \frac{1}{n} \| \eta(\bx^t; \theta_t \hat{\sigma}_t) -  \eta(\bx'^{\, t}; \theta_t {\sigma}_t ) \|^2_2 
\nonumber \\
& \leq \frac{2}{n}   \| \eta(\bx^t; \theta_t \hat{\sigma}_t)  -  \eta(\bx^t; \theta_t \sigma_t) \|^2 + \frac{2}{n}    \|  \eta(\bx^t; \theta_t \sigma_t) -  \eta(\bx'^{\, t}; \theta_t {\sigma}_t ) \|^2_2 \nonumber  \\
& \leq 2 \, \theta_t^2 ( \hat{\sigma}_t - \sigma_t )^2 + \frac{2}{n} \| \bx^t -\bx'^{\, t} \|^2_2  \, \to \,  0 \quad \text{a.s.}
\label{eq:xxpr_eq2}
\end{align}
where the last inequality holds because $\eta(x; \theta)$ is Lipschitz in each argument, with $\abs{\partial_x \eta(x; \theta)} = \abs{\partial_\theta \eta(x; \theta)} = \ind( | x | > \theta)$.  

Eqs. \eqref{eq:xxpr_eq} and \eqref{eq:xxpr_eq1} imply that, almost surely
\begin{align}
\lim_{n\to\infty} \frac{|\<\hbx^t,\bx_0\>|}{\|\hbx^t\|_2\|\bx_0\|_2} & = \lim_{n\to\infty} \frac{|\<\hbx'^{\,t},\bx_0\>|}{\|\hbx'^{\, t} \|_2\|\bx_0\|_2}  \\
& = \frac{\mu_{t+1}}{\lambda \sigma_{t+1}}, \label{eq:SE_result_overlap}
\end{align}
as required. Here Eq. \eqref{eq:SE_result_overlap} is obtained by applying Theorem \ref{thm:Rank1} with the following choices for the test function $\psi: \reals \times \reals  \to \reals$. First take $\psi(u,v) = u \,\eta(v; \, \theta_t \sigma_t)$ to obtain
\beq 
\lim_{n \to \infty} \frac{1}{n} \< \bx_0, \hbx'^{\, t} \> = \E\{ X_0 \eta(\mu_t X_0  + \sigma_t G) \} = \frac{\mu_{t+1}}{\lambda} \quad \text{ a.s.} 
\eeq
Next take $\psi(u,v) = \eta(v; \, \theta_t \sigma_t)^2$ to obtain
\beq
\lim_{n \to \infty} \frac{1}{n} \| \hbx'^{\, t} \|_2^2 = \E\{ \eta(\mu_t X_0  + \sigma_t G)^2 \} = \sigma_{t+1}^2 \quad \text{ a.s.}
\label{eq:sig_t_conv}
\eeq
It is easy to check that both these choices for $\psi$ satisfy the condition required by Theorem \ref{thm:Rank1}.

Finally, it remains to prove Eq. \eqref{eq:xxpr_eq}.  For $t=0$, we have $\bx^0= \bx'^{\, 0} =\sqrt{n} \bphi_1$.  Towards induction, assume Eq. \eqref{eq:xxpr_eq} holds for $0 \leq s \leq t$. From Eqs. \eqref{eq:SparseAlg} and  \eqref{eq:SparseAlgMod}, we have
\begin{align}
\|\bx^{t+1} - \bx'^{\, t+1}\|_2 & \le \|\bA\|_{\op} \, \| \hbx^t -\hbx'^{\, t} \|_2+ |\sb_t-\sb'_t|\, \| \hbx'^{\, t-1} \|_2+
 |\sb'_t|\, \big\| \hbx^{t-1} -\hbx'^{\, t-1} \big\|_2\\
& \equiv \sqrt{n}\big(R_1(t;n)+R_2(t;n)+R_3(t;n)\big)\,  .
\end{align}
Consider the first term. Since  $\lim\sup_{n\to\infty}\|\bA\|_{\op} \le \lambda+\lim_{n\to\infty}\|\bW\|_{\op} =\lambda+2$ almost surely, from the induction hypothesis and Eq. \eqref{eq:xxpr_eq1}  it follows that $R_1(t;n)^2 \to 0$ almost surely. Similarly, by the induction hypothesis (and noting that $|\sb_t'| <1$), we also have $R_3(t;n)^2 \to 0$. 
For $R_2(t;n)$, the induction hypothesis  Eq. \eqref{eq:xxpr_eq} and Theorem \ref{thm:Rank1} together imply that 
the following holds for any test function $\psi: \reals \times \reals \to \reals$ satisfying the conditions of the theorem. For $0 \leq s \leq t$, 
\beq 
\lim_{n \to \infty} \frac{1}{n} \sum_{i=1}^n \psi (x_{0,i},x^{s}_i) = \lim_{n \to \infty} \frac{1}{n} \sum_{i=1}^n \psi (x_{0,i},x'^{\, s}_i) = \E \left\{ \psi( X_0, \mu_s X_0  +\sigma_s G) \right\} \  \quad \text{a.s.}
\label{eq:SE_PLfns}
\eeq
As in Eq. \eqref{eq:sig_t_conv}, we have $ \frac{1}{n}\| \hbx'^{\, t-1} \|_2^2  \to \sigma_{t-1}^2$.  Eq. \eqref{eq:SE_PLfns} implies that the empirical distributions of $\bx^{t}$ and $\bx'^{\, t}$ both converge weakly to the distribution of $(\mu_t X_0 + \sigma_t G)$. Furthermore since $\eta(x; \theta)$ is Lipschitz, denoting by $\partial \eta$ the derivative with respect to the first argument, \cite[Lemma 5]{BM-MPCS-2011} implies that 
\begin{align}
 \sb'_t =  \frac{1}{n} \sum_{i=1}^n \partial \eta(x'^{\, t}_i ; \theta_t {\sigma_t}) \  \stackrel{\text{a.s.}}{\to} \  \E\{ \partial \eta(\mu_t X_0 + \sigma_t G ;  \, \theta_t {\sigma_t}) \} = \P( |\mu_t X_0 + \sigma_t G | > \theta_t {\sigma_t} ).
\end{align}
Similarly, $\sb_t$ also converges to $\P( |\mu_t X_0 + \sigma_t G | > \theta_t {\sigma_t} )$. This shows that $R_2(t;n) \to 0$, and completes the proof of the proposition.

\section{Proof of Theorem \ref{thm:Special}}
\label{app:ProofRank1}

We begin by proving the following lemma, which implies Remark \ref{rem:LipExpectation}. (This stronger version will be used in Appendix 
\ref{app:CoroConfInt}).
\begin{lemma}\label{lemma:LipCE}
The function $F:\reals\times \reals_{>0}\to\reals$ of Eq.~(\ref{eq:Fdef}) is $C^{\infty}(\reals\times \reals_{>0})$.
Further assume either of the following conditions: $(i)$ $\supp(\nu_{X_0})\in [-M,M]$ for some constant $M$; $(ii)$ $\nu_{X_0}$ has log-concave density.  
Then, for any $\eps>0$ there  exist $C(\eps)<\infty$ such that, for any $y\in\reals$, $\gamma\in [\eps,\infty)$,
we have
\begin{align}
\big|\partial_y F(y;\gamma)\big|\le C(\eps),\;\;\;\; \big|\partial_{\gamma}F(y;\gamma)\big|\le C(\eps)\,\big(1+|y|\big)\, .
\end{align}
\end{lemma}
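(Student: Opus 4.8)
The claim has two parts: smoothness of $F$ on $\reals\times\reals_{>0}$, and the quantitative derivative bounds under condition $(i)$ or $(ii)$. The plan is to write $F(y;\gamma)$ explicitly as a ratio of Gaussian-type integrals against $\nu_{X_0}$ and then differentiate under the integral sign. Recall that $Y=\gamma X_0+\sqrt\gamma\,G$, so the conditional density of $Y$ given $X_0=x$ is proportional to $\exp\{-(y-\gamma x)^2/(2\gamma)\}$. Writing $w_\gamma(y,x)\equiv\exp\{xy-\tfrac{\gamma}{2}x^2\}$ (after cancelling the $y$-only factor $e^{-y^2/(2\gamma)}$), we have
\begin{align}
F(y;\gamma)=\frac{\int x\, w_\gamma(y,x)\,\nu_{X_0}(\de x)}{\int w_\gamma(y,x)\,\nu_{X_0}(\de x)}=\frac{N(y,\gamma)}{D(y,\gamma)}\,.
\end{align}
Since $D(y,\gamma)>0$ for every $(y,\gamma)$ and $w_\gamma(y,x)$ is jointly smooth in $(y,\gamma)$ with all partial derivatives dominated (locally uniformly in $(y,\gamma)$) by integrable functions of $x$ — each derivative brings down a polynomial in $x$ times $w_\gamma$, and $\int (1+|x|)^k w_\gamma(y,x)\nu_{X_0}(\de x)<\infty$ because the Gaussian tail $e^{-\gamma x^2/2}$ beats any polynomial once $\gamma>0$ — differentiation under the integral sign is justified by dominated convergence. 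Hence $N,D\in C^\infty(\reals\times\reals_{>0})$ and $F=N/D\in C^\infty(\reals\times\reals_{>0})$. This handles the first assertion.

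For the derivative bounds, the key observation is the exponential-family structure: $\partial_y$ and $\partial_\gamma$ of $F$ are expressible in terms of the posterior variance (and higher cumulants) of $X_0$ given $Y=y$ at signal-to-noise $\gamma$. Concretely, let $\E_{y,\gamma}$ and $\Var_{y,\gamma}$ denote expectation and variance under the tilted measure $\nu_{X_0}(\de x)\,w_\gamma(y,x)/D(y,\gamma)$. A direct computation gives
\begin{align}
\partial_y F(y;\gamma)&=\Var_{y,\gamma}(X_0)\,,\\
\partial_\gamma F(y;\gamma)&=-\tfrac12\,\E_{y,\gamma}\big[(X_0-F)^2\,X_0\big]=-\tfrac12\,\big(\kappa_3 + F\cdot\Var_{y,\gamma}(X_0)\big)\,,
\end{align}
where $\kappa_3$ is the third central moment under the tilted measure; in particular $|\partial_\gamma F|\le \tfrac12\big(\E_{y,\gamma}[(X_0-F)^2|X_0|]\big)$. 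So everything reduces to bounding low-order central moments of the tilted posterior of $X_0$, uniformly over $y\in\reals$ and $\gamma\ge\eps$.

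Under condition $(i)$, $|X_0|\le M$ almost surely, so the tilted measure is supported in $[-M,M]$ and $\Var_{y,\gamma}(X_0)\le M^2$, $\E_{y,\gamma}[(X_0-F)^2|X_0|]\le 4M^3$; thus $|\partial_y F|\le M^2$ and $|\partial_\gamma F|\le 2M^3$, giving the bounds with $C(\eps)$ independent of $\eps$ (in fact the $(1+|y|)$ factor is not even needed). Under condition $(ii)$, $\nu_{X_0}$ has log-concave density $e^{-V(x)}$; then the tilted density is proportional to $\exp\{-V(x)+xy-\tfrac\gamma2 x^2\}$, which is strongly log-concave with curvature at least $\gamma\ge\eps$ (log-concavity of $e^{-V}$ only helps), so by the Brascamp–Lieb / Bakry–Émery variance inequality $\Var_{y,\gamma}(X_0)\le 1/\gamma\le 1/\eps$, and similarly the third absolute central moment of a $\gamma$-strongly-log-concave measure is bounded by $C/\gamma^{3/2}\le C\eps^{-3/2}$. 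For the $\partial_\gamma$ bound one also needs to control $|F(y;\gamma)|$: since $F(y;\gamma)=\E_{y,\gamma}[X_0]$ and the tilted mean of a $\gamma$-strongly-log-concave density with linear tilt $xy$ satisfies $|F(y;\gamma)-x_0^*(y)|\le C/\sqrt\gamma$ where $x_0^*(y)$ is the mode, and the mode is Lipschitz in $y$ with constant $\le 1/\gamma$, one gets $|F(y;\gamma)|\le C(\eps)(1+|y|)$. Combining, $|\partial_\gamma F(y;\gamma)|\le C(\eps)(1+|y|)$ as claimed.

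\textbf{Main obstacle.} The smoothness part and condition $(i)$ are routine. The real work is condition $(ii)$: one must (a) correctly identify that the exponential tilt $w_\gamma(y,\cdot)$ preserves and improves log-concavity with quantitative curvature $\ge\gamma$, (b) invoke the Brascamp–Lieb variance bound (and its third-moment analogue) with the right constants, and (c) get the linear-in-$y$ growth of $F$ itself, which requires a mode-displacement estimate rather than just a variance estimate. Step (c) is the one most likely to need care: one wants a clean bound $|\E_{y,\gamma}[X_0]|\le C(\eps)(1+|y|)$ that holds for all $y$, and the cleanest route is to compare $\E_{y,\gamma}[X_0]$ with $\E_{0,\gamma}[X_0]$ via $\partial_y F=\Var_{y,\gamma}(X_0)\le 1/\gamma$, which integrates to $|F(y;\gamma)-F(0;\gamma)|\le |y|/\gamma$, and then bound the constant $F(0;\gamma)$ separately (it is bounded uniformly in $\gamma\ge\eps$ since $\nu_{X_0}$ has finite second moment). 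This actually sidesteps the mode argument entirely, so I would use that instead.
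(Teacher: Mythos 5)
Your approach is essentially the paper's: write $F$ as a ratio of Gaussian-tilted integrals, differentiate under the integral sign (dominated by the $e^{-\gamma x^2/2}$ tail), identify $\partial_y F$ with the tilted variance, express $\partial_\gamma F$ via tilted third moments, and control those moments under condition~$(i)$ by bounded support and under condition~$(ii)$ by strong log-concavity of the tilted measure. Your preferred route for controlling $|F(y;\gamma)|$ --- integrating $\partial_y F=\Var_{y,\gamma}(X_0)\le 1/\gamma$ and bounding $F(0;\gamma)$ separately --- is exactly what the paper does too.

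There is, however, an algebra error in your formula for $\partial_\gamma F$. Differentiating $F=N/D$ with $D(y,\gamma)=\int e^{yx-\gamma x^2/2}\,\nu_{X_0}(\de x)$ gives
\begin{align}
\partial_\gamma F(y;\gamma) = -\tfrac{1}{2}\Big(\Ev_{y,\gamma}(X_0^3)-\Ev_{y,\gamma}(X_0)\,\Ev_{y,\gamma}(X_0^2)\Big) = -\tfrac{1}{2}\,\Ev_{y,\gamma}\!\big[(X_0-F)\,X_0^2\big]\, ,
\end{align}
not $-\tfrac{1}{2}\Ev_{y,\gamma}[(X_0-F)^2 X_0]$. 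Expanding $X_0^2=(X_0-F)^2+2F(X_0-F)+F^2$ yields $\partial_\gamma F = -\tfrac{1}{2}\big(\Ev_{y,\gamma}[(X_0-F)^3]+2F\,\Var_{y,\gamma}(X_0)\big)$, with a factor $2F$, not $F$. (Your identity $\Ev[(X_0-F)^2X_0]=\Ev[(X_0-F)^3]+F\Var(X_0)$ is fine; it is just the wrong quantity.) This is a local slip, not a conceptual one: the corrected decomposition has the same two-term structure, and the same bounds on the tilted variance and third central moment carry the argument through.

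On the comparison with the paper's proof: rather than decomposing the covariance into central moments, the paper applies Cauchy--Schwarz directly, $|\partial_\gamma F|\le\tfrac{1}{2}\sqrt{\Var_{y,\gamma}(X_0)\,\Var_{y,\gamma}(X_0^2)}$, and then controls $\Var_{y,\gamma}(X_0^2)$ via a concentration (log-Sobolev) estimate for the $\eps$-strongly-log-concave tilted measure, which is where the $(1+|y|)$ factor enters through $\Ev_{y,\gamma}(X_0)^2$. Your corrected cumulant decomposition is a clean alternative --- the third absolute central moment of a $\gamma$-strongly-log-concave law is $O(\gamma^{-3/2})$ directly from the sub-Gaussian tail, so one avoids the slightly delicate concentration estimate for $X_0^2$ --- and both routes pick up the $(1+|y|)$ growth by controlling $|F(y;\gamma)|$ in the identical way.
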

\begin{proof}
Note that (throughout this proof, we write $\mu = \nu_{X_0}$ for the law of $X_0$)
\begin{align}
F(y;\gamma) =  \frac{\int x\, e^{yx-\frac{1}{2}\gamma x^2} \mu(\de x)}{\int  e^{yx-\frac{1}{2}\gamma x^2} \mu(\de x)} \, . 
\end{align}
Hence $F\in C^{\infty}(\reals\times \reals_{>0})$ by an application of dominated convergence (alternatively 
notice that $F$ an be obtained by differentiating a log-moment generating function).

In order to bound the derivatives, we write $\mu_{y,\gamma}$ for the probability measure on $\reals$ with Radon-Nikodym derivative
\begin{align}
\frac{\de \mu_{y,\gamma}}{\de\mu}(x) =  \frac{ e^{yx-\frac{1}{2}\gamma x^2}}{\int  e^{yx-\frac{1}{2}\gamma x^2} \mu(\de x)}\, , 
\end{align}
and we write $\Ev_{y,\gamma}$ and $\Var_{y,\gamma}$ for expectation and variance with respect to this measure.
We then have
\begin{align}
\partial_y F(y;\gamma) & = \Var_{y,\gamma}(X)\, ,\\
\partial_{\gamma} F(y;\gamma) & = -\frac{1}{2}\Ev_{y,\gamma}(X^3)- \frac{1}{2}\Ev_{y,\gamma}(X^2)\Ev_{y,\gamma}(X)\, ,\\
\big|\partial_{\gamma} F(y;\gamma) \big|& \le \frac{1}{2}\sqrt{\Var_{y,\gamma}(X) \Var_{y,\gamma}(X^2)}\, ,\label{eq:PartialGamma}
\end{align}
where the last inequality follows by Cauchy-Schwarz.
Under assumption $(i)$, we have $|\partial_y F(y;\gamma)|\le M^2$, $|\partial_\gamma F(y;\gamma)|\le M^3/2$. 

Under assumption $(ii)$, note that $\mu_{y,\gamma}$ is $\eps$-strongly log-concave (i.e.   $\mu_{y,\gamma}(\de x) = \exp\{-h_{y,\gamma}(x)\}\, \de x$,
with $h_{y,\gamma}(x)$ $\eps$-strongly convex).  As a consequence, it satisfies a log-Sobolev inequality with constant $1/\eps$ \cite[Theorem 5.2]{Ledoux}, whence
$\mu_{y,\gamma}(|X-\Ev_{y,\gamma}(X)|\ge t) \le 2\, e^{-\eps t^2/2}$, and therefore  $\Var_{y,\gamma}(X)\le C_0/\eps$, for a numerical constant $C_0$. 
The same inequality implies 
\begin{align}
\mu_{y,\gamma}(|X^2- \Ev_{y,\gamma}(X)^2|\ge t) \le C_1\, \exp\left\{-\frac{\eps}{C_1}\Big(\frac{t^2}{\Ev_{y,\gamma}(X)^2}\wedge t\Big)\right\} \, .
\end{align}
Using $|\Ev_{y,\gamma}(X) |= |F(y;\gamma)|\le |F(0;\gamma)|+\|\partial_yF\|_{\infty}|y| \le C_0'(1+(|y|/\eps))$ (which follows from the above bound on
$\partial_yF(y;\gamma) =  \Var_{y,\gamma}(X)$),  immediately implies,
for $\eps\le 1$,
\begin{align}
\Var_{y,\gamma}(X^2) &\le \frac{C_2}{\eps} \Big(\frac{1}{\eps}+ \Ev_{y,\gamma}(X)^2\Big)\\
&\le \frac{C_3}{\eps}\left(\frac{1}{\eps}+\frac{y^2}{\eps^2}\right)\, .
\end{align}
Substituting in Eq~(\ref{eq:PartialGamma}), we obtain the claimed bound on $\big|\partial_{\gamma} F(y;\gamma) \big|$.
\end{proof}

We use Theorem \ref{thm:Rank1} which applies to the rank one matrix in  \myeqref{eq:SpikedDefSpecial}, with the setting
$\bv = \bx_0/\sqrt{n}$.  We conclude that the state evolution result in  \myeqref{eq:rank1_SE} applies with  $\mu_t$, $\sigma_t$  
determined via Eqs.~(\ref{eq:MutRec}), (\ref{eq:SigtRec}), and initial condition $\mu_0=(\lambda^{2} -1)$, $\sigma_0^2 =( \lambda^{2} -1)$ (because the initial
condition in Theorem \ref{thm:Special} is scaled by a factor $\lambda(\lambda^2-1)^{1/2}$ with respect to the statement of Theorem \ref{thm:Rank1}).

Further note that -- by Cauchy-Schwarz inequality -- the signal-to-noise ratio $\mu_{t+1}/\sigma_{t+1}$ is maximized by setting
$f(y;t) = f_{\sBayes}(y;t)$  (or any positive multiple of this function) where
\begin{align}
f_{\sBayes}(y;t) =\lambda\E\{X_0 \mid \mu_{t}X_0+\sigma_tG =y\}\, , \label{eq:fBayesProof}
\end{align}
whence  Eqs.~(\ref{eq:MutRec}), (\ref{eq:SigtRec}) yield
\begin{align}
\mu_{t+1} &= \lambda^2\E\big\{\E\{X_0 \mid  \mu_{t}X_0+\sigma_tG \}^2\big\} = \lambda^2\Big\{1-\mmse\big(\mu_t^2/\sigma_t^2\big)\Big\}\, ,\\
\sigma^2_{t+1} &= \lambda^2\E\big\{\E\{X_0 \mid \mu_{t}X_0+\sigma_tG \}^2\big\} = \lambda^2\Big\{1-\mmse\big(\mu_t^2/\sigma_t^2\big)\Big\}\, .
\end{align}
In particular, we have $\mu_t=\sigma_t^2$ for all $t\ge 1$, and we selected the initial condition to ensure that this holds for 
$t=0$ as well. Setting $\gamma_t = \mu_t^2/\sigma_t^2$, we obtain that $\gamma_t$ satisfies 
the state evolution equation (\ref{eq:SEspecial_2}), with initialization (\ref{eq:SEspecial_1}). Further, the identity $\mu_t=\sigma_t^2$ implies
$\mu_t = \gamma_t$, $\sigma^2_t = \gamma_t$ whence the choice (\ref{eq:fBayesProof}) concides with the one of  Eq.~(\ref{eq:OptFspecial}).
Finally Eq.~(\ref{eq:special_AMP_conv}) follows from Eq.~(\ref{eq:rank1_SE}) using the same identities.

Applying  (\ref{eq:rank1_SE}) to suitable test functions $\psi$, we obtain
\begin{align}
\lim_{n\to\infty}\frac{|\<\hbx^t(\bA),\bx_0\>|}{\|\hbx^t(\bA)\|_2\|\bx_0\|_2} &= \frac{\sqrt{\gamma_t}}{\lambda}\, ,\\
\lim_{n\to\infty}\frac{1}{n}\min_{s\in \{+1,-1\}} \| \hbx^t(\bA)-\bx_0\|_2^2&= 1- \frac{\gamma_{t}(\lambda)}{\lambda^2}\, .
\end{align}
To complete the proof, we need to prove that $\lim_{t\to\infty}\gamma_t =  \gamma_{\sALG}(\lambda)$.
To this end, let $M_{\lambda}(\gamma) \equiv \lambda^2\{1-\mmse(\gamma)\}$.
Since the minimum mean square error is bounded above by the minimum error of any linear estimator, we have
$\mmse(\gamma)\le (1+\gamma)^{-1}$ (where we used $\E(X_0)=0$, $\E(X_0^2) = 1$). Hence
\begin{align}
M_{\lambda}(\gamma) \ge \frac{\lambda^2\gamma}{1+\gamma}\, .
\end{align}
Also, $\gamma\mapsto\mmse(\gamma)$ is non-increasing.
Hence $\gamma\mapsto M_{\lambda}(\gamma)$ is a non-decreasing function with $M_{\lambda}(\gamma)>\gamma$ for $\gamma\in (0,\gamma_{\sALG})$, $\gamma_0\le \gamma_{\sALG}$,
which immediately implies the claim.

\section{Proof of Corollary \ref{coro:ConfInt}}
\label{app:CoroConfInt}

For the sake of concreteness, we will assume the construction of confidence intervals via Bayes AMP, cf. Eq.~\eqref{eq:ConfInterval}.
The proof is unchanged for the more general construction in \eqref{eq:ConfInterval_2}.

First we note that substituting the estimate of $\lambda$ given by $\hlambda(\bA)$ does not change the behavior of
$\obx^t$, $\hgamma_t$.
\begin{lemma}\label{lemma:HatX}
Under the assumptions of Corollary \ref{coro:ConfInt}, the following limits hold almost surely, for any fixed $t\ge 0$:
\begin{align}
\lim_{n\to\infty}\hlambda(\bA) &=\lambda\, ,\label{eq:HatLambdaConvergence}\\
\lim_{n\to\infty}\hgamma_t &=\gamma_t\, ,\label{eq:HatGammaConvergence}\\
\lim_{n\to\infty} \frac{1}{n}\|\obx^t-\bx^t\|^2_2& = 0 \, .  \label{eq:HatXConvergence}
\end{align}
\end{lemma}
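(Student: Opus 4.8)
The plan is to prove the three limits in order, deducing \eqref{eq:HatLambdaConvergence} and \eqref{eq:HatGammaConvergence} from standard facts and then establishing \eqref{eq:HatXConvergence} by induction on $t$, comparing the two runs of Bayes AMP. Throughout I would work on the almost-sure event on which $\|\bW\|_{\op}\to2$, $\|\bv\|_2\to1$, the conclusions of Theorem~\ref{thm:Special} hold for the unperturbed iterates $\bx^t$ at every fixed $t$, and (once \eqref{eq:HatLambdaConvergence}--\eqref{eq:HatGammaConvergence} are in hand) $\hlambda(\bA)\to\lambda$ and $\hgamma_t\to\gamma_t$ at every fixed $t$. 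For \eqref{eq:HatLambdaConvergence}: since $\lambda>1$, classical spiked random matrix theory \cite{baik2005phase,benaych2011eigenvalues} gives $\lambda_{\max}(\bA)\to\lambda+\lambda^{-1}>2$ almost surely; as $r\mapsto\tfrac12(r+\sqrt{r^2-4})$ is continuous on $(2,\infty)$ and inverts $x\mapsto x+x^{-1}$ on $(1,\infty)$, composing with $\lambda_{\max}(\bA)$ yields $\hlambda(\bA)\to\lambda$. For \eqref{eq:HatGammaConvergence}: I would first note $\gamma_t>0$ for all $t$ (from $\gamma_0=\lambda^2-1>0$ and the linear-estimator bound $\mmse(\gamma)\le(1+\gamma)^{-1}$ already used in the proof of Theorem~\ref{thm:Special}, which gives $\gamma_{t+1}\ge\lambda^2\gamma_t/(1+\gamma_t)$), and likewise $\hgamma_t>0$ once $\hlambda>1$; then, since $\mmse$ is continuous on $(0,\infty)$ (it is real-analytic \cite{Guo05mutualinformation}; equivalently, continuity follows from the smoothness of $F$ in Lemma~\ref{lemma:LipCE}), the map $(\ell,\gamma)\mapsto\ell^2\{1-\mmse(\gamma)\}$ is jointly continuous on $(1,\infty)\times(0,\infty)$, and a one-line induction on $t$ (base case $\hgamma_0=\hlambda^2-1\to\gamma_0$) propagates $\hgamma_t\to\gamma_t$.

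The bulk of the work is \eqref{eq:HatXConvergence}, which I would prove by induction on $t$, carrying along the auxiliary fact $\tfrac1n\|\bx^t\|_2^2\to\gamma_t^2+\gamma_t$ (Theorem~\ref{thm:Special} with $\psi(u,v)=v^2$); combined with the inductive hypothesis this also yields $\tfrac1n\|\obx^t\|_2^2\to\gamma_t^2+\gamma_t$. Write $\hf_t:=\hlambda F(\cdot\,;\hgamma_t)$ for the perturbed denoiser and $\hsb_t:=n^{-1}\sum_i\hf_t'(\ox_i^t)$ for its Onsager coefficient, so that $\obx^{t+1}=\bA\hf_t(\obx^t)-\hsb_t\hf_{t-1}(\obx^{t-1})$, versus $\bx^{t+1}=\bA f_t(\bx^t)-\sb_t f_{t-1}(\bx^{t-1})$ with $f_t=\lambda F(\cdot\,;\gamma_t)$ Lipschitz (Lemma~\ref{lemma:LipCE}, since $\gamma_t>0$). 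The base case is $\tfrac1n\|\obx^0-\bx^0\|_2^2=\big(\sqrt{\hlambda^2(\hlambda^2-1)}-\sqrt{\lambda^2(\lambda^2-1)}\big)^2\to0$ by \eqref{eq:HatLambdaConvergence}. For the inductive step, using $\obx^{t+1}-\bx^{t+1}=\bA[\hf_t(\obx^t)-f_t(\bx^t)]-[\hsb_t\hf_{t-1}(\obx^{t-1})-\sb_t f_{t-1}(\bx^{t-1})]$ and $\limsup_n\|\bA\|_{\op}\le\lambda+\lim_n\|\bW\|_{\op}=\lambda+2$, it suffices to show $\tfrac1n\|\hf_s(\obx^s)-f_s(\bx^s)\|_2^2\to0$ for $s\in\{t-1,t\}$ (the $s=t-1$ term is absent when $t=0$) and $|\hsb_t-\sb_t|\to0$.

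For the norm terms I would split $\hf_s(\obx^s)-f_s(\bx^s)=[\hf_s(\obx^s)-f_s(\obx^s)]+[f_s(\obx^s)-f_s(\bx^s)]$: the Lipschitz term is $\le L_s^2\,\tfrac1n\|\obx^s-\bx^s\|_2^2\to0$ by the inductive hypothesis, while for the denoiser-perturbation term Lemma~\ref{lemma:LipCE} (the bounds on $\partial_yF$ and $\partial_\gamma F$, valid for $\gamma$ in a fixed neighbourhood of $\gamma_s$ bounded away from $0$, together with $|F(y;\gamma)|\le C(1+|y|)$ there) gives a pointwise bound $|\hf_s(y)-f_s(y)|\le\kappa_n(1+|y|)$ with $\kappa_n=C(|\hlambda-\lambda|+|\hgamma_s-\gamma_s|)\to0$, whence $\tfrac1n\|\hf_s(\obx^s)-f_s(\obx^s)\|_2^2\le2\kappa_n^2\big(1+\tfrac1n\|\obx^s\|_2^2\big)\to0$. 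For the Onsager comparison $|\hsb_t-\sb_t|\to0$ — the technical crux — I would use that, by Theorem~\ref{thm:Special}, the inductive hypothesis, and the second-moment convergence above, the empirical laws of both $\bx^t$ and $\obx^t$ converge in $W_2$ to the law of $\gamma_tX_0+\sqrt{\gamma_t}G$; since $y\mapsto\partial_yF(y;\gamma_t)$ is continuous and bounded (Lemma~\ref{lemma:LipCE}), $n^{-1}\sum_if_t'(x_i^t)$ and $n^{-1}\sum_if_t'(\ox_i^t)$ then share the limit $\lambda\,\E\{\partial_yF(\gamma_tX_0+\sqrt{\gamma_t}G;\gamma_t)\}$, and it remains to bound $n^{-1}\sum_i[\hlambda\,\partial_yF(\ox_i^t;\hgamma_t)-\lambda\,\partial_yF(\ox_i^t;\gamma_t)]$ — the $|\hlambda-\lambda|$ contribution by boundedness of $\partial_yF$, and the $|\hgamma_t-\gamma_t|$ contribution by uniform continuity of $(y,\gamma)\mapsto\partial_yF(y;\gamma)$ on compacts together with uniform square-integrability of the empirical law of $\obx^t$ (truncate $|\ox_i^t|\le R$, send $\hgamma_t\to\gamma_t$, then let $R\to\infty$).

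Assembling these estimates closes the induction. I expect the main obstacle to be exactly this inductive step: simultaneously tracking the change of denoiser $(\lambda,\gamma_t)\mapsto(\hlambda,\hgamma_t)$ and the perturbed spectral initialization through the AMP recursion while keeping $\tfrac1n\|\obx^t\|_2^2$ bounded, and in particular the Onsager-coefficient comparison, where both the regularity of $F$ from Lemma~\ref{lemma:LipCE} and the state-evolution limits of Theorem~\ref{thm:Special} are needed. (In the prior-free construction \eqref{eq:ConfInterval_2} the iteration carries no $\lambda$, so $\obx^t=\bx^t$ and only the consistency of the data-based estimators $\hmu_t,\hsigma_t$ is at issue, which is immediate from Theorem~\ref{thm:Rank1}.)
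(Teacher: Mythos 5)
Your proof follows essentially the same route as the paper's: the same application of $\lambda_{\max}(\bA)\to\lambda+\lambda^{-1}$ and continuity of $g$, the same induction on $t$ for $\hgamma_t$, and for \eqref{eq:HatXConvergence} the same decomposition of $\obx^{t+1}-\bx^{t+1}$ into a term controlled by $\|\bA\|_{\op}$ and Lemma~\ref{lemma:LipCE}, an Onsager-coefficient discrepancy, and a carried-forward discrepancy, closed by the same truncate-and-limit argument for $|\hsb_t-\sb_t|$. The one minor divergence is the continuity of $\mmse$: you cite real-analyticity from \cite{Guo05mutualinformation} (or smoothness of $F$), whereas the paper gives a self-contained argument via a Brownian-motion coupling $Y_t=X_0+G_t$ and the Lipschitz property of $F(\,\cdot\,;\gamma)$; both are valid, and the paper's version is included presumably to avoid relying on regularity results not otherwise used in the development.
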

\begin{proof}
Recall that for $\lambda>1$, we have $\lambda_{\max}(\bA) \to (\lambda+\lambda^{-1})$ almost surely \cite{benaych2012singular}. Since the function
$g(x) = (x+\sqrt{x^2-4})/2$ is continuous for $x>2$, with $g(\lambda+\lambda^{-1}) = \lambda$, we also
have  $\hlambda(\bA) = g(\lambda_{\max}(\bA))\to \lambda$.
 
In order to prove Eq.~(\ref{eq:HatGammaConvergence}), note that $\gamma\mapsto \mmse(\gamma)$ is continuous for $\gamma\in (0,\infty)$.
Indeed, it is non-increasing by the optimality of $\mmse$. Further define $Y_t = X_0+G_t$ where $(G_t)_{t\ge 0}$ is a standard Brownian motion independent of
$X_0$, with $G_0=0$. Then $\mmse(\gamma) = \E\{[X_0-\E\{X_0|Y_{1/\gamma}\}]^2\}$. Let $\gamma_1>\gamma_2\ge0$ and $t_i\equiv 1/\gamma_i$. 
By optimality of conditional expectation, we have, for any measurable function $h:\reals\to\reals$:
\begin{align}
\mmse(\gamma_1)\le\mmse(\gamma_2)& \le \E\{[X_0-h(Y_{t_2})]^2\}=\E\{[X_0-\E(X_0|Y_{t_1})+\E(X_0|Y_{t_1})-h(Y_{t_2})]^2\} \\
& = \E\{[X_0-\E(X_0|Y_{t_1})]^2\}+\E\{[\E(X_0|Y_{t_1})-h(Y_{t_2})]^2\}\\
& = \mmse(\gamma_1)+\E\{[\E(X_0|Y_{t_1})-h(Y_{t_2})]^2\}\, .
\end{align}
Note that $\E(X_0|Y_{1/\gamma}=y) = F(\gamma y;\gamma)$. By setting $h(y) = F(\gamma_1 y;\gamma_1)$, and denoting by $L(\gamma)$
the Lipschitz constant of $F(\,\cdot\, ;\gamma)$, we obtain
\begin{align}
\big|\mmse(\gamma_1)-\mmse(\gamma_2)\big| &\le \E\big\{[F(\gamma_1Y_{1/\gamma_1};\gamma_1)-F(\gamma_1 Y_{1/\gamma_2};\gamma_1)]^2\big\}\\
&\le \gamma_1^2 L(\gamma_1) \E\big\{[Y_{1/\gamma_1}-Y_{1/\gamma_2}]^2\big\}\\
&\le \gamma_1^2 L(\gamma_1) \Big|\frac{1}{\gamma_1}-\frac{1}{\gamma_2}\Big|\, .
\end{align}

We then  proceed by induction over $t$. Using  Eq.~(\ref{eq:SEspecial_2}):
\begin{align}
\lim_{n\to\infty}\hgamma_{t+1} &= \lim_{n\to\infty}\hlambda(\bA)\Big\{1- \lim_{n\to\infty}\mmse(\hgamma_t)\Big\}\\
& = \lambda\big(1-\mmse(\gamma_t)\big) = \gamma_{t+1}\, .
\end{align}

Finally  Eq.~(\ref{eq:HatXConvergence}) is also proved by induction over $t$. Note that $\obx^t$ is defined recursively 
as per Eq.~(\ref{eq:AMPspecial0}) with $\lambda$, $\gamma_t$ in the definition of $f_t$ in \myeqref{eq:OptFspecial} repalced by $\hlambda$, $\hgamma_t$.
Explicitly,
\begin{align}
\obx^{t+1} & = \bA \, \hf_t(\obx_t) - \hsb_t\, \hf_{t-1}(\obx^{t-1})\, ,\\
\hf_t(y) &= \hlambda F\Big(y; \hgamma_t\Big)\, ,\;\;\;\;\;\;
\hsb_t  = \frac{1}{n}\sum_{i=1}^n \hf'_t(\ox^t_i) \, .
\end{align}
Therefore
\begin{align}
\|\obx^{t+1}-\bx^{t+1}\|_2 & \le \|\bA\|_{\op} \, \big\|\hf_t(\obx_t)-f_t(\bx_t)\big\|_2+ |\hsb_t-\sb_t|\, \big\| f_{t-1}(\bx^{t-1})\big\|_2  \nonumber \\ 
& \quad  +  |\hsb_t|\, \big\| \hf_{t-1}(\obx^{t-1}) - f_{t-1}(\bx^{t-1})\big\|_2\\
& \equiv \sqrt{n}\big(R_1(t;n)+R_2(t;n)+R_3(t;n)\big)\,  .
\end{align}
Consider the first term. Since  $\|\bA\|_{\op}\to 2$ almost surely, for large enough $n$ we almost surely have 
\begin{align}
R_1(t;n)^2 & \le \frac{10\lambda^2 }{n} \sum_{i=1}^n\big[F(x^t_i;\hgamma_t)-F(x^t_i;\gamma_t)\big]^2 +
\frac{10}{n}\sum_{i=1}^n\big[\hlambda F(\ox^t_i;\hgamma_t)-\lambda F(x^t_i;\hgamma_t)]^2 \nonumber\\
&\stackrel{(a)}{\le} \frac{10C_t\lambda^2}{n} (1+\|\bx^t\|_2^2) |\hgamma_t-\gamma_t| + \frac{20C_t\hlambda^2}{n}\big\|\obx^t-\bx^t\big\|_2^2+
\frac{20C_t}{n}\big\|\bx^t\big\|_2^2|\hlambda-\lambda|^2\, ,
\end{align}
where step $(a)$ is obtained using Lemma \ref{lemma:LipCE}.  We next take the limit $n\to \infty$ and use the induction hypothesis together with Eqs.~(\ref{eq:HatLambdaConvergence}),  (\ref{eq:HatGammaConvergence}), 
and the fact that $\lim\sup_{n\to\infty}\|\bx^t\|_2^2/n<\infty$, which follows  by Theorem \ref{thm:Special}.
We claim that $\lim_{n\to\infty}|\hsb_t-\sb_t| = 0$, whence $\lim\sup_{n\to\infty}|\hsb_t|<\infty$ (since $\sb_t$ is asymptotically bounded, per Eq.~(\ref{eq:fpr_conv})).
Since $\| \hf_{t-1}(\obx^{t-1}) - f_{t-1}(\bx^{t-1})\|_2/\sqrt{n}\to 0$ by the same argument above, this implies $R_3(t;n)\to 0$.
Further,  $\lim\sup_{n\to\infty}\| f_{t-1}(\bx^{t-1})\|_2<\infty$,  we also get $R_2(t;n)\to 0$.

We are left with the task of showing $\lim_{n\to\infty}|\hsb_t-\sb_t| = 0$. Note that  $\lambda,\hlambda,\gamma_t, \hgamma_t\in [1/C_0,C_0]$ 
almost surely for all $n$ large enough. Hence, by Lemma \ref{lemma:LipCE},  $|f_t'(x^t_i)|,  |\hf'_t(\ox_i^t)|\le C$ for some constant $C>0$,
Therefore, for any constant $M$, the following holds almost surely for all $n$ large enough
\begin{align}
 |\sb_t-\hsb_t|  & \le \frac{1}{n}\sum_{i=1}^n\big|f_t'(x^t_i) - \hf'_t(\ox_i^t)\big|\, \ind\big(|x^t_i|, |\ox^t_i|\le M\big) +
\frac{2C}{n}\sum_{i=1}^n\ind\big(|x^t_i|\ge M\big) +\frac{2C}{n}\sum_{i=1}^n\ind\big(|\ox^t_i|\ge M\big) \\
& \le \frac{1}{n}\sum_{i=1}^n\big|\lambda\partial_y F'( x^t_i;\gamma_t) - \hlambda\partial_yF(\ox_i^t;\hgamma_t)\big|\, \ind\big(|x^t_i|, |\ox^t_i|\le M\big) +
\frac{2C}{n M^2}\|\bx^t\|_2^2+\frac{2C}{nM^2}\|\obx^t\|_2^2\, .
\end{align}
Again by Lemma \ref{lemma:LipCE}, $\partial_y F\in C^{\infty}(\reals\times (0,\infty))$ and hence Lipschitz continuous on the 
compact set $[-M,M]\times [1/C_0,C_0]$, with Lipschitz constant $L(M)$, whence 
\begin{align}
|\sb_t-\hsb_t| &\le \frac{L(M)\lambda}{n}\| \bx^t-\obx^t\|_1 +\frac{L(M)}{n}\|\obx^t\|_1|\lambda-\hlambda|+ L(M) |\gamma_t-\hgamma_t| +
\frac{2C}{n M^2}\|\bx^t\|_2^2+\frac{2C}{nM^2}\|\obx^t\|_2^2\\
&\le \frac{L(M)\lambda}{\sqrt{n}}\|\bx^t-\obx^t\|_2 +\frac{L(M)}{\sqrt{n}}\|\obx^t\|_2|\lambda-\hlambda|+ L(M) |\gamma_t-\hgamma_t| +
\frac{2C}{n M^2}\|\bx^t\|_2^2+\frac{2C}{nM^2}\|\obx^t\|_2^2\, .
\end{align}
Using $|\hlambda-\lambda|\to 0$, $|\gamma_t-\hgamma_t| \to 0$, $\|\bx^t-\obx^t\|_2/\sqrt{n}\to 0$ (proved above),
and $\lim\sup_{n\to\infty}\|\bx^t\|_2/\sqrt{n}\le C'$, $\lim\sup_{n\to\infty}\|\obx^t\|_2/\sqrt{n}\le C'$, we get
\begin{align}
\lim\sup_{n\to\infty}|\sb_t-\hsb_t| &\le \frac{C''}{M^2}\, ,
\end{align}
whence the claim follows since $M$ is arbitrary.
\end{proof}

We are now in position to prove Corollary \ref{coro:ConfInt}.
\begin{proof}[Proof of Corollary \ref{coro:ConfInt}]
First note that for any function $\psi:\reals\times\reals\to\reals$  with $|\psi(\bx)-\psi(\by)|\le C(1+\|\bx\|_2+\|\by\|_2)\|\bx - \by\|_2$,  by Theorem \ref{thm:Special}, and Lemma \ref{lemma:HatX} we have
\begin{align}
\lim_{n\to\infty}\frac{1}{n}\sum_{i=1}^n\psi(x_{0,i},\ox_i^t) = \E\big\{\psi\big(X_0,\gamma_t\, X_0+\gamma_t^{1/2}\, Z\big)\big\}\, , 
\end{align}

We begin by proving Eq.~\eqref{eq:AS-ConfInt}. Define
\begin{align}
\hB(x;\alpha,t) = \left[\frac{1}{\hgamma_t} x - \frac{1}{\sqrt{\hgamma_t}} \Phi^{-1}(1-\tfrac{\alpha}{2}), \, 
\frac{1}{\hgamma_t} x+\frac{1}{\sqrt{\hgamma_t}} \Phi^{-1}(1-\tfrac{\alpha}{2}) \right]\, , \label{eq:ConfInterval_BIS_0}\\
B(x;\alpha,t) = \left[\frac{1}{\gamma_t} x - \frac{1}{\sqrt{\gamma_t}} \Phi^{-1}(1-\tfrac{\alpha}{2}), \, 
\frac{1}{\gamma_t} x+\frac{1}{\sqrt{\gamma_t}} \Phi^{-1}(1-\tfrac{\alpha}{2}) \right]\, .\label{eq:ConfInterval_BIS}
\end{align}
For $x\in \reals$ and $S\subseteq \reals$, we let $d(x,S)\equiv \inf\{|x-y|:\; y\in S\}$, and $S^c\equiv \reals\setminus S$.
Fixing $\epsilon>0$, we define the Lipschitz-continuous functions
\begin{align}
\hpsi_{\epsilon,+}(x_0,x) = \begin{cases}
1 & \;\;\;\; \mbox{ if $x_0\in \hB(x;\alpha,t)$,}\\
0 & \;\;\;\; \mbox{ if $d(x_0, \hB(x;\alpha,t))\ge \epsilon$,}\\
1-d(x_0, \hB(x;\alpha,t))/\epsilon & \;\;\;\; \mbox{ otherwise,}
\end{cases}
\label{eq:hpsi+def}\\
\hpsi_{\epsilon,-}(x_0,x) = \begin{cases}
1 & \;\;\;\; \mbox{ if $d(x_0, \hB(x;\alpha,t)^c)\ge \epsilon$,}\\
0 & \;\;\;\; \mbox{ if $x_0\in \hB(x;\alpha,t)^c$,}\\
d(x_0, \hB(x;\alpha,t)^c)/\epsilon & \;\;\;\; \mbox{ otherwise,}
\end{cases}
\end{align}
as well as the analogous functions for $B(x;\alpha,t)$:
\begin{align}
\psi_{\epsilon,+}(x_0,x) = \begin{cases}
1 & \;\;\;\; \mbox{ if $x_0\in B(x;\alpha,t)$,}\\
0 & \;\;\;\; \mbox{ if $d(x_0, B(x;\alpha,t))\ge \epsilon$,}\\
1-d(x_0, B(x;\alpha,t))/\epsilon & \;\;\;\; \mbox{ otherwise,}
\end{cases}\\
\psi_{\epsilon,-}(x_0,x) = \begin{cases}
1 & \;\;\;\; \mbox{ if $d(x_0, B(x;\alpha,t)^c) \ge \epsilon$,}\\
0 & \;\;\;\; \mbox{ if $x_0\in B(x;\alpha,t)^c$,}\\
d(x_0, B(x;\alpha,t)^c)/\epsilon & \;\;\;\; \mbox{ otherwise,}
\end{cases}
\label{eq:psi-def}
\end{align}
By the same argument as in the proof of Lemma  \ref{lemma:HatX}, we have almost surely
\begin{align}
\lim_{n\to\infty}\frac{1}{n}\sum_{i=1}^n\hpsi_{\epsilon,\pm}(x_{0,i},\ox^t_i) & =\lim_{n\to\infty}\frac{1}{n}\sum_{i=1}^n\psi_{\epsilon,\pm}(x_{0,i}, \ox^t_i)  \\
& = \E\big\{\psi_{\epsilon,\pm}(X_{0},\gamma_tX_0+ \gamma_t^{1/2}\, Z )\big\}. \label{eq:psi_pm_lim}
\end{align}
where the second equality follows from Theorem \ref{thm:Special}. 
On the other hand,
\begin{align}
 \hpsi_{\epsilon,-}(x_{0,i},\ox^t_i)\le  \ind\big(x_{0,i}\in \hJ_i(\alpha;t)\big)  \le \hpsi_{\epsilon,+}(x_{0,i},\ox^t_i)\, ,
\end{align}
which implies
\begin{align*}
& \lim_{\epsilon\to 0} \E\big\{\psi_{\epsilon,-}(X_{0},\gamma_t\, X_0+ \gamma_t^{1/2}\, Z )\big\} \le 
\lim\inf_{n\to\infty}\frac{1}{n}\sum_{i=1}^n \ind\big(x_{0,i}\in \hJ_i(\alpha;t)\big)\\
& \le\lim\sup_{n\to\infty}\frac{1}{n}\sum_{i=1}^n \ind\big(x_{0,i}\in \hJ_i(\alpha;t)\big)  \leq 
\lim_{\epsilon\to 0} \E\big\{\psi_{\epsilon,+}(X_{0},\gamma_t\, X_0+ \gamma_t^{1/2}\, Z )\big\}\, .
\end{align*}
The proof is completed by noticing that by monotone convergence, 
\begin{align}
&\lim_{\epsilon\to 0} \E\big\{\psi_{\epsilon,-}(X_{0},\gamma_t X_0+ \gamma_t^{1/2} Z )\big\} = 
\prob\big\{|Z|\le \Phi^{-1}(1-\tfrac{\alpha}{2})\big\} = 1- \alpha\, ,\\
&\lim_{\epsilon\to 0} \E\big\{\psi_{\epsilon,+}(X_{0},\gamma_t X_0+ \gamma_t^{1/2} Z )\big\} = 
\prob\big\{|Z|< \Phi^{-1}(1-\tfrac{\alpha}{2})\big\} = 1- \alpha\, . \label{eq:lim_psi_e_upper}
\end{align}

In order to prove Eq.~\eqref{eq:ValidNull}, we use a similar argument, with a slightly different test function. Define
$u_{\delta}(x_0) = (1-|x_0|/\delta)_+$ and
\begin{align}
\hxi_{\epsilon,\delta,\pm}(x_0,x) &= \hpsi_{\epsilon,\pm} (0,x) u_{\delta}(x_0)\, ,\\
\xi_{\epsilon,\delta,\pm}(x_0,x) &= \psi_{\epsilon,\pm}(0,x) u_{\delta}(x_0)\, .
&
\end{align}
Proceeding  as above, we obtain
\begin{align}
\lim_{n\to\infty}\frac{1}{n}\sum_{i=1}^n\hxi_{\epsilon,\delta,\pm}(x_{0,i},\ox^t_i) & = \E\big\{\xi_{\epsilon,\pm}(X_{0},\gamma_tX_0+ \gamma_t^{1/2}\, Z )\big\}\, ,
\label{eq:hxi_lim}
\end{align}
Upper and lower bounding the indicator function by $\hpsi_{\epsilon,\pm}$ as in the previous proof, we then obtain that, for any
$\delta>0$
\begin{align}
\lim_{n\to\infty}\frac{1}{n}\sum_{i=1}^n \ind\big(0\in \hJ_i(\alpha;t)\big)\, u_\delta(x_{0,i}) &=\E\big\{ u_{\delta}(X_0)\, \ind\big(\sqrt{\gamma_t}X_0-c_{\alpha}\le Z\le \sqrt{\gamma_t}X_0+c_{\alpha}\big)\big\}\\
&=\E\big\{ u_{\delta}(X_0)\, \big(\Phi(\sqrt{\gamma_t}X_0+c_{\alpha})-\Phi(\sqrt{\gamma_t}X_0-c_{\alpha})\big)\big\}
\end{align}
where $c_{\alpha} \equiv \Phi^{-1}\big(1-\frac{\alpha}{2}\big)$. By taking $\delta\to 0$ and using monotone convergence, we get
\begin{align}
\lim_{\delta\to 0}\lim_{n\to\infty}\frac{1}{n}\sum_{i=1}^n \ind\big(0\in \hJ_i(\alpha;t)\big)\, u_\delta(x_{0,i}) &= (1-\eps)(1-\alpha)\, .
\end{align}
On the other hand,  
\begin{align}
0\le\frac{1}{n}\sum_{i=1}^n \ind\big(0\in \hJ_i(\alpha;t)\big)\, u_\delta(x_{0,i}) -
\frac{1}{n}\sum_{i=1}^n \ind\big(0\in \hJ_i(\alpha;t);\; x_{0,i}=0\big)\le
 \frac{1}{n}\sum_{i=1}^n u_\delta(x_{0,i}) -\frac{1}{n}\|\bx_0\|_0\, .
\end{align}
Since, using the assumption
\begin{align}
\lim_{\delta\to 0}\lim_{n\to\infty} \left\{\frac{1}{n}\sum_{i=1}^n u_\delta(x_{0,i}) -\frac{1}{n}\|\bx_0\|_0\right\} =
\lim_{\delta\to 0}\left\{\E u_{\delta}(X_0)-(1-\eps)\right\} = 0\, ,
\label{eq:spar_lim}
\end{align}
we obtain 
\begin{align}
\lim_{n\to\infty}\frac{1}{n}\sum_{i=1}^n \ind\big(0\in \hJ_i(\alpha;t);\; x_{0,i}=0\big) =\lim_{\delta\to 0}\lim_{n\to\infty}\frac{1}{n}\sum_{i=1}^n \ind\big(0\in \hJ_i(\alpha;t)\big)\, u_\delta(x_{0,i}) &= (1-\eps)(1-\alpha)\, .
\end{align}
By dominated convergence, this also implies 
\begin{align}
\lim_{n\to\infty}\frac{1}{n}\sum_{i: x_{0,i}=0} \prob\big(0\in \hJ_i(\alpha;t)\big) =  (1-\eps)(1-\alpha)\, .
\end{align}
This can equivalently be rewritten as
\begin{align}
\lim_{n\to\infty}\frac{1}{n}\sum_{i: x_{0,i}=0} \prob\big(p_i(t)\ge\alpha\big) =  (1-\eps)(1-\alpha)\, .
\end{align}
Let $S_0(n) \equiv\{i\in [n]:x_{0,i}=0\}=[n]\setminus\supp(\bx_0(n))$.
Notice that the $p$-values $(p_i(t))_{i\in S_0(n)}$ are exchangeable. Hence for any sequence $i_0(n)\in S_0(n)$,
we have
\begin{align}
\lim_{n\to\infty}\frac{1}{n}|S_0(n)| \prob\big(p_{i_0(n)}(t) \ge \alpha\big) =  (1-\eps)(1-\alpha)\, .
\end{align}
Since by assumption $|S_0(n)|/n\to (1-\eps)$, the claim \eqref{eq:ValidNull} follows.
\end{proof}

\section{Proof of Corollary \ref{corr:FDR}}
\label{app:FDR}

Again, for concreteness we assume  the construction of $p$-values via Bayes AMP, as per Eq.~\eqref{eq:pi_Bayes}.
The proof is unchanged for the more general construction in \eqref{eq:pi_def}.

Using the definitions of $p_i(t)$ and $\hFDP(s;t)$ from  Eqs. \eqref{eq:pi_Bayes} and \eqref{eq:FDP_def}, the threshold $s_*(\alpha;t)$ in \myeqref{eq:thresh_rej_set} can be expressed as 
\beq
s_*(\alpha;t) =  \inf \left\{\, s \in [0,1]: \; \frac{1}{n} \vee \left( \frac{1}{n} \sum_{i=1}^n \ind\left( |\ox^t_i | \geq \sqrt{\hgamma_t} \,\Phi^{-1}\left(1 -{s}/{2} \right) \right) \right) \le \frac{s}{\alpha} \, \right\}\, .
\label{eq:s_star_eq}
\eeq
We first show that $ \lim_{n\to \infty} s_*(\alpha;t) = \bar{s}(\alpha;t)$ almost surely, where 
\beq
\bar{s}(\alpha;t) \equiv \inf \left\{\, s \in [0,1]: \;  \P \left(  \abs{ \sqrt{\gamma_t} X_0 +  Z } \, \geq \, \Phi^{-1}\left(1 - {s}/{2} \right) \right) <  \frac{s}{\alpha} \, \right\}
\, .
\label{eq:salph_def}
\eeq
For  $s \in [0,1]$, define the sets
\begin{align}
\hC(s;t) =\left( -\infty, \, \sqrt{\hgamma_t}\,  \Phi^{-1}(1-\tfrac{s}{2}) \right] \, \cup \, [ \sqrt{\hgamma_t}\, \Phi^{-1}(1-\tfrac{s}{2}), \,  \infty), \\
C(s;t) =\left( -\infty, \, \sqrt{\gamma_t}\,  \Phi^{-1}(1-\tfrac{s}{2}) \right] \, \cup \, [ \sqrt{\gamma_t}\, \Phi^{-1}(1-\tfrac{s}{2}), \,  \infty).
\end{align}
We define the following test functions, similarly to Eqs. \eqref{eq:hpsi+def}--\eqref{eq:psi-def}.  For $x\in \reals$ and $S\subseteq \reals$, recall that $d(x,S)\equiv \inf\{|x-y|:\; y\in S\}$, and $S^c\equiv \reals\setminus S$.
For fixed $\epsilon>0$ we define the Lipschitz-continuous functions
\begin{align}
\hpsi_{\epsilon,+}(x; \, s) = \begin{cases}
1 & \;\;\;\; \mbox{ if $x\in \hC(s;t)$,}\\
0 & \;\;\;\; \mbox{ if $d(x, \hC(s;t))\ge \epsilon$,}\\
1-d(x, \hC(s;t))/\epsilon & \;\;\;\; \mbox{ otherwise,}
\end{cases}
\label{eq:hpsi+def2} \\
\hpsi_{\epsilon,-}(x; s) = \begin{cases}
1 & \;\;\;\; \mbox{ if $d(x, \hC(s;t)^c)\ge \epsilon$,}\\
0 & \;\;\;\; \mbox{ if $x \in \hC(s;t)^c$,}\\
d(x, \hC(s;t)^c)/\epsilon & \;\;\;\; \mbox{ otherwise,}
\end{cases}
\label{eq:hpsi-def2} 
\end{align}
The analogous functions for $C(s;\, t)$, denoted by $\psi_{\epsilon,+}(x; \, s)$  and $\psi_{\epsilon,-}(x; s)$, are defined by replacing $\hat{C}(s;\, t)$ with $C(s;\, t)$ in Eqs. \eqref{eq:hpsi+def2}--\eqref{eq:hpsi-def2}, respectively.

Using the same argument as in the proof of Lemma  \ref{lemma:HatX}, we have almost surely
\begin{align}
\lim_{n\to\infty}\frac{1}{n}\sum_{i=1}^n\hpsi_{\epsilon,\pm}(\ox^t_i; \, s) & =\lim_{n\to\infty}\frac{1}{n}\sum_{i=1}^n\psi_{\epsilon,\pm}(\ox^t_i; \, s)   = \E\big\{\psi_{\epsilon,\pm}(\gamma_tX_0+ \gamma_t^{1/2}\, Z; \, s )\big\}. \label{eq:psi_pm_lim_s}
\end{align}
where the second equality follows from Theorem \ref{thm:Special}.   Furthermore, we note that
\begin{align}
 \hpsi_{\epsilon,-}(\ox^t_i; \, s) \, \le \,  \ind\big(\, \ox^t_i \in \hC(s; \,t)\big) = \frac{1}{n} \sum_{i=1}^n \ind\left( |\ox^t_i | \geq \sqrt{\hgamma_t} \,\Phi^{-1}\left(1 -s/2 \right)  \right)  \,   \le \hpsi_{\epsilon,+}(\ox^t_i; \, s).
\label{eq:h-ind+}
\end{align}

For fixed $\epsilon >0$, let
\begin{align}
\hat{s}_{\epsilon,+}(\alpha;t) =  \inf \left\{\, s \in [0,1]: \; \frac{1}{n} \vee  \left( \frac{1}{n} \sum_{i=1}^n \hpsi_{\epsilon, +}( \ox^t_i; \, s ) \right) \le \frac{s}{\alpha} \, \right\}, \\
\hat{s}_{\epsilon,-}(\alpha;t) =  \inf \left\{\, s \in [0,1]: \; \frac{1}{n} \vee  \left( \frac{1}{n} \sum_{i=1}^n \hpsi_{\epsilon, -}( \ox^t_i; \, s ) \right) \le \frac{s}{\alpha} \, \right\}.
\end{align}
Since $\hpsi_{\epsilon, +}( x; \, s ), \, \hpsi_{\epsilon, -}(x; \, s ) $ and $\ind(x \in \hC(s;t))$ are all positive and increasing in $s$
(for any fixed $x$), Eq. \eqref{eq:h-ind+} implies that
\beq
\frac{\alpha}{n} \leq \hat{s}_{\epsilon,-}(\alpha;t) \le \, s_{*}(\alpha;t) \, \le \hat{s}_{\epsilon,+}(\alpha;t).
\label{eq:s_order}
\eeq
Furthermore, using Eq. \eqref{eq:psi_pm_lim_s} we obtain that
\beq
\lim_{n \to \infty}\, \hat{s}_{\epsilon, \pm}(\alpha;t) = \inf \left\{\, s \in [0,1]: \; \E\big\{\psi_{\epsilon,\pm}(\gamma_tX_0+ \gamma_t^{1/2}\, Z; \, s )\big\} < \frac{s}{\alpha} \, \right\} \quad \text{a.s.}
\eeq
By the monotone convergence theorem, we have
\begin{align}
\lim_{\epsilon\to 0} \E\big\{\psi_{\epsilon,-}(\gamma_t\, X_0+ \gamma_t^{1/2}\, Z; \, s)\big\} & = \P\left(\abs{\sqrt{\gamma_t}\, X_0+ \, Z} \geq \Phi^{-1}(1 -s/2)  \right), \\
 \lim_{\epsilon\to 0} \E\big\{\psi_{\epsilon,+}(\gamma_t\, X_0+ \gamma_t^{1/2}\, Z; \, s )\big\}
& =  \P\left(\abs{\sqrt{\gamma_t}\, X_0+ \, Z} \geq \Phi^{-1}(1 -s/2)  \right).
\label{eq:Epsi_lim}
\end{align}
Therefore, taking $\epsilon \to 0$, from Eqs. \eqref{eq:s_order}--\eqref{eq:Epsi_lim} we obtain
\beq
\lim_{n \to \infty} \, s_{*}(\alpha; \, t) \, = \, \bar{s}(\alpha;t) \equiv \inf \left\{\, s \in [0,1]: \;  \P \left(  \abs{ \sqrt{\gamma_t} X_0 +  Z } \, \geq \, \Phi^{-1}\left(1 - {s}/{2} \right) \right) <  \frac{s}{\alpha} \, \right\} \quad \text{ a.s. }%
\label{eq:bars_def}
\eeq

We now prove the asymptotic FDR result in Eq. \eqref{eq:FDRbound} by showing that the following two limits hold almost surely:
\begin{align}
& \lim_{n \to \infty} \,   \frac{ |\hS(\alpha;t)|}{n}   \, =  \, \frac{\bar{s}(\alpha;t)}{\alpha}, 
%
\qquad  \lim_{n \to \infty} \,  \frac{|\hS(\alpha;t)\cap \{i:\, x_{0,i}=0\}|}{n}  \, = \,  (1-\eps) \bar{s}(\alpha;t).
\label{eq:set_sizes}
\end{align}
The continuous mapping theorem then implies that almost surely
\beq
\lim_{n \to \infty} \frac{ |\hS(\alpha;t)| /n}{ 1/n  \, \vee \, |\hS(\alpha;t)\cap \{i:\, x_{0,i}=0\}|/n } = (1-\eps) \alpha.
\eeq
The claim in Eq. \eqref{eq:FDRbound} then follows from dominated convergence. 

To prove the first result in Eq. \eqref{eq:set_sizes}, notice that 
\beq
\frac{ |\hS(\alpha;t)|}{n} = \frac{1}{n} \sum_{i=1}^n \ind( \ox_i^t \in \hC(s_{*}(\alpha; t); \, t) ) = \frac{1}{n} \sum_{i=1}^n \ind\left( |\ox^t_i | \geq \sqrt{\hgamma_t} \,\Phi^{-1}\left(1 - s_{*}(\alpha; t)/2 \right)  \right) ,
\eeq
and 
\begin{align}
 \hpsi_{\epsilon,-}(\ox^t_i; \, s_{*}(\alpha; t)) \, \le \,  \ind\big(\, \ox^t_i \in \hC(s_{*}(\alpha; t); \,t)\big)   \,   \le \hpsi_{\epsilon,+}(\ox^t_i; \, s_{*}(\alpha; t)).
\end{align}
Since $\hgamma_t \to \gamma_t$ and $s_{*}(\alpha; t) \to \bar{s}(\alpha;t)$ almost surely,
by the same argument as in the proof of Lemma  \ref{lemma:HatX}, we have 
\begin{align}
\lim_{n\to\infty}\frac{1}{n}\sum_{i=1}^n\hpsi_{\epsilon,\pm}(\ox^t_i; \, s_{*}(\alpha; t) ) & =\lim_{n\to\infty}\frac{1}{n}\sum_{i=1}^n\psi_{\epsilon,\pm}(\ox^t_i; \, \bar{s}(\alpha; t) )   = \E\big\{\psi_{\epsilon,\pm}(\gamma_tX_0+ \gamma_t^{1/2}\, Z; \, \bar{s}(\alpha; t))\big\}. 
\end{align}
where the second equality follows from Theorem \ref{thm:Special}. Hence
\begin{equation}
\begin{split}
& \lim_{\epsilon\to 0} \E\big\{\psi_{\epsilon,-}(\gamma_t\, X_0+ \gamma_t^{1/2}\, Z; \, \bar{s}(\alpha; t))\big\} \le 
\lim\inf_{n\to\infty}\frac{1}{n}\sum_{i=1}^n \ind\big(\, \ox^t_i \in \hC(s_{*}(\alpha; t); \,t) \big)\\
& \le\lim\sup_{n\to\infty}\frac{1}{n}\sum_{i=1}^n \ind\big(\, \ox^t_i \in \hC(s_{*}(\alpha; t); \,t) \big)  \leq 
\lim_{\epsilon\to 0} \E\big\{\psi_{\epsilon,+}(\gamma_t\, X_0+ \gamma_t^{1/2}\, Z; \, \bar{s}(\alpha; t))\big\}\, .
\end{split}
\label{eq:hC_lims}
\end{equation}
Taking $\epsilon \to 0$, by monotone convergence we find that the limits on the left and the right in \myeqref{eq:hC_lims} are both equal to $\P(\abs{\sqrt{\gamma_t}\, X_0+ \, Z} \geq \Phi^{-1}(1 - \, \bar{s}(\alpha;t)/2)  )$.  Therefore,   we have almost surely
\beq
\lim_{n \to \infty} \frac{ |\hS(\alpha;t)|}{n}  =  \P(\abs{\sqrt{\gamma_t}\, X_0+ \, Z} \geq \Phi^{-1}(1 - \, \bar{s}(\alpha;t)/2)  ) = \frac{\bar{s}(\alpha;t)}{\alpha}.
\eeq
The last equality follows from the definition of $\bar{s}(\alpha;t)$ in \myeqref{eq:bars_def} which implies that $\bar{s}(\alpha;t)$
is the smallest positive solution of 
\[  \P(\abs{\sqrt{\gamma_t}\, X_0+ \, Z} \geq \Phi^{-1}(1 - \, s/2)  )  = \frac{s}{\alpha}.  \]

To prove the second equality in \myeqref{eq:set_sizes}, we use a similar argument, but with slightly different test functions. Let
$u_{\delta}(x_0) = (1-|x_0|/\delta)_+$ and
\begin{align}
\hxi_{\epsilon,\delta,\pm}(x_0,x; \, s) = \hpsi_{\epsilon,\pm} (x; \, s) u_{\delta}(x_0)\, , \qquad
\xi_{\epsilon,\delta,\pm}(x_0,x; \, s) = \psi_{\epsilon,\pm}(x; \, s) u_{\delta}(x_0)\, .
\end{align}
We note that 
\begin{align}
\hxi_{\epsilon,\delta, - }(x_0,  \, \ox^t_i; \, s_{*}(\alpha; t)) \, \le \,  \ind\big( \ox^t_i \in \hC(\, s_{*}(\alpha; t); \,t), \, x_{0,i}=0\big)  \,  \le \, 
\hxi_{\epsilon,\delta, - }(x_0,  \, \ox^t_i; \, s_{*}(\alpha; t)).
\end{align}
Proceeding as above and using arguments similar to Eqs. \eqref{eq:hxi_lim}--\eqref{eq:spar_lim}, we obtain that almost surely
\beq
\lim_{n \to \infty} \,  \frac{|\hS(\alpha;t)\cap \{i:\, x_{0,i}=0\}|}{n} 
= (1-\eps)  \P(\abs{Z} \geq \Phi^{-1}(1 - \, \bar{s}(\alpha;t)/2)  )   = (1-\eps)\bar{s}(\alpha;t),
\eeq
which completes the proof.

\section{Proof of Theorem \ref{thm:Block}}
\label{app:Block}

As discussed already in the main text the matrix $\bA$ of Eq.~(\ref{eq:BlockMatrix}) is of  the form (\ref{eq:SpikedDef}) with $\bv_1,\dots,\bv_k$ an orthonormal basis 
of $\cV_n$, the column space of $\bA_0$, and $\lambda_1=\dots=\lambda_k=\lambda$. Recall that $\bV\in\reals^{n\times k}$ denotes the matrix with columns $\bv_1,\dots,\bv_k$,
and, for economy of notation, we let $\bPhi\in\reals^{n\times k}$ denote the matrix with columns $\bphi_1,\dots,\bphi_k$. Notice that we can construct $\bV$ through the singular value decomposition 
of the matrix $\bx_0\in\reals^{n\times (k+1)}$ (recall that $k=q-1$):
\begin{align}
  \bx_0 = \sqrt{\frac{n}{q}}\, \bV\bS^{\sT}\, ,  \label{eq:bx0_svd}
\end{align}
where $\bS\in\reals^{(k+1)\times k}$ is an orthogonal matrix whose columns span the space $\cW_k= \{\bx\in\reals^{k+1}:\, \<\bx,\bfone\>=0\}$. Note that this implies
immediately that assumption (\ref{ass:SpikesDistr}) of Theorem \ref{thm:Main}, by using
\begin{align}
\tbv_i = \sqrt{q}\,  \bS^{\sT}\bx_{0,i} =\sqrt{q}\,  \bS^{\sT}\projp\be_{\sigma_i}  = \sqrt{q}\, \bfs_{\sigma_i}\, ,
\end{align}
where $\bfs_1,\dots,\bfs_{q}\in\reals^k$ are the rows of $\bS$. 
The other assumptions of  Theorem \ref{thm:Main} are immediate to verify.

As per Theorem \ref{thm:Main}, we set $\bProd_0 = \bProd =  \bPhi^{\sT}\bV\in\reals^{k\times k}$ and $\tbProd_0 = (\id-\bProd_0\bProd_0^{\sT})^{1/2}$. 
Fixing   $\eta_n=n^{-1/2+\eps}$ for $\eps\in(0,1/2)$, the set $\cG_n = \cG_n(\bLambda)$ is given by
\begin{align}
\cG_n= \Big\{\, \bQ\in\reals^{k\times k} :\; \min_{\bR\in\cR(k)}\|\bQ-(1-\lambda^{-2})\bR\|_F\le \eta_n\Big\}\, ,
\end{align}
where, with a slight abuse of notation, we denoted by $\cR(k)$ the orthogonal group in $k$ dimensions.
By Theorem \ref{thm:Main}, we have $\bProd_0 = (1-\lambda^{-2})^{1/2}\bR +O(\eta_n)$ and $\tbProd_0 = \lambda^{-1}\id_k+O(\eta_n)$ eventually almost surely, and
$\bProd_0\stackrel{{\rm d}}{\Rightarrow}  (1-\lambda^{-2})^{1/2} \bQ$ for $\bQ$ Haar distributed in $\cR(k)$.

Further, the state evolution recursion (\ref{eq:Mt_def}),  (\ref{eq:Qt_def}) yields 
\begin{align}
\bM_{t+1} &= \lambda\sqrt{q} \, \E\Big\{f(\sqrt{q}\bM_t\bfs_{\sigma}+\bQ_t^{1/2}\bG)\bfs_{\sigma}^{\sT}\big\} \, ,\label{eq:MBlockProof}\\
\bQ_{t+1} & = \E\Big\{f(\sqrt{q}\bM_t\bfs_{\sigma}+\bQ_t^{1/2}\bG) \, f(\sqrt{q}\bM_t\bfs_{\sigma}+\bQ_t^{1/2}\bG)^{\sT}\Big\}\, ,
\end{align}
where expectation is with respect to $\sigma$ uniform in $\{1,\dots\, q\}$ independent of $\bG\sim\normal(0,\id_{k+1})$.
By Eq.~(\ref{eq:main_SE_result}), and using the fact that $\bM_t$, $\bQ_t$ are continuous in the initial condition $\bM_0$, $\bQ_0$,  under the initialization 
\begin{align}
(\bM_0)_{[k],[k]} &= \bPhi^{\sT}\bV\, ,\;\;\;\;\; (\bM_0)_{k+1,[k]} = \bzero\, ,\\
\bQ_0 &= \lambda^{-1}\diag(1,\dots,1,0)\in \reals^{(k+1)\times (k+1)}\, ,
\end{align}
we get, almost surely, for $\psi:\reals^{2k+1}\to \reals$ pseudo-Lipschitz
\begin{align}
\lim_{n\to\infty}\left|\frac{1}{n}\sum_{i=1}^n\psi(\bx_i^t,\bfs_{\sigma_i}) - \E\big\{\psi(\sqrt{q}\bM_t\bfs_{\sigma}+\bQ_t^{1/2}\bG,\bfs_{\sigma})\big\}\right|= 0\, .  
\label{eq:Proof_SE_Block}
\end{align}

We next define $\tbM_t=q^{-1/2}\bM_t\bS^{\sT}$, and notice that $\bfs_{\sigma} = \bS^{\sT}\projp\be_{\sigma}$ and
$\bS\bS^{\sT} =\projp$. Multiplying Eq.~(\ref{eq:MBlockProof}) on the right by $\bS^{\sT}/\sqrt{q}$, we get
\begin{align}
\tbM_{t+1} &= \lambda\E\Big\{f(q\tbM_t\projp\be_{\sigma}+\bQ_t^{1/2}\bG)\be_{\sigma}^{\sT}\projp\big\} \, ,\\
\bQ_{t+1} & = \E\Big\{f(q\tbM_t\projp\be_{\sigma}+\bQ_t^{1/2}\bG) f(q\tbM_t\bfs_{\sigma}+\bQ_t^{1/2}\bG)^{\sT}\Big\}\, ,
\end{align}
which coincide with Eqs. ~(\ref{eq:SE_Block_1}), (\ref{eq:SE_Block_2}), once we notice that $\tbM_t\projp = \tbM_t$ (and drop the tilde from $\tbM_t$). Also, using \myeqref{eq:bx0_svd}, note that $\tbM_0 = q^{-1/2}\bPhi^{\sT} \bV \bS^{\sT} = (\hbx^0)^{\sT} \bx_0/n$, which is the initialization specified in the statement of Theorem \ref{thm:Block}.

Finally, using $\psi(\bx,\by) = \tilde{\psi}(\bx,\bS\by)$ in Eq.~(\ref{eq:Proof_SE_Block}), and recalling that $\bx_{0,i}=\projp\be_{\sigma_i}$, we get
\begin{align}
\lim_{n\to\infty}\left|\frac{1}{n}\sum_{i=1}^n \tilde{\psi}(\bx_i^t,\bx_{0,i}) - \E\big\{\tilde{\psi}(q\tbM_t\be_{\sigma}+\bQ_t^{1/2}\bG,\projp\be_{\sigma})\big\}\right|= 0\, ,
\end{align}
which is the claim of the theorem (after dropping the tildes).

\section{Estimation of rectangular matrices with rank larger than one}
\label{app:RectangularGeneral}

We recall that the data matrix $\bA \in \reals^{n \times d}$ in the rectangular spiked model is 
\begin{align}
\bA = \sum_{i=1}^k \, \lambda_i \, \bu_i\bv_i^{\sT} + \bW\,  \equiv \, \bU \bLambda \bV^{\sT} + \bW \,.  
\label{eq:SpikedDef-Rect1}
\end{align}
Here $(\bu_i)_{1 \leq i \leq k}$ and $(\bv_i)_{1 \leq i \leq k}$  are each sets of non-random orthonormal vectors, with  $\bu_i \in \reals^n$ and  $\bv_i \in \reals^d$.
The noise matrix $\bW$ has entries $(W_{ij})_{i\le n,j\le d}\sim_{iid}\normal(0,1/n)$. We denote by  $s_1 \geq s_2 \ldots \geq s_{\min\{d,n\}} \geq 0$  the  singular values of $\bA$, with $\bphi_1, \ldots, \bphi_{\min\{d,n\}}$ the corresponding (unit-norm) right singular vectors.

For two sequences of functions  $f_t, \, g_t \, :\reals^{q}\times\reals\to \reals^q$, for $t \geq 0$, we consider the AMP algorithm that produces a sequence of iterates $\bu^t \in \reals^n, \, \bv^t \in \reals^d$ according to the following  recursion:
\begin{align}
\bu^{t} & = \bA f_t(\bv^t, \bz) - g_{t-1}(\bu^{t-1},\by)\, \sB_t^{\sT}\, .  \label{eq:AMPut} \\
\bv^{t+1} &  = \bA^{\sT} g_t(\bu^t,\by) - f_t(\bv^t, \bz)\, \sC_t^{\sT}\, .  \label{eq:AMPxt1}
\end{align}
Here $\by\in \reals^n$ and $\bz \in \reals^d$ are fixed vectors, and it is understood that $f_t, g_t$ are applied row-by-row. For example, denoting by $\bu^t_i\in \reals^q$ the
$i$-th row of $\bu^t$, the $i$-th row of $g_t(\bu^{t};\by)$ is given by $g_t(\bu^t_i,y_i)$. 
The `Onsager coefficients' $\sB_t, \, \sC_t \, \in \reals^{q\times q}$  are matrices given by
\begin{align}
\sB_t = \frac{1}{n}\sum_{i=1}^d \frac{\partial f_t}{\partial \bv}(\bv^t_i, z_i)\, ,
\qquad 
\sC_t = \frac{1}{n}\sum_{i=1}^n\frac{\partial g_t}{\partial \bu}(\bu^t_i,y_i)\,
\label{eq:Onsagerrect}
\end{align}
where $\frac{\partial f_t}{\partial \bv}, \, \frac{\partial g_t}{\partial \bu} \in \reals^{q\times q}$ denote the Jacobian matrices of the functions $f_t(\cdot, z), \, g_t(\cdot, y) \, :\reals^q\to\reals^q$, respectively.  The algorithm is initialized with  $\bv^0 \in \reals^{d \times q}$ and $g_{-1}(\bu^{-1},\by) \in  \reals^{n \times q}$ is taken to be the all-zeros matrix.

We will make the following assumptions:
\begin{enumerate}[font={\bfseries},label={(A\arabic*)}]
%
\item As $n, d \to \infty$, the aspect ratio $d(n)/n \to \alpha \in (0,\infty)$. \label{ass:Aspect}
\item\label{ass:Spikes_rect} The values $\lambda_i(n)$ have finite limits as $n\to \infty$, that we denote by $\lambda_i$. 
Furthermore, there are $k_*$ singular values whose limits are larger than 1. That is,  $\lambda_1\ge \dots \lambda_{k_*}>1 \geq \lambda_{k_*+1} \geq \ldots \geq 
\lambda_{\min\{d,n\}} \geq 0$.  We let $S \equiv (1,\dots, k_*)$, and $\bLambda_S$ denote the diagonal matrix with entries $(\bLambda_S)_{ii} = \lambda_i$, $i\in S$.
\item\label{ass:SpikesInit_rect} Setting $q\ge k_*$, we initialize the AMP iteration in Eqs. \eqref{eq:AMPut}--\eqref{eq:AMPxt1} by setting $\bv^0\in\reals^{n\times q}$ equal to the matrix with first $k_*$ ordered columns
 given by $(\sqrt{n}\bphi_i)_{i\in S}$, and $\bzero$ for the remaining $q-k_*$ columns. 
\item\label{ass:SpikesDistr_rect} The joint empirical distribution of the vectors $(\sqrt{n}\bu_\ell(n))_{\ell\in S}$, and $\by$ has a limit
in Wasserstein-$2$ metric. 
Namely, if we let $\tbu_i = (\sqrt{n}u_{\ell,i})_{\ell\in S}\in\reals^{k_*}$, then there exists a random vector $\bU$ taking values
in $\reals^{k_*}$ and a random variable $Y$, with joint law $\mu_{\bU,Y}$, such that
\begin{align}
\frac{1}{n}\sum_{i=1}^n\delta_{\tbu_i,y_i} \towass \mu_{\bU,Y}\, .
\end{align}
Similarly, the joint empirical distribution of the vectors $(\sqrt{d} \, \bv_\ell(n))_{\ell\in S}$, and $\bz$ has a limit in Wasserstein-$2$ metric.    Letting $\tbv_i = (\sqrt{d} v_{\ell,i})_{\ell\in S}\in\reals^{k_*}$, there exists a random vector $\bV \in \reals^{k_*}$  and a random variable $Z$ with joint law $\mu_{\bV, Z}$ such that 
\begin{align}
\frac{1}{d}\sum_{i=1}^d \delta_{\tbv_i, z_i} \towass \mu_{\bV, Z}\, .
\end{align}
\item\label{ass:Lip_rect}  The functions $f_t(\cdot, \cdot), g_t(\,\cdot\,,\,\cdot\,) \, :\reals^q\times\reals\to\reals^q$ are Lipschitz continuous.
\end{enumerate}

\def\obM{\overline{\bM}}
\def\obQ{\overline{\bQ}}

State evolution operates on the matrices $\bM_t, \obM_t \in\reals^{q\times k_*}$, and $\bQ_t, \obQ_t \in\reals^{q\times q}$, with $\bQ_t, \obQ_t \succeq \bzero$. For $t \geq 0$, these matrices are recursively defined as
\begin{align}
\obM_{t} &= \sqrt{\alpha}  \,  \E\Big\{f_t(\bM_t\bV +\bQ_t^{1/2}\bG, \, Z)\bV^{\sT}\big\} \bLambda_S\, , \label{eq:oMt_def_rect} \\
\obQ_{t} & = \alpha \, \E\Big\{f_t(\bM_t\bV +\bQ_t^{1/2}\bG, \, Z) f_t(\bM_t\bV +\bQ_t^{1/2}\bG, \, Z)^{\sT}\Big\}\, , \label{eq:oQt_def_rect} \\
\bM_{t+1} &= \frac{1}{\sqrt{\alpha}} \, \E\Big\{g_t(\obM_t\bU +\obQ_t^{1/2}\bG, \, Y)\bU^{\sT}\big\} \bLambda_S\, , \label{eq:Mt_def_rect} \\
\bQ_{t+1} & = \E\Big\{g_t(\obM_t\bU +\obQ_t^{1/2}\bG, \, Y) g_t(\obM_t\bU +\obQ_t^{1/2}\bG; \, Y)^{\sT}\Big\}\, , \label{eq:Qt_def_rect}
\end{align}
where expectation is taken with respect to $(\bU,Y)\sim\mu_{\bU,Y}$ and $(\bV, Z)\sim\mu_{\bV,Z}$, all of which are independent of $\bG\sim\normal(0,\id_q)$.
These recursions are initialized with $\bM_0, \bQ_0$, which will be specified in the statement of  Theorem \ref{thm:MainRect}  below.

As in Section \ref{sec:Symmetric}, we define  $\cR(\bLambda) \subseteq \reals^{S\times [k]}$ as  the set of orthogonal matrices $\bR$ (with $\bR\bR^{\sT}= \id_{S}$)
such that $R_{ij}=0$ if $\lambda_i\neq\lambda_j$ or if $j\not \in S$.

\begin{theorem}\label{thm:MainRect}
Let $(\bu^t, \bv^t)_{t \geq 0}$ be the AMP iterates generated by algorithm in 
Eqs. \eqref{eq:AMPut}-\eqref{eq:AMPxt1}, under assumptions \ref{ass:Aspect} to \ref{ass:Lip}, for the spiked matrix model in Eq. \eqref{eq:SpikedDef-Rect1}.  Define  $\bP_S \in \reals^{S \times S}$ as
\beq
\bP_S = \diag\left( \frac{1- \alpha \lambda_1^{-4}}{1 + \alpha \lambda_1^{-2}}, \,  \frac{1- \alpha \lambda_2^{-4}}{1 + \alpha \lambda_2^{-2}}, \, \ldots, \frac{1- \alpha \lambda_{k_*}^{-4}}{1 + \alpha \lambda_{k_*}^{-2}} \right).
\eeq 
For $\eta_n\ge n^{-1/2+\eps}$ such that $\eta_n\to 0$ as  $n\to\infty$, define the set of matrices
\begin{align}
\cG_n(\bLambda)\equiv \Big\{\bQ \in \reals^{S\times [k]}:\, \min_{\bR\in\cR(\bLambda)}\|\bQ- \bP_{S}^{1/2}\bR\|_{F}\le \eta_n\Big\}\, ,
\end{align}
Let $\bProd\equiv \bPhi_{S}^{\sT}\bV\in\reals^{k_*\times k}$ where $\bPhi_{S}\in\reals^{n\times k_*}$ is the matrix with columns $(\bphi_i)_{i\in S}$ and $\bV\in \reals^{n\times k}$ is the matrix 
with columns $(\bv_i)_{i\in [k]}$.  Denote by $\bProd_0\in \reals^{S\times S}$ the submatrix corresponding to the  $k_*$ columns of $\bProd$ with index in $S$, and let $\tbProd_0 = (\id-\bProd_0\bProd_0^{\sT})^{1/2}$.

Then, for any pseudo-Lipschitz function  $\psi:\reals^{q+k_*+1}\to \reals$,  $\psi \in\PL(2)$, the following holds almost surely for $t \geq 0$:
\begin{align}
\lim_{n\to\infty}\, \left|\frac{1}{d(n)}\sum_{i=1}^{d(n)} \,  \psi(\bv_i^t,\tbv_i, z_i) - \E\big\{\psi(\bM_t\bV+\bQ_t^{1/2}\bG,\bV,  Z)\big\}\right|= 0\, , \label{eq:SE_result1_rect} \\
\lim_{n\to\infty}\, \left|\frac{1}{n}\sum_{i=1}^n\psi(\bu_i^t,\tbu_i,y_i) - \E\big\{\psi(\obM_t\bU+\obQ_t^{1/2}\bG,\bU,Y)\big\}\right|= 0\, . \label{eq:SE_result2_rect}
\end{align}
Here $\tbv_i = (\sqrt{d} \, v_{\ell,i})_{\ell\in S}\in\reals^{k_*}$, and similarly, $\tbu_i = (\sqrt{n} \, u_{\ell,i})_{\ell\in S}\in\reals^{k_*}$. The expectations are computed with  $(\bV, Z) \sim \mu_{\bV,Z}$ and $(\bU,Y)\sim \mu_{\bU,Y}$, which are each independent of
$\bG\sim\normal(0,\id_q)$. Finally, $(\bM_t,\bQ_t, \obM_t, \obQ_t)$ is the state evolution sequence specified by Eqs. \eqref{eq:oMt_def_rect}--\eqref{eq:Qt_def_rect} with initialization  $(\bM_0)_{[k_*],[k_*]}=\bProd_0$, $(\bM_0)_{[q]\setminus [k_*],[k_*]}=\bzero$,
$(\bQ_0)_{[k_*],[k_*]} =\tbProd_0^2$, and $(\bQ_{0})_{i,j} = 0$ if $(i,j)\not \in [k_*]\times [k_*]$.

Further, $\prob(\bOmega\in \cG_n(\bLambda)) \ge 1-n^{-A}$ for any $A>0$ provided $n>n_0(A)$,  and 
$\bOmega$ converges in distribution to  $\bP_S^{1/2}\bR$, with $\bR$ 
Haar distributed on $\cR(\bLambda)$.
\end{theorem}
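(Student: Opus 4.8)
\textbf{Plan for the proof of Theorem \ref{thm:MainRect}.}
The plan is to reduce the rectangular problem to the symmetric one treated in Theorem \ref{thm:Main}, via the standard dilation trick. First I would form the symmetric matrix $\bB\in\reals^{(n+d)\times(n+d)}$ given in block form by $\bB = \begin{bmatrix} \bzero & \bA \\ \bA^{\sT} & \bzero \end{bmatrix}$. With $\bA = \bU\bLambda\bV^{\sT}+\bW$, one checks that $\bB$ is of the spiked form \eqref{eq:SpikedDef}: the signal part $\begin{bmatrix} \bzero & \bU\bLambda\bV^{\sT} \\ \bV\bLambda\bU^{\sT} & \bzero \end{bmatrix}$ has eigenpairs $\big(\pm\lambda_i, \tfrac{1}{\sqrt2}(\bu_i^{\sT},\pm\bv_i^{\sT})^{\sT}\big)$, so each rectangular spike $\lambda_i$ becomes a $\pm\lambda_i$ pair of symmetric spikes, and the noise part is a rescaled GOE on $n+d$ coordinates (one must track the $1/n$ versus $1/(n+d)$ normalization, which is where the aspect ratio $\alpha$ enters). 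Then I would run the symmetric AMP \eqref{eq:AMP} on $\bB$ with block-structured nonlinearities that act as $f_t$ on the $\bv$-block and $g_t$ on the $\bu$-block, and with side information $\by$ on the $\bu$-block and $\bz$ on the $\bv$-block; a short computation shows this symmetric iteration, when restricted to the two blocks, reproduces exactly the rectangular recursion \eqref{eq:AMPut}--\eqref{eq:AMPxt1}, including the Onsager terms \eqref{eq:Onsagerrect} (the cross-block structure of $\bB$ automatically produces the $\bA$ acting on $f_t(\bv^t)$ and $\bA^{\sT}$ on $g_t(\bu^t)$, and the block-diagonal Jacobian in \eqref{eq:OnsagerDef} splits into $\sB_t$ and $\sC_t$).

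The second step is to translate the spectral initialization. The right singular vectors $\bphi_i$ of $\bA$ and the corresponding singular values $s_i$ are in one-to-one correspondence with the top eigenvectors and eigenvalues of $\bB$: the eigenvector of $\bB$ at eigenvalue $s_i$ is $\tfrac{1}{\sqrt2}(\bpsi_i^{\sT},\bphi_i^{\sT})^{\sT}$ where $\bpsi_i$ is the left singular vector. Since Theorem \ref{thm:MainRect} initializes $\bv^0$ from the right singular vectors only, I would invoke Lemma \ref{lem:ConvergenceEigenvectors} (applied to $\bB$) to control the joint empirical distribution of the relevant pieces of these eigenvectors against the planted vectors; the phase transition threshold $\lambda^2\sqrt\alpha > 1$ for the rectangular model corresponds to $|\lambda_i|>1$ after the symmetrization and rescaling, which is precisely assumption \ref{ass:Spikes_rect}. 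The matrix $\bP_S$ appearing in $\cG_n(\bLambda)$ is exactly the overlap profile $(\id - \bLambda_S^{-2})^{1/2}$ from Theorem \ref{thm:Main} rewritten in terms of the rectangular parameters $\lambda_i$ and $\alpha$ — I would verify this algebraic identity by relating the symmetric eigenvalue $\rho$ and the corresponding overlap $1-\rho^{-2}$ to the rectangular singular value and the known BBP formulas for $|\langle \bv_i,\bphi_i\rangle|^2$ (e.g. from \cite{benaych2012singular}), which gives $\tfrac{1-\alpha\lambda_i^{-4}}{1+\alpha\lambda_i^{-2}}$.

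The third step is bookkeeping on state evolution: I would show that the symmetric state evolution \eqref{eq:Mt_def}--\eqref{eq:Qt_def} for the dilated system, when written in terms of the block sub-matrices, decouples into the two-step recursion \eqref{eq:oMt_def_rect}--\eqref{eq:Qt_def_rect}. The alternation $(\bM_t,\bQ_t)\mapsto(\obM_t,\obQ_t)\mapsto(\bM_{t+1},\bQ_{t+1})$ mirrors the alternation $\bu^t,\bv^{t+1}$ in the algorithm, and the $\sqrt\alpha$ and $1/\sqrt\alpha$ factors come from the different dimensions $n$ and $d$ of the two blocks together with the $\sqrt{(n+d)/n}$ rescaling of the GOE noise. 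Plugging the restricted test function $\psi(\bv_i^t,\tbv_i,z_i)$ (respectively $\psi(\bu_i^t,\tbu_i,y_i)$) into the conclusion \eqref{eq:main_SE_result} of Theorem \ref{thm:Main} for $\bB$, and noting that such restricted functions are still $\PL(2)$ on the larger coordinate space, yields \eqref{eq:SE_result1_rect}--\eqref{eq:SE_result2_rect}; the weak-convergence and high-probability statements about $\bOmega$ transfer directly from the corresponding statements in Theorem \ref{thm:Main}.

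The main obstacle I anticipate is not conceptual but a careful handling of the normalizations and the degeneracy structure introduced by the dilation. Each rectangular spike $\lambda_i$ produces a genuinely degenerate pair $\{+\lambda_i,-\lambda_i\}$ in $\bB$ only when $\lambda_i$ is counted with its negative, so $k_*$ rectangular spikes give $2k_*$ symmetric outliers, and the group $\cR(\bLambda)$ for $\bB$ involves a $\mathbb{Z}_2$ sign freedom per spike (plus possible orthogonal blocks if some $\lambda_i$ coincide). Threading this through Lemma \ref{lem:ConvergenceEigenvectors} so that it collapses back to the stated $\cR(\bLambda)\subseteq\reals^{S\times[k]}$ for the rectangular problem — and checking that the sign/orthogonal ambiguity in the symmetric eigenvectors is exactly the (harmless) sign/rotation ambiguity in the right singular vectors — is the delicate part. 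The alternative, which I would fall back on if the reduction gets unwieldy, is to redo the conditioning argument of Lemma \ref{lemma:Distr} directly for $\bA$ conditioned on its top singular vectors, using rotational invariance of the rectangular Gaussian ensemble on both sides; this is the route already taken for $k=1$ in Section \ref{sec:ExampleRectangular} and referenced via \cite[Section 6]{berthier2017state}, and it extends to general $k$ with the same structure as the proof of Theorem \ref{thm:Main} in Appendix \ref{sec:ProofMainGeneral}.
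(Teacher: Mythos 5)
The paper does not actually write out a proof of Theorem \ref{thm:MainRect}: the remark at the end of Section~\ref{sec:Symmetric} states that the rectangular case follows from Theorem~\ref{thm:Main} by a ``standard reduction'' (citing \cite[Section~6]{berthier2017state}), and the appendix only records the precise statement. Your proposal is essentially that reduction, so you are on the paper's intended path, and you correctly identify the two substantive technical points (the $\pm\lambda_i$ degeneracy and the overlap formula $\bP_S = (1-\alpha\lambda_i^{-4})/(1+\alpha\lambda_i^{-2})$ from \cite{benaych2012singular}).

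However, your primary route has a genuine gap as written: the dilated noise matrix
\begin{equation*}
\begin{bmatrix} \bzero & \bW \\ \bW^{\sT} & \bzero \end{bmatrix}
\end{equation*}
is \emph{not} a rescaled $\GOE(n+d)$ --- its diagonal blocks vanish identically and the off-diagonal block is the original rectangular $\bW$, not an i.i.d.\ symmetric Gaussian. Theorem~\ref{thm:Main}, Lemma~\ref{lemma:Distr}, and Lemma~\ref{lem:ConvergenceEigenvectors} are all proved specifically for $\GOE$ noise, so they cannot be invoked on $\bB$ as black boxes. Adding independent $\GOE$ blocks on the diagonal to repair this introduces spurious terms $\bW_1\, g_t(\bu^t)$ and $\bW_2\, f_t(\bv^t)$ that do not appear in the rectangular recursion \eqref{eq:AMPut}--\eqref{eq:AMPxt1}, so it cannot be literally reduced either. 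A related issue is that the symmetric iteration \eqref{eq:AMP} updates all coordinates synchronously, while \eqref{eq:AMPut}--\eqref{eq:AMPxt1} alternate between the $\bu$- and $\bv$-blocks; to match them you need to interleave the blocks with identity nonlinearities on alternate steps, which is additional bookkeeping you should make explicit.

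The correct route is the one you call a ``fallback'': prove the bipartite analogs of the conditioning lemma and of the singular-vector convergence lemma directly for $\bA$, using invariance of the rectangular Gaussian ensemble under separate left and right orthogonal transformations, and then run the block state-evolution argument of Appendix~\ref{sec:ProofMainGeneral} (or invoke the non-symmetric state evolution from \cite[Section~6]{berthier2017state}). This mirrors what the paper does for $k=1$ in Section~\ref{sec:ExampleRectangular} and is what the citation to \cite{berthier2017state} is pointing at. In short: your understanding of what must be proved is correct, but the primary route should be demoted and the ``fallback'' promoted to the actual proof.
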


\bibliographystyle{amsalpha}
\newcommand{\etalchar}[1]{$^{#1}$}
\providecommand{\bysame}{\leavevmode\hbox to3em{\hrulefill}\thinspace}
\providecommand{\MR}{\relax\ifhmode\unskip\space\fi MR }
\providecommand{\MRhref}[2]{%
  \href{http://www.ams.org/mathscinet-getitem?mr=#1}{#2}
}
\providecommand{\href}[2]{#2}

\addcontentsline{toc}{section}{References}

\end{document}